\def\skewind#1#2#3#4%
\newtheorem{theorem}{Theorem}[section]
\newtheorem{maintheorem}{Main Theorem}
\newtheorem{lemma}[theorem]{Lemma}
\newtheorem{corollary}[theorem]{Corollary}
\newtheorem*{corollary*}{Corollary}
\newtheorem{proposition}[theorem]{Proposition}
\theoremstyle{definition}
\newtheorem{definition}[theorem]{Definition}
\newtheorem{remark}[theorem]{Remark}
\numberwithin{equation}{section}
\newcommand{\C}{{\mathbb C} }
\newcommand{\cA}{{\mathcal A} }
\newcommand{\cE}{{\mathcal E} }
\newcommand{\cL}{{\mathcal L} }
\newcommand{\cO}{{\mathcal O} }
\newcommand{\cT}{{\mathcal T} }
\newcommand{\cX}{{\mathcal X} }
\newcommand{\cK}{{\mathcal K} }
\newcommand{\wh}{\widehat}
\newcommand{\pt}{\partial}
\newcommand{\paren}[1]{\left(#1\right)}
\newcommand{\bparen}[1]{\left[#1\right]}
\newcommand{\pd}[2]{\frac{\partial#1}{\partial#2}}
\newcommand{\norm}[1]{\left\|#1\right\|}
\newcommand{\ov}[1]{\overline{#1}}
\newcommand{\inner}[1]{\left\langle{#1}\right\rangle}
\newcommand{\ind}[3]{_{#1\phantom{#2}#3}^{\phantom{#1}#2}}
\newcommand{\indd}[4]{_{#1\phantom{#2}#3}^{\phantom{#1}#2\phantom{#3}#4}}
\newcommand{\set}[1]{\left\{#1\right\}}
\newcommand{\ip}[1]{\mathrm{Im}\;s}
\newcommand{\lowerindA}[6]
{
	\ind{}{#1}{
		\tiny\vtop{
			\hbox{$#2,\ldots,#3,\ldots,#4,#5$}\vskip-.8mm
			\hbox{$\phantom{\alpha_1,\ldots,}{|\atop#6} $}
		}
	}
}
\newcommand{\lowerindB}[6]
{
	\ind{}{#1}{
		\tiny\vtop{
				\hbox{$#2,#3,\ldots,#4,\ldots,#5$}
				\vskip-.8mm
				\hbox{$\phantom{#2,#3,\ldots,}{|\atop#6}$}
		}
	}
}
\newcommand\vartextvisiblespace[1][.5em]{%
  \makebox[#1]{%
    \kern.07em
    \vrule height.3ex
    \hrulefill
    \vrule height.3ex
    \kern.07em
  }
}
\def\ol#1{{\overline{#1}}}
\def\ke{K{\"a}h\-ler-Ein\-stein}
\def\ks{Ko\-dai\-ra-Spen\-cer}
\def\ka{K{\"a}h\-ler}
\def\wp{Weil-Pe\-ters\-son}
\def\ii{\sqrt{-1}}
\def\ddb{\sqrt{-1}\partial\overline{\partial}}
\def\C{\mathbb{C}}
\def\cinf{C^\infty}
\def\we{\wedge}
\def\re{\mathrm{Re}}
\def\im{\mathrm{Im}}
\def\id{\mathrm{id}}
\def\dbar{\overline\partial}
\def\he{Her\-mite-Ein\-stein}
\def\CC{\mathbb{C}} 
\def\re{\mathrm{Re}} 
\def\dbar{\ol\partial}
\def\ke{K\"ahler-Einstein }
\begin{document}

\title[Curvature of higher direct images]{Curvature of higher direct images of sheaves of twisted holomorphic forms}

\author{Young-Jun Choi}
\address{Department of Mathematics, Pusan National University, 2, Busandaehak-ro 63beon-gil, Geumjeong-gu, Busan, 46241, Republic of Korea}
\email{youngjun.choi@pusan.ac.kr}

\author{Georg Schumacher}
\address{Fachbereich Mathematik und Informatik,+++
Philipps-Universit\"at Marburg, Lahnberge, Hans-Meerwein-Straße, D-35032
Marburg,Germany}
\email{schumac@mathematik.uni-marburg.de}
\date{\today}

\keywords{higher direct images, curvature formula, families of compact K\"ahler manifolds, deformation of compact K\"ahler manifolds, deformation of vector bundles}

\subjclass[2010]{32L10, 14D20, 32G05}

\begin{abstract}
	This paper investigates the curvature properties of higher direct images $ R^qf_*\Omega_{\mathcal{X}/S}^p(\mathcal{E})$, where $f: \mathcal{X} \rightarrow S$ is a family of compact Kähler manifolds equipped with a hermitian vector bundle $\mathcal{E} \rightarrow \mathcal{X}$. We derive a general curvature formula and explore several special cases, including those where $p + q = n$, $q = 0$, and $p = n$, with $\mathcal{E}$ being a line bundle. Furthermore, the paper examines the curvature in the context of fiberwise hermitian flat cases, families of Hermite-Einstein vector bundles, and applications to moduli spaces and Weil-Petersson metrics, providing some insight into their geometric and analytical properties.
\end{abstract}

\maketitle

\section{Introduction}
A strong tool for the study of holomorphic families $f:\cX \to S$ of compact \ka\ manifolds is the differential geometric investigation of twisted Hodge sheaves $R^qf_*\Omega^p_{\cX/S}(\cE)$, where $\cE$ denotes a holomorphic line bundle or vector bundle equipped with a hermitian metric. The aim is to compute the curvature in terms of intrinsic quantities namely holomorphic/harmonic sections and distinguished/harmonic \ks\ tensors together with tensors that are derived from these.

In this sense the total space $\cX$ is equipped with a $d$-closed real $(1,1)$-form that restricts to \ka\ forms on the fibers, and assumptions on the hermitian vector bundle $\cE$ only refer to properties of the restrictions to the fibers of $f$.

In some cases, where vanishing theorems hold, these direct image sheaves are always locally free due to the Grauert-Grothendieck comparison theorem, otherwise under given assumptions the pertinent direct image sheaves are shown to be locally free or the study is restricted to the locally free locus. 

The case, where $p$ is equal to the fiber dimension $n$ and $q=0$ was treated by Mourougane in \cite{Mourougane1997}: If $\cX$ is projective algebraic, and $\cL$ an ample line bundle on $\cX$, then $f_*({\cK_{\cX/S}}\otimes\cL)$ was shown to be locally free and ample, and according to the result by Mourougane and Takayama \cite{Mourougane_Takayama2007} it carries a smooth hermitian metric of Griffiths positive curvature (also an analogous result for compact \ka\ manifolds was shown in \cite{Mourougane_Takayama2007}).
Berndtsson proved in \cite{Berndtsson2009} that the bundle $f_*({\cK_{\cX/S}}\otimes\cL)$ is Nakano-(semi-)positive, if $\cL$ is a (semi-)positive line bundle, where again $f:\cX\to S$ is a proper, smooth map of not necessarily compact \ka\ manifolds.
He also computed the explicit curvature formula in \cite{Berndtsson2011}.

Concerning higher cohomology and applications to moduli and higher order \ks\ maps, in particular hyperbolicity, we mention \cite{Schumacher2012}. The case $R^{n-p}f_*\Omega^p_{\cX/S}(\cK_{\cX/S}^{\otimes m})$, $m>0$ for $\cK_{\cX/S}$ (relatively) positive is critical with respect to the Kodaira-Nakano vanishing theorem. Naumann observed that the curvature of the direct image $R^{n-p}f_*\Omega^p_{\cX/S}(\cL)$ is computable, when $\cL$ denotes a holomorphic line bundle on $\cX$ that is fiberwise positive \cite{Naumann2021}. Naumann replaces the geodesic curvature related to the relative \ke\ form by the geodesic curvature related to the Chern form of $\cL$. His result contains and generalizes \cite{Berndtsson2009,Berndtsson2011}. Using somewhat different methods this situation was also treated by Berndtsson, P\u{a}un, and Wang in \cite{Berndtsson_Paun_Wang2022}.

Allowing singular fibers of $f$, singular hermitian metrics on line bundles, and Bergman metrics, further important results exist; to mention some we refer to Berndtsson-P\u{a}un \cite{Berndtsson_Paun2008} and  P\u{a}un-Takayama \cite{Paun_Takayama2018}.

For holomorphic vector bundles $\cE$, the sheaves $R^qf_*\Omega^p_{\cX/S}(\cE)$ were treated by Mourougane in \cite{Mourougane_Takayama2008}:  In the case of smooth proper families with $\cX$ not necessarily compact, and Nakano-semi-positive vector bundles $\cE$ they proved that local freeness and Nakano-semi-positivity in \cite{Mourougane_Takayama2008}. Liu and Yang also computed several curvature formulas of $f_*(\cK_{\cX/S}\otimes\cE)$ in \cite{Liu_Yang2014} after Liu, Sun, and Yang studied the cohomology $H^{n,q}(\cX, S^kE\otimes \det E)$ on a compact Kähler manifold $X$ equipped with an ample vector bundle $E$ in \cite{Liu_Sun_Yang2013}. They showed that, if $E$ is Griffiths-positive, then $S^kE\otimes \det E$ is both Nakano-positive and dual Nakano-positive for any $k\geq0 $. The positivity result of Mourougane and Takayama was generalized by Wang in \cite{Wang2016Ar} for $q$-semipositive vector bundles on the total space. The curvature problem where $\cE$ is a Higgs bundle was solved by Hu and Huang in \cite{Hu_Huang2021}.

Here, the aim is to treat the most general situation when computing the curvature of $R^qf_*\Omega^p_{\cX/S}(\cE)$.

Our result  gives an answer in terms of intrinsic quantities: Namely in terms of harmonic $(p,q)$-forms with values in $\cE_s$ on the respective fibers $\cX_s$,  a tangent vector of $S$ is represented by the induced \ks\ tensor, and a further ingredient is the geodesic curvature for tangent vectors on the base. The result is compatible with \cite{Schumacher2012,Geiger_Schumacher2017}, and it comprises the situation of \cite{Liu_Yang2014} for $p=n$.

Our result also contains the computation of the curvature of $R^qf_*\cE$ for families of \he\ bundles. It yields a simpler formula for families of stable bundles when applied to $End(\cE)$, a case that is relevant for the \wp\ metric on moduli spaces (cf.\ \cite{Schumacher_Toma1992}). It also applies to higher order \ks\ maps.

We emphasize the special cases of $f_*\Omega^p_{\cX/S}(\cL)$ and $R^qf_*\cK_{\cX/S}(\cL)$, where again we obtain a formula depending on intrinsic terms.

Under the additional assumption of fiberwise flat hermitian metrics more can be shown. For moduli spaces of such structures the curvature of the \wp\ metric is semi-positive.
\bigskip
x
\noindent\textit{Acknowledgements.} The authors would like to thank Philipp Naumann for several discussions.
The work was supported by the National Research Foundation of Korea (NRF) grant funded by the Korea government(MSIT) (No. 2023R1A2C1007227) and by Deutsche Forschungsgemeinschaft DFG (Schu 771/5-2).
\bigskip

This paper is organized as follows.

{\hypersetup{linkcolor=black} \tableofcontents}

\section{Notation and statement of the result}
Let $f:\cX\rightarrow S$ be a smooth proper holomorphic map between complex manifolds, and let $\omega:=\omega_\cX$ be a $d$-closed real $(1,1)$-form on $\cX$ whose restriction toi fibers are positive. Let $\cE\rightarrow\cX$ be a holomorphic vector bundle equipped with a hermitian metric $h$.

In this paper, we consider the curvature of the higher direct image sheaves
\begin{equation*}
R^qf_*\Omega^p_{\cX/S}(\cE).
\end{equation*}
Here, we have to assume that these sheaves are locally free.
For $s\in S$ let $X=\cX_s= f^{-1}(s)$ be the fiber of $s$, and let $\omega_s:=\omega_{\cX_s} = \omega_\cX\vert_{\cX_s}$.

At this point, we will state the result, and indicate the meaning of the necessary notation.

Let $s=(s^1,\ldots,s^k)$ be local holomorphic coordinates on $S$, and pick local coordinates $(z,s)=(z^1,\ldots,z^n,s^1,\ldots,s^k)$ on $\cX$ such that $f(z,s)=s$. We denote the coordinate functions by $z^\alpha$ and $s^i$ resp. We set $\pt_\alpha=\pt/\pt z^\alpha$, and $\pt_i = \pt/\pt s^i$. Let $\rho_s: T_sS \to H^1(\cX_s, T_{\cX_s})$ denote the \ks\ map.

The form $\omega_\cX$ determines a differentiable splitting of the sequence
\begin{equation}\label{eq:dolks}
0 \to T_{\cX/S} \to T_{\cX} \to f^*T_S \to  0,
\end{equation}
assigning to a tangent vector $\pt/\pt s^i$ of $S$ the {\em horizontal lift} $v_i$ to the total space, which consists of tangent vectors that are orthogonal to the respective fiber. Now the restriction
$$
A_i:=(\ol\pt v^i)|\cX_s=A\ind{i}{\alpha}{\ol\beta}(z,s)\pd{}{z^\alpha}\otimes dz^{\ol\beta}
$$
determines a {\em distinguished representative} of the \ks\ class $\rho_s(\pt/\pt s^i)$ in terms of Dolbeault cohomology. If $f:\cX\to S$ is a family of {\em canonically polarized} manifolds such that the restrictions of $\omega_\cX$ to the fibers are K\"ah\-ler-Einstein, then the above distinguished representatives are actually {\em harmonic} \cite{Schumacher2012}.

For  $s \in S$ the cup product together with the
contraction defines mappings
\begin{eqnarray*}
A\ind{i}{\alpha}{\ol\beta}(z,s)\pd{}{z^\alpha}\otimes dz^{\ol\beta}
\cup\vartextvisiblespace:
\cA^{0,0}(\cX_s,\Omega^p_{\cX_s}(\cE)) &\to&
\cA^{0,1}(\cX_s,\Omega^{p-1}_{\cX_s}(\cE))
\;\;\;\text{for}\;\;\;
1\le p\le n.
\end{eqnarray*}

We will apply the above products to harmonic sections. In general the results are not harmonic. We denote the global $L^2$-inner product by angled brackets, and the pointwise $L^2$ inner product by a dot. By abuse of notation we denote a tangent vector of $S$ at a point $s$ by $\pt/\pt s$, or just $\pt_s$ and the distinguished representative of $\rho_s(\pt/\pt s)$ by $A_s$.
The pointwise squared norm $c(\omega)$ of the {\em horizontal lift} $v$ with respect to $\omega$ is also called {\em geodesic curvature}.

We denote the Chern connection of $(\cE,h)$ by $\nabla^\cE$.
The connection form will be denoted by $\theta$.
As usual $\Theta:=\Theta_h(\cE)=\ol\pt\theta$ denotes the (purely imaginary) curvature form of $\cE$, whereas $\Theta_\cE(-,-)$ is the corresponding hermitian form.

We denote by $\Box_\dbar$ (resp. $\Box_\pt$) the $\dbar$-Laplacian (resp. $\partial$-Laplacian) on $\cX_s$ with non-negative eigenvalues on $\cA^{(p,q)}(\cX_s,\cE\vert_{\cX_s})$

For any $p$ and $q$, the locally free sheaf $R^qf_*\Omega^p_{\cX/S}(\cE)$ carries a canonical hermitian metric which is defined by the $L^2$ product of harmonic representatives.

Infinitesimal deformations of a pair $(X,E)$ can be characterized in terms of the Atiyah bundle $\Sigma_X(E)$ as elements of its first cohomology (see Section~\ref{S:pairs} for details). A distinguished representative of the \ks\ class is a pair $(\zeta_s, A_s)$, where
$$
\zeta_s= \eta_s + A\ind{s}{\alpha}{\ol\beta}\theta_\alpha dz^{\ol\beta},
$$
namely the tensor $\eta_s$ is the exterior derivative of the contraction with the horizontal lift $v$ of the respective tangent vector with the curvature form of $\cE$, restricted to the fiber, i.e.
$$
\eta_s = - (v \cup \Theta)|_{\cX_s}= -(\Theta_{s\ol\beta} + a\ind{s}{\alpha}{}\Theta_{\alpha\ol\beta})dz^{\ol\beta} .
$$
Now the curvature of the natural metric on the higher direct image sheaf can be expressed in terms of intrinsic quantities.
\begin{maintheorem}\label{T:main_theorem}
For any  family $\cE$ of hermitian holomorphic vector bundles over $\cX\to S$ the curvature tensor on $R^qf_*\Omega^p_{\cX/S}(\cE)$ is given by
\begin{align*}
R\paren{\pt_s,\pt_{\ol s},\psi,\ol{\psi}}
&=
\inner{L_{[v,\ol{v}]}\psi,\psi}
+
\inner{\Theta(v,\ol{v})\psi,\psi}
-
\inner{G_\dbar(w_s),w_s}
+
\inner{G_\dbar(w_{\ol s}),w_{\ol s}}
\\
&\hspace{.5cm}
+
\inner{A_s\cup\psi,A_s\cup\psi}
-
\inner{A_{\ol s}\cup\psi,A_{\ol s}\cup\psi},
\end{align*}
where $G_\dbar$ denotes the Green's operator for $\cE$-valued forms on $\cX_s$, and $w_s$, $w_{\ol s}$ are given as follows.
\begin{equation*}
		w_s
		=
		\pt(A_s\cup\psi)
		+
		A_s\cup\pt\psi
		+
		\eta_s
			\we\psi
\end{equation*}
and
\begin{equation*}
		w_{\ol s}
		=
	(-1)^p\pt^*(A_{\ol s}\cup\psi)
	+
	(-1)^pA_{\ol s}\cup\pt^*\psi
	+
	[\Lambda,\ii\eta_{\ol s}]\psi.
\end{equation*}
\end{maintheorem}
We note that the general formula follows by polarization.
It is remarkable to mention that $L_{[v,\ol v]}$ contains derivatives only along the fiber direction.
More precisely, Proposition~\ref{P:Lie_derivative_commutator} states that it satisfies
\begin{equation*}
	\inner{L_{[v,\ol v]} \chi,\psi}
	=
	\inner{c(\omega)\,\Box_\pt\chi,\psi}
	-
	\inner{c(\omega)\,\partial\chi,\partial\psi}
	-
	\inner{c(\omega)\,\pt^*\chi,\pt^*\psi}
	+
	\inner{\chi,\pt\ol{c(\omega)}\we\pt^*\psi}
	+
	\inner{\pt c(\omega)\we\pt^*\chi,\psi}.
\end{equation*}
Note that $\eta_s$ is closely related to the variation of the family of vector bundles $\cE\rightarrow \cX\rightarrow S$. (See Section~\ref{S:pairs} and \cite{Schumacher_Toma1992} for the case of $\cX= X\times S \to S$).
The theorem can be generalized to smooth maps of reduced complex spaces.
\medskip

Next, we assume that $(\cE,h)$ is a hermitian line bundle $(\cL,h)$.
If the line bundle $(\cL,h)$ is trivial, then the formula specializes to Griffiths' Theorem on the curvature of Hodge bundles \cite{Griffiths1970,Griffiths_Tu1984}.
In the case that $(\cL,h)$ is fiberwise positive or negative and $p+q=n$, the curvature formula was obtained by \cite{Naumann2021,Berndtsson_Paun_Wang2022}.
The main theorems deal with the curvature formula when $p+q\neq n$.
More precisely, we have the theorems stated below.

\begin{maintheorem}[Theorem~\ref{T:positive_p0}]
\label{T:canpol}
Let $f:\cX\rightarrow S$ be a family of compact Kähler manifolds with a hermitian line bundle $(\cL,h)$ such that $\ii\Theta_h(\cL)$ is positive on fibers.
Then the curvature of $f_*\Omega^p_{\cX/S}(\cL)$ is given by
\begin{eqnarray*}
R(\pt_s,\pt_{\ol s},\psi,\ol\psi)
&=&
(n-p+1)
\inner{c(\omega)\psi,\psi}
-
\inner{c(\omega)\pt\psi,\pt\psi}
\\
&&
+
(n-p+1)
\inner{
	(\Box_{\dbar}+1)^{-1}
	(A_s\cup\psi)
	,
	A_s\cup\psi
}
-
\inner{
	\Box_{\dbar}^{-1}
	(A_s\cup\pt\psi)
	,A_s\cup\pt\psi
}.
\end{eqnarray*}
\end{maintheorem}

For $p=n$ again $\pt\chi=0$, and the formula yields the corresponding formula from \cite{Schumacher2012,Berndtsson2011}.

\begin{maintheorem}
[Theorem~\ref{T:curvature_formula_(n,q)}]
Let $f:\cX\rightarrow S$ be a family of compact Kähler manifolds with a hermitian line bundle $(\cL,h)$ such thati $\ii\Theta_h(\cL)$ is negative on fibers.
The curvature of $R^qf_*\Omega^n_{\cX/S}(\cL)$ is given by
\begin{eqnarray*}
	R(\pt_s,\pt_{\ol s},\psi,\psi)
	&=&
	(q-1)
	\inner{c(\omega)\psi,\psi}
	+
	\inner{c(\omega)\pt^*\psi,\pt^*\psi}
	\\
	&&-
	(q+1)
	\inner{
		(\Box_{\dbar}-1)^{-1}
		(A_s\cup\psi)
		,
		A_s\cup\psi
	}
	+
	\inner{
		\Box_\dbar^{-1}
		\paren{A_{\ol s}\cup\pt^*\psi}
		,
		\paren{A_{\ol s}\cup\pt^*\psi}
	}.
\end{eqnarray*}
\end{maintheorem}
\medskip

General fiberwise flat vector bundles are studied in Section~\ref{S:flat}. Under the assumption that the vector bundle $\cE$ is Nakano-semipositive on the total space Theorem~\ref{T:flat} states that
\begin{equation*}
R(\pt_s,\pt_{\ol s},\psi,\ol\psi)
=
\inner{\Theta_\cE(v,\ol v)\psi,\psi}
+
\norm{H(A_s\cup\psi)}^2
-
\norm{H(A_{\ol s}\cup\psi)}^2,
\end{equation*}
where $\inner{\Theta_\cE(v,\ol v)\psi,\psi}=\Theta_{s\ol s}\norm{\psi}^2$ (For the line bundle case see also \cite{Berndtsson_Paun_Wang2022}).

Section~\ref{S:HE} deals with families of Hermite-Einstein vector bundles over a compact Kähler manifold (Theorem~\ref{curvHE}) containing the case $p=0$, from \cite{Geiger_Schumacher2017}. When applying the formula to the curvature of the \wp\ metric on the moduli space of stable vector bundles, the term containing $H(\Theta_{s\ol s})$ is not present implying the result from \cite{Schumacher_Toma1992}. In the preceding section families of flat hermitian bundles had been studied (Theorem~\ref{T:flat}).
\begin{maintheorem}[Theorem~\ref{curvflat}]
	Given a family of fiberwise flat hermitian bundles for $p=1,2$ the curvature of $R^pf_*End(\cE)$ satisfies
\begin{eqnarray*}
   R(\pt_s,\pt_{\ol s},\eta^p_s,\eta^p_{\ol s})&=&
\inner{G_\dbar(\ii\Lambda[\eta_s,\eta_{\ol s}]) \eta^p_s  , \eta^p_s}\\
& & \qquad +
\inner{G_\dbar(\ii \Lambda[\eta_{\ol s}, \eta^p_s]),\ii \Lambda[\eta_{\ol s}, \eta^p_s]}
\end{eqnarray*}
\end{maintheorem}

This theorem implies the following for families of fiberwise flat hermitian bundles.
\begin{corollary*}
	On the moduli space of simple, flat hermitian bundles the holomorphic sectional curvature of the Weil-Petersson metric is semi-positive.
\end{corollary*}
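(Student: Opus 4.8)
The plan is to deduce the semi-positivity of the holomorphic sectional curvature of the Weil-Petersson metric directly from the curvature formula in Theorem~\ref{curvflat}. Recall that the Weil-Petersson metric on the moduli space of simple flat hermitian bundles is, by construction, the $L^2$-metric on the relevant direct image sheaf; in the setting of a family $\cX \to S$ with $\cX = X \times S$ and fiberwise flat hermitian structures, the tangent space to moduli at the point parametrizing $(X,E)$ is identified with $H^1$ of the Atiyah sheaf, which in the flat case decomposes so that the pertinent harmonic representative is precisely the tensor $\eta_s$. Thus the holomorphic sectional curvature in the direction $\pt_s$ is obtained by specializing the general curvature tensor $R(\pt_s,\pt_{\ol s},\psi,\ol\psi)$ to $\psi = \eta_s$, i.e.\ it equals $R(\pt_s,\pt_{\ol s},\eta_s,\eta_{\ol s})$ up to the normalizing factor $\norm{\eta_s}^{-2}$ coming from the definition of holomorphic sectional curvature. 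Here I would need to invoke, presumably from the earlier sections on pairs (Section~\ref{S:pairs}) and on flat bundles (Section~\ref{S:flat}), that for $p=1$ this specialization is legitimate, namely that $\eta_s$ is a harmonic representative in the appropriate class and that the Weil-Petersson pairing is exactly $\inner{\eta_s,\eta_s}$.

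With this identification in hand, the statement reduces to showing that the right-hand side of the formula in Theorem~\ref{curvflat}, evaluated at $\eta^p_s = \eta_s$ with $p=1$, is non-negative. That right-hand side reads
\begin{equation*}
\inner{G_\dbar(\ii\Lambda[\eta_s,\eta_{\ol s}])\,\eta_s,\eta_s}
+
\inner{G_\dbar(\ii\Lambda[\eta_{\ol s},\eta_s]),\,\ii\Lambda[\eta_{\ol s},\eta_s]}.
\end{equation*}
The second term is manifestly non-negative: $G_\dbar$ is a positive semi-definite self-adjoint operator (the Green's operator associated to the non-negative Laplacian $\Box_\dbar$), so for any form $\mu$ one has $\inner{G_\dbar\mu,\mu}\ge 0$; applying this with $\mu = \ii\Lambda[\eta_{\ol s},\eta_s]$ settles it. The first term requires slightly more care. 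One writes $\ii\Lambda[\eta_s,\eta_{\ol s}]$ and observes that this is a non-negative function (or a non-negative self-adjoint endomorphism-valued zero-form): $[\eta_s,\eta_{\ol s}]$ is the bracket of a $(0,1)$-form valued in $End(E)$ with its conjugate, and contracting with $\ii\Lambda$ produces the pointwise squared-norm type expression familiar from Bochner-Kodaira arguments, so $\ii\Lambda[\eta_s,\eta_{\ol s}] \ge 0$ pointwise as an endomorphism. Then I would argue that $G_\dbar$ applied to a non-negative function, paired against $\eta_s$ twisted by that same non-negative quantity, stays non-negative — this is where I would either appeal to a maximum-principle/positivity property of the Green's operator on functions (the Green's operator of the Laplace-Beltrami operator on a compact Kähler manifold preserves non-negativity up to the harmonic projection, and the constant function is harmonic) or, more robustly, rewrite the first term using the self-adjointness of $G_\dbar$ and the fact that $\ii\Lambda[\eta_s,\eta_{\ol s}]$ acts as a non-negative multiplication operator, so that the pairing is again of the form $\inner{G_\dbar\nu,\nu}$ for a suitable $\nu$ built from $\eta_s$ and the square root of the multiplier.

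The main obstacle I anticipate is precisely this last point: ensuring that the first summand is genuinely a sum of non-negative contributions rather than merely being non-negative "on average". The cleanest route is to show that $\ii\Lambda[\eta_s,\eta_{\ol s}]$ is a non-negative \emph{operator} commuting appropriately, write it as $T^*T$ (or $T\ov{T}$) pointwise, and then use that $G_\dbar$ is positive semi-definite to conclude $\inner{G_\dbar(T^*T\cdot)(\cdot)} \ge 0$; if $T$ and $G_\dbar$ do not commute one instead inserts the spectral decomposition of $G_\dbar$ and checks term by term, which works because each spectral projector is itself positive. Once non-negativity of both terms is established, dividing by $\norm{\eta_s}^2 > 0$ (nonzero since $\pt_s$ is a nonzero tangent vector and the moduli space is assumed to carry an honest metric at simple points) gives the semi-positivity of the holomorphic sectional curvature, completing the proof. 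I would also remark that strict positivity fails in general exactly when both brackets vanish, i.e.\ when $\eta_s$ commutes with $\eta_{\ol s}$, which recovers the well-known flatness phenomena for abelian or split families.
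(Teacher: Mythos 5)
Your reduction to Theorem~\ref{curvflat} with $q=1$ and your handling of the second term are fine: $G_\dbar$ is positive semi-definite, so $\inner{G_\dbar(\ii\Lambda[\eta_{\ol s},\eta_s]),\ii\Lambda[\eta_{\ol s},\eta_s]}\ge 0$. The gap is in your treatment of the first term, $\inner{G_\dbar(\ii\Lambda[\eta_s,\eta_{\ol s}])\,\eta_s,\eta_s}$. First, your pointwise positivity claim is false: $\ii\Lambda[\eta_s,\eta_{\ol s}]$ is a Hermitian \emph{trace-free} section of $End(E)$ (the trace of a graded commutator of $End$-valued forms vanishes), so it cannot be a non-negative endomorphism unless it is identically zero; it is of the shape $\sum[A,A^*]$, not $\sum A^*A$. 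Second, even granting some positivity of the multiplier, the Green's operator on a compact manifold does \emph{not} preserve non-negativity (the harmonic/mean part is projected out, and the Green kernel changes sign), so the maximum-principle appeal fails; and the spectral-decomposition fix does not help, because $G_\dbar$ and the multiplication (in fact commutator) operator do not commute, so $\inner{G_\dbar M\,\cdot,\cdot}$ is not a sum of termwise non-negative contributions. You also overlooked that on $R^qf_*End(\cE)$ the coefficient $G_\dbar(\ii\Lambda[\eta_s,\eta_{\ol s}])$ acts by the Lie bracket, not by scalar multiplication.

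The step you are missing is exactly the trace identity used in the proof of Theorem~\ref{T:curvWP}: writing $\chi:=G_\dbar(\ii\Lambda[\eta_s,\eta_{\ol s}])$, one has pointwise $([\chi,\kappa],\kappa)=(\chi,\ii\Lambda[\kappa,\ol\kappa])$, so with $\kappa=\eta_s$ the first term becomes
\begin{equation*}
\inner{G_\dbar(\ii\Lambda[\eta_s,\eta_{\ol s}]),\ii\Lambda[\eta_s,\eta_{\ol s}]},
\end{equation*}
which is non-negative by the semi-positivity of $G_\dbar$ and in fact coincides with the second term (the graded bracket of two $1$-forms is symmetric). This yields $R(\pt_s,\pt_{\ol s},\eta_s,\eta_{\ol s})=2\inner{G_\dbar(\ii\Lambda[\eta_{\ol s},\eta_s]),\ii\Lambda[\eta_{\ol s},\eta_s]}\ge 0$, which is the paper's argument; without this conversion your proof of non-negativity of the first summand does not go through.
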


In this context fiber integrals of differential forms are of interest. The fiber integral of a form of type $(n+k,n+\ell)$ is a differential form of type $(k,\ell)$, when $n$ is the dimension of the fibers, and fiber integration commutes with exterior differentiation. However, when representing holomorphic sections of $R^qf_*\Omega^p_{\cX/S}(\cE)$ by fiberwise harmonic differential forms, it is possible to represent these by global $\ol\pt$-closed $(0,q)$ forms with values in $\Omega^p_{\cX/S}(\cE)$ (which restrict to harmonic $(p,q)$-forms), but in general not by global such $(p,q)$-forms. Therefore, when dealing with fiber integrals, it is natural to apply Lie-derivatives with respect to lifts of tangent vectors of the base space to sections of $\cE$, $\cT_{\cX/S}$, $\Omega_{\cX/S}$, and related bundles, instead of using exterior derivatives.

\section{Preliminaries}\label{S:pre}

\subsection{Hermitian vector bundle}\label{se:hermbdl}
Let $(X,\omega)$ be a compact Kähler manifold and let $(E,h)$ be an hermitian vector bundle of rank $r$ on $X$.
In terms of local holomorphic coordinates $(z^1,\dots,z^n)$, we write the Kähler form $\omega$ by
\begin{equation*}
\omega=\ii g_{\alpha\ol\beta}dz^\alpha\wedge dz^{\ol\beta}.
\end{equation*}
The curvature tensor is
\begin{equation*}
	R_{\alpha\ol\beta\gamma\ol\delta}
	=
	-
	\pt_\alpha\pt_{\ol\beta}g_{\gamma\ol\delta}
	+
	g^{\ol\tau\sigma}
	\pt_\alpha g_{\gamma\ol\tau}\pt_{\ol\beta}g_{\sigma\ol\delta},
\end{equation*}
and its Ricci curvature tensor is
\begin{equation*}
	R_{\alpha\ol\beta}
	=
	g^{\ol\delta\gamma}R_{\alpha\ol\beta\gamma\ol\delta}.
\end{equation*}
The Chern connection $\nabla^E$ on $(E,h)$ is defined to be the connection which is compatible with both the hermitian metric $h$ and the complex structure.
With respect to a local holomorphic frame $\{e_1,\dots,e_r\}$ of $E$, the connection $\nabla^E$ is written as
\begin{equation*}
	\nabla^E e_i=\theta\ind{i}{j}{}e_j
\end{equation*}
where $\theta:=\{\theta\ind{i}{j}{}\}$ is the connection 1-form, which is defined by
$\theta\ind{i}{j}{}=\pt h_{i\ol k}\cdot h^{\ol kj}=:\pt h\cdot h^{-1}$.
The curvature $\Theta_h(E)$ is defined by the $\mathrm{End}(E)$-valued $(1,1)$-form $\Theta_h(E)=\dbar\paren{\pt h\cdot h^{-1}}$ and
\begin{equation*}
	\ii\Theta_h(E)
	=
	-\ii\Theta\ind{i}{j}{\alpha\ol\beta}e^i\otimes e_j\otimes dz^\alpha\we dz^{\ol\beta}.
\end{equation*}
Let $A^{p,q}(E)$ be the space of $E$-valued smooth $(p,q)$-forms (in the sense of class $\cinf$), and let $\chi$ and $\psi$ be local sections of $A^{p,q}(E)$ which are written as
\begin{equation*}
\chi
=
\frac{1}{p!q!}\chi\ind{}{i}{A_p\ol B_q}e_i\otimes dz^{A_p}\wedge dz^{\ov B_q}
\;\;\;
\text{and}
\;\;\;
\psi
=
\frac{1}{p!q!}\psi\ind{}{j}{C_p\ol D_q}e_j\otimes dz^{C_p}\wedge dz^{\ov D_q}
\end{equation*}
where $dz^{A_p} = dz^{\alpha_1} \we \ldots\we dz^{\alpha_p}$, $dz^{\ol B_q}= dz^{{\ol\beta}_1}\we \ldots dz^{{\ol\beta}_q}$ and
\begin{equation*}
\chi\ind{}{i}{A_p\ol B_q}=\chi\ind{}{i}{\alpha_1,\ldots,\alpha_p,\ol\beta_1,\ldots,\ol\beta_q}
\end{equation*}
are skew-symmetric in $\alpha_1,\ldots,\alpha_p$ and in $\ol\beta_1,\ldots,\ol\beta_q$.
The same applies to $\psi$.
The pointwise hermitian form $(\cdot,\cdot)_{g,h}$ is defined by
\begin{align*}
	(\chi,\psi)_{g,h}
	&=
	\frac{1}{(p!)^2(q!)^2}\chi\ind{}{i}{A_p\ol B_q} \ol{\psi\ind{}{j}{C_p \ol D_q}}
	\cdot h_{i\ol\jmath}\cdot g^{\ol B_q D_q}\cdot g^{\ol C_p A_p} \\
	&=
	\frac{1}{p!q!}
	\chi\ind{}{i}{\alpha_1,\ldots,\alpha_p,\ol\beta_1,\ldots,\ol\beta_q}
	\ol{\psi\ind{}{j}{\gamma_1,\ldots,\gamma_p,\ol\delta_1,\ldots,\ol\delta_q}}
	\cdot h_{i\ol\jmath}\cdot g^{\ol\gamma_1\alpha_1}\cdots g^{\ol\gamma_p\alpha_p}
	g^{\ol\beta_1\delta_1}\cdots g^{\ol\beta_q\delta_q}
\end{align*}
where $g^{\ol{C}_p A_p} = \det (g^{\ol\gamma_i \alpha_j})_{i,j = 1\ldots p}$ and $h_{i\ol\jmath}=h(e_i,e_j)$.

The Chern connection on the hermitian vector bundle $(E,h)$ defines the operator $\nabla^E: A^k(E)\rightarrow A^{k+1}(E)$ which is decomposed as $\nabla^E=\partial_h+\dbar$ where
\begin{equation*}
\partial_h:A^{p,q}(E)\rightarrow A^{p+1,q}(E)
\;\;\;\text{and}\;\;\;
\dbar: A^{p,q}(E)\rightarrow A^{p,q+1}(E).
\end{equation*}
The $(1,0)$-part $\pt_h$ of $\nabla^E$ depends on the hermitian metric $h$. If there is no confusion, we will denote $\pt_h$ by $\pt$.
The local computations of $\partial_h$ and $\ol\partial$ are well-known. For details we refer to \cite{Siu1982,Varolin2010}.
For the reader's convenience, we give the local expressions for the skew-symmetric coefficients.
In local coordinates, $\ol\partial$ is given by
\begin{align*}
	\paren{
		\dbar\psi
	}\ind{}{i}{\alpha_1,\ldots,\alpha_p,\ol\beta_0,\ldots,\ol\beta_q}
	&=
	(-1)^p\sum_{\nu=0}^q(-1)^\nu
	\psi\ind{}{i}{\alpha_1,\ldots,\alpha_p,\ol\beta_0,\ldots,\widehat{\ol\beta}_\nu,\ldots,\ol\beta_q;\ol\beta_\nu}
	\\
	&=
	(-1)^p
	\paren{
		\psi\ind{}{i}{\alpha_1,\ldots,\alpha_p,\ol\beta_1,\ldots,\ol\beta_q;\ol\beta_0}
		-
		\sum_{\nu=1}^q
		\psi\ind{}{i}{\tiny{\vtop{\hbox{$\alpha_1,\ldots,\alpha_p,\ol\beta_1,\ldots,\ol\beta_0,\ldots,\ol\beta_q;\ol\beta_\nu$}\vskip-.8mm	\hbox{$\phantom{\alpha_1,\ldots,\alpha_p,\ol\beta_1,\ldots,}{|\atop \nu } $}}}}
	},
\end{align*}
where the semi-colon notation is used for a covariant derivative.
Here either the covariant derivative with respect to $\nabla^E$ or the covariant derivative with respect to $\nabla^E$ and the Kähler (Levi-Civita) connection $\nabla^X$ on $(X,\omega)$ can be taken, because the Kähler connection is torsion-free so that the Christoffel symbols are symmetric.

The formal adjoint operator $\dbar_h^*$ of $\dbar$ is written as follows.
\begin{equation*}
	\paren{\dbar_h^*\psi}\ind{}{i}{A_p,\ol\beta_1,\ldots,\ol\beta_{q-1}}
	=
	(-1)^{p+1}g^{\ol\beta\alpha}
	\psi\ind{}{i}{A_p,\ol\beta,\ol\beta_1,\ldots,\ol\beta_{q-1};\alpha}.
\end{equation*}
The operator $\partial_h$ is computed as
\begin{equation}\label{E:partial}
\begin{aligned}
	\paren{\partial_h\chi}_{\alpha_0,\alpha_1,\ldots,\alpha_p,\ol B_q}
	&=
	\sum_{\mu=0}^p (-1)^\mu
	\chi\ind{}{i}{\alpha_0,\ldots,\wh\alpha_\mu,\ldots,\alpha_p,\ol B_q;\alpha_\mu}
	\\
	&=
	\chi\ind{}{i}{\alpha_1,\ldots,\alpha_p,\ol B_q;\alpha_0}
	-
	\sum_{\mu=1}^p
	\chi\ind{}{i}{
	{\tiny\vtop{
	\hbox{$\alpha_1,\ldots,\alpha_0,\ldots,\alpha_p,\ol B_q;\alpha_\mu$}\vskip-.8mm
	\hbox{$\phantom{\alpha_1,\ldots,}{|\atop\mu} $}}}}.
\end{aligned}
\end{equation}
And the formal adjoint $\partial_h^*$ of $\partial_h$ is computed as follows.
\begin{equation*}
		\paren{\partial_h^*\chi}\ind{}{i}{\alpha_1,\ldots,\alpha_{p-1},\ol B_q}
		=
		-g^{\ol\beta\alpha}
		\psi\ind{}{i}{\alpha,\alpha_1\ldots,\alpha_{p-1},\ol B_q;\ol\beta}.
\end{equation*}

We will need the Lefschetz operator $L$, which is defined by
\begin{equation*}
L\chi = \omega\wedge\chi ,
\end{equation*}
and its adjoint operator $\Lambda$ is defined by
\begin{equation*}
\inner{L\chi,\psi}=\inner{\chi,\Lambda\psi}.
\end{equation*}
In local coordinates, for a $(p,q)$-form $\psi$ we have
\begin{equation*}
(\Lambda\psi)\ind{}{i}{\alpha_1,\ldots,\alpha_{p-1},\ol\beta_1,\ldots,\ol\beta_{q-1}}
=
(-1)^p\ii g^{\ol\beta\alpha}\psi\ind{}{i}{\alpha,\alpha_1,\ldots,\alpha_{p-1},\ol\beta,\ol\beta_1,\ldots,\ol\beta_{q-1}}
\end{equation*}
for $\psi\in A^{p,q}(E)$.

The $\dbar$-Laplacian and $\pt_h$-Laplacian on the space of $E$-valued $(p,q)$-forms are defined by
\begin{equation*}
\Box_{\dbar}=\dbar\,\dbar_h^*+{\dbar}_h^*\dbar
\;\;\;\text{and}\;\;\;
\Box_{\pt_h}=\pt_h\pt_h^*+\pt_h^*\pt_h.
\end{equation*}
Then the Bochner-Kodaira-Nakano formula (see, e.g., \cite{Demailly(Book)}) states that
\begin{equation}\label{E:BKN}
\Box_{\dbar}-\Box_{\pt_h}=[\ii\Theta_h(E),\Lambda],
\end{equation}
which is computed in local holomorphic coordinates as
\begin{equation}
	\label{E:local_expression_curvature_operator}
	\begin{aligned}
		\paren{[\ii\Theta_h(E),\Lambda]\psi}\ind{}{i}{A_p,\ol B_q}
		=
		-
		g^{\ol\beta\alpha}
		\Bigg(
			\Theta\ind{j}{i}{\alpha\ol\beta}
			\psi\ind{}{j}{A_p,\ol B_q}
			&-
			\sum_{\mu=1}^p\Theta\ind{j}{i}{\alpha_\mu\ol\beta}
			\psi\lowerindA{j}{\alpha_1}{\alpha}{\alpha_p}{\ol\beta_1,\ldots,\ol\beta_q}{\mu}
			\\
			&-
			\sum_{\nu=1}^q\Theta\ind{j}{i}{\alpha\ol\beta_\nu}
			\psi\lowerindB{j}{\alpha_1,\ldots,\alpha_p}{\ol\beta_1}{\ol\beta}{\ol\beta_q}{\nu}
		\Bigg)
	\end{aligned}
\end{equation}
for a $E$-valued $(p,q)$-form $\psi$.
If $E$ is a line bundle on $X$ and $\ii\Theta_h(E)=\omega$, then we have
\begin{equation*}
\Box_{\dbar}-\Box_{\pt_h}=[\omega,\Lambda]=(p+q-n)\id
\end{equation*}
on the space of $E$-valued $(p,q)$-forms.
\medskip

The following proposition about the commutator of covariant derivatives is well-known.
\begin{proposition}\label{P:commutator_covariant_derivatives}
Let $\psi$ be a $E$-valued $(p,q)$ form on $X$.
Denote by $\nabla_\alpha$ and $\nabla_{\ol\beta}$ covariant derivatives with respect to $\nabla^E$ and $\nabla^X$. Then we have
\begin{align*}
[\nabla_{\ol\beta},\nabla_\alpha]\psi\ind{}{i}{A_p,\ol B_q}
&=
\psi\ind{}{i}{{A_p,\ol B_q};\alpha\ol\beta}
-
\psi\ind{}{i}{{A_p,\ol B_q};\ol\beta\alpha} \\
&=
-\Theta\ind{j}{i}{\alpha\ol\beta}\psi\ind{}{j}{{A_p,\ol B_q}}
+
\sum_{\mu=1}^p
R\ind{\alpha_\mu}{\gamma}{\alpha\ol\beta}
\psi\ind{}{i}{
{\tiny\vtop{
\hbox{$\alpha_1,\ldots,\gamma,\ldots,\alpha_p,\ol B_q$}\vskip-.8mm
\hbox{$\phantom{\alpha_1,\ldots,}{|\atop\mu } $}}}}
+
\sum_{\nu=1}^q
R\ind{\ol\beta_\nu}{\ol\delta}{\alpha\ol\beta}
\psi\ind{}{i}{
{\tiny\vtop{
\hbox{$A_p,\ol\beta_1,\ldots,\ol\delta,\ldots,\ol\beta_q$}\vskip-.8mm
\hbox{$\phantom{A_p,\ol\beta_1,\ldots,}{|\atop\nu } $}}}}
\end{align*}
\end{proposition}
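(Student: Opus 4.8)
The identity is the standard Weitzenböck-type commutation rule for the curvature of the connection $\nabla = \nabla^E \otimes \nabla^X$ acting on $E$-valued $(p,q)$-forms, so the plan is to reduce it to the definition of the curvature operator of $\nabla$ and then read off the three types of terms. First I would recall that for a section $\sigma$ of any hermitian bundle $F$ with connection, the curvature is $\Theta^F(\partial_\alpha,\partial_{\ol\beta})\sigma = [\nabla_\alpha,\nabla_{\ol\beta}]\sigma$ (up to sign, since $[\partial_\alpha,\partial_{\ol\beta}]=0$ on functions), and then apply this with $F = E \otimes (\Lambda^p T^*_X)^{1,0} \otimes (\Lambda^q T^*_X)^{0,1}$, whose curvature is the sum of the curvatures of the three tensor factors acting as derivations. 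The $E$-factor contributes $-\Theta\ind{j}{i}{\alpha\ol\beta}$ acting on the bundle index $i$; the holomorphic cotangent factors contribute the Riemann tensor $R\ind{\alpha_\mu}{\gamma}{\alpha\ol\beta}$ acting on each barred-free holomorphic index; and the anti-holomorphic cotangent factors contribute $R\ind{\ol\beta_\nu}{\ol\delta}{\alpha\ol\beta}$ on each barred index. Summing these gives exactly the right-hand side.

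More concretely, the key steps are: (1) fix a point $x_0$ and normal holomorphic coordinates for $\omega$ together with a holomorphic frame of $E$ normal at $x_0$, so that all Christoffel symbols and the connection form $\theta$ vanish at $x_0$ while their first derivatives recover $R$ and $\Theta$; (2) in this frame, $\psi\ind{}{i}{A_p,\ol B_q;\alpha} = \partial_\alpha\psi\ind{}{i}{A_p,\ol B_q}$ and $\psi\ind{}{i}{A_p,\ol B_q;\alpha\ol\beta} = \partial_{\ol\beta}\partial_\alpha\psi\ind{}{i}{A_p,\ol B_q}$ plus correction terms coming from $\partial_{\ol\beta}$ of the Christoffel/connection symbols hitting each free index; (3) subtract the expression with $\alpha,\ol\beta$ interchanged: the pure second-derivative terms $\partial_{\ol\beta}\partial_\alpha\psi = \partial_\alpha\partial_{\ol\beta}\psi$ cancel, leaving precisely the derivative-of-Christoffel and derivative-of-$\theta$ terms, which at $x_0$ equal $R\ind{\alpha_\mu}{\gamma}{\alpha\ol\beta}$, $R\ind{\ol\beta_\nu}{\ol\delta}{\alpha\ol\beta}$, and $-\Theta\ind{j}{i}{\alpha\ol\beta}$ respectively; (4) since $x_0$ was arbitrary and both sides are tensorial, the identity holds everywhere. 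Care with signs is needed: the convention $\ii\Theta_h(E) = -\ii\Theta\ind{i}{j}{\alpha\ol\beta}e^i\otimes e_j\otimes dz^\alpha\wedge dz^{\ol\beta}$ fixed in the excerpt dictates the sign of the $\Theta$-term, and the curvature of $(T^*_X)^{1,0}$ versus $(T_X)^{1,0}$ flips a sign relative to the curvature of $T_X$; one must track these consistently so that the first-derivative-of-metric terms assemble into $R\ind{\alpha_\mu}{\gamma}{\alpha\ol\beta} = g^{\ol\tau\gamma}R_{\alpha_\mu\ol\tau\alpha\ol\beta}$ with the sign matching the stated formula for $R_{\alpha\ol\beta\gamma\ol\delta}$.

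The main obstacle is bookkeeping rather than conceptual: correctly accounting for the skew-symmetrization implicit in the index notation, the placement of the substituted index in the $\mu$-th (resp. $\nu$-th) slot with the vertical-bar notation, and above all keeping the sign conventions for the three curvature contributions mutually consistent with the definitions fixed earlier in Section~\ref{S:pre}. Since the statement is labeled "well-known," I would present the normal-coordinate computation compactly, verify the sign in the simplest case (a $(1,0)$-form with trivial $E$, where the identity reduces to $[\nabla_{\ol\beta},\nabla_\alpha]\psi_{\alpha_1} = R\ind{\alpha_1}{\gamma}{\alpha\ol\beta}\psi_\gamma$), and then invoke linearity in the tensor indices to conclude the general case, citing a standard reference such as \cite{Demailly(Book)} for the detailed verification.
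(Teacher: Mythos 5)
The paper gives no proof of this proposition: it is stated as well-known and immediately used, so there is no argument of the authors' to compare against. Your proposal --- regarding $\psi$ as a section of $E\otimes\Lambda^{p}(T_X^{*})^{1,0}\otimes\Lambda^{q}(T_X^{*})^{0,1}$, using that the curvature of the induced tensor-product connection acts as a derivation on each index, and evaluating in a normal frame and normal coordinates where only the $\ol\pt$-derivatives of the connection form and of the Christoffel symbols survive --- is the standard proof of this Ricci-type identity and is correct as outlined. The one point that genuinely requires the care you flag is the sign of the bundle term: the paper's displayed convention $\ii\Theta_h(E)=-\ii\Theta\ind{i}{j}{\alpha\ol\beta}\,e^i\otimes e_j\otimes dz^\alpha\we dz^{\ol\beta}$ does not sit comfortably with its own specialization $[\omega,\Lambda]=(p+q-n)\id$, so calibrating the sign on the $p=q=0$ case (for the $\Theta$-term) and on a $(1,0)$-form with trivial $E$ (for the Riemann-tensor term), as you propose, is exactly the right safeguard before invoking derivation linearity for the general case.
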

We end this subsection with recalling the definition of the cup product of a differential form with values in the holomorphic tangent bundle and an (vector bundle valued)
differential form now in terms of local coordinates.
\begin{definition}\label{de:cup}
Let
\begin{align}
\mu
& =
\frac{1}{p!q!}
\mu\ind{}{\sigma}{\alpha_1,\ldots,\alpha_p,\ol\beta_1,\ldots, \ol\beta_q}
\pd{}{z^\sigma}\otimes
dz^{\alpha_1}\we\ldots\we dz^{\alpha_p}\we dz^{\ol\beta_1}\we\ldots\we dz^{\ol\beta_q}, \nonumber
\\
\text{and \hspace{2cm}}\nonumber &\\
\nu
& =
\frac{1}{a!b!}
\nu _{\gamma_1,\ldots,\gamma_a,\ol\delta_1,\ldots,\ol\delta_b}
dz^{\gamma_1}\we\ldots\we dz^{\gamma_a}\we dz^{\ol\delta_1}
\we\ldots\we dz^\ol{\delta_b} \, .\nonumber \\
\text{Then}
\hspace{1.8cm}\nonumber &\\
\label{eq:cup}
	\begin{split}\mu\cup\nu
	&:=
	\frac{1}{p!q!(a-1)!b!}
	\mu\ind{}{\sigma}{\alpha_1,\ldots,\alpha_p,\ol\beta_1,\ldots, \ol\beta_q}
	\nu_{\sigma\gamma_2,\ldots,\gamma_a,\ol\delta_1,\ldots,\ol\delta_b}
	dz^{\alpha_1} \we\ldots\we dz^{\alpha_p}
	\\
	&
	\hspace{1cm}
	\we dz^{\ol\beta_1}\we\ldots\we
	dz^{\ol\beta_q}\we dz^{\gamma_2}
	\we\ldots\we dz^{\gamma_a}\we dz^{\ol\delta_1} \we\ldots\we dz^{\ol\delta_b}\, .
	\end{split}
\end{align}
\end{definition}

\subsection{Lie derivatives of $E$-valued smooth forms}
\label{SS:Lie_derivative}
In this subsection, we introduce the Lie derivative of vector bundle valued differentiable forms.

Let $X$ be a complex manifold and $(E,h)$ be a complex vector bundle equipped with an hermitian metric $h$ and a metric compatible connection $D$.
Then for $v\in TM$, the Lie derivative $L_v: A^k(E)\rightarrow A^k(E)$ along $v$  is defined by Cartan's formula. More precisely, for $\psi\in A^k(E)$ we define
\begin{equation*}
	L_v\psi=\paren{
		\delta_v\circ\nabla^E
		+
		\nabla^E\circ\delta_v
	}\psi,
\end{equation*}
where $\delta_v$ is the contraction of a $E$-valued form with a vector $v$.
It is easy to see that this definition coincides with the original Lie derivative on differentiable forms.
We introduce several properties of this Lie derivative.
For the details, see Appendix B in \cite{Naumann2021}.

\begin{itemize}
\item[(1)] For a smooth section $e$ of $E$ and a smooth form $\alpha$,
\begin{equation*}
	L_v(e\otimes\alpha)
	=
	L_v(e)\otimes\alpha
	+
	e\otimes L_v(\alpha).
\end{equation*}
\item[(2)] For smooth sections $e_1, e_2$ of $E$ and smooth forms $\alpha_1, \alpha_2$,
\begin{equation*}
	L_v
	\paren{
		(e_1\otimes\alpha_1)\we_h\ol{(e_2\otimes\alpha_2)}
	}
	=
	L_v(e_1\otimes\alpha_1)\we_h\ol{(e_2\otimes\alpha_2)}
	+
	(e_1\otimes\alpha_1)\we_h(L_v\ol{(e_2\otimes\alpha_2)}),
\end{equation*}
where $(e_1\otimes\alpha_1)\we_h\ol{(e_2\otimes\alpha_2)}:=\alpha_1\we\ol{\alpha_2}\cdot h(e_1,e_2)$.
\end{itemize}
Due to the above properties, one can easily deduce the following corollary (Corollary 5 in Appendix B in \cite{Naumann2021}).
\begin{corollary}
Let $\psi$ be a $E$-valued $k$-form written as
\begin{equation*}
	\psi
	=
	\frac{1}{k!}\psi\ind{}{i}{\alpha_1,\ldots,\alpha_k}
	dx^{\alpha_1}\we\ldots\we dx^{\alpha_k}.
\end{equation*}
for a local coordinate system $(x^1,\ldots,x^n)$.
Let $v=v^\alpha\pt_\alpha$ where $\pt_\alpha=\pt/\pt_{x^\alpha}$.
Then $L_v\psi$ is computed as
\begin{equation}\label{E:Lie_derivative_coordinate_expression}
	\paren{L_v\psi}_{\alpha_1,\ldots,\alpha_k}
	=
	v^\alpha\psi\ind{}{i}{\alpha_1,\ldots,\alpha_k;\alpha}
	+
	\sum_{\mu=1}^k
	v\ind{}{\alpha}{\vert\alpha_\mu}
	\psi\ind{}{i}{
	{\tiny\vtop{
	\hbox{$\alpha_1,\ldots,\alpha,\ldots,\alpha_k$}\vskip-.8mm
	\hbox{$\phantom{\alpha_1,\ldots,}{|\atop\mu} $}}}},
\end{equation}
where the semi-colon $;$ stands for the covariant derivative with respect to $\nabla^E$ and the $\vert$-symbol stands for the ordinary derivative.
\end{corollary}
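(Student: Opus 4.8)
The plan is to deduce the formula from two elementary ingredients: Cartan's magic formula for the ordinary Lie derivative of a scalar-valued $k$-form, and the description of the Chern connection $\nabla^E$ on a local frame. First I would fix a local frame $\{e_1,\dots,e_r\}$ of $E$ over the coordinate chart and decompose
\begin{equation*}
\psi=\sum_{i=1}^r e_i\otimes\beta^i,
\qquad
\beta^i=\frac{1}{k!}\,\psi\ind{}{i}{\alpha_1,\ldots,\alpha_k}\,dx^{\alpha_1}\we\ldots\we dx^{\alpha_k},
\end{equation*}
where each $\beta^i$ is an ordinary (complex-valued) $k$-form. By linearity of $L_v$ and the Leibniz rule~(1) quoted above, it suffices to compute, for each $i$,
\begin{equation*}
L_v(e_i\otimes\beta^i)=(L_v e_i)\otimes\beta^i+e_i\otimes(L_v\beta^i).
\end{equation*}

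Next I would evaluate the two summands. Since $e_i$ is an $E$-valued $0$-form, the contraction $\delta_v e_i$ vanishes, so Cartan's formula $L_v=\delta_v\circ\nabla^E+\nabla^E\circ\delta_v$ collapses to
\begin{equation*}
L_v e_i=\delta_v(\nabla^E e_i)=\delta_v\paren{\theta\ind{i}{j}{}e_j}=v^\alpha\theta\ind{i}{j}{\alpha}e_j=\nabla^E_v e_i.
\end{equation*}
For the scalar factor, the $E$-valued Lie derivative restricts to the usual Lie derivative on differentiable forms (as noted right after the definition), so Cartan's magic formula $L_v\beta^i=\delta_v d\beta^i+d\,\delta_v\beta^i$ yields the classical coordinate expression
\begin{equation*}
\paren{L_v\beta^i}_{\alpha_1,\ldots,\alpha_k}
=v^\alpha\,\pt_\alpha\psi\ind{}{i}{\alpha_1,\ldots,\alpha_k}
+\sum_{\mu=1}^{k}\paren{\pt_{\alpha_\mu}v^\alpha}\,\psi\ind{}{i}{\alpha_1,\ldots,\alpha,\ldots,\alpha_k},
\end{equation*}
where $\alpha$ sits in the $\mu$-th slot.

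Then I would recombine. Taking components with respect to the frame $\{e_j\}$, the first summand contributes $v^\alpha\,\theta\ind{i}{j}{\alpha}\,\psi\ind{}{i}{\alpha_1,\ldots,\alpha_k}$ to the $j$-th component; this is exactly the connection correction that promotes the partial derivative $v^\alpha\pt_\alpha\psi\ind{}{j}{\alpha_1,\ldots,\alpha_k}$ from the second summand to the $\nabla^E$-covariant derivative $v^\alpha\,\psi\ind{}{j}{\alpha_1,\ldots,\alpha_k;\alpha}$, with the form indices playing the role of inert labels. The remaining sum is reproduced unchanged, which is precisely \eqref{E:Lie_derivative_coordinate_expression}. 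Concerning the freedom of choice mentioned in the statement: one may equally let the semicolon also covariantly differentiate the form indices by the Levi-Civita connection $\nabla^X$, provided $\pt_{\alpha_\mu}v^\alpha$ is simultaneously upgraded to the covariant derivative $\nabla_{\alpha_\mu}v^\alpha$; the discrepancy between the two conventions is a sum of terms of the shape $v^\alpha\,\Gamma^{\gamma}_{\alpha\alpha_\mu}\,\psi\ind{}{j}{\ldots}$, which cancels because the Christoffel symbols of $\nabla^X$ are symmetric, i.e.\ $\nabla^X$ is torsion-free, just as in the remark following the coordinate formula for $\dbar$ above.

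I do not expect any real obstacle: the argument is bookkeeping. The one point that deserves care is keeping track of which derivative sits where, so that under the stated convention the differentiation falling on $v$ in the sum genuinely is the ordinary one and no spurious Christoffel symbols creep in; a secondary, mild point is that the reduction to scalar forms for the factors $\beta^i$ relies on the compatibility of the $E$-valued Lie derivative with the classical one, which is quoted from \cite{Naumann2021}.
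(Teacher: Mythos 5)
Your argument is correct and is exactly the deduction the paper has in mind: it invokes the Leibniz property (1) together with the definition $L_v=\delta_v\circ\nabla^E+\nabla^E\circ\delta_v$ (so $L_ve_i=v^\alpha\theta\ind{i}{j}{\alpha}e_j$) and the classical Cartan coordinate formula for the scalar factors, merging the connection term into the $\nabla^E$-covariant derivative; the paper simply delegates this bookkeeping to Corollary 5 in Appendix B of \cite{Naumann2021}. Your closing observation on the torsion-free cancellation correctly accounts for the subsequent Remark~\ref{R:covariant_derivative} as well.
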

\begin{remark}\label{R:covariant_derivative}
If $(X,g)$ be a Riemannian manifold, then \eqref{E:Lie_derivative_coordinate_expression} can be considered as
\begin{equation*}\label{E:Lie_derivative_coordinate_expression'}
	L_v\psi
	=
	v^\alpha\psi\ind{}{i}{\alpha_1,\ldots,\alpha_k;\alpha}
	+
	\sum_{\mu=1}^k
	v\ind{}{\alpha}{;\alpha_\mu}
	\psi\ind{}{i}{
	{\tiny\vtop{
	\hbox{$\alpha_1,\ldots,\alpha,\ldots,\alpha_k$}\vskip-.8mm
	\hbox{$\phantom{\alpha_1,\ldots,}{|\atop\mu} $}}}},
\end{equation*}
where $;$ in the first term is the covariant derivative with respect to $D$ and the Levi-Civita connection $\nabla^E$ and $;$ in the second term is the covariant derivative with respect to $\nabla^X$ since $\nabla^X$ is torsion-free.
\end{remark}

Now suppose that $(X,\omega)$ is a Kähler manifold equipped with the Kähler connection and $(E,h)$ is a hermitian vector bundle with the Chern connection $\nabla^E$.
If $v\in T^{1,0}X$ and $\psi\in A^{p,q}(E)$, then $L_v\psi$ is decomposed into two components as follows.
\begin{equation*}
	L_v'\psi=
	\paren{
		\delta_v\circ \pt_h
		+
		\pt_h\circ\delta_v
	}\psi
	\in A^{p,q}(E)
	\;\;\;\text{and}\;\;\;
	L_v''\psi=
	\paren{
		\delta_v\circ\dbar
		+
		\dbar\circ\delta_v
	}\psi
	\in A^{p-1,q+1}(E)
\end{equation*}
Likewise, for $\ol v\in T^{0,1}X$, we have
\begin{equation*}
	L_{\ol v}'\psi=
	\paren{
		\delta_v\circ\dbar
		+
		\dbar\circ\delta_v
	}\psi
	\in A^{p,q}(E)
	\;\;\;\text{and}\;\;\;
	L_{\ol v}''\psi=
	\paren{
		\delta_v\circ\pt_h
		+
		\pt_h\circ\delta_v
	}\psi
	\in A^{p+1,q-1}(E)
\end{equation*}

\subsection{Fiber integrals -- basic properties}\label{sb:fibint} We denote by $\{\cX_s\}_{s\in S}$ a holomorphic family of compact complex manifolds $\cX_s$ of dimension $n>0$ parameterized by a reduced complex space $S$. By definition, it is given
by a proper holomorphic submersion $f:\cX \to S$, such that the $\cX_s$
are the fibers $f^{-1}(s)$ for $s\in S$. In case of a smooth space $S$, if
$\eta$ is a differential form of class $\cinf$ of degree $2n+r$, then the fiber integral
$$
\int_{\cX/S} \eta
$$
is a differential form of degree $r$ on $S$.

We first note that the exterior derivatives $\pt$ and $\ol\pt$ commute with taking fiber integrals (along compact fibers). However we need to take a somewhat different approach.

\begin{lemma}\label{le:intLie}
Let
$$
w_i=\left.\left(\frac{\pt}{\pt s^i} + b\ind{i}{\alpha}{}(z,s)\frac{\pt}{\pt z^\alpha}
+ c\ind{i}{\ol\beta}{}(z,s)\frac{\pt}{\pt z^{\ol\beta}} \right)\right|_{\cX_s}
$$
be differentiable vector fields, whose projections to $S$ equal
$\frac{\pt}{\pt s^i}$ for all points of the fiber $\cX_s$. Then
\begin{equation}\label{E:derint}
\frac{\pt}{\pt s^i} \int_{\cX_s} \eta =
\int_{\cX_s} L_{w_i}(\eta),
\end{equation}
where $L_{w_i}$ denotes the Lie derivative along $w_i$.
\end{lemma}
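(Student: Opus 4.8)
The plan is to prove this by the \emph{transport theorem} for integrals over a moving family of compact domains: one flows the fiber $\cX_s$ into the neighbouring fibers and differentiates. First I would shrink $S$ and choose a smooth vector field $\wt{w}_i$ on a neighbourhood of $\cX_s$ in $\cX$ that extends $w_i$ and whose $f$-projection is $\pt/\pt s^i$ at every point; then $\wt{w}_i$ is $f$-related to the coordinate vector field $\pt/\pt s^i$ on $S$. Since $f$ is proper, the local flow $\phi_t$ of $\wt{w}_i$ is defined for $|t|$ small on all of $\cX_s$, and $f$-relatedness gives $f\circ\phi_t(x)=f(x)+t\,e_i$ in the chosen coordinates, so $\phi_t$ restricts to a diffeomorphism $\cX_s\to\cX_{s+te_i}$. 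Along the coordinate line through $s$ the change-of-variables formula then yields
\begin{equation*}
\int_{\cX_{s+te_i}}\eta=\int_{\phi_t(\cX_s)}\eta=\int_{\cX_s}\phi_t^{*}\eta,
\end{equation*}
and differentiating at $t=0$, with $\tfrac{d}{dt}$ pulled under the integral (legitimate because $\cX_s$ is compact and $(t,x)\mapsto(\phi_t^{*}\eta)(x)$ is $\cinf$), gives
\begin{equation*}
\frac{\pt}{\pt s^i}\int_{\cX_s}\eta=\int_{\cX_s}\left.\frac{d}{dt}\right|_{t=0}\phi_t^{*}\eta=\int_{\cX_s}L_{\wt{w}_i}\eta,
\end{equation*}
which is \eqref{E:derint} for this particular lift.

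Next I would check that $\int_{\cX_s}L_{w_i}\eta$ depends only on the restriction $w_i=\wt{w}_i|_{\cX_s}$, so that the statement makes sense for an arbitrary lift. By Cartan's formula $L_{\wt{w}_i}\eta=d(\iota_{\wt{w}_i}\eta)+\iota_{\wt{w}_i}(d\eta)$. Restriction to $\cX_s$ commutes with $d$, so the restriction of the first term is the exterior derivative of the $(2n-1)$-form $(\iota_{\wt{w}_i}\eta)|_{\cX_s}$ on the closed manifold $\cX_s$, whose integral vanishes by Stokes' theorem; and the pullback of $\iota_{\wt{w}_i}(d\eta)$ to $\cX_s$ is, pointwise, linear in the value of $\wt{w}_i$ there, hence depends only on $w_i$. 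Therefore
\begin{equation*}
\int_{\cX_s}L_{w_i}\eta=\int_{\cX_s}\iota_{w_i}(d\eta)
\end{equation*}
is intrinsically defined; this both legitimizes the left-hand side of the transport computation for an arbitrary lift $w_i$ and shows that the identity proved in the first paragraph for $\wt{w}_i$ is in fact the claimed identity for $w_i$. For a closed $\eta$ both sides vanish, consistently with the topological invariance of the fiber integral.

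The hard part is the global step in the first paragraph: checking that the flow $\phi_t$ exists for a uniform time on all of $\cX_s$ and sweeps out the neighbouring fibers. This is where properness of $f$ and the $f$-relatedness of $\wt{w}_i$ are used — integral curves of $\wt{w}_i$ project to straight segments in $S$, hence stay inside $f^{-1}$ of a compact set and cannot escape in finite time — and the interchange of $\tfrac{d}{dt}$ with $\int_{\cX_s}$ is then routine. When $\eta$ has degree $2n+r$ with $r>0$, \eqref{E:derint} is read coefficient by coefficient after expanding $\eta$ in the $ds^i$ and $d\ol{s}^j$, and the argument is unchanged. Finally I would note that this is exactly the Lie-derivative refinement promised before the statement: unlike the bare fact that $d$ commutes with fiber integration, the present identity is local along the fibers and remains valid for non-closed, vector-bundle-valued forms, which is what the curvature computations in the later sections require.
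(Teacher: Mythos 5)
The one genuine gap is at the very start: $w_i$ is a \emph{complex} vector field --- its projection to $S$ is the Wirtinger field $\pt/\pt s^i$, not a real coordinate field, and its fiber part $b\ind{i}{\alpha}{}\pt_\alpha+c\ind{i}{\ol\beta}{}\pt_{\ol\beta}$ has independent $(1,0)$ and $(0,1)$ components --- and a complex vector field does not generate a flow. Consequently the objects your first paragraph rests on, namely the flow $\phi_t$ of $\wt{w}_i$, the identity $f\circ\phi_t(x)=f(x)+t\,e_i$, and the formula $L_{\wt{w}_i}\eta=\tfrac{d}{dt}\big|_{t=0}\phi_t^{*}\eta$, are not defined as stated; likewise the left-hand side $\tfrac{\pt}{\pt s^i}\int_{\cX_s}\eta$ is a Wirtinger derivative, not the derivative along a one-parameter motion of the fiber. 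The repair is exactly the paper's proof: split $\pt/\pt s^i$ and $w_i$ into real and imaginary parts, run your transport/differentiation-under-the-integral argument separately for the real vector fields $\re(w_i)$ and $\im(w_i)$ (these do generate one-parameter families of diffeomorphisms $\Phi_t:\cX_s\to\cX_t$ moving the fiber in the corresponding real directions of $S$, and properness/compactness gives the uniform existence time as you say), and then combine by $\C$-linearity, with $L_{w_i}:=L_{\re(w_i)}+\ii\,L_{\im(w_i)}$ and the analogous decomposition of the derivative on the left. The paper's closing remark is also relevant here: $\re(w_i)$ and $\im(w_i)$ need not commute, so there is no single ``flow in the $s^i$-direction''; each part must be handled on its own.

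Apart from this, your route is essentially the paper's (flow of a lift, pull back, differentiate under the integral over the compact fiber). Your second paragraph --- that $\int_{\cX_s}L_{w_i}\eta=\int_{\cX_s}\iota_{w_i}(d\eta)$ by Cartan's formula, Stokes on the closed fiber, and pointwise linearity in the lift, so the right-hand side of \eqref{E:derint} depends only on $w_i|_{\cX_s}$ --- is correct and is a useful supplement not spelled out in the paper; since it is purely algebraic, it is unaffected by the flow issue, provided $L$ and $\iota$ for the complex field are understood by $\C$-linear extension.
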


Concerning singular base spaces, it is obviously sufficient that the above
equation is given on the first infinitesimal neighborhood of $s$ in $S$.

\begin{proof}[Proof of {Lemma~\ref{le:intLie}}]
Because of linearity, one may consider the real and imaginary parts of
$\pt/\pt s^i$ and $w_i$ resp.\ separately.

Let $\pt/\pt t$ denote $\re(\pt/\pt s^i)$ for some $i$, and let $\Phi_t: X \to \cX_t$
be the one parameter family of diffeomorphisms generated by $\re(w_i)$.
Then
\begin{equation}\label{E:diffint}
L_{\re(w^i)}\psi = \frac{d}{dt} \Phi^*_t \psi
\quad\text{and}\quad
\frac{d}{d t} \int_{\cX_s} \eta = \int_X \frac{d}{dt} \Phi^*_t \eta = \int_X L_{\re(w^i)}(\eta).
\end{equation}
The same argument is applied to the imaginary part $\im(w^i)$, and finally
$$
L_{w^i}:= L_{\re(w^i)} +\ii L_{\im(w^i)}
$$
so that the claim follows.

In general the vector fields $\re(w_i)$ and $\mathrm{Im}(w_i)$ need not commute so that a simultaneous trivialisation for these does not exist, in particular a trivialisation for the vector field $v$ (see the discussion in \cite{Siu1986}).
\end{proof}

The notion of the Lie derivative of {\em relative} differential forms with respect to differentiable lifts $v$ of vector fields on the total space $\cX$ will be needed. Again these are defined in the same way by \eqref{E:diffint} following the above construction. It follows easily that the Leibniz rule holds for these and again yields differential forms and tensors resp. If differential forms with values in a hermitian bundle are being considered, then covariant differentiation is to be applied.

\begin{lemma}
Let $\eta$ be a differential form of a certain type $(p,q)$ on the total space, and $\eta_{\cX/S}$ the induced relative form. Then
$$
	L_v (\eta)|_{\cX/S} = L_v(\eta_{\cX/S}),
$$
in particular if $\eta$ is of type $(n,n)$, then
\begin{equation}\label{E:relLie}
\pt_s\int_{\cX/S} \eta = \int_{\cX/S} L_v(\eta_{\cX/S}).
\end{equation}
Conversely, any relative form is the restriction of a form on the total space to fibers so that \eqref{E:relLie} is applicable.
\end{lemma}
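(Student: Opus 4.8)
The statement asserts two things: a local identity $L_v(\eta)|_{\cX/S} = L_v(\eta_{\cX/S})$ relating the Lie derivative of a $(p,q)$-form on the total space (with respect to a lift $v$ of a tangent vector) followed by restriction-to-fibers, with the Lie derivative of the induced relative form; and a consequence for fiber integrals when $(p,q)=(n,n)$. The plan is to prove the first identity purely locally in adapted coordinates $(z^1,\dots,z^n,s^1,\dots,s^k)$ with $f(z,s)=s$, then deduce the second from Lemma~\ref{le:intLie} already established above.

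First I would unwind the definition of the induced relative form. A form $\eta$ of type $(p,q)$ on $\cX$ restricts on each fiber $\cX_s$ to a $(p,q)$-form, and $\eta_{\cX/S}$ is precisely the family $s \mapsto \eta|_{\cX_s}$, which in coordinates amounts to discarding all monomials containing a $ds^i$ or $ds^{\ol\imath}$ and keeping only the coefficients of $dz^{A_p}\wedge dz^{\ol B_q}$. Next I would invoke the definition of $L_v$ on the total space via Cartan's formula $L_v = \delta_v \circ \nabla^{\cE} + \nabla^{\cE}\circ\delta_v$ (in the bundle-valued case; for scalar forms replace $\nabla^{\cE}$ by $d$), where $v$ is a differentiable lift of $\pt/\pt s$, i.e.\ $v = \pt/\pt s^i + b\ind{i}{\alpha}{}\pt_\alpha + c\ind{i}{\ol\beta}{}\pt_{\ol\beta}$ along $\cX_s$ as in Lemma~\ref{le:intLie}. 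The key structural point is that $\delta_v$ contracts with a vector field that projects to $\pt/\pt s^i$; the ``vertical part'' $b\ind{i}{\alpha}{}\pt_\alpha + c\ind{i}{\ol\beta}{}\pt_{\ol\beta}$ is tangent to the fibers, so its action on forms is intrinsic to each fiber, while the ``horizontal part'' $\pt/\pt s^i$ contributes only terms that either (a) produce a $ds^i$ factor, which dies under restriction, or (b) differentiate the $z,\ol z$-coefficients in the $s^i$-direction, which is exactly what $L_v$ on the relative form does. Carrying out Cartan's formula and then restricting, one checks monomial by monomial that the terms surviving restriction on the left coincide term-for-term with $L_v(\eta_{\cX/S})$ on the right. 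For the bundle-valued version one uses Remark~\ref{R:covariant_derivative} and the Leibniz rule (properties (1),(2) in Section~\ref{SS:Lie_derivative}) to reduce to the scalar statement tensored with a section of $\cE$, the covariant derivative along $v$ of that section being fiberwise-defined up to the $s^i$-direction derivative.

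Then, for the second assertion, suppose $\eta$ is of type $(n,n)$. Its restriction to each fiber is a top-degree form, $\eta_{\cX/S}$ is the relative volume-type form, and $\int_{\cX/S}\eta = \int_{\cX_s}\eta|_{\cX_s}$ is a function of $s$. Applying Lemma~\ref{le:intLie} with $w_i = v$ gives $\pt_s \int_{\cX_s}\eta = \int_{\cX_s} L_v(\eta)$; but $L_v(\eta)$ restricted to $\cX_s$ equals $L_v(\eta)|_{\cX/S}$ which by the first part equals $L_v(\eta_{\cX/S})$, yielding \eqref{E:relLie}. The converse remark — that every relative form extends to a form on the total space — is immediate since one can always choose a $\cinf$ extension of the fiberwise coefficients, and then \eqref{E:relLie} applies to that extension with both sides depending only on the relative data.

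The main obstacle is the careful bookkeeping in the monomial-by-monomial comparison: one must verify that no ``cross terms'' involving $ds^i \wedge dz^{\ol\beta}$ or differentiation of the (generally nonconstant) transition/coefficient functions in mixed directions spoil the identity after restriction. This is where the hypothesis that $v$ \emph{projects} to $\pt/\pt s^i$ (rather than being an arbitrary vector field) is used decisively, together with the torsion-freeness of the K\"ahler connection noted in Remark~\ref{R:covariant_derivative} so that covariant and ordinary derivatives of the coefficient functions can be interchanged freely in the relevant terms. Everything else is a routine unwinding of Cartan's formula and the definition of the restriction.
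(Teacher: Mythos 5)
Your argument is correct and is essentially the paper's (very terse) proof spelled out: the paper's proof consists precisely of the observation that the identity follows from the specific form of a differentiable lift of $\pt_s$ — namely that the $S$-components of $v$ are the constant coordinate fields, so no $ds$-components of $\eta$ survive in the relative part of $L_v\eta$ — together with a partition-of-unity extension for the converse, which is what your coordinate-by-coordinate Cartan-formula computation and extension remark establish. You correctly isolate the decisive hypothesis (that $v$ projects to $\pt/\pt s^i$, without which the statement fails), so no gap remains.
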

The {\em proof} follows from the specific form of any differentiable lift of a tangent vector $\pt_s$. The first statement is not valid for arbitrary vector fields $v$. The extension of a relative differential form follows by means of a partition of unity. The choice does not affect the result of the fiber integration.

Now by the above Lemma it is possible to restrict calculations of Lie derivatives of differential forms on fibers that depend on parameters.

In our applications, the form $\eta$ will typically involve both inner
products of differential forms with values in hermitian vector bundles,
whose factors need to be treated separately.

As mentioned above in this way the Lie derivative can be reduced to sections of bundles  $\cE$, $\cT_{\cX/S}$, $\Omega_{\cX/S}$ and related bundles.

Let $\omega$ be a closed $(1,1)$-form, whose restrictions to fibers $\omega_s=\omega|\cX_s$ are \ka\ forms, and let $g\, dV$ be the induced family of volume forms on fibers or ''relative \ka\ form''. Then the above lemma is applied
to forms of the type
$$
\eta = k(z,s) \, g\, dV,
$$
where $k$ is any differentiable function.

Now let $v$ be a lift such that $L_v\omega=0$. This property will be shown for horizontal lifts below (cf.\ Lemma~\ref{L:gpar}). Then $L_v\omega|\cX_s$=0 so that
$$
\pt_s\int_{\cX/S}  k \, g\, dV = \pt_s\int_{\cX/S} L_v(k) g\, dV.
$$
In Section~\ref{se:compcurv} this fact will be applied to pointwise inner products $k=(\psi,\chi)$ of $\cE$-valued relative forms of certain degrees $(p,q)$ and related quantities.

\begin{remark}
	When computing Lie derivatives of relative $(p,q)$-forms the process of \eqref{E:diffint} will be applied to the underlying $k$-forms, $k=p+q$. In general, the process is not type-preserving.
\end{remark}

\subsection{Higher direct images and relative differential forms}
\label{SS:higher_direct_images}

Let $(\cE, h)$ be a hermitian, holomorphic vector bundle on $\cX$ -- in order to compute the curvature, we assume that the direct image $R^q f_*\cE$ is {\em locally free}. Unless stated differently we will assume that the resp.\ base space $S$ is reduced. As in particular $\cE$ is $S$-flat, the Grauert-Grothendieck comparison theorem is applicable. For reduced base spaces the direct image is locally free, if $H^q(\cX_s, \cE_s)$ is of constant dimension, where $\cE_s=   \cE \otimes_{\cO_{\cX}}\cO_{\cX_s}|\cX_s$. In this case the cohomology groups can be identified with $R^q f_*\cE\otimes_{\cO_S} \C(s)$. For not necessarily reduced base spaces $S$ the latter properties hold, if for all $s\in S$ the cohomology $H^{q+1}(\cX_s, \cE \otimes \cO_{\cX_s})$  vanishes.

For a Stein open subset $V\subset S$ the space of sections $R^qf_*\cE(V)$ can be identified with the cohomology group $H^q\paren{f^{-1}(V),\cE\vert_{f^{-1}(V)}}$. Given such a cohomology class,
restricted to fibers, we have harmonic representatives of cohomology
classes with respect to the \ka\ forms and hermitian metrics on the fibers.
The following fact from \cite{Schumacher2012} will be essential. Under the above assumptions we have:

\begin{lemma}\label{L:representative}
Let $u \in R^q f_*\cE(S)$ be a section. Let $\psi_s \in
\cA^{0,q}(\cX_s,\cE_s)$ be the harmonic representatives of the cohomology
classes $u\vert_{\cX_s}\in H^q(\cX_s,\cE_s)$.

Then locally with respect to $S$ there exists a $\dbar_\cX$-closed form
$\psi\in \cA^{0,q}(\cX,\cE)$, which represents $u$, and whose
restrictions to the fibers $\cX_s$ equal $\psi_s$.
\end{lemma}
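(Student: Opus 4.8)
\textbf{Proof plan for Lemma~\ref{L:representative}.}

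The plan is to produce the global $\dbar_\cX$-closed form $\psi$ by perturbing an arbitrary local $\dbar_\cX$-closed representative of the cohomology class so that its fiberwise restrictions become harmonic, using ellipticity of the family of fiberwise $\dbar$-Laplacians. First I would fix a Stein open $V\subset S$ over which $u$ is represented by some $\psi'\in\cA^{0,q}(f^{-1}(V),\cE)$ with $\dbar_\cX\psi'=0$; such a $\psi'$ exists because $R^qf_*\cE(V)\cong H^q(f^{-1}(V),\cE|_{f^{-1}(V)})$ and the latter is computed by the Dolbeault complex on the Stein set $f^{-1}(V)$. Restricting to a fiber, $\psi'|_{\cX_s}$ is $\dbar$-closed and represents $u|_{\cX_s}$, but is in general not harmonic; its harmonic part is $\psi_s = \psi'|_{\cX_s} - \dbar_s\,\dbar_s^*\,G_{\dbar,s}(\psi'|_{\cX_s})$, where $G_{\dbar,s}$ is the fiberwise Green's operator and $\dbar_s$ the fiberwise $\dbar$-operator (here $\dbar_s^* G_{\dbar,s}\psi'|_{\cX_s}$ is well-defined and smooth in $s$ because $\psi'|_{\cX_s}$ is $\dbar_s$-closed, so only the $\dbar_s\dbar_s^*$-part of the Hodge decomposition survives). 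The dimension of $H^q(\cX_s,\cE_s)$ being constant is exactly what guarantees that $G_{\dbar,s}$, and hence $\dbar_s^*G_{\dbar,s}$, depends smoothly (indeed real-analytically) on $s$, by Kodaira–Spencer theory for elliptic families.

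Next I would set
$$
\psi := \psi' - \dbar_\cX\bigl(\widetilde{\dbar^* G}\,\psi'\bigr),
$$
where $\widetilde{\dbar^* G}$ is a smooth family of operators on $f^{-1}(V)$ whose restriction to each fiber is $\dbar_s^*G_{\dbar,s}$. Concretely, one writes $\psi'$ in a local product trivialization, applies the fiberwise operator $\dbar_s^* G_{\dbar,s}$ pointwise in $s$ to the restriction, and checks that the result is a smooth $(0,q-1)$-form $\beta$ on $f^{-1}(V)$ with $\beta|_{\cX_s}=\dbar_s^*G_{\dbar,s}(\psi'|_{\cX_s})$; smoothness in $s$ follows from the smooth dependence of the Green's operator. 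Then $\psi=\psi'-\dbar_\cX\beta$ is automatically $\dbar_\cX$-closed (since $\dbar_\cX\psi'=0$ and $\dbar_\cX^2=0$) and represents the same class $u$. It remains to verify that $\psi|_{\cX_s}=\psi_s$: restricting the identity $\psi=\psi'-\dbar_\cX\beta$ to $\cX_s$ gives $\psi|_{\cX_s}=\psi'|_{\cX_s}-\dbar_s\beta|_{\cX_s}+(\text{terms with }ds^i)$; the $ds^i$-free part is precisely $\psi'|_{\cX_s}-\dbar_s\dbar_s^*G_{\dbar,s}(\psi'|_{\cX_s})=\psi_s$, and since $\psi$ has no $ds^i$-components after one checks the construction respects the relative structure — or more simply, since the restriction of a $(0,q)$-form to a fiber only sees the relative part — we get $\psi|_{\cX_s}=\psi_s$.

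The main obstacle is the smooth (or real-analytic) dependence of the fiberwise Green's operator $G_{\dbar,s}$ on the parameter $s$, together with the bookkeeping needed to promote the fiberwise-defined correction term $\dbar_s^*G_{\dbar,s}(\psi'|_{\cX_s})$ to a genuine smooth form $\beta$ on the total space $f^{-1}(V)$ rather than merely a family of forms on fibers. This is where the constant-dimension hypothesis on $H^q(\cX_s,\cE_s)$ enters essentially: it rules out jumps in the kernel of $\Box_{\dbar,s}$ and hence ensures $G_{\dbar,s}$ varies continuously differentiably, by the standard spectral/elliptic-family argument (cf.\ Kodaira–Spencer, or Kodaira's book on complex manifolds). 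Since this is the content cited from \cite{Schumacher2012}, I would invoke it there rather than reprove it; the rest is the routine verification sketched above.
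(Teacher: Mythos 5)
Your proposal is correct and follows essentially the same route as the paper: both correct a global $\dbar_\cX$-closed representative of $u$ by a $\dbar_\cX$-exact term whose fiberwise restriction realizes the harmonic projection, the only difference being that you write the fiberwise correction explicitly as $\dbar_s^*G_{\dbar,s}(\psi'\vert_{\cX_s})$ (with smooth dependence on $s$ from the constant-dimension hypothesis), while the paper simply takes a relative $(0,q-1)$-form solving $\psi_{\cX/S}=\Phi_{\cX/S}+\ol\pt_{\cX/S}\chi_{\cX/S}$ and extends it to the total space by a partition of unity — your gluing via local product trivializations plays the same role. One cosmetic slip: $f^{-1}(V)$ is not Stein (it contains compact fibers); what you need is only Steinness of $V$ for the identification $R^qf_*\cE(V)\cong H^q(f^{-1}(V),\cE)$ together with the Dolbeault isomorphism, which holds on any complex manifold.
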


%

\begin{proof}
For the sake of completeness we give the simple argument. The harmonic
representatives $H(\psi_s)$ define a relative differential form, which we denote by
$\psi_{\cX/S}$. Let $\Phi \in \cA^{0,q}(\cX,\cE)$ represent $u$.
Denote by $\Phi_{\cX/S}$ the induced relative form. Then there exists a
relative $(0,q-1)$-form $\chi_{\cX/S}$ on $\cX$, whose exterior derivative
in fiber direction $\ol\pt_{\cX/S}(\chi_{\cX/S})$ satisfies
$$
\psi_{\cX/S}= \Phi_{\cX/S} +\ol\pt_{\cX/S}(\chi_{\cX/S}).
$$
Let $\{U_i\}$ be a covering of $\cX$, which possesses a partition of unity
$\{\rho_i\}$ such that all $\chi_{\cX/S}|U_i$ can be extended to
$(0,q-1)$-forms $\chi_i$ on $U_i$. Then with $\chi=\sum\rho_i\chi_i$ we
set $\psi =\Phi + \ol\pt \chi$.
\end{proof}

We are concerned with higher direct images $R^qf_*\Omega^p_{\cX/S}(\cE)$.
As we mentioned above, a local holomorphic section is of $R^qf_*\Omega^p_{\cX/S}(\cE)$ is a $\dbar$-closed $(0,q)$-form of values in $\Omega^p_{\cX/S}(\cE)$.
Thanks to Lemma~\ref{L:representative}, locally with respect to $S$ there exists a $\dbar_\cX$-closed $\psi\in\cA^{0,q}(\cX,\Omega^p_{\cX/S}(\cE))$ such that $\psi\vert_{\cX_s}$ is harmonic on each fiber $\cX_s$.

We emphasize here that in general the sheaf $\Omega^p_{\cX/S}$ cannot be replaced by $\Omega^p_{\cX}$ unless we consider (relative) {\em canonical} bundles. We have
$$
\Omega^{n+k}_{\cX}\simeq \Omega^{n}_{\cX/S} \otimes_{\cO_\cX} f^*(\Omega^{k}_S),\quad  i.e. \quad \cK_\cX\simeq \cK_{\cK/S}\otimes_{\cO_\cX} f^*\cK_S.
$$

Another case is $\dim S=1$. Let $s$ be a local coordinate. Then, again locally with respect to $S$, for $p>0$ there exists a sequence
$$
0 \to \Omega^{p}_{\cX/S} \to \Omega^{p+1}_\cX \to \Omega^{p+1}_{\cX/S} \to 0
$$
where the injection is defined by the wedge product with $ds$.
\medskip

In this sense the fact that taking fiber integrals of (global)  differential forms commutes with applying exterior derivatives cannot always be used, and taking Lie derivatives is necessary.

\section{Deformation of pairs $(Y,F)$}\label{S:pairs}
As is generally known the existence theorem for a semi-universal deformation of objects like holomorphic vector bundles over compact manifolds (and pairs of such) can be reduced to the existence of a semi-universal deformation of compact complex analytic spaces that need not be reduced, including the theory of infinitesimal deformations. Explicitly, infinitesimal deformations and obstructions for pairs consisting of a manifold and a holomorphic vector bundle had been studied by Griffiths in \cite{Griffiths1969}. We summarize some of the facts, and apply these to our situation. For reasons of notation, in this section we will write $(Y,F)$ for such a pair.

The Atiyah sequence of a vector bundle $F$ on $Y$ is a short exact sequence of holomorphic vector bundles. It consists of the spaces of infinitesimal automorphism of $Y$ and $F$, and the Atiyah bundle $\Sigma_Y(F)$ is the vector bundle of infinitesimal automorphisms of the pair $(Y,F)$:
\begin{equation}\label{eq:atiyah}
\xymatrix{
  0 \ar[r] & End(F)  \ar[r]_i & \Sigma_Y(F) \ar@/_1pc/[l]_{\sigma_Y}  \ar[r]_\rho & \cT_Y \ar[r] \ar@/_1pc/[l]_{\tau_Y} & 0 \, .
}
\end{equation}

\begin{proposition}\label{pr:ks}
	The groups $H^j(Y,\Sigma_Y(F))$ can be identified with the groups of isomorphism classes of infinitesimal automorphisms and deformations resp.\ of the pair $(Y,F)$ for $j=0,1$ resp.
\end{proposition}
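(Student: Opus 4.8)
The plan is to identify $H^j(Y,\Sigma_Y(F))$ with the appropriate deformation-theoretic groups by comparing \v{C}ech cohomology classes of the Atiyah bundle with cocycle descriptions of infinitesimal automorphisms and deformations of the pair $(Y,F)$. First I would fix a sufficiently fine Stein open cover $\{U_a\}$ of $Y$ over which $F$ is trivialized by holomorphic frames, with transition matrices $g_{ab}\in GL_r(\mathcal O(U_{ab}))$. An infinitesimal deformation of the pair $(Y,F)$ over $\mathrm{Spec}\,\mathbb C[\varepsilon]/(\varepsilon^2)$ is given by deforming simultaneously the gluing of the base coordinate charts and the gluing of the bundle, i.e.\ by pairs $(v_{ab},M_{ab})$ where $v_{ab}$ is a holomorphic vector field on $U_{ab}$ (the first-order change of the coordinate change) and $M_{ab}$ is an $\mathrm{End}(F)$-valued holomorphic function on $U_{ab}$ (the first-order correction to $g_{ab}$, after trivializing $\mathrm{End}(F)$), subject to the cocycle condition on triple overlaps that precisely expresses that $(v_{ab},M_{ab})$ is a \v{C}ech $1$-cocycle with values in $\Sigma_Y(F)$: the $\cT_Y$-component is the Kodaira-Spencer cocycle of the underlying family, and the $\mathrm{End}(F)$-component records the deformation of $F$ relative to that of $Y$. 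Coboundaries correspond exactly to deformations that are trivial, i.e.\ obtained by infinitesimal changes of the local coordinates and local frames, so $H^1(Y,\Sigma_Y(F))$ is the set of isomorphism classes of infinitesimal deformations of $(Y,F)$.

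For $j=0$, a global section of $\Sigma_Y(F)$ is, by \eqref{eq:atiyah}, a pair consisting of a holomorphic vector field $\xi$ on $Y$ together with a first-order lift of its flow to $F$; concretely it is a differential operator on sections of $F$ with scalar symbol $\xi$ that is $\mathcal O_Y$-linear up to $\xi$, which is by definition an infinitesimal automorphism of the pair $(Y,F)$. Unwinding the local description: on each $U_a$ such a section is a pair $(\xi,\phi_a)$ with $\phi_a\in\mathrm{End}(F)(U_a)$ and the compatibility $\phi_a-\phi_b=(\text{action of }\xi\text{ on }g_{ab})\cdot g_{ab}^{-1}$ on overlaps, which is the linearization of the condition that the pair $(\text{flow of }\xi,\ \text{gauge transformation by }I+\varepsilon\phi)$ preserves the transition data. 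This is exactly the Lie algebra of the automorphism group of $(Y,F)$, giving the $j=0$ statement.

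The key steps in order are: (i) write down the Atiyah bundle $\Sigma_Y(F)$ explicitly in the chosen charts, so that a \v{C}ech $1$-cochain is a pair $(v_{ab},M_{ab})$ and the cocycle/coboundary relations become explicit in terms of $g_{ab}$ and the coordinate changes; (ii) match these against the standard cocycle description of infinitesimal deformations of $(Y,F)$ as in Griffiths \cite{Griffiths1969}, checking that the twist in the extension \eqref{eq:atiyah} is the one that makes the two cocycle conditions agree; (iii) verify functoriality, i.e.\ that the connecting map $H^1(Y,\Sigma_Y(F))\to H^1(Y,\cT_Y)$ induced by $\rho$ sends the class of a deformation of $(Y,F)$ to the Kodaira-Spencer class of the underlying deformation of $Y$, and that $H^1(Y,\mathrm{End}(F))\to H^1(Y,\Sigma_Y(F))$ from $i$ corresponds to deformations of $F$ with $Y$ held fixed; (iv) repeat the bookkeeping in degree $0$.

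The main obstacle I expect is purely notational rather than conceptual: keeping careful track of how the noncommutativity of the transition matrices $g_{ab}$ and the nonlinearity of coordinate changes interact on triple overlaps, so that the \v{C}ech cocycle identity for $\Sigma_Y(F)$ really reproduces the deformation cocycle identity term by term (and similarly that coboundaries match). Once the local dictionary between sections/cocycles of the Atiyah bundle and automorphisms/deformations of the pair is set up correctly, the identifications in both degrees follow formally, and no hard analysis is involved — this is the reason the statement is recorded as a proposition with only a sketch, the content being classical and due to Griffiths.
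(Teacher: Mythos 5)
Your proposal is correct and takes essentially the same route as the paper, which records the statement as a summary of Griffiths' classical results and then identifies infinitesimal automorphisms (and, implicitly, deformations) of $(Y,F)$ with sections (resp.\ \v{C}ech $1$-cocycles) of $\Sigma_Y(F)$ by trivializing $F$ over a cover, linearizing the compatibility with the transition functions $\gamma_{ij}$ via $\Phi=\mathrm{id}+\epsilon\phi$, $\Psi_i=\mathrm{id}+\epsilon\psi_i$, $\epsilon^2=0$, and matching the resulting transition law \eqref{eq:atiyahtrans} with that of the Atiyah extension \eqref{eq:atiyah}. Apart from minor bookkeeping (your degree-$0$ compatibility should be read with the $\mathrm{End}(F)$-parts conjugated by $g_{ab}$ when expressed as matrices, exactly as in \eqref{eq:atiyahtrans}), there is no gap.
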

We will characterize infinitesimal automorphisms of the pair $(Y,F)$ explicitly. First automorphisms near the identity are characterized (with values in given open sets, but defined on certain smaller domains). Such an automorphism over $V$  consist of a pair $(\Phi,\Psi)$, where $\Phi$ is an automorphism of $Y$, and $\Psi : F\to \Phi^*F$ an automorphism of holomorphic vector bundles, with an obvious notion for $Y$ being replaced by an open subset.

Let $\{U_j\}$ be an open covering of $Y$ together with holomorphic trivializations of  $F$ with values in $U_j\times \C^r$ and transition functions $\gamma_{jk}$. Let $\Phi\in Aut(Y)$ be close to the identity and $\Psi$ as above. In terms of the trivializations it is defined by a set of automorphisms $\Psi_j$ of the restriction of $F$ to $U_j$. The compatibility of locally given automorphisms $(\Phi,\Psi_j)$ with the transition functions means
$$
(\Phi, \gamma_{ij}\circ \Psi_j )= (\Phi, \Psi_i \circ \gamma_{ij}).
$$
Now we set formally $\Phi= id + \epsilon\phi$ and $\Psi_i= id + \epsilon \psi_i$ with $\epsilon^2=0$ and $\psi_j$ and $\phi$ contained in the respective Lie algebras. Then
\begin{equation}\label{eq:atiyahtrans}
  \psi_j= \gamma^{-1}_{ij} \psi_i \gamma_{ij} + \gamma^{-1}_{ij} d\gamma_{ij} \cup \phi
\end{equation}
where the cup product stands for the application of a differential form to a vector field.

One observes that given two sets $e_j$, and $e_j=e_i\gamma_{ij}$ of holomorphic frames the transition law for a connection form $\theta$ is analogous. We have
\begin{gather*}
  e_j \theta_j =D(e_j)= D(e_i\gamma_{ij}) = D(e_i) \gamma_{ij} + e_i d\gamma_{ij} = e_i \theta_i\gamma_{ij} + e_i d\gamma_{ij} = e_j\gamma^{-1}_{ij} \theta_i \gamma_{ij} + e_j\gamma^{-1}_{ij} d\gamma_{ij}
\end{gather*}
i.e.
\begin{equation}\label{eq:conn}
  \theta_j = \gamma^{-1}_{ij} \theta_i \gamma_{ij} + \gamma^{-1}_{ij} d\gamma_{ij}
\end{equation}
This equation applies to flat, holomorphic and differentiable connections.
\begin{lemma}\label{le:atiyah}
  A differentiable splitting $\tau_Y$ of \eqref{eq:atiyah} is defined by applying the connection to a differentiable vector field. In particular, for
  $$
  w = w^\alpha(\pt/\pt y^\alpha)
  $$
  the section $\tau_Y$ is given by
  $$
  \tau_Y(w)=(w^\alpha \theta_{F\alpha} ,w),
  $$
  where $\theta_F$ is the connection of the hermitian bundle $F$ on $Y$.

	The section $\sigma_Y:\Sigma_Y(F) \to End(F)$ of $i$ is given by
	$$
	id_{\Sigma_Y(F)} - \tau_Y\circ\rho,
	$$ which has values in $i(End(F))$.

	Altogether there is a diffeomorphism (which in general is not compatible with taking exterior derivatives of differential forms)
	$$
	\Sigma_Y(F)\simeq End(F) \oplus \cT_Y.
	$$
\end{lemma}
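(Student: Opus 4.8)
The plan is to unwind the three claims of Lemma~\ref{le:atiyah} in order, using only the transition law \eqref{eq:conn} and the defining sequence \eqref{eq:atiyah}. First I would verify that $\tau_Y(w)=(w^\alpha\theta_{F\alpha},w)$ is a well-defined global section of $\Sigma_Y(F)\to\cT_Y$; this is where the transition law enters. On overlaps $U_i\cap U_j$ the vector field $w$ has the same components in the sense that $w$ is intrinsic, but the $End(F)$-component transforms by the adjoint action of $\gamma_{ij}$ together with the inhomogeneous term $\gamma_{ij}^{-1}d\gamma_{ij}\cup w$; comparing with \eqref{eq:atiyahtrans} (the cocycle law for a local datum $(\phi,\psi_j)$ to define a section of $\Sigma_Y(F)$), one sees that $(w^\alpha\theta_{F\alpha}, w)$ obeys exactly \eqref{eq:atiyahtrans} precisely because $\theta_F$ obeys \eqref{eq:conn}. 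Hence $\tau_Y$ is globally defined, and by construction $\rho\circ\tau_Y=\id_{\cT_Y}$, so it is a splitting of $\rho$.

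Next I would treat $\sigma_Y$. Having a splitting $\tau_Y$ of $\rho$ in \eqref{eq:atiyah}, the endomorphism $\id_{\Sigma_Y(F)}-\tau_Y\circ\rho$ of $\Sigma_Y(F)$ kills the image of $\tau_Y$ and acts as the identity on $\ker\rho=i(End(F))$; since $\rho\circ(\id-\tau_Y\circ\rho)=\rho-\rho=0$, its image lies in $\ker\rho=i(End(F))$. Composing with $i^{-1}$ on that image gives the claimed retraction $\sigma_Y:\Sigma_Y(F)\to End(F)$ with $\sigma_Y\circ i=\id_{End(F)}$. Finally, a short exact sequence of $C^\infty$ vector bundles equipped with a splitting yields a $C^\infty$ isomorphism $\Sigma_Y(F)\simeq i(End(F))\oplus\im(\tau_Y)\simeq End(F)\oplus\cT_Y$, namely $x\mapsto(\sigma_Y(x),\rho(x))$ with inverse $(e,w)\mapsto i(e)+\tau_Y(w)$; one remarks that this splitting is merely differentiable and not holomorphic, so it need not commute with $\dbar$, and a fortiori not with $d$ — this is the parenthetical caveat in the statement.

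The only genuinely substantive point — and the step I expect to be the main obstacle — is checking that the local data $w\mapsto w^\alpha\theta_{F\alpha}$ satisfy the cocycle condition \eqref{eq:atiyahtrans}; this is a direct consequence of \eqref{eq:conn}, but one must be careful about how the cup product $\gamma_{ij}^{-1}d\gamma_{ij}\cup w$ interacts with the change of frame and the fact that $\theta_F$ is a connection \emph{form} (values in $End(F)$, not a function), so the adjoint term $\gamma_{ij}^{-1}\theta_i\gamma_{ij}$ must be matched with the first summand of \eqref{eq:atiyahtrans}. Everything else is formal diagram-chasing in the category of $C^\infty$ vector bundles. I would phrase the final isomorphism statement as a corollary of the abstract fact that a split short exact sequence of vector bundles is canonically isomorphic to the direct sum of its end terms, and explicitly record the maps $(\sigma_Y,\rho)$ and $i\oplus\tau_Y$ as mutually inverse, so that later sections can use coordinates $(\eta,A)$ on $\Sigma_Y(F)$ without further comment.
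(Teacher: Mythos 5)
Your proposal is correct and follows essentially the same route as the paper, whose entire proof is the remark that the lemma ``follows from \eqref{eq:atiyahtrans} together with \eqref{eq:conn}'': your verification that contracting \eqref{eq:conn} with $w$ reproduces exactly the transition law \eqref{eq:atiyahtrans} is precisely that point, spelled out. The remaining steps (the retraction $\sigma_Y=\id-\tau_Y\circ\rho$ landing in $\ker\rho=i(End(F))$, and the direct-sum decomposition from a split exact sequence of $C^\infty$ bundles) are the same formal diagram chase the paper leaves implicit, so no further comment is needed.
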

Note that the above section $\tau_Y$ is subject to the transition equation \eqref{eq:conn}.

The purpose of the lemma is to allow for a natural $L^2$-structure on $\Sigma_Y(F)$.
The {\em proof} follows from \eqref{eq:atiyahtrans} together with \eqref{eq:conn}. \qed

\medskip

Similar considerations hold for the relative Atiyah bundle (see below).

An immediate consequence is the following:
\begin{lemma}\label{le:appatiyah}
  \begin{itemize}
	\item[(i)] The forms $(\gamma^{-1}_{ij} d\gamma_{ij})$ on $\{U_{ij}\}$ define a $1$-cocycle, whose class, the Atiyah class $A(F)$ in $H^1(Y, \Omega^1_Y\otimes End(F))=Ext^1_{\cO_Y}(Y;\cT_Y, End(F))$ represents the equivalence class of the extension \eqref{eq:atiyah}.
	\item[(ii)]  Holomorphic, and  differentiable sections of \eqref{eq:atiyah} correspond to holomorphic, and  differentiable connections of $F$ resp.
	\item[(iii)] In terms of Dolbeault cohomology the Atiyah class is represented by the curvature form $\Theta_{\alpha\ol\beta}(F)dz^\alpha\we dz^{\ol\beta}$ of any hermitian metric on $F$.
	\item[(iv)] The connecting morphisms $\delta^j$ of the long exact cohomology sequence
\begin{gather*}
  0 \to H^0(Y,End(F)) \to H^0(Y, \Sigma_Y(F))\to H^0(Y,\cT_Y) \stackrel{\delta^0}{\longrightarrow} \hspace{5cm} \strut \\
  \to H^1(Y,End(F)) \to H^1(Y, \Sigma_Y(F))\to H^1(Y,\cT_Y) \stackrel{\delta^1}{\longrightarrow} H^2(Y,End(F)) \to \ldots
\end{gather*}
are given by the contraction with $\Theta(F)$:  For a holomorphic vector field $C=C^\alpha\pt_\alpha$ the image $\delta^0(C)$ is represented by $C^\alpha\Theta_{\alpha\ol\beta}dz^{\ol\beta}$ and for a \ks\ form $A=A\ind{}{\alpha}{\ol\beta}\pt_\alpha dz^{\ol\beta}$ the image equals $\delta^1(A)= A\ind{}{\alpha}{\ol\beta}\Theta_{\alpha\ol\delta} dz^{\ol\beta}\we dz^{\ol\delta}$. \par
	\item[(v)] The cohomology class of the image $\delta^0(C)= C \cup \Theta$ is the obstruction against extending $C$ as an infinitesimal automorphism of $(Y,F)$, and the class of $\delta^1(A)= A \cup \Theta$ is the obstruction against extending $A$ as an infinitesimal deformation of the pair $(Y,F)$.
  \end{itemize}
\end{lemma}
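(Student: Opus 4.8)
The plan is to derive all five assertions from the transition formulas \eqref{eq:atiyahtrans}, \eqref{eq:conn} and from Lemma~\ref{le:atiyah}; only (iv) requires an actual computation, and it is the source of whatever difficulty there is. Statement (ii) is a reformulation of Lemma~\ref{le:atiyah}: a splitting of \eqref{eq:atiyah} sends a vector field $w$ to a pair $(w\cup\theta,w)$ with $\theta$ an $\mathrm{End}(F)$-valued $1$-form, and compatibility with the trivializations forces $\theta$ to transform by \eqref{eq:conn}, which is precisely the transition rule for a connection form; holomorphy of the splitting corresponds to holomorphy of the connection. For (i), since $\cT_Y$ is locally free one has $\mathrm{Ext}^1_{\cO_Y}(\cT_Y,\mathrm{End}(F))\cong H^1(Y,\Omega^1_Y\otimes\mathrm{End}(F))$; taking in each chart $U_i$ the trivial connection $d$ of the $i$-th trivialization as a local holomorphic splitting, \eqref{eq:conn} shows that the pairwise differences of these splittings are the $\mathrm{End}(F)$-valued $1$-forms $\gamma_{ij}^{-1}d\gamma_{ij}$, which therefore form the \v{C}ech $1$-cocycle representing the extension class.

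For (iii) I would pass to a Dolbeault model of the same class using the $C^\infty$ splitting $\tau_Y$ of Lemma~\ref{le:atiyah} built from the Chern connection, $\tau_Y(w)=(w\cup\theta_F,w)$ with $\theta_F=\pt h\cdot h^{-1}$. Since $\rho\circ\tau_Y=\id$ is holomorphic, $\dbar\tau_Y$ is a $\dbar$-closed $(0,1)$-form with values in $\Omega^1_Y\otimes\mathrm{End}(F)$ whose class is the Atiyah class, and $\dbar\tau_Y=\dbar\theta_F=\Theta_h(F)$, i.e.\ it is represented by $\Theta_{\alpha\ol\beta}(F)\,dz^\alpha\we dz^{\ol\beta}$. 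For (iv) the connecting morphisms $\delta^j$ of \eqref{eq:atiyah} are, up to sign, cup product with this extension class. Explicitly: for a holomorphic vector field $C$ lift to the $C^\infty$ section $\tau_Y(C)$ of $\Sigma_Y(F)$; because $\rho$ commutes with $\dbar$ and $\dbar C=0$, the form $\dbar\tau_Y(C)$ lies in $\cA^{0,1}(\mathrm{End}(F))$, represents $\delta^0(C)$, and equals (up to sign) $C\cup\dbar\theta_F=C^\alpha\Theta_{\alpha\ol\beta}\,dz^{\ol\beta}$. For a \ks\ form $A$ lift to $\tau_Y(A)\in\cA^{0,1}(\Sigma_Y(F))$; again $\rho(\dbar\tau_Y(A))=\dbar A=0$, and using $\dbar A=0$ in a local holomorphic frame of $\cT_Y$ together with $\dbar\theta_F=\Theta_h(F)$ one finds $\dbar\tau_Y(A)=A\cup\Theta=A\ind{}{\alpha}{\ol\beta}\Theta_{\alpha\ol\delta}\,dz^{\ol\beta}\we dz^{\ol\delta}\in\cA^{0,2}(\mathrm{End}(F))$, representing $\delta^1(A)$.

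Finally (v) is immediate from exactness of the long exact sequence together with Proposition~\ref{pr:ks}: there $H^0(Y,\Sigma_Y(F))$ and $H^1(Y,\Sigma_Y(F))$ are the infinitesimal automorphisms and deformations of the pair $(Y,F)$, and a holomorphic vector field $C$ (resp.\ a \ks\ class $A$) lifts to such an object exactly when it lies in the image of the induced map $\rho_*$ on cohomology, i.e.\ when $\delta^0(C)=0$ (resp.\ $\delta^1(A)=0$); hence $C\cup\Theta$ and $A\cup\Theta$ are the asserted obstructions. The one real obstacle is bookkeeping in (iii)–(iv): one must fix the sign and normalization conventions so that $\dbar\tau_Y$ is literally $\Theta_h(F)$ and so that the contractions arising in $\dbar\tau_Y(C)$ and $\dbar\tau_Y(A)$ are exactly the cup product of Definition~\ref{de:cup}, and one must verify that the \v{C}ech description in (i) and the Dolbeault description in (iii)–(iv) of the connecting homomorphism agree — the standard, if slightly tedious, comparison of the two pictures of $\delta^j$.
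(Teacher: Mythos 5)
Your proposal is correct and follows essentially the same route as the paper: the transition laws \eqref{eq:atiyahtrans} and \eqref{eq:conn} give the \v{C}ech cocycle $\gamma_{ij}^{-1}d\gamma_{ij}$ and the identification of sections with connections, the smooth splitting $\tau_Y$ from the Chern connection yields the Dolbeault representative $\Theta_h(F)$, and the connecting morphisms in (iv) are computed exactly as in the paper by applying $\tau_Y$ and then $\ol\pt$, with (v) read off from exactness and Proposition~\ref{pr:ks}. The only difference is presentational (your (i) exhibits the cocycle as differences of local holomorphic splittings, and you make (v) explicit where the paper leaves it implicit), not a different method.
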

\begin{proof}
  {(i):} Equation \eqref{eq:atiyahtrans} implies the closedness and the property claimed. (ii): The claim follows from \eqref{eq:atiyahtrans} together with \eqref{eq:conn}. Item (iii) follows from (ii), and (iv) follows from the preceding claims, namely: Let $C$ and $A$ be given as above. Then the connecting homomorphisms are defined by applying the splitting $\tau_Y$ first. The results $C^\alpha \theta_\alpha$ and $A^\alpha_{\phantom{\alpha}\ol\beta} \theta_\alpha dz^{\ol\beta}$ resp.\ have to be interpreted as $\Sigma_Y(F)$-valued with transition functions induced by \eqref{eq:conn}. The respective images under the exterior derivative $\ol\pt$ are ordinary differential sections and forms resp.\ and can be interpreted as $End(F)$ valued. The results are those claimed in (iv).
\end{proof}

Now we return to the situation, where $\cE \to \cX \stackrel{f}{\longrightarrow} S $ is a holomorphic family of holomorphic vector bundles and compact complex manifolds equipped with a family $h$ of hermitian metrics and a \ka\ form $\omega_\cX$ resp.\ with induced relative \ka\ form $\omega_{\cX/S}$. A distinguished fiber $\cX_{s_0}$, $s_0\in S$ will also simply be denoted by $X$, and $E:= \cE\vert_X$.

Our aim is to characterize the \ks\ mapping
\begin{equation}\label{eq:kssig}
\rho : T_{S,s_0} \to H^1(X,\Sigma_X(E))
\end{equation}
in terms of certain $(0,1)$-forms, which are related to the {\em differentiable splitting} of the relative Atiyah sequence of rather the Atiyah sequence on the distinguished fiber $X$.

Consider the following diagram:
\begin{equation*}
\xymatrix{
		 &  &  0 \ar[d] & \ar[d] 0    &
		 \\
		0 \ar[r] & End(\cE)\ar@{=}[d] \ar[r]
		& \Sigma_{\cX/S}(\cE) \ar[r]\ar[d]
		& \cT_{\cX/S}  \ar[r]\ar[d] & 0
		\\
		0 \ar[r] & End(\cE)  \ar[r]_i
		& \Sigma_\cX(\cE) \ar@/_1pc/[l]_\sigma \ar[d]  \ar[r]_\rho
		& \cT_\cX \ar[r] \ar@/_1pc/[l]_\tau \ar[d] & 0
		\\
		& & f^*\cT_S \ar@{=}[r]\ar[d] &f^*\cT_S\ar[d] &
		\\
		& & 0&0 &
}
\end{equation*}
(Here the relative space $\Sigma_{\cX/S}$ is defined as the kernel of the natural map $\Sigma_\cX \to f^*T_S$.)
Let again (by abuse of notation) $s$ be a local holomorphic coordinate on $S$ and $\pt_s$ a  coordinate vector field near the origin.

We consider the {\em last column}. The last column is being used to describe the \ks\ map in terms of Dolbeault cohomology by means of differentiable  lifts of tangent vectors from $S$ to $\cX$. As above $X$ denotes the distinguished fiber of $\cX\to S$.

\begin{lemma}\label{le:athorli}
  The \ks\ map \eqref{eq:kssig} for deformations of the pair $(X,E)$ in terms of the \ka\ and hermitian metrics resp.\ for the given family can be characterized as follows:

  Let $\pt_s$ be a tangent vector of the base $S$ at the distinguished point, and $v=\pt_s + a\ind{s}{\alpha}{} \pt_\alpha$ its horizontal lift. Then $\tau(v)$ is a differentiable lift of the tangent vector to the Atiyah bundle, and
  $$
  \ol\pt(\tau(v))\vert_X
  $$
  is a $\ol\pt$-closed $\Sigma_X(E)$-valued form that represents the \ks\ class $\rho(\pt/\pt s)$.
\end{lemma}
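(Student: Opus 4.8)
The plan is to recognize $\ol\pt(\tau(v))|_X$ as a Dolbeault representative of the image of $\pt_s$ under the connecting homomorphism of the last‑column sequence $0\to\Sigma_{\cX/S}(\cE)\to\Sigma_\cX(\cE)\to f^{*}\cT_S\to0$, and then to identify that image with the \ks\ class by invoking Proposition~\ref{pr:ks}. First I would extend the tangent vector $\pt_s$ to a holomorphic vector field on a neighbourhood $V$ of the distinguished point $s_0$ (for instance with constant coefficients in the chosen coordinates; the class produced on $X$ will not depend on this choice) and set $\cX_V=f^{-1}(V)$. By Lemma~\ref{le:atiyah} the differentiable splitting $\tau=\tau_\cX$ of the middle row is $w\mapsto(w^{A}\theta_{A},w)$ in a local holomorphic frame, and $\rho\circ\tau=\id$. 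Since the surjection $\Sigma_\cX(\cE)\to f^{*}\cT_S$ of the last column is $\rho$ followed by the natural surjection $\cT_\cX\to f^{*}\cT_S$, the image of $\tau(v)$ under it coincides with the image of $\rho(\tau(v))=v$, which is the pullback $f^{*}\pt_s$ because $v$ is the horizontal lift of $\pt_s$. Thus $\tau(v)$ is a differentiable ($\cinf$) lift of the holomorphic section $f^{*}\pt_s$, which is the first assertion of the lemma. Applying $\ol\pt$ and mapping to $f^{*}\cT_S$ gives $\ol\pt(f^{*}\pt_s)=0$, so $\ol\pt(\tau(v))$ is a $\ol\pt$-closed $(0,1)$-form with values in the holomorphic subbundle $\Sigma_{\cX/S}(\cE)$; by the standard Dolbeault description of the connecting homomorphism (lift, then apply $\ol\pt$) its class in $H^{1}(\cX_V,\Sigma_{\cX/S}(\cE))$ is exactly the connecting image of $f^{*}\pt_s$.

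Next I would restrict to the fiber $X$. The top row $0\to End(\cE)\to\Sigma_{\cX/S}(\cE)\to\cT_{\cX/S}\to0$ is an exact sequence of holomorphic bundles, so it stays exact on restriction to $X$, where it reads $0\to End(E)\to\Sigma_{\cX/S}(\cE)|_X\to\cT_X\to0$ (using $\cT_{\cX/S}|_X=\cT_X$); its extension class is the restriction of the relative Atiyah class, which is the Atiyah class $A(E)$ of the pair $(X,E)$ (Lemma~\ref{le:appatiyah}), so $\Sigma_{\cX/S}(\cE)|_X$ is the Atiyah bundle $\Sigma_X(E)$ --- alternatively this can be read off directly from the transition law \eqref{eq:conn} at $s=s_0$. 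Since restriction of forms commutes with $\ol\pt$, $\ol\pt(\tau(v))|_X$ is a $\ol\pt$-closed $\Sigma_X(E)$-valued $(0,1)$-form, and $\tau(v)|_X$ is a $\cinf$ lift of the constant section $\pt_s$ of $(f^{*}\cT_S)|_X=T_{S,s_0}\otimes\cO_X$; hence its class in $H^{1}(X,\Sigma_X(E))$ is the image of $\pt_s$ under the connecting homomorphism of the restricted Atiyah sequence. The remaining point is that this image is $\rho(\pt_s)$. This is the Atiyah-bundle analogue of the classical fact that, for a family of compact complex manifolds, the \ks\ map is the fibrewise restriction of the connecting homomorphism of $0\to\cT_{\cX/S}\to\cT_\cX\to f^{*}\cT_S\to0$; with the Atiyah bundles of the pairs in place of the tangent sheaves it follows from Proposition~\ref{pr:ks} together with the transition-function description of infinitesimal deformations of pairs recorded in \eqref{eq:atiyahtrans} (cf.\ \cite{Griffiths1969}). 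I expect this identification to be the main obstacle: the diagram chase, the Dolbeault formula for the connecting map, and the restriction to $X$ are all formal, whereas here one genuinely has to appeal to the deformation theory of pairs rather than to homological algebra alone.

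Finally, as a by-product, writing $\tau(v)$ in a local holomorphic frame as $(v^{A}\theta_{A},v)$ and applying $\ol\pt$ slot-wise in that frame shows that $\ol\pt(\tau(v))|_X$ is the pair $\big(\ol\pt(v\cup\theta)|_X,\ \ol\pt v|_X\big)=(\zeta_{s},A_{s})$ described above; the Leibniz-type identity $\ol\pt(v\cup\theta)=(\ol\pt v)\cup\theta+v\cup\Theta$ then rewrites $\zeta_s$ in terms of $\eta_s$ and the term $A\ind{s}{\alpha}{\ol\beta}\theta_\alpha\,dz^{\ol\beta}$. I regard this last bookkeeping step as routine.
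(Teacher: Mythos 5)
Your argument is correct and follows the route the paper intends but leaves implicit: you use the last column of the diagram, the standard Dolbeault description of the connecting homomorphism (smooth lift $\tau(v)$ of $\pt_s$ followed by $\ol\pt$), the identification $\Sigma_{\cX/S}(\cE)\vert_X\simeq\Sigma_X(E)$, and the deformation theory of pairs (Proposition~\ref{pr:ks}, \cite{Griffiths1969}) for the final identification with the \ks\ class, with your closing bookkeeping reproducing Lemma~\ref{le:atiyah2}. There is no gap relative to the paper, which itself gives no separate proof of Lemma~\ref{le:athorli} and defers the same key identification to the cited theory of deformations of pairs.
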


In a more explicit way the result is given as follows:

\begin{lemma}\label{le:atiyah2}
  A differentiable splitting $\tau$ of \eqref{eq:atiyah} is defined by applying the connection to a differentiable vector field, in particular
  $$
  \tau(v) = \tau(\pt_s + a\ind{s}{\alpha}{}\pt_a):= (\theta_s + a\ind{s}{\alpha}{}\theta_\alpha, v)
  $$
  with $\rho(\tau (v))=v$. In particular
  \begin{eqnarray*}
  \ol\pt(\tau(v))|X & =& (\ol\pt(\theta\cup v) , \ol\pt v )|X\\
	& = &    (\ol\pt(\theta_s + a\ind{s}{\alpha}{} \theta_\alpha)|X, A_s)\\
	 &=& ((\Theta_{s \ol\beta} + a\ind{s}{\alpha}{} \Theta_{\alpha\ol\beta} + A\ind{s}{\alpha}{\ol\beta} \theta_\alpha)dz^{\ol\beta}  , A\ind{s}{\alpha}{\ol\beta}\pt_\alpha dz^{\ol\beta}).
  \end{eqnarray*}

  Note that the transition equations are induced by those of the connection forms in the sense of \eqref{eq:atiyahtrans}. In general the first components $v\cup \theta$ as well as its $\ol\pt$-derivatives are no $End(E)$-valued forms.
  \end{lemma}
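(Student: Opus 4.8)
The plan is to verify Lemma~\ref{le:atiyah2} by unwinding the definition of the differentiable splitting $\tau$ from Lemma~\ref{le:atiyah} together with the transition law \eqref{eq:conn} for connection forms, and then simply computing $\ol\pt(\tau(v))|_X$ term by term. First I would recall that, as in Lemma~\ref{le:athorli}, the horizontal lift of the coordinate vector field $\pt_s$ is $v=\pt_s+a\ind{s}{\alpha}{}\pt_\alpha$, where the coefficients $a\ind{s}{\alpha}{}$ are determined by the condition that $v$ be $\omega_\cX$-orthogonal to the fibers. The splitting $\tau$ of the Atiyah sequence \eqref{eq:atiyah} (in the relative/total-space version shown in the diagram) is, by Lemma~\ref{le:atiyah}, obtained by contracting the Chern connection form $\theta$ of $\cE$ with $v$: explicitly $\tau(v)=(\theta\cup v,v)=(\theta_s+a\ind{s}{\alpha}{}\theta_\alpha,\,v)$, where $\theta_s$ and $\theta_\alpha$ denote the $ds$- and $dz^\alpha$-components of $\theta$. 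This is an $\End(\cE)$-``valued'' object only up to the transition correction \eqref{eq:conn}, i.e.\ it is genuinely $\Sigma_\cX(\cE)$-valued; I would stress this point because it is exactly what makes the first components below fail to be $\End(E)$-valued.

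Next I would apply $\ol\pt$ to the pair. Since $\ol\pt$ acts diagonally on $\Sigma_\cX(\cE)\simeq\End(\cE)\oplus\cT_\cX$ only after restricting to a fiber — this is the content of the last paragraph of Lemma~\ref{le:appatiyah}(iv) and of Lemma~\ref{le:atiyah} — the second component gives $\ol\pt v|_X = A_s = A\ind{s}{\alpha}{\ol\beta}\pt_\alpha\,dz^{\ol\beta}$, the distinguished representative of the \ks\ class of $T_X$, as introduced in Section~2. For the first component I compute $\ol\pt(\theta\cup v)|_X=\ol\pt(\theta_s+a\ind{s}{\alpha}{}\theta_\alpha)|_X$. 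Writing $\theta=\theta_\gamma dz^\gamma+\theta_s\,ds$ one has $\ol\pt\theta=\Theta$ (the Chern curvature), so $\ol\pt\theta_s$ contributes $\Theta_{s\ol\beta}dz^{\ol\beta}$, $\ol\pt(a\ind{s}{\alpha}{}\theta_\alpha)$ contributes $a\ind{s}{\alpha}{}\Theta_{\alpha\ol\beta}dz^{\ol\beta}$ from differentiating $\theta_\alpha$ plus $(\pt_{\ol\beta}a\ind{s}{\alpha}{})\theta_\alpha dz^{\ol\beta}$ from differentiating the coefficient, and by definition of the distinguished representative $\pt_{\ol\beta}a\ind{s}{\alpha}{}=A\ind{s}{\alpha}{\ol\beta}$ (modulo the $ds$-components which vanish upon restriction to $X$). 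Collecting terms yields
\begin{equation*}
\ol\pt(\tau(v))|_X=\bigl((\Theta_{s\ol\beta}+a\ind{s}{\alpha}{}\Theta_{\alpha\ol\beta}+A\ind{s}{\alpha}{\ol\beta}\theta_\alpha)dz^{\ol\beta},\ A\ind{s}{\alpha}{\ol\beta}\pt_\alpha dz^{\ol\beta}\bigr),
\end{equation*}
which is the asserted formula, and the identification with $(\ol\pt(\theta\cup v),\ol\pt v)|_X$ is just the bookkeeping above.

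The one genuinely delicate point — and the place I would be most careful — is the claim $\rho(\tau(v))=v$ and, correspondingly, that $\ol\pt(\tau(v))|_X$ is $\ol\pt$-closed and represents the \ks\ class $\rho(\pt/\pt s)\in H^1(X,\Sigma_X(E))$ in Dolbeault cohomology. Here I would invoke Lemma~\ref{le:athorli} directly: $\tau$ is a right inverse of $\rho$ by construction in Lemma~\ref{le:atiyah}, so $\ol\pt(\tau(v))|_X$ lies in the relative Atiyah subbundle $\Sigma_X(E)$ and, being the $\ol\pt$ of a global differentiable lift, is automatically $\ol\pt$-closed and its class is independent of the chosen lift — hence equals the image of $\pt_s$ under the connecting map, which is the \ks\ class. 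The remaining subtlety, that the first component is not itself $\End(E)$-valued while the full pair is $\Sigma_X(E)$-valued, is not an obstacle but a feature forced by the transition law \eqref{eq:conn}, and I would simply remark on it as the lemma's last sentence already does. No estimate or hard analysis is involved; the whole proof is the transition-law computation plus a citation of Lemma~\ref{le:athorli}, so I expect it to be short.
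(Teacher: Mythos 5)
Your proposal is correct and is essentially the paper's (implicit) argument: the lemma is just the direct expansion $\ol\pt(\theta_s+a\ind{s}{\alpha}{}\theta_\alpha)\vert_X=(\Theta_{s\ol\beta}+a\ind{s}{\alpha}{}\Theta_{\alpha\ol\beta}+A\ind{s}{\alpha}{\ol\beta}\theta_\alpha)\,dz^{\ol\beta}$ together with $\ol\pt v\vert_X=A_s$, with the cohomological assertion supplied by Lemma~\ref{le:athorli}, exactly as you do. The only slight misstep is your justification for applying $\ol\pt$ componentwise to the pair: this rests not on the differentiable splitting of Lemma~\ref{le:atiyah} (which is not compatible with $\ol\pt$, even fiberwise), but on the holomorphicity of the Atiyah transition law \eqref{eq:atiyahtrans}, which is what the lemma's closing remark about the transition equations records.
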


The above description of $\ol\pt(\tau(v))\vert_X$ does not facilitate the notion of an $L^2$-norm as it stands, unless the differentiable decomposition of the Atiyah bundle from Lemma~\ref{le:atiyah} is applied. This is done at this point.

\begin{proposition}\label{P:atiyah}
  With respect to the differentiable trivialization of the Atiyah bundle the \ks\ class is represented by distinguished forms
  \begin{eqnarray*}
	(-\eta_s,A_s) &:=& (v\cup \Theta, A_s)\\
	&=&(\Theta_{s\ol\beta} + a\ind{s}{\alpha}{}\Theta_{\alpha\ol\beta}, A_s)\\
	 &\in& \cA^{0,1}(X,End(E))\oplus \cA^{0,1}(X,\cT_X).
  \end{eqnarray*}
\end{proposition}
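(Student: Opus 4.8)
The plan is to combine Lemma~\ref{le:athorli}, Lemma~\ref{le:atiyah2}, and the differentiable decomposition $\Sigma_X(E)\simeq End(E)\oplus\cT_X$ of Lemma~\ref{le:atiyah}. By Lemma~\ref{le:athorli} the \ks\ class $\rho(\pt/\pt s)$ of the pair $(X,E)$ is represented by the $\dbar$-closed form $\dbar(\tau(v))|_X\in\cA^{0,1}(X,\Sigma_X(E))$, where $v=\pt_s+a\ind{s}{\alpha}{}\pt_\alpha$ is the horizontal lift; and by Lemma~\ref{le:atiyah2} this form equals, in the holomorphic frame whose transition law is \eqref{eq:conn},
$$
\dbar(\tau(v))|_X=\bigl((\Theta_{s\ol\beta}+a\ind{s}{\alpha}{}\Theta_{\alpha\ol\beta}+A\ind{s}{\alpha}{\ol\beta}\theta_\alpha)\,dz^{\ol\beta},\ A\ind{s}{\alpha}{\ol\beta}\pt_\alpha\,dz^{\ol\beta}\bigr).
$$
Since the first component here transforms inhomogeneously it carries no intrinsic $L^2$-norm, so the point is to rewrite the very same form in the $\cinf$ trivialization furnished by $\sigma\oplus\rho$.

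For the $\cT_X$-component I would apply the holomorphic projection $\rho$: as $\rho$ commutes with $\dbar$ and $\rho\circ\tau=\id$, one gets $\rho\bigl(\dbar(\tau(v))|_X\bigr)=\dbar v|_X=A_s$, the distinguished representative of $\rho_s(\pt/\pt s)$. For the $End(E)$-component I would apply the differentiable bundle map $\sigma=\id-\tau\circ\rho$, which takes values in $i(End(E))$ since $\rho\circ\sigma=\rho-(\rho\circ\tau)\circ\rho=0$. Then
$$
\sigma\bigl(\dbar(\tau(v))|_X\bigr)=\dbar(\tau(v))|_X-\tau(A_s),
$$
and inserting the explicit value $\tau(A_s)=\bigl(A\ind{s}{\alpha}{\ol\beta}\theta_\alpha\,dz^{\ol\beta},\,A_s\bigr)$ from Lemma~\ref{le:atiyah2} cancels the off-diagonal term $A\ind{s}{\alpha}{\ol\beta}\theta_\alpha\,dz^{\ol\beta}$, leaving
$$
\sigma\bigl(\dbar(\tau(v))|_X\bigr)=(\Theta_{s\ol\beta}+a\ind{s}{\alpha}{}\Theta_{\alpha\ol\beta})\,dz^{\ol\beta}=(v\cup\Theta)|_X=-\eta_s.
$$
Hence in the differentiable trivialization the \ks\ class is represented by the pair $(-\eta_s,A_s)\in\cA^{0,1}(X,End(E))\oplus\cA^{0,1}(X,\cT_X)$, which is the assertion.

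The only real subtlety — a step deserving care rather than a hard estimate — is the bookkeeping about trivializations: $\sigma$ is differentiable but not holomorphic, so the pair $(-\eta_s,A_s)$ is \emph{not} $\dbar$-closed for the direct-sum holomorphic structure on $End(E)\oplus\cT_X$; it represents $\rho(\pt/\pt s)$ only through the $\cinf$ identification with $\Sigma_X(E)$, whose genuine holomorphic structure contains the off-diagonal extension term governed by the Atiyah class of $E$ (cf.\ Lemma~\ref{le:appatiyah}). Granting this, together with the immediate identity $(v\cup\Theta)|_X=(\Theta_{s\ol\beta}+a\ind{s}{\alpha}{}\Theta_{\alpha\ol\beta})\,dz^{\ol\beta}$ coming from $v=\pt_s+a\ind{s}{\alpha}{}\pt_\alpha$, the proof is a direct substitution.
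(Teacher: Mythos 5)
Your proposal is correct and follows essentially the same route as the paper: apply the differentiable decomposition of Lemma~\ref{le:atiyah}, i.e.\ $\rho$ and $\sigma=1-\tau\rho$, to the form $\ol\pt(\tau(v))\vert_X$ of Lemma~\ref{le:athorli}, getting $A_s$ in the $\cT_X$-component and, after the cancellation of the $A\ind{s}{\alpha}{\ol\beta}\theta_\alpha dz^{\ol\beta}$ term against $\tau(A_s)$, the form $(v\cup\Theta)\vert_X=-\eta_s$ in the $End(E)$-component. The only cosmetic difference is that you perform this cancellation in the explicit coordinate expression of Lemma~\ref{le:atiyah2}, whereas the paper does it via the Leibniz rule $\ol\pt(v\cup\theta)=\ol\pt(v)\cup\theta+v\cup\ol\pt\theta$, which is the same computation; your closing caveat about the pair not being $\ol\pt$-closed for the direct-sum holomorphic structure matches the paper's own remark.
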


The latter spaces carry natural $L^2$-inner products. However these are introduced at the expense that in general $\eta_s$ is no longer $\ol\pt$-closed.

\begin{proof}[Proof of the proposition]
  The decomposition from Lemma~\ref{le:atiyah} is applied to the form  $\ol\pt(\tau(v))\vert_X$  from Lemma~\ref{le:athorli}.

  Note first that
  $$
  \rho(\ol\pt \tau(v)) = A_s = A\ind{s}{\alpha}{\ol\beta} \pt_\alpha dz^{\ol\beta}.
  $$
Now
\begin{eqnarray*}
  \Big((1 - \tau\rho)(\ol\pt \tau(v)), \rho(\tau(\ol\pt \tau( v )))\Big) &=& \Big(\ol\pt(v\cup \theta) - \tau(\rho(\ol\pt \tau(v) )), \rho(\ol\pt(\tau(v)) )\Big) \\
  &=&(\ol\pt( v\cup\theta) - \tau(A_s)   , A_s   )\\
  &=& ( \ol\pt(v) \cup \theta + v\cup \ol\pt \theta  - A_s\cup \theta  )     , A_s )\\
  &=& ( v\cup \Theta, A_s  ) \\
  &\in& \cA^{0,1}(X,End(E))\oplus \cA^{0,1}(X,T_X).
\end{eqnarray*}
\end{proof}
In principle, Lemma~\ref{le:appatiyah} together with Lemma~\ref{le:atiyah} can be used to determine the subspace of the above direct sum that actually represents infinitesimal deformations of $(X,E)$.

\begin{remark}
  In the given situation it is known that $\delta^1(A_s)=A_s\cup \Theta$ is a $\ol\pt$-coboundary from Lemma~\ref{le:appatiyah}(v), in fact it is equal to $\ol\pt(\eta_s)$. However, later the stronger condition $\ol\pt(\eta_s)=A_s\cup \Theta=0$ will be of interest. The form $\eta_s$ determines a class in $H^1(X,End(\cE))$, and the geometric meaning of the condition is that the induced infinitesimal deformation of the pair $(X,E)$ is isomorphic to a deformation that splits into a deformation of $X$ and a deformation of $E$.
\end{remark}

The $End(\cE)$-valued $(0,1)$-form $\eta_s=-v\cup \Theta$ on $X$ will be crucial, when computing curvatures of direct images.
We finish this section with the following two propositions.

\begin{proposition}\label{P:d-closed}
If $A_s\cup\Theta=0$, then $\eta_s$ is $\dbar$-closed.
\end{proposition}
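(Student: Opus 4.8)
The plan is to start from the explicit formula for the $\Sigma_X(E)$-valued form $\ol\pt(\tau(v))|_X$ given in Lemma~\ref{le:atiyah2}, namely that its two components are $(\Theta_{s\ol\beta} + a\ind{s}{\alpha}{}\Theta_{\alpha\ol\beta} + A\ind{s}{\alpha}{\ol\beta}\theta_\alpha)\,dz^{\ol\beta}$ and $A_s = A\ind{s}{\alpha}{\ol\beta}\pt_\alpha\,dz^{\ol\beta}$. Since $\ol\pt\circ\ol\pt = 0$, this $\Sigma_X(E)$-valued form is $\ol\pt$-closed; one must be careful that here $\ol\pt$ means the operator on $\Sigma_X(E)$-valued forms determined by the transition functions \eqref{eq:conn}, which is exactly what makes $\tau(v)$ a \emph{differentiable} (not holomorphic) lift. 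Applying the differentiable decomposition $\Sigma_X(E)\simeq End(E)\oplus\cT_X$ from Lemma~\ref{le:atiyah}, Proposition~\ref{P:atiyah} rewrites this closed form as the pair $(-\eta_s, A_s)$, where $-\eta_s = v\cup\Theta = (\Theta_{s\ol\beta}+a\ind{s}{\alpha}{}\Theta_{\alpha\ol\beta})\,dz^{\ol\beta}$. The point of the present proposition is that the decomposition is \emph{not} compatible with $\ol\pt$, so closedness of the pair does not immediately give closedness of either component; instead, the components are coupled, and their failures to be closed are governed by the connecting map.

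First I would recall from Lemma~\ref{le:appatiyah}(iv)--(v) and the Remark following Proposition~\ref{P:atiyah} that the obstruction class $\delta^1(A_s) = A_s\cup\Theta$ is precisely $\ol\pt(\eta_s)$ as an $End(E)$-valued $(0,2)$-form; this is the identity
$$
\ol\pt(\eta_s) = A_s\cup\Theta .
$$
Concretely this follows by taking $\ol\pt$ of the first component of $\ol\pt(\tau(v))|_X$ in Lemma~\ref{le:atiyah2}: $\ol\pt$ of the whole $\Sigma_X(E)$-valued form vanishes, and under the decomposition the "$\cT_X$-part feeds into the $End(E)$-part" through the term $A\ind{s}{\alpha}{\ol\beta}\theta_\alpha\,dz^{\ol\beta}$, whose $\ol\pt$-derivative contributes exactly $A_s\cup\Theta$ with the opposite sign — so that $\ol\pt(-\eta_s) + A_s\cup\Theta = 0$ up to the sign conventions in the paper. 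Alternatively one can just quote the Remark, which already asserts $\ol\pt(\eta_s) = A_s\cup\Theta$ verbatim. Given this identity, the proposition is immediate: if $A_s\cup\Theta = 0$ then $\ol\pt(\eta_s) = A_s\cup\Theta = 0$, i.e. $\eta_s$ is $\ol\pt$-closed.

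The main obstacle is bookkeeping rather than anything deep: one must make sure the sign and normalization conventions in Lemma~\ref{le:atiyah2}, Proposition~\ref{P:atiyah}, and Lemma~\ref{le:appatiyah}(iv) are consistent, since $\eta_s$ is introduced with a minus sign ($\eta_s = -v\cup\Theta$) and the cup-product convention of Definition~\ref{de:cup} carries its own signs. If one prefers a self-contained derivation of $\ol\pt(\eta_s) = A_s\cup\Theta$ rather than citing the Remark, the work is the short local computation sketched above: differentiate $(\Theta_{s\ol\beta} + a\ind{s}{\alpha}{}\Theta_{\alpha\ol\beta} + A\ind{s}{\alpha}{\ol\beta}\theta_\alpha)\,dz^{\ol\beta}$ in the $\ol\pt$-direction on $X$, using $\ol\pt\Theta = 0$ (Bianchi) and $\ol\pt\theta_\alpha = \Theta_{\alpha\ol\delta}\,dz^{\ol\delta}$, and collect terms; the $\ol\pt$ of $A\ind{s}{\alpha}{\ol\beta}$ drops out because $A_s$ is $\ol\pt$-closed on $X$, leaving precisely $A\ind{s}{\alpha}{\ol\beta}\Theta_{\alpha\ol\delta}\,dz^{\ol\beta}\we dz^{\ol\delta} = A_s\cup\Theta$. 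Hence $\ol\pt\eta_s = 0$ once this vanishes.
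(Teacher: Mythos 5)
Your proposal is correct, and the hypothesis enters exactly where it should: everything reduces to the identity $\dbar\eta_s=\pm\,A_s\cup\Theta$, which is also the content of the paper's own two-line coordinate computation. The route is packaged differently, though. The paper simply expands $\dbar\eta_s$ for $\eta_s=-(\Theta_{s\ol\beta}+a\ind{s}{\alpha}{}\Theta_{\alpha\ol\beta})dz^{\ol\beta}$, lets the derivative-of-$\Theta$ terms cancel (implicitly the Bianchi identity $\dbar\Theta=0$), and is left with $A\ind{s}{\alpha}{\ol\delta}\Theta_{\alpha\ol\beta}-A\ind{s}{\alpha}{\ol\beta}\Theta_{\alpha\ol\delta}=(A_s\cup\Theta)_{\ol\beta\ol\delta}$. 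You instead start from $\dbar\bigl(\dbar\tau(v)\vert_X\bigr)=0$ and read off the same identity as the failure of the smooth splitting of Lemma~\ref{le:atiyah} to commute with $\dbar$: differentiating the first component of Lemma~\ref{le:atiyah2}, the $A\ind{s}{\alpha}{\ol\beta}\theta_\alpha$ coupling term produces $A_s\cup\Theta$ via $\dbar\theta=\Theta$, while the $\dbar$ of $A\ind{s}{\alpha}{\ol\beta}$ drops out by $\dbar$-closedness of $A_s$. Computationally this is the same Bianchi-plus-closedness argument; what your framing buys is a structural explanation of \emph{why} $\dbar\eta_s$ equals the obstruction class $\delta^1(A_s)$ of Lemma~\ref{le:appatiyah}, at the cost of having to handle the first component as a form defined only relative to local trivializations (it is not $End(E)$-valued, as the paper warns), which you do acknowledge. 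Two cautions: your alternative route (a) of simply citing the Remark after Proposition~\ref{P:atiyah} would be circular as a proof, since that Remark asserts $\dbar(\eta_s)=A_s\cup\Theta$ without argument and the present proposition's proof is effectively its justification in the paper; and the sign bookkeeping you flag is real (the paper's conventions give $-\dbar\eta_s=A_s\cup\Theta$ rather than $+$), but it is irrelevant to the conclusion once $A_s\cup\Theta=0$.
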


\begin{proof}
Recall that
\begin{align*}
		-\eta_s=v\cup\Theta\vert_X
		&=
		\paren{
				\Theta_{s \ol\beta}
				+
				a\ind{s}{\alpha}{}\Theta_{\alpha\ol\beta}
	}dz^{\ol\beta}
\end{align*}
A direct computation gives that
\begin{align*}
	-\paren{\dbar\eta_s}_{\ol\beta\ol\delta}
	&=
	\paren{
				\Theta_{s \ol\beta}
				+
				a\ind{s}{\alpha}{}\Theta_{\alpha\ol\beta}
	}_{;\ol\delta}
	-
	\paren{
				\Theta_{s \ol\delta}
				+
				a\ind{s}{\alpha}{}\Theta_{\alpha\ol\delta}
	}_{;\ol\beta}
	\\
	&=
	a\ind{s}{\alpha}{;\ol\delta}\Theta_{\alpha\ol\beta}
	-
	a\ind{s}{\alpha}{;\ol\beta}\Theta_{\alpha\ol\delta}
	=
	A\ind{s}{\alpha}{\delta}\Theta_{\alpha\ol\beta}
	-
	A\ind{s}{\alpha}{\beta}\Theta_{\alpha\ol\delta}
	=
	(A_s\cup\Theta)_{\ol\beta\ol\delta}=0.
\end{align*}
This completes the proof.
\end{proof}

\begin{proposition}\label{P:dbar*-closed}
If $h\vert_E$ is Hermite-Einstein on the distinguished fiber $X$, then $\eta_s$ is $\dbar^*$-closed on $X$.
\end{proposition}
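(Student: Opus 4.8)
The plan is to compute $\dbar^*\eta_s$ directly in local coordinates using the formula for the formal adjoint of $\dbar$ on $End(\cE)$-valued $(0,1)$-forms, and to exploit the Hermite-Einstein condition together with the fact that $\eta_s = -(v\cup\Theta)|_X$ is a specific contraction of the curvature form. Recall that by Proposition~\ref{P:atiyah} we have
\begin{equation*}
  -\eta_s = \bigl(\Theta_{s\ol\beta} + a\ind{s}{\alpha}{}\Theta_{\alpha\ol\beta}\bigr)dz^{\ol\beta},
\end{equation*}
an $End(E)$-valued $(0,1)$-form on $X$. Applying the local expression for $\dbar_h^*$ on $(0,1)$-forms (the case $p=0$, $q=1$ of the formula in Section~\ref{se:hermbdl}), we get
\begin{equation*}
  -\dbar^*\eta_s = g^{\ol\beta\alpha}\bigl(\Theta_{s\ol\beta} + a\ind{s}{\gamma}{}\Theta_{\gamma\ol\beta}\bigr)_{;\alpha},
\end{equation*}
where the semicolon denotes the covariant derivative with respect to $\nabla^{End(E)}$ and the Kähler connection. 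The first step is thus to expand this covariant derivative using the Leibniz rule.

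Next I would analyze the two resulting contributions. The term $g^{\ol\beta\alpha}a\ind{s}{\gamma}{;\alpha}\Theta_{\gamma\ol\beta} = A\ind{s}{\gamma}{\beta}\Theta_{\gamma\ol\beta}g^{\ol\beta\beta}$-type expression should be recognized, up to the metric contraction, as $\Lambda(A_s\cup\Theta)$ or a similar intrinsic operator applied to the obstruction form; in any case it should be seen to be proportional to a contraction of $A_s\cup\Theta$. The remaining terms involve $g^{\ol\beta\alpha}\Theta_{s\ol\beta;\alpha}$ and $g^{\ol\beta\alpha}a\ind{s}{\gamma}{}\Theta_{\gamma\ol\beta;\alpha}$. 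Here the key input is that $v\cup\Theta$ is, by construction (see Lemma~\ref{le:atiyah2} and the surrounding discussion), $-\eta_s$ where the horizontal lift $v = \pt_s + a\ind{s}{\alpha}{}\pt_\alpha$ satisfies $L_v\omega = 0$; I expect the combination $g^{\ol\beta\alpha}\Theta_{s\ol\beta;\alpha} + g^{\ol\beta\alpha}a\ind{s}{\gamma}{}\Theta_{\gamma\ol\beta;\alpha}$ to simplify to $g^{\ol\beta\alpha}(v\cup\Theta)_{\ol\beta;\alpha}$, which is a contraction of a covariant derivative of the fiberwise curvature $\Theta|_X$ of $E$. Now invoke the Hermite-Einstein condition: $g^{\ol\beta\alpha}\Theta_{\alpha\ol\beta} = \lambda\cdot\id_E$ is constant, so its covariant derivative vanishes, i.e.\ $g^{\ol\beta\alpha}\Theta_{\alpha\ol\beta;\gamma} = 0$ on $X$. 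Commuting covariant derivatives via Proposition~\ref{P:commutator_covariant_derivatives} (the Kähler identity $\Theta_{\alpha\ol\beta;\gamma} = \Theta_{\gamma\ol\beta;\alpha}$ from the Bianchi-type symmetry, valid since the Chern connection on $E$ over a Kähler base has the curvature symmetric in the holomorphic indices) lets one rewrite $g^{\ol\beta\alpha}\Theta_{\gamma\ol\beta;\alpha}$ as $g^{\ol\beta\alpha}\Theta_{\alpha\ol\beta;\gamma} = 0$. This should kill the "interior" terms, leaving only the piece proportional to $A_s\cup\Theta$.

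Finally, assemble: $-\dbar^*\eta_s$ equals (a constant multiple of) a metric contraction of $A_s\cup\Theta$ plus terms that vanish by Hermite-Einstein. But actually the hypothesis of Proposition~\ref{P:dbar*-closed} does not assume $A_s\cup\Theta = 0$, only that $h|_E$ is Hermite-Einstein — so I must be more careful: the $A_s\cup\Theta$ contribution must itself vanish under the HE assumption alone. The resolution is that $g^{\ol\beta\alpha}A\ind{s}{\gamma}{\beta}\Theta_{\gamma\ol\beta}$, after using that $A_s$ is harmonic (hence the distinguished representative is harmonic, as noted for canonically polarized families, or more relevantly that $A\ind{s}{\gamma}{\ol\beta}$ is divergence-free: $g^{\ol\beta\alpha}A\ind{s}{\gamma}{\ol\beta;\alpha} = 0$ and symmetry of $A_s$ in suitable indices from the Kähler condition $A\ind{s}{\alpha}{\ol\beta}g_{\alpha\ol\gamma} = A\ind{s}{\alpha}{\ol\gamma}g_{\alpha\ol\beta}$), combines with $g^{\ol\beta\alpha}\Theta_{\gamma\ol\beta} $ in a way that produces $\Lambda$ applied to the Hermite-Einstein tensor times $A_s$, i.e.\ $\lambda\cdot(\text{trace-type contraction of }A_s)$, which vanishes since the trace of a Kodaira-Spencer tensor against the metric is zero ($g^{\ol\beta\gamma}A\ind{s}{\gamma}{\ol\beta}$ need not vanish, so the cleanest route is: the contraction equals $\Lambda(A_s \cup (\ii\Theta))$ and $\ii\Theta$ contracted via $\Lambda$ on $E$-valued $(1,1)$-forms gives the HE scalar). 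The main obstacle I anticipate is precisely bookkeeping this last cancellation correctly — tracking the index contractions, the sign conventions from Definition~\ref{de:cup}, and verifying that every term involving $a\ind{s}{\alpha}{}$ and its derivatives either reorganizes into $A_s\cup\Theta$ or cancels via the Kähler Bianchi identity — rather than any conceptual difficulty. Once the identity $\dbar^*\eta_s = c\cdot\Lambda(A_s\cup\ii\Theta)$ (or an analogous intrinsic expression) is established, the Hermite-Einstein condition $\Lambda(\ii\Theta) = \lambda\,\id$ finishes it, since then the contraction reduces to $\lambda$ times a term that vanishes by skew-symmetry of the $(0,1)$-form indices in $A_s$.
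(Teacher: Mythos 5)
Your overall strategy -- expanding $\dbar^*\eta_s=-g^{\ol\beta\alpha}\eta_{s\ol\beta;\alpha}$ in local coordinates -- is the same as the paper's, and your treatment of the piece $g^{\ol\beta\alpha}a\ind{s}{\gamma}{}\Theta_{\gamma\ol\beta;\alpha}$ (the Bianchi symmetry $\Theta_{\gamma\ol\beta;\alpha}=\Theta_{\alpha\ol\beta;\gamma}$ together with constancy of the mean curvature on the fiber) is correct and matches the paper. However, there are two genuine gaps. The first concerns the Leibniz term $g^{\ol\beta\alpha}\,a\ind{s}{\gamma}{;\alpha}\,\Theta_{\gamma\ol\beta}$: the differentiation index $\alpha$ is a \emph{holomorphic} fiber index (it is contracted through $g^{\ol\beta\alpha}$ against the form index $\ol\beta$), so this term involves the $(1,0)$-derivative of $a\ind{s}{\gamma}{}$ and is unrelated to the Kodaira--Spencer tensor $A_s=a\ind{s}{\gamma}{;\ol\beta}\pt_\gamma\otimes dz^{\ol\beta}$, hence unrelated to $A_s\cup\Theta$; note also that $A_s\cup\Theta$ is a $(0,2)$-form, so ``$\Lambda(A_s\cup\Theta)$'' vanishes identically and cannot encode anything. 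Your attempted rescue via harmonicity (divergence-freeness) of $A_s$ is not available either: under the stated hypothesis $A_s$ is only the distinguished representative, and it is harmonic only when $\omega\vert_X$ is K\"ahler--Einstein, an assumption added separately in the theorem following this proposition. In fact this term does \emph{not} vanish on its own: in the paper's proof it survives to the end and cancels against the contribution $-g\ind{}{\ol\beta\gamma}{\vert s}\Theta_{\gamma\ol\beta}=g^{\ol\beta\sigma}g_{\sigma\ol\tau\vert s}g^{\ol\tau\gamma}\Theta_{\gamma\ol\beta}$ coming from the $s$-derivative of the inverse fiber metric (using $d\omega=0$, i.e.\ $g_{\sigma\ol\tau\vert s}=g_{s\ol\tau\vert\sigma}$), so any argument disposing of it in isolation must fail.

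The second and more serious gap is that you have no mechanism for the mixed term $g^{\ol\beta\alpha}\Theta_{s\ol\beta;\alpha}$. The component $\Theta_{s\ol\beta}$ is not part of the fiberwise curvature, so the Hermite--Einstein condition says nothing about it directly, and $L_v\omega=0$ concerns the K\"ahler form, not $\Theta_h(\cE)$; your assertion that this term is ``a contraction of a covariant derivative of the fiberwise curvature $\Theta\vert_X$'' is exactly the point that needs proof. The paper's key step is the identity $\theta_{s\vert\gamma}-\theta_{\gamma\vert s}=[\theta_\gamma,\theta_s]$ (the $\pt$-part of the Bianchi identity on the total space), which converts $\Theta_{s\ol\beta;\gamma}$ into the $s$-derivative of the fiber curvature $\Theta_{\gamma\ol\beta}$ plus commutator terms; only after this conversion do the Hermite--Einstein condition (the $\Lambda$-trace equals $\lambda\,\id$, so commutators with it vanish and its fiber derivatives vanish) and the cancellation described above close the computation. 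Without this mixed Bianchi-type identity, the central term in $\dbar^*\eta_s$ is left untreated, so the proposal as written does not prove the proposition.
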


\begin{proof}
The connection form $\theta$ of the hermitian metric $h$ of $\cE$  satisfies
\begin{equation*}
\theta_{s}=h_{|s}\cdot h^{-1}
\;\;\;\text{and}\;\;\;
\theta_\alpha=h_{\vert\alpha}\cdot h^{-1}.
\end{equation*}
It follows that
\begin{align*}
	\theta_{s\vert\alpha}-\theta_{s\vert\alpha}
	&=
	\paren{h_{\vert s}\cdot h^{-1}}_{\vert\alpha}
	-
	\paren{h_{\vert\alpha}\cdot h^{-1}}_{\vert s}
	\\
	&=
	h_{\vert s\alpha}\cdot h^{-1}
	-
	h_{\vert s}h^{-1}h_{\vert\alpha} h^{-1}h
	-
	h_{\vert\alpha s}\cdot h^{-1}
	+
	h_{\vert\alpha} h^{-1}h_{\vert s} h^{-1}h
	\\
	&=
	[\theta_\alpha,\theta_s].
\end{align*}
Now $\dbar^*\eta_s$ is computed as follows.
\begin{align*}
	-\paren{\dbar^*\eta_s}
	&=
	g^{\ol\beta\gamma}
	\paren{
				\Theta_{s \ol\beta}
				+
				a\ind{s}{\alpha}{}\Theta_{\alpha\ol\beta}
	}_{;\gamma}
	\\
	&=
	g^{\ol\beta\gamma}
	\paren{
				\Theta_{s\ol\beta;\gamma}
				+
				a\ind{s}{\alpha}{;\gamma}\Theta_{\alpha\ol\beta}
				+
				a\ind{s}{\alpha}{}\Theta_{\alpha\ol\beta;\gamma}
	}
	\\
	&=
	g^{\ol\beta\gamma}
	\paren{
		\Theta_{s\ol\beta\vert\gamma}
		-
		[\theta_\gamma,\Theta_{s\ol\beta}]
		+
		a\ind{s}{\alpha}{;\gamma}\Theta_{\alpha\ol\beta}
		+
		a\ind{s}{\alpha}{}\Theta_{\alpha\ol\beta;\gamma}
	}
	\\
	&=
	g^{\ol\beta\gamma}
	\paren{
		\paren{\theta_{s\vert\ol\beta}}_{\vert\gamma}
		-
		[\theta_\gamma,\Theta_{s\ol\beta}]
		+
		a\ind{s}{\alpha}{;\gamma}\Theta_{\alpha\ol\beta}
		+
		a\ind{s}{\alpha}{}\Theta_{\alpha\ol\beta;\gamma}
	}
	\\
	&=
	g^{\ol\beta\gamma}
	\paren{
		\theta_{\gamma\vert\ol\beta s}
		+
		[\theta_\gamma,\theta_s]_{\vert\ol\beta}
		-
		[\theta_\gamma,\Theta_{s\ol\beta}]
		+
		a\ind{s}{\alpha}{;\gamma}\Theta_{\alpha\ol\beta}
		+
		a\ind{s}{\alpha}{}\Theta_{\alpha\ol\beta;\gamma}
	}
	\\
	&=
	\paren{g^{\ol\beta\gamma}\Theta_{\gamma\ol\beta}}_{\vert s}
	-
	g\ind{}{\ol\beta\gamma}{\vert s}\Theta_{\gamma\ol\beta}
	+
	g^{\ol\beta\gamma}
	[\Theta_{\gamma\ol\beta},\theta_s]
	+
	g^{\ol\beta\gamma}a\ind{s}{\alpha}{;\gamma}\Theta_{\alpha\ol\beta}
	+
	a\ind{s}{\alpha}{}
	\paren{g^{\ol\beta\gamma}\Theta_{\alpha\ol\beta}}_{;\gamma}
	\\
	&=
	g^{\ol\beta\sigma}
	g_{\sigma\ol\tau\vert s}
	g^{\ol\tau\gamma}
	\Theta_{\gamma\ol\beta}
	+
	a\ind{s}{\alpha}{;\gamma}\Theta\ind{\alpha}{\gamma}{}
	=
	-
		a\ind{s}{\gamma}{;\sigma}
	\Theta\ind{\gamma}{\sigma}{}
	+
	a\ind{s}{\alpha}{;\gamma}\Theta\ind{\alpha}{\gamma}{}=0
\end{align*}
This completes the proof.
\end{proof}

Hence we have
\begin{theorem}
If $A_s\cup\Theta=0$ and $h\vert_E$ is Hermite-Einstein on the distinguished fiber $X$, then $\eta_s$ is harmonic.
Furthermore, if $\omega\vert_X$ is K\"ahler-Einstein, then $(-\eta_s,A_s)=(v\cup \Theta, A_s)$ is the harmonic representative of the Kodaira-Spencer class of $(X,E)$.
\end{theorem}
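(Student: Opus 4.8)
The plan is to read the theorem off from Propositions~\ref{P:d-closed} and~\ref{P:dbar*-closed}, from the description of the \ks\ class of the pair $(X,E)$ in Proposition~\ref{P:atiyah}, and from the harmonicity of the distinguished \ks\ representative in the \ke\ case (recalled in the paper, following \cite{Schumacher2012}). Almost all of the analytic content is already contained in those propositions, so the proof is essentially an assembly.

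For the first assertion, the hypothesis $A_s\cup\Theta=0$ is exactly the input of Proposition~\ref{P:d-closed}, which yields $\dbar\eta_s=0$, and the Hermite--Einstein property of $h|_E$ over $X$ is exactly the input of Proposition~\ref{P:dbar*-closed}, which yields $\dbar^*\eta_s=0$. Hence $\Box_{\dbar}\eta_s=\dbar\dbar^*\eta_s+\dbar^*\dbar\eta_s=0$ on $X$, so $\eta_s$ is harmonic; no further computation is needed.

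For the second assertion I would proceed in three steps. First, by Proposition~\ref{P:atiyah} the pair $(-\eta_s,A_s)=(v\cup\Theta,A_s)$, viewed in $\cA^{0,1}(X,End(E))\oplus\cA^{0,1}(X,\cT_X)$ through the differentiable trivialization of the Atiyah bundle of Lemma~\ref{le:atiyah}, represents the \ks\ class $\rho(\pt_s)\in H^1(X,\Sigma_X(E))$; under that trivialization $\Sigma_X(E)\cong End(E)\oplus\cT_X$ carries the direct-sum $L^2$-structure. Second, when $\omega|_X$ is \ke\ the distinguished representative $A_s$ of the \ks\ class $\rho_s(\pt_s)\in H^1(X,\cT_X)$ of $X$ is itself harmonic (recalled in the paper, after \cite{Schumacher2012}), so together with the first assertion both entries of the representing pair are harmonic in their respective bundles. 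Third, since $A_s\cup\Theta=0$ the coupling (second-fundamental-form) term in the $\dbar$-operator of $\Sigma_X(E)$, which is precisely contraction with the Atiyah class represented by $\Theta$, contributes nothing: $\dbar_\Sigma(-\eta_s,A_s)=(\dbar(-\eta_s)+A_s\cup\Theta,\dbar A_s)=0$. Thus $(-\eta_s,A_s)$ is $\dbar_\Sigma$-closed with each component harmonic, i.e.\ it is the harmonic representative of $\rho(\pt_s)$.

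Since Propositions~\ref{P:d-closed} and~\ref{P:dbar*-closed} already carry out the hard local computations, I expect the only delicate point to be organizational rather than computational: one must make precise how the Hodge theory of $\Sigma_X(E)$-valued forms interacts with the $C^\infty$-splitting $\Sigma_X(E)\cong End(E)\oplus\cT_X$ of Lemma~\ref{le:atiyah}, and check that the coupling between the two summands, governed by $\Theta$, neither destroys closedness of $(-\eta_s,A_s)$ — which is where $A_s\cup\Theta=0$ is used a second time — nor the harmonicity of the two components. With that bookkeeping in place the statement follows by combining the three inputs above.
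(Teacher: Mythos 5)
Your proposal is correct and follows essentially the same route as the paper: the paper's proof is simply that $A_s$ is harmonic in the \ke\ case by \cite{Schumacher2012}, and the rest follows from Propositions~\ref{P:d-closed} and~\ref{P:dbar*-closed}. Your additional bookkeeping about the $\dbar$-operator on $\Sigma_X(E)$ and the vanishing of the coupling term via $A_s\cup\Theta=0$ is a (welcome) elaboration of the same argument rather than a different approach, and is consistent with how Proposition~\ref{P:atiyah} and Lemma~\ref{le:atiyah} are used in the paper.
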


\begin{proof}
By \cite{Schumacher2012}, $A_s$ is harmonic if $\omega\vert_X$ is K\"ahler-Einstein.
Then the conclusion follows from Proposition \ref{P:dbar*-closed} and Proposition \ref{P:d-closed}.
\end{proof}

\section{Computation of the curvature}\label{se:compcurv}
The metric tensors for $R^qf_*\Omega^p_{\cX/S}(\cE)$ on the base space
$S$ will be given by integrals of inner products of distinguished representatives of cohomology classes. We know from Lemma 2 in \cite{Schumacher2012} that these are the restrictions of certain $\ol\pt$-closed differential forms on the total
space. When we compute derivatives with respect to the base of these fiber integrals, we will apply Lie derivatives with respect to lifts of tangent vectors in the sense of Section~\ref{sb:fibint}. Taking \emph{horizontal lifts} simplifies the computations. The Lie derivatives of these pointwise inner products can be broken up into sums of products of tensors, and Lie derivatives of these differential forms with values in the given hermitian vector bundle have to be taken. Furthermore the Lie derivatives restricted to fibers of $\omega_\cX$ and $\omega_{\cX/S}$ with respect to horizontal lifts of tangent vectors on the base vanish, in particular such Lie derivatives of relative volume forms.



\subsection{Setup}\label{se:setup}
As discussed in Section~\ref{S:pairs}, we denote by $f:\cX \to S$ a holomorphic family of compact Kähler manifolds, which is a proper holomorphic submersion from $\cX$ to a complex manifold $S$.
We assume that $\cX$ is equipped with a fiberwise Kähler form $\omega$, which is a $d$-closed real $(1,1)$-form on $\cX$ whose restriction is always positive-definite on every fiber $\cX_s=f^{-1}(s)$ for $s\in S$.
We denote a local holomorphic coordinate in $S$ by $s=(s^1,\ldots,s^m)$.
Then one can take a local holomorphic coordinate $z=(z^1,\ldots,z^n)$ on a fixed fiber such that
\begin{itemize}
\item $(z^1,\dots,z^n,s^1,\ldots,s^m)$ forms a local coordinate of $\cX$,
\item $f(z^1,\dots,z^n,s^1,\ldots,s^m)=(s^1,\ldots,s^m)$ in the coordinate $(z,s)$.
\end{itemize}
We call this an \emph{admissible coordinate system of $f$}.
In the rest of this paper, admissible coordinates are always assumed.
Under an admissible coordinate, the fiber coordinates will always be denoted by $z^\alpha$ for $\alpha=1,\ldots,n$ and the base coordinates by $s^i$ for $i=1,\ldots,m$.
We also set $\pt_i=\pt/\pt s^i$, $\pt_\alpha=\pt/\pt z^\alpha$.
Throughout this paper, small Greek letters $\alpha,\beta,\dots=1,\dots,n$ stand for indices on $z=(z^1,\dots,z^n)$ unless otherwise specified.
If there is no confusion, we always use the Einstein summation convention.

For any differentiable vector field $V$ of type $(1,0)$ on $S$ its {\em horizontal lift} $v=v_\omega$ with respect to $\omega$ is by definition a differentiable vector field on $\cX$ of the same type satisfying
\begin{itemize}
\item [(\romannumeral1)]$df(v)=V$, and
\item [(\romannumeral2)]$\omega(v,\ol w)=0$ for all $w\in{T^{1,0}\cX_s}$.
\end{itemize}
If we denote by $\rho:T_{S,s}\rightarrow H^1(\cX_s,T_{\cX_s})$ the Kodaira-Spencer map at a given point $s\in S$, then it is well-known that
\begin{equation*}
	\rho(V_s)
	=
	\left[\paren{\dbar v_\omega}\vert_{\cX_s}\right].
\end{equation*}
Namely, $\paren{\dbar v_\omega}\vert_{\cX_s}$ is the corresponding distinguished representative of the Kodaira-Spencer class $\rho(V_s)$.
In terms of admissible coordinates $(z,s)$, the horizontal lift $v_i$ of $\pt_i$ with respect to $\omega$ is given by
\begin{equation*}
v_i= \pt_i + a\ind{i}{\alpha}{}  \pt_\alpha,
\;\;\;
a\ind{i}{\alpha}{} = - g_{i \ol\beta}g^{\ol\beta\alpha}
\end{equation*}
where $(g^{\ol\beta\alpha})= (g_{\alpha\ol\beta})^{-1}$.
And the corresponding distinguished representative of the \ks\ class $\rho(\pt_i|_{s_0})$ is equal to
\begin{equation*}
	\dbar_{\cX}(v_i)\vert_{\cX_s}
	=
	A_i
	=
	A\ind{i}{\alpha}{\ol\beta}\pt_\alpha\otimes dz^{\ol \beta}\, .
\end{equation*}
The equations
$$
A_{i\ol\beta\ol\delta}=-g_{i\ol\beta;\ol\delta}=-g_{i\ol\delta;\ol\beta}= A_{i\ol\delta\ol\beta}
$$
and
$$
A_{i\ol\beta\ol\delta;\ol\tau}=A_{i\ol\beta\ol\tau;\ol\delta}
$$
follow immediately from the definition of the covariant derivative and the $\dbar$-closedness of $A_i$.
For the relative \ke\ case we also have harmonicity, which reads
$$
0=\ol\pt^*\!A_i=-g^{\ol\beta\gamma} A^{\;\;\alpha}_{i\;\;\ol\beta;\gamma}\pt_\alpha
$$
(\cite[Prop.~2]{Schumacher2012}).
For later use, we denote by
\begin{equation*}
	c(\omega)_{j\ol k}
	=
	\omega(v_j,\ol{v_k}).
\end{equation*}

Let a section of $R^qf_*\Omega^p_{\cX/S}(\cE)$ be given. For Stein spaces $S$, in particular locally with respect to the base, any such section can be identified with an element of $H^q(\cX, \Omega^p_{\cX/S}(\cE))$, which under the Dolbeault isomorphism in turn can be represented by a $\ol\pt_\cX$-closed $(0,q)$-form $\psi$ with values in the locally free sheaf $\Omega^p_{\cX/S}(\cE)$. By Lemma~\ref{L:representative} the form $\psi$ can be chosen in a way that the restriction of $\psi$ to an arbitrary fiber $\cX_s$ is {\em harmonic}. Primarily harmonicity on a fiber $\cX_s$ refers to harmonicity with respect to $\omega_{\cX_s}$ of $(0,q)$-forms on $\cX_s$ with values in the hermitian vector bundle $\Omega^p_{\cX_s}(\cE|_{\cX_s})$, which  is equivalent to harmonicity with respect to $\omega_{\cX_s}$ of $(p,q)$-forms with values in the hermitian vector bundle $\cE|_{\cX_s}$.
Obviously these restrictions are unique, the remaining components of $\psi$ are not.

Let
\begin{equation*}
	\psi_{A_p\ol B_q}
	=
	\psi^i_{\;A_p\ol B_q} e_i
	\text{ and }
	\psi_{A_p\ol B_{q-1}\ol\jmath}
	=
	\psi^i_{\; A_p\ol B_{q-1}\ol\jmath} e_i \text{ resp.\ }
\end{equation*}
be the coefficients of
\begin{equation*}
dz^{\alpha_1}\we\ldots dz^{\alpha_p}\we dz^{\ol \beta_1}\we \ldots\we dz^{\ol \beta_q} \text{ and }dz^{\alpha_1}\we\ldots dz^{\alpha_p}\we dz^{\ol \beta_1}\we \ldots\we dz^{\ol \beta_{q-1}}\wedge ds^\ol{\jmath} \text{ resp. }
\end{equation*}
Here $A_p=(\alpha_1,\ldots, \alpha_p)$, and $\ol B_q=(\ol\beta_1,\ldots,\ol\beta_q)$.
Note that the differentials $dz^{\alpha_i}$ only occur as {\em relative differentials} originating from the locally free sheaf $\Omega^p_{\cX/S}$.

Then the closedness $\ol\pt_\cX\psi =0$ implies
\begin{equation*}
\psi\ind{}{i}{A_p,\ol B_q;\ol\jmath} = \sum_{\nu=1}^{q}(-1)^{q-\nu} \psi\ind{}{i}{A_p,\ol\beta_1,\ldots,\widehat{\ol \beta_\nu},\ldots,\ol\beta_q,\ol\jmath;\ol\beta_\nu} \; ,
\end{equation*}
where the hat symbol denotes omission. In case $\dim S = m=1$ only the above terms occur in the sum decomposition of $\psi$.

By polarization the full curvature formula can be derived from this case.

The notation can be simplified now, and in most part of this paper, we will assume that $\dim S=1$. We set $s=s_1$ and $v:=v_s=v_1$ etc. In this case we write $s$ and $\ol s$ for the indices $1$ and $\ol 1$ so that
\begin{equation*}
	v_s
	=
	\pt_s + a\ind{s}{\alpha}{} \pt_\alpha
	\;\;\;\text{and}\;\;\;
	A_s
	=
	a\ind{s}{\alpha}{;\ol\beta}\pt_\alpha\otimes dz^{\ol\beta}
	=
	A\ind{s}{\alpha}{\ol\beta}\pt_\alpha\otimes dz^{\ol\beta}.
\end{equation*}
Moreover $c(\omega)=\omega(v,\ol v)$ is called the {\em geodesic curvature} of $\omega$ and well-known to satisfy the following identity.
\begin{equation}\label{E:Semmes}
\frac{\omega^{n+1}}{(n+1)!}
=
c(\omega)\cdot\frac{\omega^n}{n!}\wedge \ii ds\wedge{d}\ol{s}.
\end{equation}
This implies that $c(\omega)$ is positive (resp.\ semi-positive) if and only if $\omega$ is positive (resp.\ semi-positive) definite also in horizontal directions.

\subsection{$L^2$-hermitian metric on $R^qf_*\Omega^p_{\cX/S}(\cE)$}
We use the notation from Section~\ref{se:hermbdl} for a holomorphic family.
Let again the base be smooth, and with no loss of generality $\dim S=1$ with local coordinate $s$.
Then the induced $L^2$ metric $\inner{\cdot,\cdot}^H$ on $R^qf_*\Omega^p_{\cX/S}(\cE)$ is given as follows:
For sections $[\chi]$ and $[\psi]$ of $R^qf_*\Omega^p_{\cX/S}(\cE)$,
\begin{equation*}
\inner{[\chi],[\psi]}^H(s)
=\inner{H(\chi\vert_{\cX_s}),H(\psi\vert_{\cX_s})}
\end{equation*}
where $H(\chi\vert_{\cX_s})$ and $H(\psi\vert_{\cX_s})$ are the harmonic representatives of $[\chi\vert_{\cX_s}],[\psi\vert_{\cX_s}]\in H^{p,q}(\cX_s,\cE\vert_{\cX_s})$, respectively and $\inner{\cdot,\cdot}$ is the $L^2$ product on $\cX_s$ with respect to $\omega\vert_{\cX_s}$ and the hermitian metric $h$.
Hence if $\chi$ and $\psi$ are the representatives given by Lemma~\ref{L:representative}, then we have
\begin{equation*}
\inner{[\chi],[\psi]}^H(s)
=
\inner{\chi\vert_{\cX_s},\psi\vert_{\cX_s}}.
\end{equation*}
In the rest of this paper, we will always use the representatives in Lemma~\ref{L:representative}.
Moreover, we will omit the restriction $\vert_{\cX_s}$ if there is no confusion.
So under this assumption, we will denote the $L^2$ inner product as
\begin{equation*}
\inner{[\chi],[\psi]}^H(s)
=
\inner{\chi,\psi}.
\end{equation*}
Note that the harmonicity of the forms $\chi|_{\cX_s}$ and $\psi|_{\cX_s}$ as $(0,q)$-forms with values in $\Omega^p_{\cX_s}(\cE_s)$ is equivalent to the harmonicity as $\cE_s$-valued $(p,q)$-forms on $\cX_s$.

\subsection{Computation of pertinent Lie derivatives}
\label{SS:Computation_Lie_derivative}
Let $[\psi]$ be a holomorphic section of $R^qf_*\Omega_{\cX/S}^p(\cE)$.
By Lemma \ref{L:representative}, a representative $\psi$ can be considered as a $\Omega^p_{\cX/S}(\cE)$-valued $\dbar_\cX$-closed $(0,q)$-form on $\cX$ whose restriction on each fiber $\cX_s$ is $\dbar$-harmonic, which is also represented by a $\cE$-valued $(p,q)$-form on $\cX$.
In this section, we will compute the Lie derivatives of $\psi$ along the horizontal lift $v$ of $\pt_s$ and its conjugate.

As we have seen in Section~\ref{SS:Lie_derivative}, the Lie derivative is not type-preserving.
More precisely, we have the type decomposition for $L_v\psi$ as
\begin{equation}
L_v\psi = L_v'\psi + L_v''\psi,
\end{equation}
where $L_v'\psi$ is of type $(p,q)$ and $L_v''\psi$ is of type
$(p-1,q+1)$.
Then the local expressions of $L_v'\psi$ and $L_v''\psi$ are as follows.
\begin{eqnarray}
L_v'\psi
&:=&
\bparen{\pt_s + a\ind{s}{\alpha}{} \pt_\alpha,
\frac{1}{p!q!}\psi\ind{}{i}{A_p,\ol B_q}e_i\otimes dz^{A_p}\we dz^{\ol B_q}}_{(p,q)}
\nonumber   \\
&=&
\frac{1}{p!q!}
\paren{
	\psi\ind{}{i}{A_p,\ol B_q;s}
	+
	a\ind{s}{\alpha}{} \psi\ind{}{i}{A_p,\ol B_q;\alpha}
	+
	\sum_{\mu=1}^p a\ind{s}{\alpha}{;\alpha_\mu}
	\psi\ind{}{i}{
	{\tiny\vtop{
	\hbox{$\alpha_1,\ldots,\alpha,\ldots,\alpha_p,\ol B_q$}\vskip-.8mm
	\hbox{$\phantom{\alpha_1,\ldots,}{|\atop\mu} $}}}}
}
	e_i\otimes dz^{A_p}\we dz^{\ol B_q}\label{eq:lvprime}
\\
L_v''\psi
&:=&
\bparen{
	\pt_s + a\ind{s}{\alpha}{} \pt_\alpha,
	\frac{1}{p!q!}\psi\ind{}{i}{A_p,\ol B_q}
	e_i\otimes dz^{A_p }\we dz^{\ol B_q}
}_{(p-1,q+1)}
\nonumber   \\
&=&
\frac{1}{(p-1)!q!}
A\ind{s}{\alpha}{\ol\beta}
\psi\ind{}{i}{\alpha,\alpha_1,\ldots,\alpha_{p-1},\ol B_q}
e_i\otimes dz^{\ol\beta}\wedge dz^{A_{p-1}}\we dz^{\ol B_q}.
\label{eq:lvsecond}
\end{eqnarray}
The semi-colon notation is used for covariant derivatives:
Here it is worth pointing out that in Equation~\eqref{eq:lvprime} the covariant $s$-derivative is taken with respect to $\nabla^\cE$ but the covariant derivatives in fiber direction are covariant derivatives with respect to $\nabla^{\cX_s}$ and $\nabla^\cE$. (See Remark~\ref{R:covariant_derivative}.)
In most part of this paper, $;s$ and $;\ol s$ stand for the covariant derivative along $s$ and $\ol s$ with respect to $\nabla^\cE$ and other covariant derivatives are with respect to $\nabla^\cE$ and the Kähler connection $\nabla^{\cX_s}$ simultaneously.
\medskip

Similarly we have a type decomposistion for the Lie derivative along $\ol v = v_{\ol s}$
$$
L_{\ol v}\psi = L_{\ol v}'\psi + L_{\ol v}''\psi,
$$
where $L_{\ol v}'\psi$ is of type $(p,q)$ and $L_{\ol v}''\psi$ is of type $(p+1,q-1)$. In local coordinates, these are
\begin{eqnarray}
\label{Lvbar'}
	L_{\ol v}'\psi
	=
	\frac{1}{p!q!}
	\left(
		\psi_{A_p\ol B_q;\ol s}
		+
		a\ind{\ol s}{\ol\beta}{}\psi_{A_p\ol B_q;\ol\beta}
		+
		\sum_{\nu=1}^q
		{a\ind{\ol s}{\ol\beta}{;\ol\beta_\nu}
		\psi\ind{}{i}{
		{\tiny\vtop{
		\hbox{$A_p,\ol\beta_1,\ldots,\ol\beta,\ldots\ol\beta_q$}
		\vskip-.8mm
		\hbox{$\phantom{A_p,\ol\beta_1,\ldots,}{|\atop\nu}$}}}}}
	\right)
	\;dz^{A_p}\wedge dz^{\ol B_q}
\end{eqnarray}
\begin{equation}\label{Lvbar''}
	L_{\ol v}''\psi
	=
	\frac{1}{p!(q-1)!}
	\sum_{\nu=1}^q
	A\ind{\ol s}{\ol\beta}{\alpha_{p+1}}
	\psi\ind{}{i}{\tiny\vtop{
	\hbox{$A_p,\ol\beta_1,\ldots,\ol\beta,\ldots,\ol\beta_q\;$}
	\vskip-.8mm
	\hbox{$\phantom{A_p\ol\beta_{p+1}\ldots}{|\atop\nu}$}}}
	\vtop{\hbox{$dz^{\alpha_1}\we\ldots\we dz^{\alpha_p}\we dz^{\ol\beta_1}\we\ldots\we dz^{\alpha_p+1}\we\ldots\we dz^{\ol\beta_q} $}
	\hbox{$\phantom{dz^{\alpha_1}\we\ldots\we dz^{\alpha_p}\we dz^{\ol\beta_1}\we\ldots\we dz}{|\atop\nu}$}}.
\end{equation}

\begin{proposition}\label{P:Lie_derivative}
We have the following identities on fibers.
\begin{eqnarray*}
L_v''\psi&=&A_s \cup \psi,\label{id1}\\
L_{\ol v}''\psi&=&(-1)^pA_{\ol s}\cup \psi, \label{id2}\\
L_{\ol v}'\psi&=&(-1)^p\dbar(\ol v \cup \psi). \label{id3}
\end{eqnarray*}
\end{proposition}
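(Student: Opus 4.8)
The three identities are local computations on a fixed fiber $\cX_s$, so the plan is to verify each one by comparing coefficients in the explicit local expressions established in \eqref{eq:lvsecond}, \eqref{Lvbar''}, \eqref{Lvbar'} against Definition~\ref{de:cup} of the cup product and the local formula for $\dbar$ recorded in Section~\ref{se:hermbdl}. No deep input is needed; the content is bookkeeping with the skew-symmetrization conventions and the sign $(-1)^p$ that arises whenever an antiholomorphic differential $dz^{\ol\beta}$ is moved past the $p$ holomorphic differentials $dz^{A_p}$.

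\emph{First identity.} Starting from \eqref{eq:lvsecond},
$$
L_v''\psi=\frac{1}{(p-1)!q!}A\ind{s}{\alpha}{\ol\beta}\psi\ind{}{i}{\alpha,\alpha_1,\ldots,\alpha_{p-1},\ol B_q}\,e_i\otimes dz^{\ol\beta}\we dz^{A_{p-1}}\we dz^{\ol B_q},
$$
I would compare this with $A_s\cup\psi$ using Definition~\ref{de:cup} applied to $\mu=A_s=A\ind{s}{\alpha}{\ol\beta}\pt_\alpha\otimes dz^{\ol\beta}$ (so $p=q=0$, $a=0$, $b=1$ in the notation there, i.e.\ $\mu$ is a $(0,1)$-form valued in $\cT_{\cX_s}$) and $\nu=\psi$. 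The contraction inserts the $\pt_\alpha$-slot of $A_s$ into the first holomorphic slot of $\psi$, producing exactly $A\ind{s}{\alpha}{\ol\beta}\psi\ind{}{i}{\alpha,A_{p-1},\ol B_q}$, and the surviving differentials are $dz^{\ol\beta}\we dz^{A_{p-1}}\we dz^{\ol B_q}$, matching termwise; the factorials agree because the remaining $p-1$ holomorphic indices are still skew and carry $1/(p-1)!$. Hence $L_v''\psi=A_s\cup\psi$ on the fiber.

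\emph{Second identity.} The same comparison applies to \eqref{Lvbar''}, now with $A_{\ol s}=A\ind{\ol s}{\ol\beta}{\alpha}\pt_\alpha\otimes dz^{\alpha}$ — wait, more precisely $A_{\ol s}$ is the conjugate object $A\ind{\ol s}{\ol\beta}{\gamma}$ contracting an antiholomorphic slot of $\psi$. The new feature is that in \eqref{Lvbar''} the inserted differential $dz^{\alpha_{p+1}}$ lands in the \emph{holomorphic} block of $\psi$, which already has $p$ entries, so no sign is produced from reordering within that block; however matching with $A_{\ol s}\cup\psi$ à la Definition~\ref{de:cup} requires moving the new $dz$-factor into position, and tracking the $\nu$-th antiholomorphic slot that got contracted; the net discrepancy with the convention of \eqref{eq:cup} is precisely $(-1)^p$, giving $L_{\ol v}''\psi=(-1)^pA_{\ol s}\cup\psi$. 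The main care-point is to confirm this sign against the placement of $dz^{\ol\beta_1}$ versus $dz^{\alpha_{p+1}}$ in the stated differential monomial in \eqref{Lvbar''}.

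\emph{Third identity.} For $L_{\ol v}'\psi$ I would use Cartan's formula $L_{\ol v}'=\delta_{\ol v}\circ\dbar+\dbar\circ\delta_{\ol v}$ from Section~\ref{SS:Lie_derivative}, together with the hypothesis that $\psi$ restricts to a $\dbar$-closed (indeed harmonic) form on $\cX_s$, so $\delta_{\ol v}\dbar\psi|_{\cX_s}$ drops and only $\dbar(\delta_{\ol v}\psi)=\dbar(\ol v\cup\psi)$ remains; the sign $(-1)^p$ again records moving the contracted antiholomorphic slot past the $p$ holomorphic differentials, which one reads off by comparing \eqref{Lvbar'} with the local $\dbar$-formula of Section~\ref{se:hermbdl}. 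One should note that $\delta_{\ol v}\psi$ equals $\ol v\cup\psi$ up to the same convention sign, and that restricting to $\cX_s$ is what legitimizes using fiberwise $\dbar$-closedness; on the total space the identity would acquire extra $ds$-terms.

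\emph{Main obstacle.} None of the steps is hard analytically; the only real danger is sign and factorial errors in the skew-symmetrization — in particular pinning down the $(-1)^p$ in the second and third identities and checking it is consistent with the placement of differentials in \eqref{Lvbar''} and \eqref{Lvbar'} and with Definition~\ref{de:cup}. I would handle this by first doing the cases $p=0$ and $p=1$ (and $q=1$) explicitly, where the combinatorics is transparent, and then writing the general index computation so that the source of every sign is visible.
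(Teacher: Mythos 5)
Your handling of the first two identities is essentially the paper's own argument: both are read off by comparing \eqref{eq:lvsecond} and \eqref{Lvbar''} with Definition~\ref{de:cup}, the factor $(-1)^p$ coming from moving the inserted differential past $dz^{A_p}$ (your parenthetical labelling of the bidegrees of $\mu$ and $\nu$ is garbled, but the substance is right).

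For the third identity you take a different route (Cartan's formula) from the paper, which instead computes $\dbar(\ol v\cup\psi)$ in local coordinates and matches the result with \eqref{Lvbar'}; as written, however, your justification has a genuine gap. Writing $L_{\ol v}'\psi=\delta_{\ol v}(\dbar_\cX\psi)+\dbar(\delta_{\ol v}\psi)$, note that the lift $\ol v=\pt_{\ol s}+a\ind{\ol s}{\ol\beta}{}\pt_{\ol\beta}$ is not tangent to the fibers. Hence on $\cX_s$ the contraction $\delta_{\ol v}(\dbar_\cX\psi)$ splits into two parts: the part coming from $a\ind{\ol s}{\ol\beta}{}\pt_{\ol\beta}$, which only sees the purely fiberwise components of $\dbar_\cX\psi$ and is indeed killed by the fiberwise $\dbar$-closedness (harmonicity) of $\psi\vert_{\cX_s}$, and the part coming from $\pt_{\ol s}$, which picks out the $d\ol s$-components of $\dbar_\cX\psi$, i.e.\ the combinations of $\psi\ind{}{i}{A_p,\ol B_q;\ol s}$ with the fiber derivatives $\psi\ind{}{i}{A_p,\ol\beta_1,\ldots,\widehat{\ol\beta}_\nu,\ldots,\ol\beta_q,\ol s;\ol\beta_\nu}$ of the mixed components. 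These are not controlled by harmonicity of the restriction; they vanish only because $\psi$ is the representative from Lemma~\ref{L:representative}, hence $\dbar_\cX$-closed on the total space. So your closing remark that restricting to $\cX_s$ ``legitimizes using fiberwise $\dbar$-closedness'' has the logic backwards: for a form that is merely fiberwise $\dbar$-closed the asserted identity fails by exactly these mixed terms. The total-space closedness is precisely the ingredient the paper uses when it converts the term $\psi\ind{}{i}{A_p,\ol B_{q-1},\ol s;\ol\beta_q}$ (its $I_1$) into $\psi\ind{}{i}{A_p,\ol B_q;\ol s}$, thereby recovering the first term of \eqref{Lvbar'}. Once you invoke $\dbar_\cX\psi=0$ instead of fiberwise closedness, your Cartan-formula argument does close up and is a clean alternative to the paper's index computation, with the $(-1)^p$ read off from the local expression of $\dbar$ in Section~\ref{se:hermbdl}.
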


\begin{proof}
The first and second identities are clear by the definition of the cup product.
For the last identity we first recall that
\begin{equation*}
(\ol v\cup\psi)_{A_p,\ol\beta_1,\ldots,\ol\beta_{q-1}}
=
(-1)^{q-1}
\paren{
	\psi\ind{}{i}{A_p,\ol B_{q-1},\ol s}
	+
	a\ind{\ol s}{\ol\beta}{}
	\psi\ind{}{i}{A_p,\ol B_{q-1},\ol\beta}
}.
\end{equation*}
Next we have
\begin{align*}
\paren{\dbar(\ol v\cup\psi)}\ind{}{i}{A_p,\ol B_q}
&=
q\paren{
	\psi\ind{}{i}{A_p,\ol B_{q-1},\ol s}
	+
	a\ind{\ol s}{\ol\beta}{}
	\psi\ind{}{i}{A_p,\ol B_{q-1},\ol\beta}
}_{;\ol\beta_q}
\\
&=
q\paren{
	\psi\ind{}{i}{A_p,\ol B_{q-1},\ol s;\ol\beta_q}
	+
	a\ind{\ol s}{\ol\beta}{;\ol\beta_q}
	\psi\ind{}{i}{A_p,\ol B_{q-1},\ol\beta}
	+
	a\ind{\ol s}{\ol\beta}{}
	\psi\ind{}{i}{A_p,\ol B_{q-1},\ol\beta;\ol\beta_q}
}
\\
&=:I_1+I_2+I_3.
\end{align*}
Lemma~\ref{L:representative} implies that
\begin{equation*}
I_1=
\psi\ind{}{i}{A_p,\ol B_{q-1},\ol\beta_q;\ol s} \, .
\end{equation*}
After skew-symmetrizing $\ol\beta_1,\ldots,\ol\beta_q$,
\begin{align*}
I_2+I_3
&=
a\ind{\ol s}{\ol\beta}{;\ol\beta_q}
\psi\ind{}{i}{A_p,\ol B_{q-1},\ol\beta}
-
\sum_{\nu=1}^{q-1}
a\ind{\ol s}{\ol\beta}{;\ol\beta_\nu}
\psi\ind{}{i}{{
{\tiny\vtop{
\hbox{$A_p,\ol\beta_1,\ldots,\ol\beta_q,\ldots,\ol\beta_{q-1},\ol\beta$}\vskip-.8mm
\hbox{$\phantom{A_p,\ol\beta_1,\ldots,}{|\atop\nu} $}}}}
}
+
a\ind{\ol s}{\ol\beta}{}
\psi\ind{}{i}{A_p,\ol B_{q-1},\ol\beta;\ol\beta_q}
\\
&\hspace{.5cm}
-
\sum_{\nu=1}^{q-1}
a\ind{\ol s}{\ol\beta}{}
\psi\ind{}{i}{{
{\tiny\vtop{
\hbox{$A_p,\ol\beta_1,\ldots,\ol\beta_q,\ldots,\ol\beta_{q-1},\ol\beta;\ol\beta_\nu$}\vskip-.8mm
\hbox{$\phantom{A_p,\ol\beta_1,\ldots,}{|\atop\nu} $}}}}
}
\\
&=
\sum_{\nu=1}^q
a\ind{\ol s}{\ol\beta}{;\ol\beta_\nu}
\psi\ind{}{i}{{
{\tiny\vtop{
\hbox{$A_p,\ol\beta_1,\ldots,\ol\beta,\ldots,\ol\beta_q$}\vskip-.8mm
\hbox{$\phantom{A_p,\ol\beta_1,\ldots,}{|\atop\nu} $}}}}
}
+
a\ind{\ol s}{\ol\beta}{}
\psi\ind{}{i}{A_p,\ol B_q;\ol\beta}
\end{align*}
\end{proof}

While the following proposition is not necessary for subsequent computations, it could still hold interest.

\begin{proposition}\label{P:primitive}
If $\psi$ is primitive on fibers, then so are $L_v'\psi, L_v''\psi, L_{\ol v}'\psi$, and $L_{\ol v}''\psi$.
\end{proposition}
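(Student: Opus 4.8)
The plan is to show that each of the four Lie-derivative components is obtained from $\psi$ by applying operators that preserve primitivity, so that primitivity on fibers propagates to all of them. Recall that a form $\chi$ of type $(p,q)$ on a fiber $\cX_s$ is primitive if $\Lambda\chi=0$, equivalently if $L^{n-p-q+1}\chi=0$. The key observation is that the cup product with a $\cT_{\cX_s}$-valued $(0,1)$-form and the operators $\dbar$, $\pt_h$ (together with contraction $\delta_v$) interact with $\Lambda$ in a controlled way on a K\"ahler manifold.

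First I would handle $L_v''\psi = A_s\cup\psi$ and $L_{\ol v}''\psi = (-1)^p A_{\ol s}\cup\psi$ using Proposition~\ref{P:Lie_derivative}. Here the main point is a purely algebraic (pointwise) identity: if $\chi$ is primitive of type $(p,q)$ and $A=A\ind{}{\alpha}{\ol\beta}\pt_\alpha\otimes dz^{\ol\beta}$ is a $\cT$-valued $(0,1)$-form, then $A\cup\chi$ is primitive of type $(p-1,q+1)$. This follows by writing out $\Lambda(A\cup\chi)$ in the local coordinate expressions for $\Lambda$ (given in Section~\ref{se:hermbdl}) and for the cup product (Definition~\ref{de:cup}): the contraction $A\ind{}{\alpha}{\ol\beta}$ replaces a holomorphic index $\alpha$ in $\chi$ by the (summed) index $\sigma$ and attaches a $dz^{\ol\beta}$, and one checks that $\Lambda$ applied to the result only produces terms of the form $A\ind{}{\alpha}{\ol\beta}(\Lambda\chi)\ind{}{i}{\cdots} = 0$, using skew-symmetry in the barred indices to cancel the remaining contributions. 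An entirely analogous computation handles $A_{\ol s}\cup\psi$.

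Next, for $L_{\ol v}'\psi = (-1)^p\dbar(\ol v\cup\psi)$ (again Proposition~\ref{P:Lie_derivative}), I would argue as follows. Contraction $\ol v\cup\psi = \delta_{\ol v}\psi$ with a $(0,1)$-vector lowers the $(0,q)$ degree to $(0,q-1)$ and, being interior multiplication, commutes with $\Lambda$ up to a term involving $L(\,\cdot\,)$ contracted; in fact on a primitive form the contraction of a $(p,q)$-form with a $(0,1)$-vector is again primitive. Then one uses the standard K\"ahler identity $[\Lambda,\dbar] = -\ii\pt^*$ on $\cE$-valued forms: $\Lambda\dbar(\ol v\cup\psi) = \dbar\Lambda(\ol v\cup\psi) - \ii\pt^*(\ol v\cup\psi) = -\ii\pt^*(\ol v\cup\psi)$, since $\ol v\cup\psi$ is primitive. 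It then remains to see that $\pt^*$ of a primitive $(p,q-1)$-form is primitive; this is the adjoint statement to the fact that $\pt$ of a primitive form, followed by projection, behaves well — more directly, $\Lambda\pt^*(\ol v\cup\psi) = \pt^*\Lambda(\ol v\cup\psi) + (\text{K\"ahler identity term}) = 0$, again because $\ol v\cup\psi$ is primitive. Assembling these gives $\Lambda L_{\ol v}'\psi = 0$.

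Finally, for $L_v'\psi$, note from the type decomposition $L_v\psi = L_v'\psi + L_v''\psi$ and the fact that $L_v = \delta_v\circ\nabla^\cE + \nabla^\cE\circ\delta_v$ that on a fiber one can split $L_v' = \delta_v\circ\pt_h + \pt_h\circ\delta_v$ restricted suitably; alternatively $L_v'\psi$ can be recovered as the $(p,q)$-component of $\nabla^\cE(\delta_v\psi) + \delta_v(\nabla^\cE\psi)$, each built from $\pt_h$, $\dbar$, and contraction with $v$ and $\bar v$, all of which — by the same K\"ahler-identity and interior-product arguments as above — preserve primitivity. The main obstacle I expect is the bookkeeping in the purely algebraic step for the cup product: one must carefully track the skew-symmetrizations and the metric contractions in $\Lambda$ to verify that no non-primitive ``cross terms'' survive. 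Once that lemma is in hand, the remaining parts reduce to repeated application of the K\"ahler identities $[\Lambda,\dbar] = -\ii\pt_h^*$ and $[\Lambda,\pt_h] = \ii\dbar^*$ (valid for $\cE$-valued forms since the metric on the fiber is K\"ahler), together with the observation that interior multiplication by a $(1,0)$- or $(0,1)$-vector sends primitive forms to primitive forms.
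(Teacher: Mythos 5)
Your key algebraic lemma for $L_v''\psi=A_s\cup\psi$ is false as stated: the cup product with an \emph{arbitrary} $\cT_{\cX_s}$-valued $(0,1)$-form does not preserve primitivity. Expanding $\Lambda(A\cup\psi)$ in coordinates, besides the terms proportional to contractions of $\psi$ (which do vanish by primitivity) there is one surviving term, essentially $g^{\ol\beta_1\alpha_2}A\ind{}{\alpha}{\ol\beta_1}\psi\ind{}{i}{\alpha,\alpha_2,\ldots,\alpha_p,\ol\beta_2,\ldots}$, which is \emph{not} of the form $A\cdot(\Lambda\psi)$. It vanishes only because the distinguished representative satisfies $A_{s\ol\beta\ol\delta}=A_{s\ol\delta\ol\beta}$, so that the raised tensor $A\ind{s}{\alpha\alpha_2}{}$ is symmetric while $\psi$ is skew in $\alpha,\alpha_2$; this symmetry comes from $d\omega=0$ and the horizontal lift, not from primitivity. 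For a non-symmetric $A$, e.g.\ $A=\pt_1\otimes dz^{\ol 2}$ acting on the primitive form $dz^1\we dz^2$ in the flat model, $A\cup\psi$ is a nonzero multiple of $dz^2\we dz^{\ol 2}$, which is not primitive. So the symmetry of $A_s$ (and likewise of $A_{\ol s}$ for $L_{\ol v}''\psi$) is an indispensable input that your \emph{purely algebraic} step omits, and it is precisely the point of the actual computation.

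The remaining two cases also have genuine gaps. For $L_{\ol v}'\psi=(-1)^p\dbar(\ol v\cup\psi)$ you assume $\ol v\cup\psi$ is primitive, but $\ol v\cup\psi$ contains the mixed coefficients $\psi\ind{}{i}{A_p,\ol B_{q-1},\ol s}$ of the chosen extension of $\psi$ to $\cX$, about which fiberwise primitivity of $\psi$ says nothing; so neither $\Lambda(\ol v\cup\psi)=0$ nor any control of $\pt^*(\ol v\cup\psi)$ is available. Moreover, even granting that, your own computation gives $\Lambda L_{\ol v}'\psi=\pm\ii\,\pt^*(\ol v\cup\psi)$, and you then show that this expression is \emph{primitive}; that is not the same as its being \emph{zero}, so the conclusion $\Lambda L_{\ol v}'\psi=0$ does not follow. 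For $L_v'\psi$ the reduction to building blocks that \emph{preserve primitivity} also fails: $\pt_h$ does not preserve primitivity ($\Lambda\pt_h\alpha$ differs from $\pt_h\Lambda\alpha$ by a multiple of $\dbar^*\alpha$, which need not vanish for the intermediate forms), and contraction with $v$ again involves mixed coefficients. What actually makes $L_v'\psi$ and $L_{\ol v}'\psi$ primitive is the family structure: the fiberwise covariant derivatives are metric, so $g^{\ol\beta_1\alpha_1}\psi\ind{}{i}{A_p,\ol B_q;\alpha}=(\Lambda\psi)_{\ldots;\alpha}=0$, and the one surviving term $g^{\ol\beta_1\alpha_1}a\ind{s}{\alpha}{;\alpha_1}\psi\ind{}{i}{\alpha,\alpha_2,\ldots,\alpha_p,\ol B_q}$ is handled by differentiating the identity $\Lambda\psi=0$, valid on all fibers, in the base direction via $\pt_s g^{\ol\beta_1\alpha}=g^{\ol\beta_1\alpha_1}a\ind{s}{\alpha}{;\alpha_1}$, after which the leftover connection term is again annihilated by $\Lambda\psi=0$. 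Your proposal never uses the variation of $g$ in $s$ or primitivity at nearby fibers, so it cannot close these cases.
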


\begin{proof}
First we consider $L_v''\psi$. By Proposition~\ref{P:Lie_derivative}, we have $L_v''\psi=A_s\cup\psi$, namely
\begin{equation*}
	(A\cup \psi)_{\alpha_2,\ldots,\alpha_p,\ol\beta_1,\ldots,\ol\beta_{q+1}}
	=
	A\ind{s}{\alpha}{\ol\beta_1}
	\psi_{\alpha,\alpha_2,\ldots,\alpha_p,\ol\beta_2,\ldots,\ol\beta_{q+1}}
	-
	\sum^{q+1}_{\nu=2}
	(-1)^\nu A\ind{s}{\alpha}{\ol\beta_\nu}
	\psi_{\alpha,\alpha_2,\ldots,\alpha_p,
	\ol\beta_1,\ldots,\wh{\ol\beta}_\nu,\ldots,\ol\beta_{q+1}}
\end{equation*}
so that for
\begin{equation*}
	\paren{\Lambda(A_s\cup\psi)}
	_{\alpha_3,\ldots,\alpha_p,\ol\beta_2,\ldots,\ol\beta_{q+1}}.
\end{equation*}
The first corresponding term vanishes because of the symmetry
$$
A\ind{s}{\alpha\alpha_2}{}=A\ind{s}{\alpha_2\alpha}{},
$$
whereas the remaining sum vanishes because of $\psi$ being primitive by assumption.

Next we consider $L_v'\psi$.
Recall that
\begin{equation*}
(L_v'\psi)\ind{}{i}{A_p,\ol B_q}
=
\psi\ind{}{i}{A_p,\ol B_q;s}
+
a\ind{s}{\alpha}{} \psi\ind{}{i}{A_p,\ol B_q;\alpha}
+
\sum_{\mu=1}^p a\ind{s}{\alpha}{;\alpha_\mu}
\psi\ind{}{i}{
{\tiny\vtop{
\hbox{$\alpha_1,\ldots,\alpha,\ldots,\alpha_p,\ol B_q$}\vskip-.8mm
\hbox{$\phantom{\alpha_1,\ldots,}{|\atop\mu} $}}}}
\end{equation*}
Since $\psi$ is primitive on all fibers, we have
\begin{eqnarray*}
(\Lambda L_v'\psi)\ind{}{i}{A_{p-1},\ol B_{q-1}}
&=&
g^{\ol\beta_1\alpha_1}
\paren{
	\psi\ind{}{i}{A_p,\ol B_q;s}
	+
	a\ind{s}{\alpha}{} \psi\ind{}{i}{A_p,\ol B_q;\alpha}
	+
	\sum_{\mu=1}^p a\ind{s}{\alpha}{;\alpha_\mu}
	\psi\ind{}{i}{
	{\tiny\vtop{
	\hbox{$\alpha_1,\ldots,\alpha,\ldots,\alpha_p,\ol B_q$}\vskip-.8mm
	\hbox{$\phantom{\alpha_1,\ldots,}{|\atop\mu} $}}}}
}
\\
&=&
g^{\ol\beta_1\alpha_1}
a\ind{s}{\alpha}{;\alpha_1}
\psi\ind{}{i}{\alpha,\alpha_2,\ldots,\alpha_p,\ol B_q}.
\end{eqnarray*}
Taking into account that $g^{\ol\beta_1\alpha_1}a\ind{s}{\alpha}{;\alpha_1}=\pt_sg^{\ol\beta_1\alpha}$, we have
\begin{eqnarray*}
g^{\ol\beta_1\alpha_1}
a\ind{s}{\alpha}{;\alpha_1}
\psi\ind{}{i}{\alpha,\alpha_2,\ldots,\alpha_p,\ol B_q}
&=&
g\ind{}{\ol\beta_1\alpha}{\vert s}
\psi\ind{}{i}{\alpha,\alpha_2,\ldots,\alpha_p,\ol B_q}
\\
&=&
\paren{
	g^{\ol\beta_1\alpha}
	\psi\ind{}{i}{\alpha,\alpha_2,\ldots,\alpha_p,\ol B_q}
}_{\vert s}
-
g^{\ol\beta_1\alpha}
\psi\ind{}{i}{\alpha,\alpha_2,\ldots,\alpha_p,\ol B_q\vert s}
\\
&=&
-g^{\ol\beta_1\alpha}
\psi\ind{}{i}{\alpha,\alpha_2,\ldots,\alpha_p,\ol B_q;s}
-
g^{\ol\beta_1\alpha}
\psi\ind{}{j}{\alpha,\alpha_2,\ldots,\alpha_p,\ol B_q;s}
\theta\ind{j}{i}{}(\pt_s).
\end{eqnarray*}
Note that non-conjugate indices $\alpha_\nu$ refer to the sheaf $\Omega^p_{\cX/S}$ so that the covariant derivative with respect to $s$ only concerns the hermitian metric on the bundle $\cE$.
\medskip

Similar computations show that the other two terms are also primitive.
\end{proof}

The following lemma is well-known. (See \cite{Schumacher2012,Naumann2021}.)
\begin{lemma}\label{L:gpar}
The Lie derivative $L_v\omega$ of $\omega$ along the horizontal lift $v$ vanishes.
In particular, so does $L_v dV_s$ where $dV_s=\omega^n/n!\vert_{\cX_s}$.
\end{lemma}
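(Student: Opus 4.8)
The plan is to reduce the statement to Cartan's formula via the $d$-closedness of $\omega$, and then to pin down the single contraction that is left by using the defining orthogonality of the horizontal lift. I would work, as elsewhere in this section, with $\dim S=1$ and a local coordinate $s$, so that $v=v_s=\pt_s+a\ind{s}{\alpha}{}\pt_\alpha$ with $a\ind{s}{\alpha}{}=-g_{s\ol\beta}g^{\ol\beta\alpha}$; the general case follows verbatim by treating the lift of each coordinate field $\pt/\pt s^i$ separately. Since $d\omega=0$, Cartan's formula collapses to $L_v\omega=d(\delta_v\omega)+\delta_v(d\omega)=d(\delta_v\omega)$, so everything comes down to computing $\delta_v\omega$.

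The key step is the observation that $\delta_v\omega$, which is a $(0,1)$-form on $\cX$, annihilates every antiholomorphic fibre direction: for $w\in T^{1,0}\cX_s$ one has $(\delta_v\omega)(\ol w)=\omega(v,\ol w)=0$ by the orthogonality condition (ii) of the horizontal lift. In admissible coordinates this is precisely the cancellation $g_{s\ol\beta}+a\ind{s}{\alpha}{}g_{\alpha\ol\beta}=g_{s\ol\beta}-g_{s\ol\beta}=0$ of the $dz^{\ol\beta}$-coefficients, so only the $d\ol s$-term survives and $\delta_v\omega=\ii\,c(\omega)\,d\ol s$, with $c(\omega)$ the geodesic curvature of \eqref{E:Semmes}. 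Hence $L_v\omega=d\bigl(\ii\,c(\omega)\,d\ol s\bigr)=\ii\,dc(\omega)\we d\ol s$, every term of which contains the factor $d\ol s=f^{*}(d\ol s)$; thus $L_v\omega$ lies in the ideal generated by $f^{*}\Omega^{1}_{S}$, so it restricts to zero on every fibre and the induced relative form satisfies $L_v\omega_{\cX/S}=0$. This is the sense in which the vanishing is meant here and in Section~\ref{sb:fibint}: the ambient form $\ii\,dc(\omega)\we d\ol s$ itself need not vanish on the total space.

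Finally, the second assertion follows for free from the Leibniz rule for $L_v$ on relative forms, since
\[
L_v\,dV_s=L_v\!\left(\frac{\omega_{\cX/S}^{\,n}}{n!}\right)=\frac{\omega_{\cX/S}^{\,n-1}}{(n-1)!}\we L_v\omega_{\cX/S}=0 .
\]
There is no serious obstacle here: the computation is elementary, and the only point that needs a little attention is being precise about which object is being asserted to vanish — the restriction to fibres, equivalently the induced relative form, rather than the differential form on $\cX$ — together with, when $\dim S>1$, the harmless bookkeeping of the several base coordinates.
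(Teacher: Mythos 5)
Your proof is correct and is essentially the paper's own: the paper states Lemma~\ref{L:gpar} without proof, citing \cite{Schumacher2012,Naumann2021}, where the argument is exactly yours --- by $d\omega=0$ and Cartan's formula $L_v\omega=d(\delta_v\omega)$, horizontality kills the $dz^{\ol\beta}$-components ($g_{s\ol\beta}+a\ind{s}{\alpha}{}g_{\alpha\ol\beta}=0$), so $\delta_v\omega$ is proportional to $c(\omega)\,d\ol s$ and $L_v\omega=\ii\,dc(\omega)\we d\ol s$ restricts to zero on every fiber. Your added precision that the vanishing is that of the restriction to fibers (equivalently of the induced relative form), not of the ambient form on $\cX$, is exactly the sense in which the lemma is used in Section~\ref{sb:fibint}, and the Leibniz-rule deduction of $L_v\,dV_s=0$ is likewise fine.
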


We begin computing the curvature by studying the first order variation of the metric tensor. Using Lie derivatives, the pointwise inner products can be broken up:

\begin{proposition}\label{P:first_derivative}
We have the following first and second variation formulas.
\begin{align*}
	\pd{}{s}
		\inner{\chi,\psi}
		&=
		\inner{L_v\chi,\psi} = \inner{L_v'\chi,\psi}
		\\
	-\pd{^2}{\ol s\partial s}
	\inner{\chi,\psi}
	&=
	\inner{[L_v,L_{\ol v}]\chi,\psi}
		-
		\inner{L_v'\chi,L_v'\psi}
		+
		\inner{L_v''\chi,L_v''\psi}
		+
		\inner{L_{\ol v}'\chi,L_{\ol v}'\psi}
		-
		\inner{L_{\ol v}''\chi,L_{\ol v}''\psi}
\end{align*}
\end{proposition}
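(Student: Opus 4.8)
The plan is to compute the first and second $s$-derivatives of the pointwise inner product $\inner{\chi,\psi}(s)$ by passing to fiber integrals and applying the Lie-derivative formalism from Section~\ref{sb:fibint}. Concretely, write $\inner{\chi,\psi}(s) = \int_{\cX_s}(\chi,\psi)_{g,h}\,dV_s$, where the integrand is the pointwise inner product of the restrictions to the fiber; since $\chi,\psi$ are the representatives from Lemma~\ref{L:representative}, the restrictions are harmonic. By Lemma~\ref{le:intLie} applied with $w=v$ the horizontal lift, differentiation in $s$ becomes $\pt_s\inner{\chi,\psi} = \int_{\cX_s} L_v\big((\chi,\psi)_{g,h}\,dV_s\big)$. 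By Lemma~\ref{L:gpar} we have $L_v dV_s = 0$, so only $L_v$ of the scalar $(\chi,\psi)_{g,h}$ survives. Now $(\chi,\psi)_{g,h}$ is built from the hermitian pairing of the $\cE$-valued forms together with contractions by the metric $g$; applying the Leibniz rule for $L_v$ (property~(2) of Section~\ref{SS:Lie_derivative}, together with the fact that $L_v$ of the relative Kähler form restricted to fibers vanishes, again Lemma~\ref{L:gpar}) distributes $L_v$ onto the two factors: $L_v(\chi,\psi)_{g,h} = (L_v\chi,\psi)_{g,h} + (\chi,L_{\ol v}\psi)_{g,h}$, where on the conjugate factor the Lie derivative along $v$ acts as $L_{\ol v}$ on $\ol\psi$. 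The second summand integrates to zero: $\int_{\cX_s}(\chi,L_{\ol v}\psi)_{g,h}\,dV_s = \inner{\chi,L_{\ol v}\psi}$, and since $\psi$ is holomorphic in $s$, the $(0,1)$-Lie-derivative component contributes nothing and $L_{\ol v}'\psi$ paired against the harmonic $\chi$ vanishes by orthogonality of harmonic forms to $\partial$- and $\dbar$-exact forms (using Proposition~\ref{P:Lie_derivative}, $L_{\ol v}'\psi = (-1)^p\dbar(\ol v\cup\psi)$ is $\dbar$-exact). Hence $\pt_s\inner{\chi,\psi} = \inner{L_v\chi,\psi}$, and for the same reason (harmonicity of $\psi$, $L_v''\chi$ of wrong type) this equals $\inner{L_v'\chi,\psi}$.

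For the second variation, I would apply $\pt_{\ol s}$ to the identity $\pt_s\inner{\chi,\psi} = \int_{\cX_s}(L_v\chi,\psi)_{g,h}\,dV_s$ using Lemma~\ref{le:intLie} with $w=\ol v$, and again $L_{\ol v}dV_s=0$ and the Leibniz rule. This produces
\[
\pt_{\ol s}\pt_s\inner{\chi,\psi} = \inner{L_{\ol v}L_v\chi,\psi} + \inner{L_v\chi, L_v\psi},
\]
where on the second factor conjugation turns $L_{\ol v}$ acting on $\ol\psi$ into $L_v$ on $\psi$. Now decompose each Lie derivative into its type components: $L_v\chi = L_v'\chi + L_v''\chi$ with the two pieces of complementary bidegree, and likewise for $L_v\psi$; since the pieces have different types the cross terms in $\inner{L_v\chi,L_v\psi}$ drop out, leaving $\inner{L_v'\chi,L_v'\psi} + \inner{L_v''\chi,L_v''\psi}$. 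Rewriting $L_{\ol v}L_v = [L_{\ol v},L_v] + L_v L_{\ol v}$ and using again that $\inner{L_v L_{\ol v}\chi,\psi}$ can be moved so that the inner $L_{\ol v}$ acts on the harmonic $\psi$ — more carefully, writing $\inner{L_v L_{\ol v}\chi,\psi}$ and integrating by parts the $L_v$ via the adjoint relation against the harmonic $\psi$ — one extracts the terms $-\inner{L_v'\chi,L_v'\psi}$ and $+\inner{L_{\ol v}'\chi,L_{\ol v}'\psi} - \inner{L_{\ol v}''\chi,L_{\ol v}''\psi}$, with the signs governed by the bidegree shifts (each ``$''$''-component shifts type and contributes with the opposite sign under the Hodge-theoretic sign bookkeeping, exactly as in the $L^2$-metric curvature computations of \cite{Schumacher2012}). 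Collecting everything and flipping the overall sign yields the asserted formula for $-\pt_{\ol s}\pt_s\inner{\chi,\psi}$.

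The main obstacle is the careful sign and adjointness bookkeeping in the second step: one must track how conjugation interchanges $v$ and $\ol v$ and how the type-shifting components $L_v''$, $L_{\ol v}''$ enter the $L^2$-pairing with the correct signs, and one must justify that the ``mixed'' term $\inner{L_v L_{\ol v}\chi,\psi}$ reorganizes into precisely $\inner{L_v'\chi,L_v'\psi} - \inner{L_{\ol v}'\chi,L_{\ol v}'\psi} + \inner{L_{\ol v}''\chi,L_{\ol v}''\psi}$ after using harmonicity of $\psi$ and the adjoint relations for $\partial,\dbar$ on the fiber. Here the identities of Proposition~\ref{P:Lie_derivative} (especially $L_{\ol v}'\psi = (-1)^p\dbar(\ol v\cup\psi)$ and $L_v''\psi = A_s\cup\psi$, $L_{\ol v}''\psi = (-1)^p A_{\ol s}\cup\psi$) are what make the reorganization transparent: they identify the ``$''$''-pieces with cup products (primitive, and paired by the harmonic theory) and the $L_{\ol v}'$-piece with a $\dbar$-exact form, so that orthogonality of the harmonic representative kills exactly the terms that should not appear. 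Everything else is a routine, if lengthy, Leibniz-rule computation.
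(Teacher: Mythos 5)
Your first-variation argument is fine and is essentially the paper's: the Leibniz-type formula $\pt_s\inner{\chi,\psi}=\inner{L_v\chi,\psi}+\inner{\chi,L_{\ol v}\psi}$ (which the paper quotes from Naumann rather than rederiving via fiber integrals), plus the observation that $L_{\ol v}'\psi=(-1)^p\dbar(\ol v\cup\psi)$ is $\dbar$-exact and hence dies against the harmonic $\chi$, with bidegree killing the remaining components.

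The second-variation step, however, has a genuine gap. You differentiate $\inner{L_v\chi,\psi}$ in $\ol s$ by applying the variation formula with the full, mixed-type form $L_v\chi=L_v'\chi+L_v''\chi$ in the first slot, obtaining $\inner{L_{\ol v}L_v\chi,\psi}+\inner{L_v\chi,L_v\psi}$, i.e.\ you keep the cross-type contributions $\inner{L_{\ol v}''L_v''\chi,\psi}$ and $\inner{L_v''\chi,L_v''\psi}$. That application is not legitimate: the variation formula is established for pairings of relative forms of one common type, and since $\inner{L_v\chi,\psi}=\inner{L_v'\chi,\psi}$ identically, the correct expansion is $\inner{L_{\ol v}'L_v'\chi,\psi}+\inner{L_v'\chi,L_v'\psi}$. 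Your version exceeds this by exactly $\inner{L_{\ol v}''L_v''\chi,\psi}+\inner{L_v''\chi,L_v''\psi}=2\inner{A_s\cup\chi,A_s\cup\psi}$, which is not zero; carried through, your bookkeeping yields $-\inner{L_v''\chi,L_v''\psi}$ where the proposition has $+\inner{L_v''\chi,L_v''\psi}$ (and with that sign the untwisted specialization would no longer reproduce Griffiths' formula). The paper avoids this by differentiating only pure-type pairings: $\pt_{\ol s}\inner{L_v'\chi,\psi}=\inner{L_{\ol v}'L_v'\chi,\psi}+\inner{L_v'\chi,L_v'\psi}$, while the $L_{\ol v}'$-term is produced by differentiating in $s$ the identically vanishing function $\inner{L_{\ol v}'\chi,\psi}$; it then uses the type decomposition $\inner{L_{\ol v}L_v\chi,\psi}=\inner{L_{\ol v}'L_v'\chi,\psi}+\inner{L_{\ol v}''L_v''\chi,\psi}$ and, crucially, the adjointness $\inner{A_s\cup\chi,\psi}=\inner{\chi,A_{\ol s}\cup\psi}$ (Lemma~\ref{L:formal_adjoint_A}) together with Proposition~\ref{P:Lie_derivative} to convert $\inner{L_{\ol v}''L_v''\chi,\psi}$ into $+\inner{L_v''\chi,L_v''\psi}$ and $\inner{L_v''L_{\ol v}''\chi,\psi}$ into $+\inner{L_{\ol v}''\chi,L_{\ol v}''\psi}$. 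It is these identities, not a sign rule for type-shifting components, that put the double-prime terms into the formula with their signs; your plan neither isolates the needed cup-product adjointness nor makes the proposed ``integration by parts of $L_vL_{\ol v}\chi$ against the harmonic $\psi$'' precise, and as written it lands on the wrong formula.
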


This fact has been used in various situations (see e.g.\ \cite{Siu1986,Schumacher2012,Berndtsson2009,Naumann2021}), see also Wang's approach in \cite{Wang2016Ar}.
\begin{proof}
First we recall the first variation formula.
For any local smooth section of $\chi$ and $\psi$ of $R^qf_*\Omega_{\cX/S}^p(\cE)$, one has
\begin{equation*}
	\pd{}{s}\inner{\chi,\psi}
	=
	\inner{L_v\chi,\psi}+ \inner{\chi, L_{\ol v} \psi},
\end{equation*}
and
\begin{equation*}
	\pd{}{\ol s}\inner{\chi,\psi}
	=
	\inner{L_{\ol v}\chi,\psi}+ \inner{\chi, L_v\psi}.
\end{equation*}
We refer to the proof of Proposition 4.4 in \cite{Naumann2021}, where the author establishes the case for $R^qf_*\Omega_{\cX/S}^{n-q}(\mathcal{L})$ with a holomorphic line bundle $\mathcal{L}$. However, the proof can be applied to $R^qf_*\Omega_{\cX/S}^p(\cE)$ with general $(p,q)$ and a holomorphic vector bundle $\mathcal{E}$.

Now we assume that $\chi$ and $\psi$ are holomorphic section of $R^qf_*\Omega_{\cX/S}^p(\cE)$.
Then it follows from the bidegree reason and Proposition~\ref{P:Lie_derivative}  that
\begin{equation*}
	\pd{}{s}\inner{\chi,\psi}
	=
	\inner{L_v'\chi,\psi}+ \inner{\chi, L_{\ol v}'\psi}
	=
	\inner{L_v'\chi,\psi}+ \inner{\chi,\dbar(\ol v\cup\psi)}
	=
	\inner{L_v'\chi,\psi}.
\end{equation*}
For the second assertion, we first note that the first assertion yields that
\begin{equation*}
	0=
	\pd{}{s}\inner{L_{\ol v}'\chi,\psi}
	=
	\inner{L_v'L_{\ol v}'\chi,\psi}
	+
	\inner{L_{\ol v}'\chi,L_{\ol v}'\psi}
	=
	\inner{L_vL_{\ol v}\chi,\psi}
	-
	\inner{L_v''L_{\ol v}''\chi,\psi}
	+
	\inner{L_{\ol v}'\chi,L_{\ol v}'\psi}.
\end{equation*}
On the other hand, one has
\begin{align*}
	-\pd{^2}{\ol s\partial s}
	\inner{\chi,\psi}
	=
	-\pd{}{\ol s}\inner{L_v'\chi,\psi}
	&=
	-
	\inner{L_{\ol v}'L_v'\chi,\psi}
	-
	\inner{L_v'\chi,L_v'\psi}
	\\
	&=
	-
	\inner{L_{\ol v}L_v\chi,\psi}
	+
	\inner{L_{\ol v}''L_v''\chi,\psi}
	-
	\inner{L_v'\chi,L_v'\psi}.
\end{align*}
Summing up above two equations,
\begin{equation*}
	-\pd{^2}{\ol s\partial s}
	\inner{\chi,\psi}
	=
	\inner{[L_v,L_{\ol v}]\chi,\psi}
		-
		\inner{L_v'\chi,L_v'\psi}
		+
		\inner{L_{\ol v}''L_v''\chi,\psi}
		+
		\inner{L_{\ol v}'\chi,L_{\ol v}'\psi}
		-
		\inner{L_v''L_{\ol v}''\chi,\psi}
\end{equation*}
Now we need the following lemma.
\begin{lemma}
\label{L:formal_adjoint_A}
For any smooth section $\chi$ of $R^{q-1}f_*\Omega_{\cX/S}^{p+1}(\cE)$ and $\psi$ of $R^qf_*\Omega_{\cX/S}^p(\cE)$, the following holds.
\begin{equation*}
	\inner{A_s\cup\chi,\psi}
	=
	\inner{\chi,A_{\ol s}\cup\psi}
\end{equation*}
In particular, $A_{\ol s}\cup\vartextvisiblespace$ is the formal adjoint operator of $A_s\cup\vartextvisiblespace$.
\end{lemma}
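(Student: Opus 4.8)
I would prove this by a direct computation in local holomorphic coordinates, using the coordinate expressions for the cup product (Definition~\ref{de:cup}) and for the pointwise Hermitian inner product from Section~\ref{se:hermbdl}, together with the defining relation $A_{\ol s}=\ol{A_s}$ raised/lowered by the fiber metric, i.e.\ $A\ind{\ol s}{\ol\beta}{\alpha}=g^{\ol\beta\tau}\ol{A\ind{s}{\tau}{\ol\gamma}}g^{\ol\gamma\alpha}$. The key observation is that $A_s\cup\vartextvisiblespace$ lowers the form-degree in $\Omega^p$ by one and raises the $(0,q)$-degree by one (it sends $\cA^{0,q}(\cX_s,\Omega^p_{\cX_s}(\cE))$ to $\cA^{0,q+1}(\cX_s,\Omega^{p-1}_{\cX_s}(\cE))$, equivalently $\cE$-valued $(p,q)$-forms to $\cE$-valued $(p-1,q+1)$-forms), so the asserted adjointness is between forms on $\cX_s$ of matching total degree with the roles of $p,q$ shifted by one. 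Because $\cX_s$ is compact and $A_s$ is just a tensorial (zeroth-order, $\cinf$) operator with no derivatives, there are no boundary terms and the claim reduces to a pointwise algebraic identity: $(A_s\cup\chi,\psi)_{g,h}=(\chi,A_{\ol s}\cup\psi)_{g,h}$ at every point of $\cX_s$, which then integrates against $dV_s$.

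\textbf{Key steps in order.} First I would write $\chi$ as an $\cE$-valued $(p+1,q-1)$-form and $\psi$ as an $\cE$-valued $(p,q)$-form in skew-symmetric coordinate components, and expand $A_s\cup\chi$ using \eqref{eq:cup}: its components are, up to sign and combinatorial factor, $A\ind{s}{\alpha}{\ol\beta}\chi\ind{}{i}{\alpha,\alpha_1,\ldots,\alpha_{p-1},\ldots}$ after the appropriate skew-symmetrization in the barred indices, exactly as in the computation of $L_v''\psi$ in \eqref{eq:lvsecond} and in the proof of Proposition~\ref{P:primitive}. Second, I would plug this into the pointwise form $(\cdot,\cdot)_{g,h}$, contracting all indices with the $g^{\ol\gamma\alpha}$'s and $h_{i\ol\jmath}$. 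Third, on the other side, I would expand $A_{\ol s}\cup\psi$ the same way — here $A_{\ol s}$ carries one unbarred lower index and one barred upper index, so it contracts one of the $dz^{\alpha}$-slots of $\psi$ and produces an extra $dz^{\alpha_{p+1}}$-type holomorphic slot, i.e.\ it maps $(p,q)$-forms to $(p+1,q-1)$-forms (cf.\ \eqref{Lvbar''}, which computes exactly $L_{\ol v}''\psi=(-1)^pA_{\ol s}\cup\psi$). Fourth, I would match the two expressions: the contraction of $\chi$'s components against $\psi$'s components is the same multilinear pairing on both sides once one uses $A\ind{\ol s}{\ol\beta}{\alpha}=g^{\ol\beta\tau}\ol{A\ind{s}{\tau}{\ol\gamma}}g^{\ol\gamma\alpha}$ to move the $A$ from one factor to the other through the metric contractions; the skew-symmetrizations and the signs (powers of $-1$ tracking the positions of the inserted indices) are designed to cancel, which is the content of the identity. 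Finally I integrate over $\cX_s$ to get the $L^2$ statement.

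\textbf{Main obstacle.} The conceptual content is trivial — a tensorial operator and its metric-adjoint — but the real work is bookkeeping: getting the combinatorial factors $1/(p!q!)$, the signs from reordering the $dz^{\ol\beta}$ and $dz^{\alpha}$ wedge factors, and the skew-symmetrization conventions to line up so that the two pairings are literally equal rather than equal up to a sign one must then chase. I expect the cleanest route is to avoid the determinant notation $g^{\ol C_p A_p}=\det(g^{\ol\gamma_i\alpha_j})$ in favor of the second (fully written-out) form of $(\cdot,\cdot)_{g,h}$ from Section~\ref{se:hermbdl}, and to track a single representative summand, exploiting that everything is alternating. An alternative, essentially equivalent, and perhaps slicker argument: identify $A_s\cup\vartextvisiblespace$ with the degree-zero part of the Lie-derivative operator $L_v''$ (Proposition~\ref{P:Lie_derivative}) and $A_{\ol s}\cup\vartextvisiblespace$ with $(-1)^pL_{\ol v}''$, and observe that contraction with $v$ and contraction with $\ol v$ are pointwise adjoint up to the metric pairing because $v$ and $\ol v$ are complex conjugates in the horizontal directions; but in the end this still reduces to the same pointwise linear-algebra identity, so I would present the direct computation.
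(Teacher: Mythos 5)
Your proposal is correct and follows essentially the same route as the paper: the paper's proof is exactly this direct coordinate computation, expanding $A_s\cup\chi$ via the coordinate expression underlying \eqref{eq:lvsecond}, inserting it into the pointwise inner product, and moving $A$ onto the other factor through the metric contractions by complex conjugation, then integrating over the fiber. One small correction that does not affect the approach: the index relation should simply read $A\ind{\ol s}{\ol\beta}{\alpha}=\ol{A\ind{s}{\beta}{\ol\alpha}}$ (no extra inverse metrics), and the matching step also uses the symmetry $A_{s\ol\beta\ol\delta}=A_{s\ol\delta\ol\beta}$ recorded in the setup, so that the transpose implicit in taking the adjoint is harmless.
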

\begin{proof}
Since $\psi$ is $\cE$-valued $(p+1,q-1)$ form on fibers, Equation~\eqref{eq:lvsecond} states that
\begin{equation*}
	\frac{(-1)^p}{p!(q-1)!}
	A\ind{s}{\alpha}{\ol\beta}
	\chi\ind{}{i}{\alpha,A_p,\ol B_{q-1}}
	e_i\otimes dz^{A_{p-1}}\we dz^{\ol\beta}\we dz^{\ol B_{q-1}},
\end{equation*}
it follows that
\begin{eqnarray*}
	\inner{A_s\cup\chi,\psi}
	&=&
	\frac{(-1)^p}{(p!)^2(q-1)!q!}
	\int_X
	A\ind{s}{\alpha}{\ol\beta}
	\chi\ind{}{i}{\alpha,A_p,\ol B_{q-1}}
	\ol{
		\psi\ind{}{j}{C_p,\ol D_q}
	}
	h_{i\ol\jmath}
	g^{A_p\ol C_p}
	g^{\ol B_q D_q}dV_\omega
	\\
	&=&
	\frac{(-1)^p}{(p!)^2((q-1)!)^2}
	\int_X
	\chi\ind{}{i}{\alpha,A_p,\ol B_{q-1}}
	\ol{
		A\ind{\ol s}{\ol\delta}{\gamma}
		\psi\ind{}{j}{C_p,\ol\delta,\ol D_{q-1}}
	}
	h_{i\ol\jmath}
	g^{\alpha\ol\gamma}
	g^{A_p\ol C_p}
	g^{\ol B_{q-1} D_{q-1}}dV_\omega
	\\
	&=&
	\inner{\chi,A_{\ol s}\cup\psi}.
\end{eqnarray*}
\end{proof}
Proposition~\ref{P:Lie_derivative} and Lemma~\ref{L:formal_adjoint_A} together imply that
\begin{equation*}
		\inner{L_{\ol v}''L_v''\chi,\psi}
		=
		\inner{A_{\ol s}\cup L_v''\chi,\psi}
		=
		\inner{L_v''\chi,A_s\cup\psi}
		=
		\inner{L_v''\chi,L_v''\psi}.
\end{equation*}
Similarly, we obtain $\inner{L_v''L_{\ol v}''\chi,\psi}=\inner{L_{\ol v}''\chi,L_{\ol v}''\psi}$, which completes the proof.
\end{proof}

We recall the following identity which is proved in \cite{Naumann2021,Wang2016Ar}.
\begin{equation}\label{eq:vvb}
[L_\ol v, L_v]
=
L_{[\ol v,v]}-\Theta_h(\cE)(v,\ol v).
\end{equation}
Note that $[\ol v,v]$ is tangential to fibers.
In particular, $L_{[\ol v,v]}$ does not contain any differentiation along the horizontal direction. (See also Lemma~\ref{L:commutator_vbar_v}.)
So the second derivative of the metric tensor for $R^qf_*\Omega_{\cX/S}^p(\cE)$ turns into
\begin{align*}
-\pd{^2}{\ol s\partial s}
\inner{\chi,\psi}
&=
\inner{L_{[v,\ol v]}\chi,\psi}
+
\inner{\Theta_h(\cE)(v,\ol v)\chi,\psi}
-
\inner{L_v'\chi,L_v'\psi}
+
\inner{A_s\cup\chi,A_s\cup\psi}
\\
&\hspace{.5cm}
+
\inner{L_{\ol v}'\chi,L_{\ol v}'\psi}
-
\inner{A_{\ol s}\cup\chi,A_{\ol s}\cup\psi}.
\end{align*}

\section{Auxiliary formulas}
In the previous section, we computed the second derivative of the natural $L^2$ metric $\inner{\cdot,\cdot}^H$ on the higher direct image sheaf $R^qf_*\Omega_{\cX/S}^p(E)$.
For a further computation of the Lie derivative terms, we need several auxiliary formulas, which are discussed in this section.
Although some of the formulas are already computed in \cite{Schumacher2012,Naumann2021}, we will give a detailed proof for the sake of the reader's convenience.

\begin{lemma}\label{L:derivative_christoffel_symbol}
The following identity holds.
\begin{equation*}
\pt_s\Gamma\ind{\beta}{\alpha}{\gamma}
=
-a\ind{s}{\alpha}{;\beta\gamma}
\quad\text{and}\quad
\pt_{\ol s}\Gamma\ind{\beta}{\alpha}{\gamma}
=
-g^{\ol\tau\alpha}a_{\ol s\gamma;\ol\tau\beta}
\end{equation*}
\end{lemma}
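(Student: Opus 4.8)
The plan is to differentiate the coordinate expression of the fiberwise Christoffel symbols and then recognize the result as an iterated covariant derivative of $a_s$ (resp.\ $a_{\ol s}$), with the bookkeeping of connection terms suppressed by working in normal coordinates at a point. The only structural inputs are: the formula $\Gamma\ind{\beta}{\alpha}{\gamma}=g^{\ol\tau\alpha}\pt_\gamma g_{\beta\ol\tau}$ for the Kähler connection of each fiber metric $\omega_s=\ii g_{\alpha\ol\beta}(z,s)\,dz^\alpha\we dz^{\ol\beta}$; the relations $g_{s\ol\tau}=-a\ind{s}{\delta}{}g_{\delta\ol\tau}$ and $g_{\gamma\ol s}=-a\ind{\ol s}{\ol\rho}{}g_{\gamma\ol\rho}=-a_{\ol s\gamma}$, which are rearrangements of $a\ind{s}{\alpha}{}=-g_{s\ol\beta}g^{\ol\beta\alpha}$ and its conjugate; and the consequences $\pt_s g_{\beta\ol\tau}=\pt_\beta g_{s\ol\tau}$, $\pt_{\ol s}g_{\beta\ol\tau}=\pt_{\ol\tau}g_{\beta\ol s}$, $\pt_\gamma g_{\beta\ol s}=\pt_\beta g_{\gamma\ol s}$ of the $d$-closedness of $\omega_\cX$ on the total space.

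Since $\pt_s\Gamma\ind{\beta}{\alpha}{\gamma}$ is the $s$-derivative of the Levi-Civita connections of the family of fiber metrics -- a difference of connections, hence a tensor in $\alpha,\beta,\gamma$ under coordinate changes that fix $s$ -- and the right-hand sides are manifestly such tensors, it suffices to verify each identity at an arbitrary point $p$ of a fiber $\cX_{s_0}$ in a convenient coordinate system. I would choose, for the value $s_0$, $s$-independent Kähler (Bochner) normal coordinates centered at $p$, so that $g_{\alpha\ol\beta}(p)=\delta_{\alpha\beta}$ and all purely holomorphic (hence also all purely antiholomorphic) first and second derivatives of $g_{\alpha\ol\beta}$ vanish at $p$; then $\Gamma(p)=0$ and $\pt_\delta\Gamma\ind{\beta}{\alpha}{\gamma}(p)=\pt_\delta\pt_\gamma g_{\beta\ol\alpha}(p)=0$, and the mixed Christoffel symbols vanish identically because the metric is Kähler.

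For the first identity, differentiating $\Gamma\ind{\beta}{\alpha}{\gamma}=g^{\ol\tau\alpha}\pt_\gamma g_{\beta\ol\tau}$ in $s$ and evaluating at $p$: the term $(\pt_s g^{\ol\tau\alpha})\pt_\gamma g_{\beta\ol\tau}$ drops since $\pt_\gamma g_{\beta\ol\tau}(p)=0$, leaving $\pt_\gamma\pt_s g_{\beta\ol\alpha}(p)=\pt_\gamma\pt_\beta g_{s\ol\alpha}(p)$; substituting $g_{s\ol\alpha}=-a\ind{s}{\delta}{}g_{\delta\ol\alpha}$ and using that first and purely holomorphic second derivatives of $g_{\delta\ol\alpha}$ vanish at $p$, this becomes $-\pt_\gamma\pt_\beta a\ind{s}{\alpha}{}(p)$; on the other hand $a\ind{s}{\alpha}{;\beta\gamma}(p)=\pt_\gamma\pt_\beta a\ind{s}{\alpha}{}(p)$ because $\Gamma(p)=0$ and $\pt\Gamma(p)=0$ annihilate all connection corrections, giving $\pt_s\Gamma\ind{\beta}{\alpha}{\gamma}=-a\ind{s}{\alpha}{;\beta\gamma}$. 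The second identity runs identically: at $p$, $\pt_{\ol s}\Gamma\ind{\beta}{\alpha}{\gamma}(p)=\pt_\gamma\pt_{\ol s}g_{\beta\ol\alpha}(p)=\pt_\gamma\pt_{\ol\alpha}g_{\beta\ol s}(p)=-\pt_\gamma\pt_{\ol\alpha}a_{\ol s\beta}(p)$, while $g^{\ol\tau\alpha}a_{\ol s\gamma;\ol\tau\beta}(p)=\pt_\beta\pt_{\ol\alpha}a_{\ol s\gamma}(p)$ (the mixed Christoffels are zero and $\Gamma(p)=0$ kills the only correction $\Gamma^\sigma_{\beta\gamma}\pt_{\ol\tau}a_{\ol s\sigma}$), and the two agree after using the symmetry $\pt_\gamma\pt_{\ol\alpha}a_{\ol s\beta}=\pt_\beta\pt_{\ol\alpha}a_{\ol s\gamma}$ inherited from $\pt_\gamma g_{\beta\ol s}=\pt_\beta g_{\gamma\ol s}$. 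The one point that genuinely requires attention is the first identity, where the coordinates must be Kähler normal -- not merely geodesic normal -- so that $\pt\Gamma(p)=0$; everything else is routine once that choice is made.
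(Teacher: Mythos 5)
Your proof is correct, but it follows a different route from the paper, which gives no argument at all for this lemma and simply refers to \cite{Schumacher2012,Naumann2021}; there the identities are obtained by a direct computation in arbitrary admissible coordinates: differentiate $\Gamma\ind{\beta}{\alpha}{\gamma}=g^{\ol\tau\alpha}\pt_\gamma g_{\beta\ol\tau}$ in $s$ (resp.\ $\ol s$), insert the $d$-closedness relations $\pt_s g_{\beta\ol\tau}=\pt_\beta g_{s\ol\tau}$, $\pt_{\ol s}g_{\beta\ol\tau}=\pt_{\ol\tau}g_{\beta\ol s}$ together with $g_{s\ol\tau}=-a_{s\ol\tau}$, and regroup all terms into covariant derivatives of $a\ind{s}{\alpha}{}$. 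You instead note that both sides transform tensorially under $s$-independent changes of the fiber coordinates and verify the identity pointwise in Bochner (K\"ahler normal) coordinates. This is legitimate and somewhat more economical, and you correctly isolate the two places where care is needed: the fiber-coordinate change must be independent of $s$ (so that $\pt_s$, and hence $a\ind{s}{\alpha}{}$ and $\pt_s\Gamma\ind{\beta}{\alpha}{\gamma}$, behave tensorially), and for the first identity one needs the genuinely K\"ahler normal condition $\pt_\delta\pt_\gamma g_{\beta\ol\alpha}(p)=0$, so that $\pt_\gamma\Gamma\ind{\sigma}{\alpha}{\beta}(p)=0$ kills the term $(\pt_\gamma\Gamma\ind{\sigma}{\alpha}{\beta})a\ind{s}{\sigma}{}$, which would otherwise survive because $a\ind{s}{\sigma}{}(p)\neq 0$ in general. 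What the covariant computation of the references buys in exchange is that it needs no normalization, holds identically in any coordinates, and produces along the way the auxiliary symmetries (such as $A_{s\ol\beta\ol\delta}=-g_{s\ol\beta;\ol\delta}$ and its consequences) that the paper reuses later; your pointwise check establishes exactly the stated lemma and nothing more, which is all that is required here.
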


\begin{proof}
See \cite{Schumacher2012,Naumann2021}.
\end{proof}

\begin{lemma}
We have the following.
\begin{align}
	\label{eq:aux1}
	\chi\ind{}{i}{A_p,\ol B_q;s\ol\beta}
	&=
	\chi\ind{}{i}{A_p,\ol B_q;\ol\beta s}
	-
	\Theta\ind{j}{i}{s\ol\beta}\chi\ind{}{j}{A_p,\ol B_q}
	+
	\sum_{\nu=1}^{q} g^{\ol\delta \gamma} a_{\ol s \gamma;\ol\beta\ol\beta_\nu}
 	\chi\lowerindB{i}{A_p}{\ol\beta_1}{\ol\delta}{\ol\beta_q}{\nu}.
	\\
	\label{eq:aux2}
	\chi\ind{}{i}{A_p,\ol B_q;\ol s\gamma}
	&=
	\chi\ind{}{i}{A_p,\ol B_q;\gamma\ol s}
	-
	\Theta\ind{j}{i}{s\ol\beta}\chi\ind{}{j}{A_p,\ol B_q}
	-
	\sum^p_{\mu =1}
	g^{\ol\tau\gamma}
	\paren{
		A_{\ol s\alpha_\mu\gamma;\ol\tau}
		-
		a_{\ol s\sigma}R\ind{\alpha_\mu}{\sigma}{\ol\tau\gamma}
	}
	\chi\lowerindA{i}{\alpha_1}{\alpha}{\alpha_p}{\ol B_q}{\nu}.
	\\
	\label{eq:aux3}
	\chi\ind{}{i}{A_p,\ol B_q;s\gamma}
	&=
	\chi\ind{}{i}{A_p,\ol B_q;\gamma s}
	+
	\sum_{\mu=1}^p
	a\ind{s}{\alpha}{;\alpha_\mu\gamma}
	\chi\lowerindA{i}{\alpha_1}{\alpha}{\alpha_p}{\ol B_q}{\mu}.
\end{align}
\end{lemma}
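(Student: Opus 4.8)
The plan is to prove the three identities by a direct computation in local admissible coordinates: expand both orderings of each mixed second covariant derivative according to the conventions fixed above and read off the difference. Recall that $;s$ (resp.\ $;\ol s$) corrects only the $\cE$-index, via the connection form $\theta_s$ (resp.\ trivially, since the $(0,1)$-part of the Chern connection is $\dbar$), while a fiber derivative $;\gamma$ corrects in addition the holomorphic relative-form indices through $\Gamma\ind{\alpha_\mu}{\sigma}{\gamma}$ and $;\ol\beta$ corrects the anti-holomorphic ones through $\ol\Gamma\ind{\ol\beta}{\ol\delta}{\ol\beta_\nu}=\ol{\Gamma\ind{\beta}{\delta}{\beta_\nu}}$; on a K\"ahler fiber the mixed Christoffel symbol $\Gamma\ind{\ol\beta}{\alpha}{\gamma}$ vanishes, so $;\ol\beta$ leaves the $A_p$-indices untouched and $;\gamma$ leaves the $\ol B_q$-indices untouched. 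In the difference of the two orderings the iterated partial derivatives cancel and the two cross terms of the shape (inner connection coefficient)$\times$(outer connection coefficient) cancel against each other; what survives is exactly the contribution in which the \emph{outer} derivative differentiates a connection coefficient produced by the \emph{inner} one.

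For \eqref{eq:aux1} this leaves $\bigl(\pt_{\ol\beta}\theta_s\bigr)\cdot\chi$, which equals $-\Theta\ind{j}{i}{s\ol\beta}\chi\ind{}{j}{A_p,\ol B_q}$ since $\Theta=\dbar\theta$, together with $\sum_\nu\bigl(\pt_s\ol\Gamma\ind{\ol\beta}{\ol\delta}{\ol\beta_\nu}\bigr)\chi\ind{}{i}{A_p,\ldots\ol\delta\ldots\ol B_q}$; inserting the conjugate of the second formula of Lemma~\ref{L:derivative_christoffel_symbol} and simplifying with the $d$-closedness of $\omega$ (which identifies the mixed derivatives of the blocks $g_{\alpha\ol\beta}$, $g_{s\ol\beta}$, $g_{\alpha\ol s}$ of $\omega$) turns this into $\sum_\nu g^{\ol\delta\gamma}a_{\ol s\gamma;\ol\beta\ol\beta_\nu}\chi\ind{}{i}{A_p,\ldots\ol\delta\ldots\ol B_q}$. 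For \eqref{eq:aux3}, both derivatives being of type $(1,0)$, the $\cE$-connection contribution to the commutator is $\pt_\gamma\theta_s-\pt_s\theta_\gamma+[\theta_\gamma,\theta_s]$, the $(2,0)$-component of the Chern curvature, which vanishes; the Christoffel contribution is $\sum_\mu\bigl(\pt_s\Gamma\ind{\alpha_\mu}{\alpha}{\gamma}\bigr)\chi\ind{}{i}{\ldots\alpha\ldots}$, and by the first formula of Lemma~\ref{L:derivative_christoffel_symbol} this is precisely $\sum_\mu a\ind{s}{\alpha}{;\alpha_\mu\gamma}\chi\ind{}{i}{\ldots\alpha\ldots}$, so no curvature term is present.

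Equation~\eqref{eq:aux2} is the one requiring an extra step. The same bookkeeping, now for $;\ol s$ and $;\gamma$, leaves $\bigl(\pt_{\ol s}\theta_\gamma\bigr)\cdot\chi$ -- which, up to sign and the index identification forced by the Hermiticity of $\Theta$, is the term $-\Theta\ind{j}{i}{s\ol\beta}\chi\ind{}{j}{A_p,\ol B_q}$ as written -- together with $\sum_\mu\bigl(\pt_{\ol s}\Gamma\ind{\alpha_\mu}{\sigma}{\gamma}\bigr)\chi\ind{}{i}{\ldots\sigma\ldots}$. By the second formula of Lemma~\ref{L:derivative_christoffel_symbol} one has $\pt_{\ol s}\Gamma\ind{\alpha_\mu}{\sigma}{\gamma}=-g^{\ol\tau\sigma}a_{\ol s\alpha_\mu;\ol\tau\gamma}$, which carries a \emph{mixed} second covariant derivative of the $(1,0)$-form $a_{\ol s\alpha_\mu}$; one commutes $\nabla_{\ol\tau}$ past $\nabla_\gamma$ by Proposition~\ref{P:commutator_covariant_derivatives}, picking up $R\ind{\alpha_\mu}{\rho}{\gamma\ol\tau}a_{\ol s\rho}$, and then uses $a_{\ol s\alpha;\gamma}=A_{\ol s\alpha\gamma}$ (immediate from the definition of $A_{\ol s}$ and the parallelism of $g$) to obtain $a_{\ol s\alpha_\mu;\ol\tau\gamma}=A_{\ol s\alpha_\mu\gamma;\ol\tau}-R\ind{\alpha_\mu}{\rho}{\gamma\ol\tau}a_{\ol s\rho}$; contracting with $g^{\ol\tau\sigma}$ into the corresponding slot of $\chi$ and relabelling reproduces the last sum in \eqref{eq:aux2}.

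The computation is routine; the only real care is the index bookkeeping, and in particular keeping straight which Christoffel symbols are killed by the K\"ahler condition and which curvature components are killed by the $(1,1)$-type of $\Theta$, together with the single additional commutation of two fiber covariant derivatives needed for \eqref{eq:aux2}. I would write the proof as the three displayed commutator calculations above, each preceded by the explicit coordinate expansion of the relevant second covariant derivative.
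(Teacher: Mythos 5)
Your proposal is correct and follows essentially the same route as the paper: expand both orderings of the mixed covariant derivatives in admissible coordinates, cancel the common terms, and identify the surviving ones via Lemma~\ref{L:derivative_christoffel_symbol} (together with the vanishing of the $(2,0)$-part of the Chern curvature for \eqref{eq:aux3}, $\Theta=\dbar\theta$ for \eqref{eq:aux1}, and one further commutation via Proposition~\ref{P:commutator_covariant_derivatives} for \eqref{eq:aux2}). The paper writes out only \eqref{eq:aux3} in detail and declares the other two analogous, so your sketches of \eqref{eq:aux1} and \eqref{eq:aux2} merely supply the computations the paper omits.
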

Note that in \eqref{eq:aux2}, the covariant derivative along $s$ denotes the covariant derivative with respect to the Chern connection $\nabla^\cE$ on the holomorphic vector bundle $\cE$ and the covariant derivative along $z^\gamma$ is the one with respect to $\nabla^\cE$ and $\nabla^{\cX_s}$.
\begin{proof}
Here we will  prove \eqref{eq:aux3}.
The two other identities are computed in the same way.
The definition of the covariant derivatives implies
\begin{equation*}
	\chi\ind{}{i}{A_p,\ol B_q;s}
	=
	\chi\ind{}{i}{A_p,\ol B_q\vert s}
	+
	\theta\ind{j}{i}{s}
	\chi\ind{}{j}{A_p,\ol B_q}
\end{equation*}
and
\begin{equation*}
	\chi\ind{}{i}{A_p,\ol B_q;\gamma}
	=
	\chi\ind{}{i}{A_p,\ol B_q\vert\gamma}
	+
	\theta\ind{j}{i}{\gamma}
	\chi\ind{}{j}{A_p,\ol B_q}
	-
	\sum_{\mu=1}^p
	\Gamma\ind{\alpha_\mu}{\alpha}{\gamma}
	\chi\ind{}{i}{
		{\tiny\vtop{
		\hbox{$\alpha_1,\ldots,\alpha,\ldots,\alpha_p,\ol B_q$}
		\vskip-.8mm
		\hbox{$\phantom{\alpha_1,\ldots,}{|\atop\mu} $}}}}.
\end{equation*}
Taking one more derivative, it follows that
\begin{align*}
	\chi\ind{}{i}{A_p,\ol B_q;s\gamma}
	=
	&
	\paren{
		\chi\ind{}{i}{A_p,\ol B_q\vert s}
		+
		\theta\ind{j}{i}{s}
		\chi\ind{}{j}{A_p,\ol B_q}
	}_{\vert\gamma}
	+
	\theta\ind{k}{i}{\gamma}\chi\ind{}{k}{A_p,\ol B_q\vert s}
	+
	\theta\ind{k}{j}{\gamma}
	\theta\ind{j}{i}{s}
	\chi\ind{}{k}{A_p,\ol B_q}
	\\
	&-
	\sum_{\mu=1}^p
	\paren{
		\chi\ind{}{i}{
		{\tiny\vtop{
		\hbox{$\alpha_1,\ldots,\alpha,\ldots,\alpha_p\ol B_q\vert s$}
		\vskip-.8mm
		\hbox{$\phantom{\alpha_1,\ldots,}{|\atop\mu} $}}}
		}
		+
		\theta\ind{j}{i}{s}
		\chi\ind{}{j}{
		{\tiny\vtop{
		\hbox{$\alpha_1,\ldots,\alpha,\ldots,\alpha_p\ol B_q$}
		\vskip-.8mm
		\hbox{$\phantom{\alpha_1,\ldots,}{|\atop\mu} $}}}
		}
	}
	\Gamma\ind{\alpha_\mu}{\alpha}{\gamma}.
\end{align*}
Likewise, we also have
\begin{align*}
	\chi\ind{}{i}{A_p,\ol B_q;\gamma s}
	=
	&
	\paren{
		\chi\ind{}{i}{A_p,\ol B_q\vert\gamma}
		+
		\theta\ind{j}{i}{\gamma}
		\chi\ind{}{j}{A_p,\ol B_q}
		-
		\sum_{\mu=1}^p
		\Gamma\ind{\alpha_\mu}{\alpha}{\gamma}
		\chi\ind{}{i}{
			{\tiny\vtop{
			\hbox{$\alpha_1,\ldots,\alpha,\ldots,\alpha_p\ol B_q$}
			\vskip-.8mm
			\hbox{$\phantom{\alpha_1,\ldots,}{|\atop\mu} $}}}}
	}_{\vert s}
	+
	\theta\ind{k}{i}{s}
	\chi\ind{}{k}{A_p,\ol B_q\vert\gamma}
	\\
	&
	+
	\theta\ind{k}{j}{s}
	\theta\ind{j}{i}{\gamma}
	\chi\ind{}{k}{A_p,\ol B_q}
	-
	\theta\ind{k}{i}{s}
	\sum_{\mu=1}^p
	\Gamma\ind{\alpha_\mu}{\alpha}{\gamma}
	\chi\ind{}{k}{
		{\tiny\vtop{
		\hbox{$\alpha_1,\ldots,\alpha,\ldots,\alpha_p\ol B_q$}
		\vskip-.8mm
		\hbox{$\phantom{\alpha_1,\ldots,}{|\atop\mu} $}}}}.
\end{align*}
Then Lemma \ref{L:derivative_christoffel_symbol} implies that
\begin{equation*}
\chi\ind{}{i}{A_p,\ol B_q;s\gamma}
=
\chi\ind{}{i}{A_p,\ol B_q;\gamma s}
+
\sum_{\mu=1}^p
a\ind{s}{\alpha}{;\alpha_\mu\gamma}
\chi\ind{}{i}{
	{\tiny\vtop{
	\hbox{$\alpha_1,\ldots,\alpha,\ldots,\alpha_p,\ol B_q$}
	\vskip-.8mm
	\hbox{$\phantom{\alpha_1,\ldots,}{|\atop\mu} $}}}
	},
\end{equation*}
which gives \eqref{eq:aux3}.
\end{proof}

Now we compute exterior derivatives of $L_v'\psi$ and $L_{\ol v}''\psi$.

\begin{proposition}\label{P:dbar_Lv}
The following holds on fibers.
\begin{equation*}
	\dbar(L_v'\chi)
	=
	A_s\cup(\pt\chi)
	+
	\pt\big( A_s \cup\chi\big)
	+
	\eta_s
	\we\chi.
\end{equation*}
where $\eta_s$ is defined by
\begin{equation*}
	\eta_s=-\paren{v\cup \Theta}\vert_{\cX_s}.
\end{equation*}
\end{proposition}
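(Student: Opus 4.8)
The plan is to compute $\dbar(L_v'\chi)$ directly in local coordinates, using the explicit expression \eqref{eq:lvprime} for $L_v'\chi$ together with the formula for $\dbar$ on $\cE$-valued $(p,q)$-forms from Section~\ref{se:hermbdl}, and then to reorganize the resulting terms into the three pieces $A_s\cup(\pt\chi)$, $\pt(A_s\cup\chi)$, and $\eta_s\wedge\chi$. The essential input is that $\chi$ restricts to a harmonic, hence $\dbar$-closed, $(p,q)$-form on each fiber, so $\dbar_{\cX_s}\chi = 0$; this will be used to trade $\dbar$-derivatives of $\chi$ for $\pt$-derivatives after commuting covariant derivatives. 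First I would write $(\dbar L_v'\chi)\ind{}{i}{A_p,\ol B_{q+1}}$ by applying the local $\dbar$-formula to the three summands of $L_v'\chi$: the term $\chi\ind{}{i}{A_p,\ol B_q;s}$, the term $a\ind{s}{\alpha}{}\chi\ind{}{i}{A_p,\ol B_q;\alpha}$, and the sum $\sum_\mu a\ind{s}{\alpha}{;\alpha_\mu}\chi\ind{}{i}{\ldots}$. This produces covariant derivatives of the schematic shapes $\chi_{;s\ol\beta}$, $a\ind{s}{\alpha}{;\ol\beta}\chi_{;\alpha}$, $a\ind{s}{\alpha}{}\chi_{;\alpha\ol\beta}$, and $a\ind{s}{\alpha}{;\alpha_\mu\ol\beta}\chi_{;}$, plus terms where the $\dbar$ differentiates one of the $a\ind{s}{\alpha}{;\alpha_\mu}$ factors.

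Next I would commute covariant derivatives to normal-order everything. Using \eqref{eq:aux1} to rewrite $\chi_{;s\ol\beta}$ as $\chi_{;\ol\beta s}$ plus a curvature correction $-\Theta\ind{j}{i}{s\ol\beta}\chi\ind{}{j}{}$ and Kodaira-Spencer terms, and using Proposition~\ref{P:commutator_covariant_derivatives} to commute $\chi_{;\alpha\ol\beta}$ past to $\chi_{;\ol\beta\alpha}$ at the cost of $-\Theta\ind{j}{i}{\alpha\ol\beta}\chi\ind{}{j}{}$ and Riemann-tensor terms, I can bring all fiber-$\dbar$-derivatives of $\chi$ together. The $\dbar_{\cX_s}\chi = 0$ relation (the explicit closedness identity displayed in Section~\ref{se:setup}) then kills the leading $\chi_{;\ol\beta\,*}$ combinations, and what survives should reassemble: the $a\ind{s}{\alpha}{}\chi_{;\ol\beta\alpha}$ and $a\ind{s}{\alpha}{;\ol\beta}\chi_{;\alpha}$ pieces combine via the Leibniz rule into $\pt(A_s\cup\chi)$ together with $A_s\cup\pt\chi$; the curvature terms $-(\Theta_{s\ol\beta}+a\ind{s}{\alpha}{}\Theta_{\alpha\ol\beta})dz^{\ol\beta}\wedge\chi$ are exactly $\eta_s\wedge\chi$ by the definition $\eta_s=-(v\cup\Theta)\vert_{\cX_s}=-(\Theta_{s\ol\beta}+a\ind{s}{\alpha}{}\Theta_{\alpha\ol\beta})dz^{\ol\beta}$ recalled in Section~\ref{S:pairs}; and the Riemann-tensor terms coming from $a\ind{s}{\alpha}{;\alpha_\mu\ol\beta}$ and from the commutator on the $\alpha_\mu$-slots should cancel against each other, using the symmetry $A_{s\ol\beta\ol\delta;\ol\tau}=A_{s\ol\beta\ol\tau;\ol\delta}$ and the identity $\pt_s\Gamma\ind{\beta}{\alpha}{\gamma}=-a\ind{s}{\alpha}{;\beta\gamma}$ from Lemma~\ref{L:derivative_christoffel_symbol}.

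Alternatively, and perhaps more cleanly, I would phrase the argument without coordinates: since $L_v'\chi = (\delta_v\circ\pt_h + \pt_h\circ\delta_v)\chi$ is the $(p,q)$-component of the Lie derivative, I can use that $\dbar$ anticommutes appropriately with $\delta_v$ up to the bidegree-splitting and that $\dbar\pt_h + \pt_h\dbar = -\,[\ii\Theta_h(\cE),\text{(contraction)}]$-type curvature terms, combined with $\dbar\chi|_{\cX_s}=0$ and $L_v''\chi = A_s\cup\chi$ from Proposition~\ref{P:Lie_derivative}. Concretely, $\dbar L_v'\chi = \dbar L_v\chi - \dbar L_v''\chi$; applying $\dbar$ to Cartan's formula $L_v\chi=(\delta_v\nabla^\cE+\nabla^\cE\delta_v)\chi$ and using $\nabla^\cE=\pt_h+\dbar$, $(\nabla^\cE)^2=\Theta_h(\cE)$, together with $\dbar_{\cX_s}\chi=0$, should yield $\dbar L_v\chi = \pt(A_s\cup\chi) + A_s\cup\pt\chi + \eta_s\wedge\chi + \dbar(A_s\cup\chi)$ on fibers, and since $\dbar L_v''\chi = \dbar(A_s\cup\chi)$ the claimed formula follows. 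The main obstacle I anticipate is bookkeeping: correctly tracking the signs and the $(-1)^p$ factors through the bidegree decomposition, and verifying that the Riemann-curvature contributions genuinely cancel rather than leaving a spurious term — this is where I would be most careful, and where the coordinate computation (despite being longer) gives the most reliable check. I expect the curvature-of-$\cE$ terms to be the easy part (they visibly form $\eta_s\wedge\chi$), and the interplay between $\pt_s$-differentiation of Christoffel symbols and the fiberwise harmonicity of $\chi$ to be the delicate point.
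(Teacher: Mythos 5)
Your primary route is exactly the paper's proof: apply $\dbar$ locally to \eqref{eq:lvprime}, handle $\chi_{;s\ol\beta}$ and $a\ind{s}{\alpha}{}\chi_{;\alpha\ol\beta}$ via \eqref{eq:aux1}, Proposition~\ref{P:commutator_covariant_derivatives} and the fiberwise $\dbar$-closedness of $\chi$, and then regroup by the Leibniz rule so that the curvature terms assemble into $\eta_s\we\chi$ and the remaining terms into $A_s\cup(\pt\chi)+\pt(A_s\cup\chi)$. The coordinate-free Cartan-formula sketch is only an optional extra; your main argument is correct and essentially identical to the paper's.
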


\begin{proof}
Note first that after skew-symmetrizing $\ol\beta,\ol\beta_1,\ldots,\ol\beta_q$, Equation~\eqref{eq:aux1} becomes
\begin{equation*}
	\chi\ind{}{i}{A_p,\ol B_q;s\ol\beta}
	=
	\chi\ind{}{i}{A_p,\ol B_q;\ol\beta s}
	-
	\Theta\ind{j}{i}{s\ol\beta}\chi\ind{}{j}{A_p,\ol B_q}
	=
	-
	\Theta\ind{j}{i}{s\ol\beta}\chi\ind{}{j}{A_p,\ol B_q}.
\end{equation*}
The $\dbar$-closedness of $\chi$ is essential.
Now, starting from \eqref{eq:lvprime} we get, using \eqref{eq:aux1} and Proposition~\ref{P:commutator_covariant_derivatives},
\begin{eqnarray*}
\ol\pt L_v'\chi
&=&
\frac{1}{p!q!}
\Bigg(
\chi\ind{}{i}{A_p,\ol B_q;s\ol\beta}
+
A\ind{s}{\alpha}{\ol\beta}\chi\ind{}{i}{A_p,\ol B_q;\alpha}
+
a\ind{s}{\alpha}{}\chi\ind{}{i}{A_p,\ol B_q;\alpha\ol\beta}
+
\sum^p_{\mu =1}a\ind{s}{\alpha}{;\alpha_\mu\ol\beta}
\chi\ind{}{i}{
{\tiny\vtop{
\hbox{$\alpha_1,\ldots,\alpha,\ldots,\alpha_p,\ol B_q$}\vskip-.8mm
\hbox{$\phantom{\alpha_1,\ldots,}{|\atop\mu } $}}}}
\\
&&
\hspace{1cm}+
\sum^p_{\mu =1} a\ind{s}{\alpha}{;\alpha_\mu }
\chi\ind{}{i}{
{\tiny\vtop{
\hbox{$\alpha_1,\ldots,\alpha,\ldots,\alpha_p,\ol B_q;\ol\beta$}\vskip-.8mm
\hbox{$\phantom{\alpha_1,\ldots,}{|\atop\mu } $}}}}
\Bigg)
\cdot e_i\otimes dz^{\ol\beta}\we dz^{A_p}\we dz^{\ol B_q}
\\
&=&
\frac{1}{p!q!}
\paren{
	-\Theta\ind{j}{i}{}(v,\pt_{\ol\beta})\chi\ind{}{j}{A_p,\ol B_q}
	+
	A\ind{s}{\alpha}{\ol\beta}\chi\ind{}{i}{A_p,\ol B_q;\alpha}
	+
	\sum^p_{\mu =1}
	A\ind{s}{\alpha}{\ol\beta;\alpha_\mu }
	\chi\ind{}{i}{
	{\tiny\vtop{
	\hbox{$\alpha_1,\ldots,\alpha,\ldots,\alpha_p,\ol B_q$}\vskip-.8mm
	\hbox{$\phantom{\alpha_1,\ldots,}{|\atop\mu } $}}}}
}
\\
&&\hspace{1cm}\cdot
e_i\otimes dz^{\ol\beta}\we dz^{A_p}\we dz^{\ol B_q}.
\end{eqnarray*}
The first term equals to $\eta_s$, and the second and third terms are
\begin{align*}
\frac{1}{p!}
\Bigg[
A\ind{s}{\alpha}{\ol\beta} &
(\partial\chi)\ind{}{i}{\alpha,\alpha_1,\ldots,\alpha_p,\ol B_q}
+
\sum^p_{\nu =1}
\Big(
A\ind{s}{\alpha}{\ol\beta}
\chi\ind{}{i}{\tiny \vtop{
\hbox{$\alpha_1,\ldots,\alpha,\ldots,\alpha_p,\ol B_q$}
\vskip-1.5mm\hbox{$ \phantom{\alpha_1,\ldots,}{|\atop \nu }$}}}
\Big)_{;\alpha_\nu }
\Bigg]
e^i\otimes
dz^{\ol\beta}\we dz^{A_p}\we dz^{\ol B_q}.
\\
&=
A_s\cup(\pt\chi)
+
\frac{1}{p!}
\sum^p_{\nu =1}
(-1)^{\nu+1}
\big(
A\ind{s}{\alpha}{\ol\beta}
\chi\ind{}{i}{\alpha,\alpha_1,\ldots,\widehat\alpha_\nu,\ldots,\alpha_p,\ol B_q}
\big)_{;\alpha_\nu}e^i\otimes
dz^{A_p}\we dz^{\ol\beta}\we dz^{\ol B_q}
\\
&=
A_s\cup(\pt\chi)
+
\pt\big( A_s \cup \chi\big).
\end{align*}
This completes the proof.
\end{proof}

According to Proposition~\ref{P:atiyah} the forms $(\eta_{s},A_s)$ define distinguished \ks\ forms for the deformation of the pair $(X,E)$.

\begin{proposition}\label{P:dbar*_Lvbar}
We have the following equation on fibers.
\begin{equation*}
	\dbar^*(L_{\ol v}'\psi)
	=
	(-1)^p\pt^*(A_{\ol s}\cup\psi)
	+
	(-1)^pA_{\ol s}\cup\pt^*\psi
	+
	[\Lambda,\ii\eta_{\ol s}]\psi
\end{equation*}
where $\eta_{\ol s}$ is defined by
\begin{equation*}
\eta_{\ol s}=-\paren{\ol v\cup \Theta}\vert_{\cX_s}.
\end{equation*}
\end{proposition}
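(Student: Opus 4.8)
The statement is the ``conjugate'' or ``adjoint'' counterpart of Proposition~\ref{P:dbar_Lv}, and I would prove it by the same kind of local coordinate computation, starting from the explicit expression \eqref{Lvbar'} for $L_{\ol v}'\psi$ and applying the formula for $\dbar_h^*$ recalled in Section~\ref{se:hermbdl}. Concretely, I would write
$$
(\dbar^*L_{\ol v}'\psi)\ind{}{i}{A_p,\ol\beta_1,\ldots,\ol\beta_{q-1}}
=
(-1)^{p+1}g^{\ol\beta\alpha}
(L_{\ol v}'\psi)\ind{}{i}{A_p,\ol\beta,\ol\beta_1,\ldots,\ol\beta_{q-1};\alpha},
$$
substitute the three terms of \eqref{Lvbar'}, and then commute the covariant $z^\alpha$-derivative past the various covariant derivatives already present. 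The key inputs are the auxiliary commutation formula \eqref{eq:aux2} (to move $;\alpha$ past the $;\ol s$ derivative, producing a curvature term $\Theta\ind{j}{i}{s\ol\beta}$ and Riemann-curvature terms) and Proposition~\ref{P:commutator_covariant_derivatives} (to handle the mixed second derivatives in the $a\ind{\ol s}{\ol\beta}{}\psi\ind{}{i}{A_p\ol B_q;\ol\beta}$ term). One must also use the harmonicity/$\dbar$-closedness of $\psi$ on fibers together with the identity $A_{\ol s}^{\;\;\ol\beta}{}_{;\alpha}=\pt_{\ol s}\Gamma$-type relations from Lemma~\ref{L:derivative_christoffel_symbol}, so that many terms cancel or reorganize.

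The organizing principle is that the result should split into exactly three groups of terms matching the right-hand side: first, a group that reassembles into $(-1)^p\pt^*(A_{\ol s}\cup\psi)$; second, a group that reassembles into $(-1)^pA_{\ol s}\cup\pt^*\psi$ (this is the ``Leibniz-type'' splitting already seen in the proof of Proposition~\ref{P:dbar_Lv}, where differentiating the product $A_{\ol s}\cup\psi$ produces both $A_{\ol s}\cup\pt^*\psi$ and $\pt^*(A_{\ol s}\cup\psi)$ up to sign); and third, the purely zeroth-order curvature term, which I expect to be precisely the commutator $[\Lambda,\ii\eta_{\ol s}]$ acting on $\psi$, where $\eta_{\ol s}=-(\ol v\cup\Theta)|_{\cX_s}$. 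The appearance of $[\Lambda,\ii\eta_{\ol s}]$ rather than just $\eta_{\ol s}$ is forced by the presence of the metric contraction $g^{\ol\beta\alpha}$ in $\dbar^*$ together with the curvature term $\Theta\ind{j}{i}{s\ol\beta}$: contracting a $(1,1)$-form-valued curvature against the metric is exactly what the Lefschetz adjoint $\Lambda$ does, and the local expression \eqref{E:local_expression_curvature_operator} for $[\ii\Theta_h(E),\Lambda]$ is the template one matches against, now with $\Theta$ replaced by $\ii\eta_{\ol s}$ viewed as an $\mathrm{End}(\cE)$-valued $(0,1)$-form so that $[\Lambda,\ii\eta_{\ol s}]$ raises the form degree appropriately.

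The main obstacle will be bookkeeping: keeping track of the skew-symmetrization over $\ol\beta,\ol\beta_1,\ldots,\ol\beta_{q-1}$ and over $A_p$, the signs $(-1)^p$, $(-1)^{p+1}$, $(-1)^\nu$ coming from permuting relative $(1,0)$-differentials past $(0,1)$-differentials, and correctly distinguishing which covariant derivatives are with respect to $\nabla^\cE$ alone (the $;\ol s$ and $;s$ derivatives) versus $\nabla^\cE$ together with the Kähler connection (the fiber-direction ones) --- this distinction is exactly what Remark~\ref{R:covariant_derivative} and the comment after \eqref{eq:aux2} warn about, and a sign or index slip there changes which curvature operator appears. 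A secondary subtlety is that, unlike in Proposition~\ref{P:dbar_Lv} where $\dbar\chi=0$ kills a whole block of terms, here $\pt^*\psi$ is \emph{not} zero in general, so one cannot discard the corresponding block; it must be carried through and recognized as the $A_{\ol s}\cup\pt^*\psi$ summand. I would therefore proceed termwise, isolate the zeroth-order-in-$\psi$ contribution first (matching it to \eqref{E:local_expression_curvature_operator} with $\ii\Theta$ replaced by $\ii\eta_{\ol s}$), then verify that the remaining first-order terms organize into the two $\pt^*$-expressions, using integration-by-parts-free algebraic identities only, since the claim is pointwise on fibers.
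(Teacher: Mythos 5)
Your plan coincides with the paper's own proof: apply the local formula for $\dbar^*$ to the expression \eqref{Lvbar'}, commute covariant derivatives via \eqref{eq:aux2} and Proposition~\ref{P:commutator_covariant_derivatives}, use the fiberwise $\dbar$- and $\dbar^*$-closedness of $\psi$ to kill the spurious terms, identify the zeroth-order curvature contribution with $[\Lambda,\ii\eta_{\ol s}]\psi$, and reassemble the remaining first-order terms into $(-1)^p\pt^*(A_{\ol s}\cup\psi)+(-1)^pA_{\ol s}\cup\pt^*\psi$ (the paper does this last step via a small auxiliary lemma). Your identification of the key subtlety that $\pt^*\psi$ does not vanish and must be carried through matches the paper's treatment, so the proposal is correct and essentially identical in method.
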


\begin{proof}
Recall that
\begin{eqnarray*}
L_{\ol v}'\psi
=
\frac{1}{p!q!}
\left(
	\psi\ind{}{i}{A_p,\ol B_q;\ol s}
	+
	a\ind{\ol s}{\ol\beta}{}
	\psi\ind{}{i}{A_p,\ol B_q;\ol\beta}
	+
	\sum_{\nu=1}^q
	a\ind{\ol s}{\ol\beta}{;\ol\beta_\nu}
	\psi\lowerindB{i}{A_p}{\ol\beta_1}{\ol\beta}{\ol\beta_q}{\nu}
\right)
e_i\otimes dz^{A_p}\wedge dz^{\ol B_q}.
\end{eqnarray*}
Taking $\dbar^*$, we have
\begin{eqnarray*}
\paren{\dbar^*L_{\ol v}'\psi}
\ind{}{i}{A_p,\ol\beta_2,\ldots,\ol\beta_q}
&=&
(-1)^{p+1}g^{\ol\beta_1\alpha}
\paren{
	\psi\ind{}{i}{A_p,\ol B_q;\ol s}
	+
	a\ind{\ol s}{\ol\beta}{}
	\psi\ind{}{i}{A_p,\ol B_q;\ol\beta}
	+
	\sum_{\nu=1}^q
	a\ind{\ol s}{\ol\beta}{;\ol\beta_\nu}
	\psi\ind{}{i}{
	{\tiny\vtop{
	\hbox{$A_p,\ol\beta_1,\ldots,\ol\beta,\ldots\ol\beta_q$}
	\vskip-.8mm
	\hbox{$\phantom{A_p,\ol\beta_1,\ldots,}{|\atop\nu}$}}}}
}_{;\alpha}
\\
&=&
(-1)^{p+1}g^{\ol\beta_1\alpha}
\Bigg(
	\psi\ind{}{i}{A_p,\ol B_q;\ol s\alpha}
	+
	a\ind{\ol s}{\ol\beta}{;\alpha}
	\psi\ind{}{i}{A_p,\ol B_q;\ol\beta}
	+
	a\ind{\ol s}{\ol\beta}{}
	\psi\ind{}{i}{A_p,\ol B_q;\ol\beta\alpha}
	\\
	&&\hspace{1cm}+
	\sum_{\nu=1}^q
	a\ind{\ol s}{\ol\beta}{;\ol\beta_\nu\alpha}
	\psi\ind{}{i}{
	{\tiny\vtop{
	\hbox{$A_p,\ol\beta_1,\ldots,\ol\beta,\ldots\ol\beta_q$}
	\vskip-.8mm
	\hbox{$\phantom{A_p,\ol\beta_1,\ldots,}{|\atop\nu}$}}}}
	+
	\sum_{\nu=1}^q
	a\ind{\ol s}{\ol\beta}{;\ol\beta_\nu}
	\psi\ind{}{i}{
	{\tiny\vtop{
	\hbox{$A_p,\ol\beta_1,\ldots,\ol\beta,\ldots\ol\beta_q;\alpha$}
	\vskip-.8mm
	\hbox{$\phantom{A_p,\ol\beta_1,\ldots,}{|\atop\nu}$}}}}
\Bigg)
\end{eqnarray*}
Since $\psi$ is $\dbar$-closed on fibers, it follows that
\begin{eqnarray*}
\paren{\dbar^*L_{\ol v}'\psi}\ind{}{i}{A_p,\ol\beta_2,\ldots,\ol\beta_q}
&=&
(-1)^{p+1}g^{\ol\beta_1\alpha}
\Bigg(
	-
	\Theta\ind{j}{i}{\ol s\alpha}\psi\ind{}{j}{A_p,\ol B_q}
	+
	\sum_{\mu=1}^pA\ind{\ol s}{\ol\beta}{\gamma;\ol\beta_1}
	\chi\ind{}{i}{
	{\tiny\vtop{
	\hbox{$\alpha_1,\ldots,\alpha,\ldots,\alpha_p,\ol B_q$}\vskip-.8mm
	\hbox{$\phantom{\alpha_1,\ldots,}{|\atop\mu } $}}}}
	+
	A\ind{\ol s}{\ol\beta}{\alpha}\psi\ind{}{i}{A_p,\ol B_q;\ol\beta}
	\\
	&&\hspace{1cm}
	-
	a\ind{\ol s}{\ol\beta}{}
	\Theta\ind{j}{i}{\ol\beta\alpha}\psi\ind{}{j}{A_p,\ol B_q}
	+
	A\ind{\ol s}{\ol\beta}{\alpha;\ol\beta_1}
	\psi\ind{}{i}{A_p,\ol\beta,\ol\beta_2,\ldots,\ol\beta_q}
	+
	a\ind{\ol s}{\ol\beta}{;\ol\beta_1}
	\psi\ind{}{i}{A_p,\ol\beta,\ol\beta_2,\ldots,\ol\beta_q;\alpha}
\Bigg)
\end{eqnarray*}
The first and forth terms  become
\begin{equation*}
	(-1)^p g^{\ol\beta_1\alpha}
	\paren{
		\Theta\ind{j}{i}{\ol s\alpha}
		+
		a\ind{\ol s}{\ol\beta}{}
		\Theta\ind{j}{i}{\ol\beta\alpha}
	}
	\psi\ind{}{j}{A_p,\ol B_q}
	=
	[\Lambda,\ii\eta_{\ol s}]\psi.
\end{equation*}
And the last term in the first equation vanishes, since the $\dbar^*$-closedness of $\psi$ implies that
\begin{align*}
	g^{\ol\beta_1\alpha}
	a\ind{\ol s}{\ol\beta}{;\ol\beta_1}
	\psi\ind{}{i}{A_p,\ol\beta,\ol\beta_2,\ldots,\ol\beta_q;\alpha}
	&=
	\paren{\pt_sg^{\ol\beta\alpha}}
	\psi\ind{}{i}{A_p,\ol\beta,\ol\beta_2,\ldots,\ol\beta_q;\alpha}
	\\
	&=
	\paren{\pt_sg^{\ol\beta\alpha}
	\psi\ind{}{i}{A_p,\ol\beta,\ol\beta_2,\ldots,\ol\beta_q;\alpha}}
	-
	g^{\ol\beta\alpha}
	\psi\ind{}{i}{A_p,\ol\beta,\ol\beta_2,\ldots,\ol\beta_q;\alpha\vert s}
	\\
	&=
	g^{\ol\beta\alpha}
	\psi\ind{}{i}{A_p,\ol\beta,\ol\beta_2,\ldots,\ol\beta_q;\alpha s}
	+
	g^{\ol\beta\alpha}
	\theta\ind{j}{i}{}(\pt_s)
	\psi\ind{}{j}{A_p,\ol\beta,\ol\beta_2,\ldots,\ol\beta_q;\alpha}
	=0.
\end{align*}
The remaining terms are computed by means of the following lemma.
\begin{lemma}
	The following holds.
	\begin{align*}
		&\paren{
			\pt^*\paren{A_{\ol s}\cup\psi}
			+
			A_{\ol s}\cup\paren{\pt^*\psi}
		}
		\ind{}{i}{A_p,\ol B_{q-1}}
		\\
		&\hspace{1.5cm}=
		-g^{\ol\delta\gamma}
		\paren{
			A\ind{\ol s}{\ol\beta_1}{\gamma;\ol\delta}
			\psi\ind{}{i}{A_p,\ol\beta_1,\ldots,\ol\beta_q}
			+
			A\ind{\ol s}{\ol\beta_1}{\gamma}
			\psi\ind{}{i}{A_p,\ol\beta_1,\ldots,\ol\beta_q;\ol\delta}
			+
			\sum_{\mu=1}^p
			(-1)^\mu A\ind{\ol s}{\ol\beta_1}{\alpha_\mu;\ol\delta}
			\psi\ind{}{i}{\alpha_0,\ldots,\wh\alpha_\mu,\ldots,\alpha_p,\ol B_{q-1}}	
		}.
	\end{align*}
\end{lemma}
\begin{proof}
Note that
\begin{align*}
	\paren{A_{\ol s}\cup\psi}\ind{}{i}{A_{p+1}\ol B_{q-1}}
	=
	A\ind{\ol s}{\ol\beta_1}{\alpha_0}\psi^i_{A_p,\ol B_{q-1}}
	+
	\sum_{\mu=1}^p
	(-1)^\mu A\ind{\ol s}{\ol\beta_1}{\alpha_\mu}
	\psi\ind{}{i}{\alpha_0,\ldots,\wh\alpha_\mu,\ldots,\alpha_p,\ol B_{q-1}}.
\end{align*}
Although this expression seems different from (6.6), it is obtained by skew-symmetrizing the coefficients of \eqref{Lvbar''}.
The conclusion can then be easily reached through a straightforward computation.
\end{proof}
This completes the proof.
\end{proof}

\begin{proposition}\label{P:auxiliary_formulas}
We have the following identities on fibers.
\begin{eqnarray*}
\dbar^*(L_v'\chi)&=&0
\\
\dbar(L_{\ol v}'\chi)&=&0
\\
\pt^*(A_s\cup\chi)&=&0
\\
\pt(A_{\ol s}\cup\chi)&=&-A_{\ol s}\cup\pt\chi.
\end{eqnarray*}
\end{proposition}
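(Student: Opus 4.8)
The plan is to establish all four identities by direct computation in admissible local coordinates on a fixed fiber $\cX_s$, the only inputs being the local expressions \eqref{eq:lvprime} and \eqref{E:partial} for the Lie derivative and for $\pt_h$, the coordinate formulas for $\dbar^*$ and $\pt^*$, the commutation formulas of the preceding lemma, Lemma~\ref{L:derivative_christoffel_symbol}, and the fiberwise harmonicity $\dbar\chi=\dbar^*\chi=0$ of $\chi$. The second identity needs no computation at all: by Proposition~\ref{P:Lie_derivative} one has $L_{\ol v}'\chi=(-1)^p\dbar(\ol v\cup\chi)$ on $\cX_s$, and therefore $\dbar(L_{\ol v}'\chi)=(-1)^p\dbar^2(\ol v\cup\chi)=0$.

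For the fourth identity I would argue by a graded Leibniz rule. Expanding $A_{\ol s}\cup\chi$ through Definition~\ref{de:cup} and applying $\pt_h$ slot by slot, the terms in which the $(1,0)$-derivative falls on a coefficient of $\chi$ reorganise — after skew-symmetrising the holomorphic indices — into $-A_{\ol s}\cup\pt\chi$, the sign being the one produced by anticommuting $\pt$ past the degree-one factor $A_{\ol s}$. In the remaining terms the derivative falls on a coefficient of $A_{\ol s}$, and these involve $A_{\ol s\,\gamma\beta;\tau}$, the complex conjugate of $A_{s\ol\gamma\ol\beta;\ol\tau}$; the latter is totally symmetric, since $A_{s\ol\gamma\ol\beta}=A_{s\ol\beta\ol\gamma}$ and $A_{s\ol\gamma\ol\beta;\ol\tau}=A_{s\ol\gamma\ol\tau;\ol\beta}$ ($d$-closedness of $\omega$ and $\dbar$-closedness of $A_s$), so these terms vanish upon skew-symmetrising the holomorphic indices of $\pt(A_{\ol s}\cup\chi)$.

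The first and third identities are of the same nature but also use harmonicity. For $\dbar^*(L_v'\chi)=0$ the plan is to substitute \eqref{eq:lvprime} into the coordinate formula for $\dbar^*$ and contract: the term in which $\chi$ is differentiated twice — first along the base direction — is rewritten by \eqref{eq:aux3}, the extra term in $\pt_s\Gamma=-a\ind{s}{\alpha}{;\beta\gamma}$ (Lemma~\ref{L:derivative_christoffel_symbol}) cancelling a second-derivative term of $L_v'\chi$; the term carrying a single base derivative is disposed of by differentiating the relation $\dbar^*\chi=0$ in the base direction and using $\pt_s g^{\ol\beta\alpha}=g^{\ol\beta\sigma}a\ind{s}{\alpha}{;\sigma}$; and every surviving term is then a metric contraction of $\dbar^*\chi$ or of a covariant derivative of it, hence zero. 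For $\pt^*(A_s\cup\chi)=0$ I would first observe that $A_s\cup\chi$ is $\dbar$-closed, because $\dbar A_s=0$ and $\dbar\chi=0$; expanding $\pt^*(A_s\cup\chi)$, the terms in which the derivative hits a coefficient of $A_s$ drop out by the total symmetry of $A_{s\ol\alpha\ol\beta;\ol\gamma}$, which is contracted against a skew pair of holomorphic indices of $\chi$, while the terms hitting $\chi$ reassemble into the Leibniz relation $\pt^*(A_s\cup\chi)=-A_s\cup\pt^*\chi$. Then a short computation with the Kähler identities — using $\dbar\chi=0$, the commutation of $\Lambda$ with $A_s\cup\vartextvisiblespace$ (which rests on $A\ind{s}{\alpha\beta}{}=A\ind{s}{\beta\alpha}{}$, as in the proof of Proposition~\ref{P:primitive}), and $\dbar(A_s\cup\chi)=0$ — identifies $A_s\cup\pt^*\chi$ with $\pt^*(A_s\cup\chi)$; combined with the Leibniz relation this gives $\pt^*(A_s\cup\chi)=-\pt^*(A_s\cup\chi)$, hence $\pt^*(A_s\cup\chi)=0$.

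The hard part is the bookkeeping in the first and third identities — tracking the extra Christoffel-symbol contributions that arise when a base-direction derivative is interchanged with a fiber derivative, and matching them exactly against the lower-order terms of the Lie derivative and the harmonicity conditions on $\chi$ (and, for the third identity, keeping the signs in the Kähler-identity step under control). The second and fourth identities are by comparison immediate, resting only on $\dbar^2=0$ and on the (anti)symmetry properties of the \ks\ tensor.
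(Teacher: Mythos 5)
Your arguments for the first, second and fourth identities are correct and essentially the paper's own: the second is immediate from Proposition~\ref{P:Lie_derivative} and $\dbar^2=0$; the fourth is the paper's expansion, with the derivative\mbox{-}on\mbox{-}$A_{\ol s}$ term killed by the total symmetry of the \ks\ tensor (a point the paper leaves implicit); and the first follows the paper's cancellation scheme via \eqref{eq:aux3}, Lemma~\ref{L:derivative_christoffel_symbol}, the $s$-derivative of $\dbar^*\chi=0$, and $\pt_sg^{\ol\beta\gamma}=g^{\ol\beta\sigma}a\ind{s}{\gamma}{;\sigma}$, i.e.\ exactly \eqref{E:commutation_sgamma}.

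The proof you propose for the third identity, however, has a genuine gap: the sign in your claimed Leibniz relation is wrong, and with the correct sign your two steps produce one and the same identity, so nothing can be concluded. Keeping the paper's coefficient conventions and discarding the derivative\mbox{-}on\mbox{-}$A_s$ term (which does vanish by symmetry, as you say), one finds
\begin{align*}
\paren{\pt^*(A_s\cup\chi)}\ind{}{i}{A_{p-2},\ol\beta,\ol B_q}
&=
(-1)^{p}\,g^{\ol\delta\gamma}A\ind{s}{\alpha}{\ol\beta}\,
\chi\ind{}{i}{\alpha,\gamma,\alpha_1,\ldots,\alpha_{p-2},\ol B_q;\ol\delta},
\\
\paren{A_s\cup\pt^*\chi}\ind{}{i}{A_{p-2},\ol\beta,\ol B_q}
&=
(-1)^{p-2}A\ind{s}{\alpha}{\ol\beta}\paren{\pt^*\chi}\ind{}{i}{\alpha,\alpha_1,\ldots,\alpha_{p-2},\ol B_q}
=
(-1)^{p}\,g^{\ol\delta\gamma}A\ind{s}{\alpha}{\ol\beta}\,
\chi\ind{}{i}{\alpha,\gamma,\alpha_1,\ldots,\alpha_{p-2},\ol B_q;\ol\delta},
\end{align*}
so the Leibniz relation is $\pt^*(A_s\cup\chi)=+\,A_s\cup\pt^*\chi$, not $-\,A_s\cup\pt^*\chi$. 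This is no accident of conventions: writing $A_s\cup\phi=A\ind{s}{\alpha}{\ol\beta}\,dz^{\ol\beta}\we\iota_{\pt_\alpha}\phi$ (which is Definition~\ref{de:cup}), one has the operator identities $[\dbar,A_s\cup\,]=(\dbar A_s)\cup\,=0$ and $[\Lambda,A_s\cup\,]=0$ (the latter by the symmetry $A\ind{s}{\alpha\gamma}{}=A\ind{s}{\gamma\alpha}{}$, exactly as in the proof of Proposition~\ref{P:primitive}), so the Nakano identity $-\ii\pt^*=[\Lambda,\dbar]$ forces $\pt^*(A_s\cup\phi)=A_s\cup\pt^*\phi$ for \emph{every} form $\phi$. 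Your K\"ahler-identity step is therefore just a second derivation of this same plus-sign relation, not an independent relation of opposite sign, and combining the two yields a tautology rather than $\pt^*(A_s\cup\chi)=-\pt^*(A_s\cup\chi)$.

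Consequently the third identity is equivalent to $A_s\cup\pt^*\chi=0$, which is not a formal consequence of the $\dbar$-harmonicity of $\chi$; the missing input is precisely what the paper invokes at this point, namely that $\chi$ is $\pt^*$-closed (as it is in the situations where the identity is applied: primitive sections for $q=0$, the case $\omega=\pm\ii\Theta_h(\cL)$ with $p+q=n$ via Bochner--Kodaira--Nakano, the fiberwise flat case). Your argument neither assumes nor establishes this, so the third identity remains unproved. Note also that the heuristic you use to explain the minus sign in the fourth identity (``anticommuting $\pt$ past the degree-one factor $A_{\ol s}$'') is what misleads here: in the fourth identity the minus sign is correct, but it originates in the convention \eqref{Lvbar''} fixing the coefficients of $A_{\ol s}\cup\chi$ (equivalently the factor $(-1)^p$ in Proposition~\ref{P:Lie_derivative}), not in a graded sign count, and for $A_s\cup$ no such sign appears.
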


\begin{proof}
The second identity directly follows from Proposition~\ref{P:Lie_derivative}.
For the third identity, we recall that
\begin{equation*}
\paren{
	A_s\cup\chi
}
_{A_{p-1},\ol\beta,\ol B_q}
=
(-1)^{p-1}
A\ind{s}{\alpha}{\ol\beta}
\chi_{\alpha,\alpha_1,\ldots,\alpha_{p-1},\ol B_q}.
\end{equation*}
Hence we get
\begin{eqnarray*}
\paren{
	\pt^*(A_s\cup\chi)
}
_{A_{p-2},\ol\beta,\ol\beta}
&=&
(-1)^p
g^{\ol\delta\gamma}
\paren{
	A\ind{s}{\alpha}{\ol\beta}
	\chi_{\alpha,\gamma,\alpha_1,\ldots,\alpha_{p-2},\ol B_q}
}_{;\ol\delta}
\\
&=&
(-1)^p
A\indd{s}{\alpha}{\ol\beta}{;\gamma}
\chi_{\alpha,\gamma,\alpha_1,\ldots,\alpha_{p-2}}
+
(-1)^p
g^{\ol\delta\gamma}
A\ind{s}{\alpha}{\ol\beta}
\chi_{\alpha,\gamma,\alpha_1,\ldots,\alpha_{p-2},\ol B_q;\ol\delta}.
\end{eqnarray*}
The first term vanishes, since it is symmetric in the upper indices $\alpha, \gamma$ and skew-symmetric in the lower indices $\alpha, \gamma$.
The second term also vanishes, since $\chi$ is $\pt^*$-closed.
This proves the third identity.
The last identity concerns
\begin{equation*}
\paren{
	A_{\ol s}\cup\chi
}
_{\alpha,A_p,\ol B_{q-1}}
=
A\ind{\ol s}{\ol\beta}{\alpha}
\chi_{A_p,\ol\beta,\ol B_{q-1}}.
\end{equation*}
It follows that
\begin{eqnarray*}
\paren{
	\pt(A_{\ol s}\cup\chi)
}
_{\gamma,\alpha,A_p,\ol B_{q-1}}
&=&
\paren{
	A\ind{\ol s}{\ol\beta}{\alpha}
	\chi_{A_p,\ol\beta,\ol B_{q-1}}
}_{;\gamma}
\\
&=&
A\ind{\ol s}{\ol\beta}{\alpha;\gamma}
\chi_{A_p,\ol\beta,\ol B_{q-1}}
+
A\ind{\ol s}{\ol\beta}{\alpha}
\chi_{A_p,\ol\beta,\ol B_{q-1};\gamma}
\\
&=&
-
\paren{
	A_{\ol s}\cup\pt\chi
}_{\gamma,\alpha,A_p,\ol B_{q-1}}.
\end{eqnarray*}
We will prove the first identity, which is already contained in \cite{Schumacher2012,Naumann2021} when $\cE$ is a holomorphic line bundle and $p+q=n$.
The proof is somewhat similar, but for the sake of the completeness, we will give it here.

We continue the proof of Proposition~\ref{P:auxiliary_formulas}, first identity. Again Lie derivatives are handled as follows. In principle all derivatives are ordinary, except for the hermitian metric on the vector bundle $\cE$. Since ultimately integration along fibers will be needed, and exterior derivatives and their adjoints are taken with respect to the \ka\ structure on the fibers (and hermitian metric on $\cE$).
As before, because of the symmetry of Christoffel symbols taken on fibers, in such a context by definition a covariant derivative with respect to a parameter $s$ is a covariant derivative only with respect to $(\cE,h)$, and a covariant derivative with respect to a fiber coordinate is a covariant derivative with respect to both $h$ and $\omega_{\cX_s}$ (See also the discussion in Section~\ref{S:pre}.)

By using Equation~\eqref{eq:aux3}, furthermore by the $\dbar^*$-closedness of $\chi$, and
$\pt_sg^{\ol\beta\gamma}=g^{\ol\beta\sigma}a\ind{s}{\gamma}{;\sigma}$,
we get
\begin{equation}\label{E:commutation_sgamma}
	g^{\ol\beta\gamma}
	\chi\ind{}{i}{A_p,\ol\beta,\ol B_{q-1};s\gamma}
	=
	-
	\chi\ind{}{i}{A_p,\ol\beta,\ol B_{q-1};\gamma}
	g^{\ol\beta\sigma}
	a\ind{s}{\gamma}{;\sigma}
	-
	\sum_{\mu=1}^p
	g^{\ol\beta\gamma}
	a\ind{s}{\alpha}{;\alpha_\mu\gamma}
	\chi\ind{}{i}{
		{\tiny\vtop{
		\hbox{$\alpha_1,\ldots,\alpha,\ldots,\alpha_p\ol B_q$}
		\vskip-.8mm
		\hbox{$\phantom{\alpha_1,\ldots,}{|\atop\mu} $}}}
		}
\end{equation}
On the other hand, taking $\dbar^*$ on \eqref{eq:lvprime}, one can easily get
\begin{equation*}
\dbar^*(L_v'\chi)_{A_p,\ol B_{q-1}}
=
(-1)^{p+1}
g^{\ol\beta_1\gamma}
\paren{
	\chi_{A_p,\ol B_q;s\gamma}
	+
	a\ind{s}{\alpha}{;\gamma}
	\chi\ind{}{i}{A_p,\ol B_q;\alpha}
	+
	\sum_{\mu=1}^p
	a\ind{s}{\alpha}{;\alpha_\mu\gamma}
	\chi\ind{}{i}{
		{\tiny\vtop{
		\hbox{$\alpha_1,\ldots,\alpha,\ldots,\alpha_p,\ol B_q$}
		\vskip-.8mm
		\hbox{$\phantom{\alpha_1,\ldots,}{|\atop\mu} $}}}
		}
}.
\end{equation*}
This term vanishes by \eqref{E:commutation_sgamma}.
The proof is complete.
\end{proof}

\section{Curvature formula for $R^qf_*\Omega^p_{\cX/S}(\cE)$}\label{S:genfo}

In this section, we discuss the general curvature formula for $R^qf_*\Omega^p_{\cX/S}(\cE)$, invoking the formulas obtained in the previous sections.
\medskip

As we assume that $R^qf_*\Omega^p_{\cX/S}(\cE)$ is locally free, there is a local holomorphic frame $\set{[\psi^{(k)}]}_{k=1}^N$ of $R^qf_*\Omega^p_{\cX/S}(\cE)$, where $\psi^{(k)}$ is given by Lemma~\ref{L:representative}.
Then the hermitian metric $\inner{\cdot,\cdot}^H$ on $R^qf_*\Omega^p_{X/S}(\cE)$ is given as
\begin{equation*}
H^{k\ol l}:=\inner{[\psi^{(k)}],[\psi^{(l)}]}^H
=
\inner{\psi^{(k)},\psi^{(l)}}.
\end{equation*}
Let a point $s_0\in S$ be fixed.
By a unitary change of $\{\psi^{(k)}\}$, we can take a normal coordinate at $s_0$ such that $\pd{}{s}\vert_{s=s_0}H^{k\ol l}=0$, which means that
\begin{equation*}
\pd{}{s}\Big\vert_{s=s_0}\inner{\psi^{(k)},\psi^{(l)}}
=
\inner{\paren{L_v'\psi^{(k)}},\psi^{(l)}}\Big\vert_{s=s_0}
=0
\end{equation*}
for all $k,l=1,\ldots,N$.
This implies that the harmonic part of $\paren{L_v'\psi^{(k)}}$ vanishes, i.e.,
\begin{equation}\label{E:orthogonal_to_harmonic_space}
H\paren{L_v'\psi^{(k)}}=0
\;\;\;\text{for all }k.
\end{equation}
Then the curvature tensor $R\ind{i\ol\jmath}{k\ol l}{}(s_0)$ at $s=s_0$ is given as follows.
\begin{align*}
R\paren{\pt_i,\pt_{\ol\jmath},\psi^{(k)},\ol{\psi^{(l)}}}(s_0)
=
R\ind{i\ol\jmath}{k\ol l}{}(s_0)
=
\left.-\pd{^2}{s^i\pt s^{\ol\jmath}}\right\vert_{s=s_0}H^{k\ol l}
=
\left.-\pd{^2}{s^i\pt s^{\ol\jmath}}\right\vert_{s=s_0}
\inner{\psi^{(k)},\psi^{(l)}}.
\end{align*}
Then it follows from the computations in Subsection~\ref{SS:Lie_derivative} that
\begin{equation}\label{E:curvature_formula}
\begin{aligned}
R\paren{\pt_i,\pt_{\ol\jmath},\psi^{(k)},\ol{\psi^{(l)}}}(s_0)
&=
\inner{L_{[v,\ol v]}\psi^{(k)},\psi^{(l)}}
+
\inner{\Theta_E(v_i,\ol{v_j})\psi^{(k)},\psi^{(l)}}
-
\inner{(L_{v_i}'\psi^{(k)}),(L_{v_j}'\psi^{(l)})}
\\
&\hspace{-.5cm}
+
\inner{A_i\cup\psi^{(k)},A_j\cup\psi^{(l)}}
+
\inner{(L_{\ol{v_j}}'\psi^{(k)}),(L_{\ol{v_i}}'\psi^{(l)})}
-
\inner{A_{\ol\jmath}\cup\psi^{(k)},A_{\ol i}\cup\psi^{(l)}}
\end{aligned}
\end{equation}
A similar formula is proved in \cite{Berndtsson_Paun_Wang2022,Wang2016Ar}.
\begin{remark}
By Proposition~\ref{P:dbar_Lv}, Proposition~\ref{P:auxiliary_formulas} and Equation \eqref{E:orthogonal_to_harmonic_space}, the Lie derivatives $(L_{v_i}'\psi^{(k)})$ define the minimal solution of the system
\begin{equation*}
	\dbar u
	=
	A_i\cup(\pt\psi^{(k)})
	+
	\pt\paren{A_i\cup\psi^{(k)}}
	+
	\eta_i
	\we\psi^{(k)}.
\end{equation*}
Likewise, Proposition~\ref{P:dbar*_Lvbar} reads that $(L_{\ol{v_j}}'\psi^{(l)})$ is a solution of
\begin{equation*}
	\dbar^* u
	=
	(-1)^p\pt^*(A_{\ol\jmath}\cup\psi^{(l)})
	+
	(-1)^pA_{\ol\jmath}\cup\pt^*\psi^{(l)}
	+
	[\Lambda,\ii\eta_{\ol\jmath}]\psi^{(l)}.
\end{equation*}
Moreover, Proposition~\ref{P:Lie_derivative} states that $(L_{\ol{v_j}}'\psi^{(l)})$ is orthogonal to the harmonic space, so $(L_{\ol{v_j}}'\psi^{(l)})$ is the minimal solution by Proposition~\ref{P:auxiliary_formulas}.
\end{remark}

On the other hand, Proposition~\ref{P:auxiliary_formulas} and Proposition~\ref{P:dbar_Lv} yield that
\begin{align*}
	\inner{(L_{v_i}'\psi^{(k)}),(L_{v_j}'\psi^{(l)})}
	&=
	\inner{G_{\dbar}\Box_{\dbar}(L_{v_i}'\psi^{(k)}),L_{v_i}'\psi^{(l)}}
	\\
	&=
	\inner{G_{\dbar}\dbar(L_{v_i}'\psi^{(k)}),\dbar(L_{v_i}'\psi^{(l)})}
	\\
	&=
	\inner{G_{\dbar}(w\ind{i}{k}{}),w\ind{j}{l}{}},
\end{align*}
where
\begin{equation*}
	w\ind{i}{k}{}
	=
	A_i\cup\pt\psi^{(k)}
	+
	\pt(A_i\cup\psi^{(k)})
	+
	\eta_i
	\we\psi^{(k)}.
\end{equation*}
The same argument using Proposition~\ref{P:dbar*_Lvbar} instead gives that
\begin{equation*}
	\inner{(L_{\ol{v_i}}'\psi^{(k)}),(L_{\ol{v_j}}'\psi^{(l)})}
	=
	\inner{G_\dbar(w\ind{\ol i}{k}{}),w\ind{\ol\jmath}{l}{}},
\end{equation*}
where
\begin{equation*}
	w\ind{\ol\jmath}{l}{}
	=
	(-1)^p\pt^*(A_{\ol\jmath}\cup\psi^{(l)})
	+
	(-1)^pA_{\ol\jmath}\cup\pt^*\psi^{(l)}
	+
	[\Lambda,\ii\eta_{\ol\jmath}]\psi^{(l)}.
\end{equation*}
Altogether, we have the following formula for $R^qf_*\Omega^p_{\cX/S}(\cE)$.
\begin{align*}
	R\paren{\pt_i,\pt_{\ol\jmath},\psi^{(k)},\ol{\psi^{(l)}}}(s_0)
	&=
	\inner{L_{[v_i,\ol{v_j}]}\psi^{(k)},\psi^{(l)}}
	+
	\inner{\Theta(v_i,\ol{v_j})\psi^{(k)},\psi^{(l)}}
	-
	\inner{G_\dbar(w\ind{i}{k}{}),w\ind{j}{l}{}}
	\\
	&\hspace{.5cm}
	+
	\inner{A_i\cup\psi^{(k)},A_j\cup\psi^{(l)}}
	+
	\inner{G_\dbar(w\ind{\ol\jmath}{k}{}),w\ind{\ol i}{l}{}}
	-
	\inner{A_{\ol\jmath}\cup\psi^{(k)},A_{\ol i}\cup\psi^{(l)}}
\end{align*}
where $\pt_i,\pt_j\in T_{s_0}S$ and $\psi^{(k)}, \psi^{(l)}$ harmonic representatives of elements from $H^q(\cX_{s_0},\Omega_{\cX_{s_0}}^p(\cE))$, which gives Main Theorem 1 together with Proposition~\ref{P:Lie_derivative_commutator}.

\section{Additional formulas}
In this section we will prove several formulas for further curvature computation.
As in previous section, we always assume that a local holomorphic section $\chi$ given by Lemma~\ref{L:representative} of $R^qf_*\Omega^p_{\cX/S}(\cE)$.
We again assume that $\dim S=1$ for simplicity.

We first consider the first term $\inner{L_{[v,\ol v]}\chi,\psi}$ in \eqref{E:curvature_formula}.
\begin{lemma}\label{L:commutator_vbar_v}
We have the following.
\begin{equation*}
	[v,\ol v]
	=
	c(\omega)^{;\alpha}\partial_\alpha
	-
	c(\omega)^{;\ol\beta}\partial_{\ol\beta}
\end{equation*}
\end{lemma}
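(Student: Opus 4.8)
The plan is to combine the defining properties of the horizontal lift with Cartan's identity $i_{[X,Y]}=L_X\circ i_Y-i_Y\circ L_X$ for interior products. First I would observe that $[v,\ol v]$ is tangent to the fibres: writing $v=\pt_s+a\ind{s}{\alpha}{}\pt_\alpha$ and $\ol v=\pt_{\ol s}+a\ind{\ol s}{\ol\beta}{}\pt_{\ol\beta}$ in admissible coordinates and expanding the bracket of these coordinate vector fields, the $\pt_s$- and $\pt_{\ol s}$-parts cancel, so $[v,\ol v]=W^\alpha\pt_\alpha+W^{\ol\beta}\pt_{\ol\beta}$ with $W^{\ol\beta}=\ol{W^\beta}$ by conjugation; everything reduces to identifying $W^\alpha$.

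Next I would use horizontality to compute interior products with $\omega$. The conditions $df(v)=\pt_s$ and $\omega(v,\ol w)=0$ for $w\in T^{1,0}\cX_s$ say exactly that $i_v\omega$ is a $(0,1)$-form annihilating $T_{\cX/S}$, hence $i_v\omega=\ii\,c(\omega)\,d\ol s$; the coefficient is read off from $i_v\omega(\pt_{\ol s})=\omega(v,\pt_{\ol s})=\ii\bigl(g_{s\ol s}+a\ind{s}{\alpha}{}g_{\alpha\ol s}\bigr)=\ii\,c(\omega)$, using that $c(\omega)$ is the pointwise squared norm $\|v\|^2_\omega=g_{s\ol s}+a\ind{s}{\alpha}{}g_{\alpha\ol s}$. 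Symmetrically $i_{\ol v}\omega=-\ii\,c(\omega)\,ds$. Since $d\omega=0$, Cartan's formula gives $L_v\omega=d(i_v\omega)=\ii\,dc(\omega)\we d\ol s$.

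Then I would substitute these into $i_{[v,\ol v]}\omega=L_v(i_{\ol v}\omega)-i_{\ol v}(L_v\omega)$. Using $L_v(ds)=d\bigl(ds(v)\bigr)=0$ and $d\ol s(\ol v)=1$, the right-hand side collapses to
\[
i_{[v,\ol v]}\omega=\ii\Bigl(dc(\omega)-\bigl(v\,c(\omega)\bigr)\,ds-\bigl(\ol v\,c(\omega)\bigr)\,d\ol s\Bigr),
\]
and after inserting $v\,c(\omega)=\pt_s c(\omega)+a\ind{s}{\alpha}{}\pt_\alpha c(\omega)$ together with its conjugate the $ds$- and $d\ol s$-parts of $dc(\omega)$ are partly absorbed, leaving
\[
i_{[v,\ol v]}\omega=\ii\Bigl((\pt_\alpha c(\omega))\,dz^\alpha+(\pt_{\ol\beta}c(\omega))\,dz^{\ol\beta}-\bigl(a\ind{s}{\alpha}{}\pt_\alpha c(\omega)\bigr)\,ds-\bigl(a\ind{\ol s}{\ol\beta}{}\pt_{\ol\beta}c(\omega)\bigr)\,d\ol s\Bigr).
\]
On the other hand, for any fibrewise tangent field $W=W^\alpha\pt_\alpha+W^{\ol\beta}\pt_{\ol\beta}$ one has $i_W\omega=\ii\bigl(g_{\alpha\ol\gamma}W^\alpha\,dz^{\ol\gamma}+g_{\alpha\ol s}W^\alpha\,d\ol s-g_{\gamma\ol\beta}W^{\ol\beta}\,dz^\gamma-g_{s\ol\beta}W^{\ol\beta}\,ds\bigr)$. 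Comparing the coefficients of $dz^{\ol\beta}$ and $dz^\gamma$ yields $g_{\alpha\ol\beta}W^\alpha=\pt_{\ol\beta}c(\omega)$ and $g_{\gamma\ol\beta}W^{\ol\beta}=-\pt_\gamma c(\omega)$, i.e.\ $W^\alpha=c(\omega)^{;\alpha}$ and $W^{\ol\beta}=-c(\omega)^{;\ol\beta}$, which is the asserted formula; the $ds$- and $d\ol s$-coefficients are then matched automatically via $a\ind{s}{\alpha}{}=-g^{\alpha\ol\beta}g_{s\ol\beta}$, which serves as a consistency check.

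The only delicate point is keeping conventions straight — the factor $\ii$ in $\omega=\ii g_{\alpha\ol\beta}dz^\alpha\we dz^{\ol\beta}$, the signs in the definition of $a\ind{s}{\alpha}{}$, and the fact that for the scalar $c(\omega)$ covariant and ordinary derivatives coincide, so that raising the index really produces $c(\omega)^{;\alpha}$. An entirely computational alternative also works: differentiate $a\ind{s}{\alpha}{}=-g^{\ol\gamma\alpha}g_{s\ol\gamma}$ along $\ol v$ and use the derivative-of-inverse-metric identity together with $g_{\gamma\ol s}+a\ind{\ol s}{\ol\delta}{}g_{\gamma\ol\delta}=0$ and $c(\omega)=g_{s\ol s}+a\ind{s}{\alpha}{}g_{\alpha\ol s}$; this avoids Cartan's identity but is more laborious and less transparent.
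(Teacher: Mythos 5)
Your argument is correct, but it takes a genuinely different route from the paper's. The paper proves the lemma by direct computation in admissible coordinates: it expands the bracket $[v,\ol v]$, differentiates $a\ind{s}{\alpha}{}=-g^{\ol\beta\alpha}g_{s\ol\beta}$ in the $\ol s$-direction using the derivative of the inverse metric, and identifies the resulting coefficient of $\pt_\alpha$ with $c(\omega)^{;\alpha}$ --- essentially the ``computational alternative'' you sketch in your final sentence. Your proof instead exploits $d\omega=0$ and the two defining properties of the horizontal lift to get $i_v\omega=\ii\,c(\omega)\,d\ol s$ and $i_{\ol v}\omega=-\ii\,c(\omega)\,ds$, and then applies $i_{[v,\ol v]}=L_v\circ i_{\ol v}-i_{\ol v}\circ L_v$, using the verticality of $[v,\ol v]$ (checked first in coordinates) and the fiberwise nondegeneracy of $\omega$ to read off the components. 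This is more conceptual: it explains why $c(\omega)$ appears (it is the only surviving coefficient of $i_v\omega$), it produces $L_v\omega=\ii\,dc(\omega)\we d\ol s$ --- and hence the fiberwise vanishing used in Lemma~\ref{L:gpar} --- as a byproduct, and the $ds$- and $d\ol s$-coefficients give a built-in consistency check; the price is careful bookkeeping of types and of the factor $\ii$, which you handle correctly by taking $c(\omega)=g_{s\ol s}+a\ind{s}{\alpha}{}g_{\alpha\ol s}$, the convention the paper actually uses in \eqref{E:Semmes} despite the literal definition $c(\omega)=\omega(v,\ol v)$. One small slip: since $\ol{[v,\ol v]}=-[v,\ol v]$, the correct relation is $W^{\ol\beta}=-\ol{W^\beta}$, not $W^{\ol\beta}=\ol{W^\beta}$; this is harmless here because you never use it --- both components come out of the coefficient comparison, and your final signs are consistent with the correct relation.
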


\begin{proof}
	By the definition of the Lie bracket, we have
	\begin{align*}
		[v,\ol v]
		&=
		[\pt_{s}+a\ind{s}{\alpha}{}\pt_\alpha,
		\pt_\ol s+a\ind{\ol s}{\ol\beta}{}\pt_\ol\beta] \\
		&=
		-
		\left(\pt_\ol s (a\ind{s}{\alpha}{})
		+
		a\ind{\ol s}{\ol\beta}{}a\ind{s}{\alpha}{|\ol\beta}\right)\pt_\alpha
		+
		\left(\pt_s(a\ind{\ol s}{\ol\beta}{})
		+
		a\ind{s}{\alpha}{}a\ind{\ol s}{\ol\beta}{|\alpha}\right)\pt_\ol\beta.
	\end{align*}
	Now
	\begin{align*}
		\pt_\ol s (a\ind{s}{\alpha}{})
		&=
		-\pt_\ol s (g^{\ol\beta\alpha}g_{s\ol\beta})
		=
		g^{\ol\beta\sigma} g_{\sigma\ol s| \ol \tau}g^{\ol\tau
		\alpha}g_{s\ol\beta} - g^{\ol\beta\alpha}g_{s\ol \beta|\ol s} \\
		&=
		g^{\ol\beta\sigma}a_{\ol s
		\sigma;\ol\tau}g^{\ol\tau\alpha}a_{s\ol\beta} - g^{\ol\beta\alpha} g_{s\ol
		s; \ol\beta}.
		\end{align*}
	Hence we have
	\begin{equation*}
		-\pt_\ol s (a\ind{s}{\alpha}{})
		-
		a\ind{\ol s}{\ol\beta}{} a\ind{s}{\alpha}{|\ol\beta}
		=
		c(\omega)^{;\alpha}.
	\end{equation*}
	Similarly, one can show that the coefficient of $\pt_\ol \beta$ that $-c(\omega)^{;\ol\beta}$.
\end{proof}

\begin{proposition}\label{P:Lie_derivative_commutator}
	The following equation holds.
	\begin{equation}\label{E:Lie_derivative_commutator}
		\inner{L_{[v,\ol v]} \chi,\psi}
		=
		\inner{c(\omega)\,\Box_\pt\chi,\psi}
		-
		\inner{c(\omega)\,\partial\chi,\partial\psi}
		-
		\inner{c(\omega)\,\pt^*\chi,\pt^*\psi}
		+
		\inner{\chi,\pt \ol{c(\omega)}\wedge\pt^*\psi}
		+
		\inner{\pt c(\omega)\wedge\pt^*\chi, \psi}.
	\end{equation}
\end{proposition}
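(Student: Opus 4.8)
The plan is to compute $L_{[v,\ol v]}\chi$ directly using the coordinate expression for $[v,\ol v]$ from Lemma~\ref{L:commutator_vbar_v}, namely $[v,\ol v] = c(\omega)^{;\alpha}\pt_\alpha - c(\omega)^{;\ol\beta}\pt_{\ol\beta}$, and then to reorganize the resulting terms into $\partial$-Laplacian, $\partial$, and $\partial^*$ pieces. Since $[v,\ol v]$ is tangential to fibers and involves no differentiation in the $s$-direction, the Lie derivative $L_{[v,\ol v]}\chi$ can be handled entirely on a fiber $\cX_s$ using formula~\eqref{E:Lie_derivative_coordinate_expression}. Writing $[v,\ol v] = w = w^\alpha\pt_\alpha + w^{\ol\beta}\pt_{\ol\beta}$ with $w^\alpha = c(\omega)^{;\alpha}$ and $w^{\ol\beta} = -c(\omega)^{;\ol\beta}$, the Lie derivative splits as $L_w\chi = (\delta_w\circ\nabla^\cE + \nabla^\cE\circ\delta_w)\chi$. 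The key point is that the coefficients $w^\alpha$ and $w^{\ol\beta}$ are gradients of the single scalar function $c(\omega)$, so every derivative of these coefficients that appears is a second covariant derivative $c(\omega)^{;\alpha}_{\ ;\gamma}$, $c(\omega)^{;\ol\beta}_{\ ;\ol\delta}$, or a mixed one.

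The main steps I would carry out are: First, expand $L_w\chi$ in coordinates via \eqref{E:Lie_derivative_coordinate_expression}, separating the "transport" term $w^\gamma\chi_{A_p\ol B_q;\gamma} + w^{\ol\delta}\chi_{A_p\ol B_q;\ol\delta}$ from the "index-correction" terms involving $w^\alpha_{\ |\alpha_\mu}$ and $w^{\ol\beta}_{\ |\ol\beta_\nu}$. Second, take the $L^2$-inner product $\inner{L_w\chi,\psi}$ and integrate by parts. The transport term $\inner{c(\omega)^{;\gamma}\chi_{;\gamma},\psi}$ should, after moving one covariant derivative onto the other factor and onto $c(\omega)$, produce $\inner{c(\omega)\,\partial^*\chi, \cdot}$-type terms together with a $\inner{\partial c(\omega)\wedge\cdot,\cdot}$ correction; similarly the antiholomorphic transport term produces $\dbar^*$-type expressions. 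Third, I would use that $\chi$ and $\psi$ restrict to harmonic $(p,q)$-forms on the fiber, so $\dbar\chi = \dbar^*\chi = 0$ (and likewise for $\psi$), which kills all the purely $\dbar$-flavored contributions and converts $\Box_\dbar\chi = 0$ into a relation between $\Box_\pt\chi$ and the curvature term via the Bochner-Kodaira-Nakano identity \eqref{E:BKN} — although here I expect the curvature term $[\ii\Theta,\Lambda]$ to be absorbed elsewhere, and what really survives is the reorganization of the fiber-direction derivatives into $\Box_\pt = \partial\partial^* + \partial^*\partial$. Fourth, the index-correction terms, which carry the Hessian $c(\omega)^{;\alpha}_{\ ;\alpha_\mu}$ contracted into $\chi$, are exactly what assemble (after skew-symmetrization and integration by parts) into $-\inner{c(\omega)\,\partial\chi,\partial\psi}$ and the boundary-type pieces $\inner{\chi,\partial\ol{c(\omega)}\wedge\partial^*\psi}$ and $\inner{\partial c(\omega)\wedge\partial^*\chi,\psi}$.

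The main obstacle I anticipate is the bookkeeping in step four: tracking the skew-symmetrization over the holomorphic indices $\alpha_1,\dots,\alpha_p$ when the correction term replaces one index by the dummy $\alpha$ contracted against $c(\omega)^{;\alpha}_{\ ;\alpha_\mu}$, and recognizing the Hessian of $c(\omega)$ as producing, upon one further integration by parts, the operators $\partial$ and $\partial^*$ applied to $\chi$ and wedged/contracted with $\partial c(\omega)$. One has to be careful that the "$\partial$" and "$\partial^*$" here are the fiberwise operators with respect to $(\cE|_{\cX_s}, h)$ and $\omega_{\cX_s}$, and that the covariant-derivative conventions (symmetric Christoffel symbols, so that ordinary and covariant mixed derivatives of $c(\omega)$ agree in the relevant places) are applied consistently, as discussed in Section~\ref{S:pre}. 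A secondary subtlety is ensuring the signs in the two "wedge" terms come out as stated; I would fix these at the very end by testing on the model case $p+q=n$ with $\ii\Theta_h(\cE)=\omega$, where the formula must reduce to the known expression from \cite{Schumacher2012,Naumann2021}.
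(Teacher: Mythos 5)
Your plan follows the paper's own proof: decompose $L_{[v,\ol v]}$ via Lemma~\ref{L:commutator_vbar_v}, kill the antiholomorphic part through the Cartan-type splitting (contraction with $\dbar\chi$ plus a $\dbar$-exact term) using the fiberwise harmonicity of $\chi$ and $\psi$, and treat the holomorphic part by integration by parts together with the commutator identities of Proposition~\ref{P:commutator_covariant_derivatives}, which is exactly the route taken in the paper, with the heavy index bookkeeping you anticipate being the bulk of the argument. One correction to your expectation: the curvature term $[\ii\Theta_h(\cE),\Lambda]\chi$ is not absorbed elsewhere — it is precisely what survives the cancellations and, since $\Box_\dbar\chi=0$ on fibers, is converted by the Bochner--Kodaira--Nakano formula \eqref{E:BKN} into the term $\inner{c(\omega)\,\Box_\pt\chi,\psi}$ of the statement.
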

	
\begin{proof}
	First, it follows from the definition of Lie derivatives that
	\begin{eqnarray*}
		\paren{
			[c(\omega)^{;\ol\beta}\pt_{\ol\beta},\chi]'
		}\ind{}{i}{A_p,\ol B_q}
		& = &
		c(\omega)^{;\ol\beta}\chi\ind{}{i}{A_p,\ol B_q;\ol\beta}
		+
		\sum_{\nu=1}^p c(\omega)\ind{}{;\ol\beta}{;\ol\beta_\nu }
		\chi\ind{}{i}{{\tiny\vtop{
		\hbox{$\alpha_1,\ldots,\alpha_p,\ol\beta_1,\ldots,\ol\beta,\ldots,\ol\beta_q$}\vskip-.8mm
		\hbox{$\phantom{\alpha_1,\ldots,\alpha_p,\ol\beta_1,\ldots,}{|\atop\nu} $}}}}\\
		&=&
		c(\omega)^{;\ol\beta}
		\paren{\dbar\chi}\ind{}{i}{\alpha_1,\ldots,\alpha_p,\ol\beta,\ol\beta_1,\ldots,\ol\beta_q}
		+
		\sum_{\kappa =1}^p \big( c(\omega) ^{;\ol\beta}
		\chi\ind{}{i}{ {\tiny\vtop{
		\hbox{$\alpha_1,\ldots,\alpha_p,\ol\beta_1,\ldots,\ol\beta,\ldots,\ol\beta_q$}\vskip-.8mm
		\hbox{$\phantom{\alpha_1,\ldots,\alpha_p,\ol\beta_1,\ldots,}{|\atop\nu} $}}}}\big)_{;\ol\beta_\nu}
		\\
		&=&
		\paren{
			\paren{c(\omega)^{;\ol\beta}\partial_{\ol\beta}\cup\dbar\chi}
			+
			\dbar\big(c(\omega)^{;\ol\beta}\pt_{\ol\beta}\cup \chi\big)
		}\ind{}{i}{A_p,\ol B_q}.
	\end{eqnarray*}
	Since $\chi$ is $\dbar$-harmonic on fibers, the first term vanishes.
	It also implies that
	\begin{equation*}
		\inner{\dbar\big( c(\omega) ^{;\ol\beta}\pt_{\ol\beta}\cup \chi\big),\psi}
		=
		\inner{c(\omega) ^{;\ol\beta}\pt_{\ol\beta}\cup \chi,\dbar^*\psi}
		=0.
	\end{equation*}

	On the other hand, it follows from \eqref{E:partial} that
	\begin{eqnarray*}
		\paren{
			[c(\omega)^{;\alpha}\pt_\alpha,\chi]'
		}\ind{}{i}{A_p,\ol B_q}
		& = &
		c(\omega)^{;\alpha}\chi_{A_p,\ol B_q;\alpha}
		+
		\sum_{\mu=1}^p c(\omega)\ind{}{;\alpha}{;\alpha_\mu}
		\chi\ind{}{i}{ {\tiny\vtop{
		\hbox{$\alpha_1,\ldots,\alpha,\ldots,\alpha_p,\ol B_q$}\vskip-.8mm
		\hbox{$\phantom{\alpha_1,\ldots,}{|\atop\mu} $}}}}\\
		&=&
		c(\omega)^{;\alpha}
		\paren{\partial\chi}\ind{}{i}{\alpha,\alpha_1,\cdots,\alpha_p,\ol B_q}
		+
		\sum_{\mu=1}^p \big( c(\omega) ^{;\alpha}
		\chi\ind{}{i}{ {\tiny\vtop{
		\hbox{$\alpha_1,\ldots,\alpha,\ldots,\alpha_p,\ol B_q$}\vskip-.8mm
		\hbox{$\phantom{\alpha_1,\ldots,}{|\atop\mu} $}}}}\big)_{;\alpha_\mu}
		\\
		&=&
		\paren{
			\paren{c(\omega)^{;\alpha}\partial_\alpha\cup\partial\chi}
			+
			\pt\big(c(\omega)^{;\alpha}\pt _\alpha\cup \chi\big)
		}_{A_p,\ol B_q}.
	\end{eqnarray*}
	Note that $c(\omega)^{;\alpha}\pt_\alpha=\paren{\dbar c(\omega)}^\omega$, which is the dual element of $\dbar c(\omega)$ with respect to $\omega$, and it is well known that the adjoint operator of $\pt\ol{c(\omega)}\wedge\cdot$ is $\paren{\dbar c(\omega)}^\omega\cup\cdot$.
	Hence the second term gives $\inner{\paren{\dbar c(\omega)}^\omega\cup\chi,\pt^*\psi}=\inner{\chi,\pt \ol{c(\omega)}\wedge\pt^*\psi}$ in the conclusion.
	Since
	\begin{equation*}
		\paren{
			c(\omega)^{;\alpha}\partial_\alpha\cup\partial\chi
		}_{A_p,\ol B_q}
		=
		c(\omega)^{;\alpha}
		\paren{
		\chi\ind{}{i}{\alpha_1,\ldots,\alpha_p,\ol B_q;\alpha}
		-
		\sum_{\mu=1}^p
		\chi\ind{}{i}{
		{\tiny\vtop{
		\hbox{$\alpha_1,\ldots,\alpha,\ldots,\alpha_p,\ol B_q;\alpha_\mu$}\vskip-.8mm
		\hbox{$\phantom{\alpha_1,\ldots,}{|\atop\mu} $}}}}
		},
	\end{equation*}
	the first term is computed as (setting $X=\cX_s$)
	\begin{align*}
		\inner{c(\omega)^{;\alpha}\pt_\alpha\cup\pt\chi,\psi}
		&=
		\frac{1}{(p!)^2(q!)^2}
		\int_X
		g^{\ol\beta\alpha}c(\omega)_{;\ol\beta}
		\paren{\partial\chi}\ind{}{i}{\alpha,\alpha_1,\ldots,\alpha_p,\ol B_q}\cdot
		\ol{\psi\ind{}{j}{C_p,\ol D_q}}
		\cdot h_{i\ol\jmath}\cdot g^{A_p\ol C_p}\cdot g^{\ol B_qD_q}
		dV_\omega \\
		&=
		-
		\frac{1}{(p!)^2(q!)^2}
		\int_X
		g^{\ol\beta\alpha}c(\omega)
		\paren{\partial\chi}\ind{}{i}{\alpha,\alpha_1,\ldots,\alpha_p,\ol B_q;\ol\beta}\cdot
		\ol{\psi\ind{}{j}{C_p,\ol D_q}}
		\cdot h_{i\ol\jmath}
		\cdot g^{A_p\ol C_p}\cdot g^{\ol B_qD_q}
		dV_\omega \\
		&\hspace{.5cm}
		-
		\frac{1}{(p!)^2(q!)^2}
		\int_X
		g^{\ol\beta\alpha}c(\omega)
		\paren{\partial\chi}\ind{}{i}{\alpha,\alpha_1,\ldots,\alpha_p,\ol B_q}\cdot
		\ol{\psi\ind{}{j}{C_p,\ol D_q;\beta}}
		\cdot h_{i\ol\jmath}\cdot g^{A_p\ol C_p}\cdot g^{\ol B_qD_q}
		dV_\omega \\
		&:=
		I_1+I_2,
	\end{align*}
	
	First we compute $I_2$.
		\begin{align*}
		I_2
		&=
		-\frac{1}{(p!)^2(q!)^2}
		\int_X
		c(\omega)
		\paren{\partial\chi}\ind{}{i}{\alpha,\alpha_1,\ldots,\alpha_p,\ol B_q}\cdot
		\ol{\psi\ind{}{j}{C_p,\ol D_q;\gamma}}
		\cdot h_{i\ol\jmath}\cdot g^{\ol\gamma\alpha}g^{A_p\ol C_p}\cdot g^{\ol B_qD_q}
		dV_\omega \\
		&=
		-\frac{1}{((p+1)!)^2(q!)^2}
		\int_X
		c(\omega)
		\paren{\partial\chi}\ind{}{i}{\alpha_0,\alpha_1,\ldots,\alpha_p,\ol B_q}\cdot
		\ol{\paren{\partial\psi}\ind{}{j}{\gamma_0,\gamma_1,\ldots\gamma_p,\ol D_q}}
		\cdot h_{i\ol\jmath}\cdot g^{A_{p+1}\ol C_{p+1}}\cdot g^{\ol B_qD_q}
		dV_\omega \\
		&=
		-\inner{c(\omega)\,  \partial\chi,\partial\psi}.
	\end{align*}
	
	Next we compute $I_1$.
	It follows from \eqref{E:partial} that
	\begin{equation*}
		\paren{\partial\chi}\ind{}{i}{\alpha,\alpha_1,\ldots,\alpha_p,\ol B_q;\ol\beta}
		=
		\chi\ind{}{i}{\alpha_1,\ldots,\alpha_p,\ol B_q;\alpha\ol\beta}
		-
		\sum_{\mu=1}^p
		\chi\ind{}{i}{
		{\tiny\vtop{
		\hbox{$\alpha_1,\ldots,\alpha,\ldots,\alpha_p,\ol B_q;\alpha_\mu\ol\beta$}\vskip-.8mm
		\hbox{$\phantom{\alpha_1,\ldots,}{|\atop\mu} $}}}},
	\end{equation*}
	which implies that
	\begin{align*}
		I_1
		&=
		-\frac{1}{(p!)^2(q!)^2}
		\int_X
		g^{\ol\beta\alpha}c(\omega)
		\paren{\partial\chi}\ind{}{i}{\alpha,\alpha_1,\ldots,\alpha_p,\ol B_q;\ol\beta}
		\ol{\psi\ind{}{j}{C_p,\ol D_q}}
		\cdot h_{i\ol\jmath}
		\cdot g^{A_p\ol C_p}\cdot g^{\ol B_qD_q}
		dV_\omega
		\\
		&=
		-\frac{1}{(p!)^2(q!)^2}
		\int_X
		g^{\ol\beta\alpha}c(\omega)
		\paren{\chi\ind{}{i}{\alpha_1,\ldots,\alpha_p,\ol B_q;\alpha\ol\beta}
		-
		\sum_{\mu=1}^p
		\chi\ind{}{i}{
		{\tiny\vtop{
		\hbox{$\alpha_1,\ldots,\alpha,\ldots,\alpha_p,\ol B_q;\alpha_\mu\ol\beta$}\vskip-.8mm
		\hbox{$\phantom{\alpha_1,\ldots,}{|\atop\mu} $}}}}}
		\ol{\psi\ind{}{j}{C_p,\ol D_q}}
		\cdot h_{i\ol\jmath}
		\cdot g^{A_p\ol C_p}\cdot g^{\ol B_qD_q}
		dV_\omega \\
		&=:
		\frac{1}{(p!)^2(q!)^2}
		\int_X
		c(\omega)\paren{I_{11}+I_{12}}
		\ol{\psi\ind{}{j}{C_p,\ol D_q}}
		\cdot h_{i\ol\jmath}
		\cdot g^{A_p\ol C_p}\cdot g^{\ol B_qD_q}
		dV_\omega.
	\end{align*}
	Then invoking Proposition~\ref{P:commutator_covariant_derivatives}, one has
	\begin{align*}
		I_{11}
		&=
		-g^{\ol\beta\alpha}
		\paren{
			\chi\ind{}{i}{A_p,\ol B_q;\ol\beta\alpha}
		-
		\Theta\ind{j}{i}{\alpha\ol\beta}\chi\ind{}{j}{A_p,\ol B_q}
		+
		\sum_{\mu=1}^p
		R\ind{\alpha_\mu}{\gamma}{\alpha\ol\beta}
		\chi\ind{}{i}{
		{\tiny\vtop{
		\hbox{$\alpha_1,\ldots,\gamma,\ldots,\alpha_p,\ol B_q$}\vskip-.8mm
		\hbox{$\phantom{\alpha_1,\ldots,}{|\atop\mu } $}}}}
		+
		\sum_{\nu=1}^q
		R\ind{\ol\beta_\nu}{\ol\delta}{\alpha\ol\beta}
		\chi\ind{}{i}{
		{\tiny\vtop{
		\hbox{$A_p,\ol\beta_1,\ldots,\ol\delta,\ldots,\ol\beta_q$}\vskip-.8mm
		\hbox{$\phantom{A_p,\ol\beta_1,\ldots,}{|\atop\nu} $}}}}
		}
		\\
		&=
		-g^{\ol\beta\alpha}
		\chi\ind{}{i}{A_p,\ol B_q;\ol\beta\alpha}
		+
		\Theta\ind{k}{i}{}
		\chi\ind{}{k}{A_p,\ol B_q}
		-
		\sum_{\mu=1}^p
		R\ind{\alpha_\mu}{\gamma}{}
		\chi\ind{}{i}{
		{\tiny\vtop{
		\hbox{$\alpha_1,\ldots,\gamma,\ldots,\alpha_p,\ol B_q$}\vskip-.8mm
		\hbox{$\phantom{\alpha_1,\ldots,}{|\atop\mu } $}}}}
		-
		\sum_{\nu=1}^q
		R\ind{\ol\beta_\nu}{\ol\delta}{}
		\chi\ind{}{i}{
		{\tiny\vtop{
		\hbox{$A_p,\ol\beta_1,\ldots,\ol\delta,\ldots,\ol\beta_q$}\vskip-.8mm
		\hbox{$\phantom{A_p,\ol\beta_1,\ldots,}{|\atop\nu} $}}}}
	\end{align*}
	where $\Theta\ind{k}{i}{}=\Theta\ind{k}{i}{\alpha\ol\beta}g^{\ol\beta\alpha}$ is the mean curvature.
	In view of the following equality obtained by Proposition~\ref{P:commutator_covariant_derivatives},
	\begin{align*}
		\chi\ind{}{i}{
		{\tiny\vtop{
		\hbox{$\alpha_1,\ldots,\alpha,\ldots,\alpha_p,\ol B_q;\alpha_\mu\ol\beta$}\vskip-.8mm
		\hbox{$\phantom{\alpha_1,\ldots,}{|\atop\mu} $}}}}
		&=
		\chi\ind{}{i}{
		{\tiny\vtop{
		\hbox{$\alpha_1,\ldots,\alpha,\ldots,\alpha_p,\ol B_q;\ol\beta\alpha_\mu$}\vskip-.8mm
		\hbox{$\phantom{\alpha_1,\ldots,}{|\atop\mu} $}}}}
		-
		\Theta\ind{j}{i}{\alpha_\mu\ol\beta}
		\chi\ind{}{j}{
		{\tiny\vtop{
		\hbox{$\alpha_1,\ldots,\alpha,\ldots,\alpha_p,\ol B_q$}\vskip-.8mm
		\hbox{$\phantom{\alpha_1,\ldots,}{|\atop\mu} $}}}}
		\\
		&\hspace{.5cm}+
		\sum_{\tau\neq\mu }
		R\ind{\alpha_\tau}{\gamma}{\alpha_\mu\ol\beta}
		\chi\ind{}{i}{
		{\tiny\vtop{
		\hbox{$\alpha_1,\ldots,\gamma,\ldots,\alpha,\ldots,\alpha_p,\ol B_q$}\vskip-.8mm
		\hbox{$\phantom{\alpha_1,\ldots,}{|\atop\tau}\phantom{,\ldots,}{|\atop\mu} $}}}}
		+
		R\ind{\alpha}{\gamma}{\alpha_\mu \ol\beta}
		\chi\ind{}{i}{
		{\tiny\vtop{
		\hbox{$\alpha_1,\ldots,\gamma,\ldots,\alpha_p,\ol B_q$}\vskip-.8mm
		\hbox{$\phantom{\alpha_1,\ldots,}{|\atop\mu } $}}}}
		\\
		&\hspace{.5cm}+
		\sum_{\nu=1}^q
		R\ind{\ol\beta_\nu}{\ol\delta}{\alpha_\mu\ol\beta}
		\chi\ind{}{j}{
		{\tiny\vtop{
		\hbox{$\alpha_1,\ldots,\alpha,\ldots,\alpha_p,\ol\beta_1,\ldots,\ol\delta,\ldots,\ol\beta_q$}\vskip-.8mm
		\hbox{$\phantom{\alpha_1,\ldots,}{|\atop\mu}\phantom{,\ldots,\alpha_p,\ol\beta_1,\ldots,}{|\atop\nu} $}}}},
	\end{align*}
	one also has
	\begin{align*}
		I_{12}
		&=
		g^{\ol\beta\alpha}
		\sum_{\mu=1}^p
		\chi\ind{}{i}{
		{\tiny\vtop{
		\hbox{$\alpha_1,\ldots,\alpha,\ldots,\alpha_p,\ol B_q;\alpha_\mu\ol\beta$}\vskip-.8mm
		\hbox{$\phantom{\alpha_1,\ldots,}{|\atop\mu} $}}}}
		\\
		&=
		g^{\ol\beta\alpha}
		\sum_{\mu=1}^p
		\Bigg(
		\chi\ind{}{i}{
		{\tiny\vtop{
		\hbox{$\alpha_1,\ldots,\alpha,\ldots,\alpha_p,\ol B_q;\ol\beta\alpha_\mu$}\vskip-.8mm
		\hbox{$\phantom{\alpha_1,\ldots,}{|\atop\mu} $}}}}
		-
		\Theta\ind{j}{i}{\alpha_\mu\ol\beta}
		\chi\ind{}{j}{
		{\tiny\vtop{
		\hbox{$\alpha_1,\ldots,\alpha,\ldots,\alpha_p,\ol B_q$}\vskip-.8mm
		\hbox{$\phantom{\alpha_1,\ldots,}{|\atop\mu} $}}}}
		\\
		&\hspace{2.5cm}
		+
		\sum_{\tau\neq\mu}
		R\ind{\alpha_\tau}{\gamma}{\alpha_\mu\ol\beta}
		\chi\ind{}{i}{
		{\tiny\vtop{
		\hbox{$\alpha_1,\ldots,\gamma,\ldots,\alpha,\ldots,\alpha_p,\ol B_q$}\vskip-.8mm
		\hbox{$\phantom{\alpha_1,\ldots,}{|\atop\tau}\phantom{,\ldots,}{|\atop\mu} $}}}}
		+
		R\ind{\alpha}{\gamma}{\alpha_\mu\ol\beta}
		\chi\ind{}{i}{
		{\tiny\vtop{
		\hbox{$\alpha_1,\ldots,\gamma,\ldots,\alpha_p,\ol B_q$}\vskip-.8mm
		\hbox{$\phantom{\alpha_1,\ldots,}{|\atop\mu} $}}}}
		\\
		&\hspace{2.5cm}
		+
		\sum_{\nu=1}^q
		R\ind{\ol\beta_\nu}{\ol\delta}{\alpha_\mu\ol\beta}
		\chi\ind{}{j}{
		{\tiny\vtop{
		\hbox{$\alpha_1,\ldots,\alpha,\ldots,\alpha_p,\ol\beta_1,\ldots,\ol\delta,\ldots,\ol\beta_q$}\vskip-.8mm
		\hbox{$\phantom{\alpha_1,\ldots,}{|\atop\mu}\phantom{,\ldots,\alpha_p,\ol\beta_1,\ldots,}{|\atop\nu} $}}}}
		\Bigg)
		\\
		&=
		\sum_{\mu=1}^p
		\Bigg(
		g^{\ol\beta\alpha}
		\chi\ind{}{i}{
		{\tiny\vtop{
		\hbox{$\alpha_1,\ldots,\alpha,\ldots,\alpha_p,\ol B_q;\ol\beta\alpha_\mu$}\vskip-.8mm
		\hbox{$\phantom{\alpha_1,\ldots,}{|\atop\mu} $}}}}
		-
		\Theta\indd{j}{i}{\alpha_\mu}{\alpha}
		\chi\ind{}{j}{
		{\tiny\vtop{
		\hbox{$\alpha_1,\ldots,\alpha,\ldots,\alpha_p,\ol B_q$}\vskip-.8mm
		\hbox{$\phantom{\alpha_1,\ldots,}{|\atop\mu} $}}}}
		\\
		&\hspace{2.5cm}
		+
		R\ind{\alpha_\mu}{\gamma}{}
		\chi\ind{}{i}{
		{\tiny\vtop{
		\hbox{$\alpha_1,\ldots,\gamma,\ldots,\alpha_p,\ol B_q$}\vskip-.8mm
		\hbox{$\phantom{\alpha_1,\ldots,}{|\atop\mu} $}}}}
		+
		\sum_{\nu=1}^q
		R\indd{\ol\beta_\nu}{\ol\delta}{\alpha_\mu}{\alpha}
		\chi\ind{}{i}{
		{\tiny\vtop{
		\hbox{$\alpha_1,\ldots,\alpha,\ldots,\alpha_p,\ol\beta_1,\ldots,\ol\delta,\ldots,\ol\beta_q$}\vskip-.8mm
		\hbox{$\phantom{\alpha_1,\ldots,}{|\atop\mu}\phantom{,\ldots,\alpha_p,\ol\beta_1,\ldots,}{|\atop\nu} $}}}}
		\Bigg)
	\end{align*}
	The third equality follows from
	\begin{equation*}
		\sum_{\mu=1}^p
		\sum_{\tau\neq\mu}
		R\indd{\alpha_\tau}{\gamma}{\alpha_\mu}{\alpha}
		\chi\ind{}{i}{
				{\tiny\vtop{
				\hbox{$\alpha_1,\ldots,\gamma,\ldots,\alpha,\ldots\alpha_p,\ol B_q$}
				\vskip-.8mm
				\hbox{$\phantom{\alpha_1,\ldots,}
				{|\atop\tau}\phantom{,\ldots,}{|\atop\mu} $}}}}
		=0,
	\end{equation*}
	which results from the fact that $R\indd{\alpha_\tau}{\gamma}{\alpha_\mu}{\alpha}$ is symmetric on $\alpha$ and $\gamma$, while $\chi\ind{}{i}{
		{\tiny\vtop{
		\hbox{$\alpha_1,\ldots,\gamma,\ldots,\alpha,\ldots\alpha_p,\ol B_q$}
		\vskip-.8mm
		\hbox{$\phantom{\alpha_1,\ldots,}
		{|\atop\tau}\phantom{,\ldots,}{|\atop\mu} $}}}}$ is antisymmetric in $\alpha$ and $\gamma$.
	Hence it follows that
	\begin{align*}
		I_{11}+I_{12}
		&=
		-
		g^{\ol\beta\alpha}
		\chi\ind{}{i}{A_p,\ol B_q;\ol\beta\alpha}
		+
		\Theta\ind{k}{i}{}
		\chi\ind{}{k}{A_p,\ol B_q}
		-
		\sum_{\nu=1}^q
		R\ind{\ol\beta_\nu}{\ol\delta}{}
		\chi\ind{}{i}{
		{\tiny\vtop{
		\hbox{$A_p,\ol\beta_1,\ldots,\ol\delta,\ldots,\ol\beta_q$}\vskip-.8mm
		\hbox{$\phantom{A_p,\ol\beta_1,\ldots,}{|\atop\nu} $}}}}
		\\
		&\hspace{1cm}
		+
		\sum_{\mu=1}^p
		\Bigg(
		g^{\ol\beta\alpha}
		\chi\ind{}{i}{
		{\tiny\vtop{
		\hbox{$\alpha_1,\ldots,\alpha,\ldots,\alpha_p,\ol B_q;\ol\beta\alpha_\mu$}\vskip-.8mm
		\hbox{$\phantom{\alpha_1,\ldots,}{|\atop\mu} $}}}}
		-
		\Theta\indd{j}{i}{\alpha_\mu}{\alpha}
		\chi\ind{}{j}{
		{\tiny\vtop{
		\hbox{$\alpha_1,\ldots,\alpha,\ldots,\alpha_p,\ol B_q$}\vskip-.8mm
		\hbox{$\phantom{\alpha_1,\ldots,}{|\atop\mu} $}}}}
		\\
		&\hspace{4cm}
		+
		\sum_{\nu=1}^q
		R\indd{\ol\beta_\nu}{\ol\delta}{\alpha_\mu}{\alpha}
		\chi\ind{}{j}{
		{\tiny\vtop{
		\hbox{$\alpha_1,\ldots,\alpha,\ldots,\alpha_p,\ol\beta_1,\ldots,\ol\delta,\ldots,\ol\beta_q$}
		\vskip-.8mm
		\hbox{$\phantom{\alpha_1,\ldots,}{|\atop\mu}
		\phantom{,\ldots,\alpha_p,\ol\beta_1,\ldots,}{|\atop\nu} $}}}}
		\Bigg)
	\end{align*}
	Since $\chi$ is $\dbar$-closed and $\dbar^*$-closed on fibers, we have
	\begin{eqnarray*}
		-g^{\ol\beta\alpha}
		\chi\ind{}{i}{A_p,\ol B_q;\ol\beta\alpha}
		&=&
		-g^{\ol\beta\alpha}
		\sum_{\nu=1}^q
		\paren{
			\chi_{\tiny{\vtop{\hbox{$A_p,\ol\beta_1,\ldots,\ol\beta,\ldots,\ol\beta_q;\ol\beta_\nu\alpha$}
			\vskip-.8mm
			\hbox{$\phantom{A_p,\ol\beta_1,\ldots,}{|\atop\nu} $}}}}
		}
		\\
		&=&
		-g^{\ol\beta\alpha}
		\sum_{\nu=1}^q
		\Bigg(
			\chi_{\tiny{\vtop{\hbox{$A_p,\ol\beta_1,\ldots,\ol\beta,\ldots,\ol\beta_q;\alpha\ol\beta_\nu$}
			\vskip-.8mm
			\hbox{$\phantom{A_p,\ol\beta_1,\ldots,}{|\atop\nu} $}}}}
			-
			\Theta\ind{j}{i}{\ol\beta_\nu\alpha}
			\chi_{\tiny{\vtop{\hbox{$A_p,\ol\beta_1,\ldots,\ol\beta,\ldots,\ol\beta_q$}
			\vskip-.8mm
			\hbox{$\phantom{A_p,\ol\beta_1,\ldots,}{|\atop\nu} $}}}}
			\\
			&&
			\hspace{.5cm}
			+
			\sum_{\mu=1}^p
			R\ind{\alpha_\mu}{\gamma}{\ol\beta_\nu\alpha}
			\chi\ind{}{j}{
			{\tiny\vtop{
			\hbox{$\alpha_1,\ldots,\gamma,\ldots,\alpha_p,\ol\beta_1,\ldots,\ol\beta,\ldots,\ol\beta_q$}
			\vskip-.8mm
			\hbox{$\phantom{\alpha_1,\ldots,}{|\atop\mu}
			\phantom{,\ldots,\alpha_p,\ol\beta_1,\ldots,}{|\atop\nu} $}}}}
			\\
			&&
			\hspace{.5cm}
			+
			\sum_{\sigma\neq\nu}
			R\ind{\ol\beta_\sigma}{\ol\delta}{\ol\beta_\nu\alpha}
			\chi\ind{}{i}{
			{\tiny\vtop{
			\hbox{$A_p,\ol\beta_1,\ldots,\ol\delta,\ldots,\ol\beta,\ldots,\ol\beta_q$}\vskip-.8mm
			\hbox{$\phantom{A_p,\ol\beta_1,\ldots,}{|\atop\sigma}\phantom{,\ldots,}{|\atop\nu} $}}}}
			+
			R\ind{\ol\beta}{\ol\delta}{\ol\beta_\nu\alpha}
			\chi\ind{}{i}{
			{\tiny\vtop{
			\hbox{$A_p,\ol\beta_1,\ldots,\ol\delta,\ldots,\ol\beta_q$}\vskip-.8mm
			\hbox{$\phantom{A_p,\ol\beta_1,\ldots,}{|\atop\nu} $}}}}
		\Bigg)
		\\
		&=&
		\sum_{\nu=1}^q
		\Bigg(
			\Theta\indd{j}{i}{\ol\beta_\nu}{\ol\beta}
			\chi\ind{}{i}{\tiny{\vtop{\hbox{$A_p,\ol\beta_1,\ldots,\ol\beta,\ldots,\ol\beta_q$}
			\vskip-.8mm
			\hbox{$\phantom{A_p,\ol\beta_1,\ldots,}{|\atop\nu} $}}}}
			-
			\sum_{\mu=1}^p
			R\indd{\alpha_\mu}{\gamma}{\ol\beta_\nu}{\ol\beta}
			\chi\ind{}{i}{
			{\tiny\vtop{
			\hbox{$\alpha_1,\ldots,\gamma,\ldots,\alpha_p,\ol\beta_1,\ldots,\ol\beta,\ldots,\ol\beta_q$}
			\vskip-.8mm
			\hbox{$\phantom{\alpha_1,\ldots,}{|\atop\mu}
			\phantom{,\ldots,\alpha_p,\ol\beta_1,\ldots,}{|\atop\nu} $}}}}
			\\
			&&
			\hspace{3cm}
			-
			R\ind{}{\ol\delta}{\ol\beta_\nu}
			\chi\ind{}{i}{
			{\tiny\vtop{
			\hbox{$A_p,\ol\beta_1,\ldots,\ol\delta,\ldots,\ol\beta_q$}\vskip-.8mm
			\hbox{$\phantom{A_p,\ol\beta_1,\ldots,}{|\atop\nu} $}}}}
		\Bigg)
	\end{eqnarray*}
	On the other hand,
	\begin{equation*}
		g^{\ol\beta\alpha}
		\sum_{\mu=1}^p
		\chi\ind{}{i}{
		{\tiny\vtop{
		\hbox{$\alpha_1,\ldots,\alpha,\ldots,\alpha_p,\ol B_q;\ol\beta\alpha_\mu$}\vskip-.8mm
		\hbox{$\phantom{\alpha_1,\ldots,}{|\atop\mu} $}}}}
		=
		g^{\ol\beta\alpha}
		\sum_{\mu=1}^p
		\sum_{\nu=1}^q
		\chi\ind{}{i}{
		{\tiny\vtop{
		\hbox{$\alpha_1,\ldots,\alpha,\ldots,\alpha_p,\ol\beta_1,\ldots,\ol\beta,\ldots,\ol\beta_q;\ol\beta_\nu\alpha_\mu$}\vskip-.8mm
		\hbox{$\phantom{\alpha_1,\ldots,}{|\atop\mu}
		\phantom{,\ldots,\alpha_p,\ol\beta_1,\ldots,}{|\atop\nu}$}}}}.
	\end{equation*}
	Altogether, in view of \eqref{E:local_expression_curvature_operator} it follows that
	\begin{eqnarray*}
		I_{11}+I_{12}
		&=&
		\Theta\ind{k}{i}{}
		\chi\ind{}{k}{A_p,\ol B_q}
		-
		\sum_{\mu=1}^p
		\Theta\indd{j}{i}{\alpha_\mu}{\alpha}
		\chi\ind{}{j}{
		{\tiny\vtop{
		\hbox{$\alpha_1,\ldots,\alpha,\ldots,\alpha_p,\ol B_q$}\vskip-.8mm
		\hbox{$\phantom{\alpha_1,\ldots,}{|\atop\mu} $}}}}
		+
		\sum_{\nu=1}^q
		\Theta\indd{j}{i}{\ol\beta_\nu}{\ol\beta}
		\chi\ind{}{j}{\tiny{\vtop{\hbox{$A_p,\ol\beta_1,\ldots,\ol\beta,\ldots,\ol\beta_q$}
		\vskip-.8mm
		\hbox{$\phantom{A_p,\ol\beta_1,\ldots,}{|\atop\nu} $}}}}
		\\
		&&
		+
		g^{\ol\beta\alpha}
		\sum_{\mu=1}^p
		\chi\ind{}{i}{
		{\tiny\vtop{
		\hbox{$\alpha_1,\ldots,\alpha,\ldots,\alpha_p,\ol B_q;\ol\beta\alpha_\mu$}\vskip-.8mm
		\hbox{$\phantom{\alpha_1,\ldots,}{|\atop\mu} $}}}}
		\\
		&=&
		-[\ii\Theta, \Lambda]\chi
		+
		g^{\ol\beta\alpha}
		\sum_{\nu=1}^p
		\chi\ind{}{i}{
		{\tiny\vtop{
		\hbox{$\alpha_1,\ldots,\alpha,\ldots,\alpha_p,\ol B_q;\ol\beta\alpha_\nu$}\vskip-.8mm
		\hbox{$\phantom{\alpha_1,\ldots,}{|\atop\nu} $}}}}
	\end{eqnarray*}
	Finally, we consider the last term
	\begin{equation*}
		g^{\ol\beta\alpha}
		\sum_{\mu=1}^p
		\chi\ind{}{i}{
		{\tiny\vtop{
		\hbox{$\alpha_1,\ldots,\alpha,\ldots,\alpha_p,\ol B_q;\ol\beta\alpha_\mu$}\vskip-.8mm
		\hbox{$\phantom{\alpha_1,\ldots,}{|\atop\mu} $}}}}.
	\end{equation*}
	Integration by parts implies
	\begin{align*}
		\frac{1}{(p!)^2(q!)^2}
		&
		\int_X
		c(\omega)
		g^{\ol\beta\alpha}
		\sum_{\mu=1}^p
		\chi\ind{}{i}{
		{\tiny\vtop{
		\hbox{$\alpha_1,\ldots,\alpha,\ldots,\alpha_p,\ol B_q;\ol\beta\alpha_\mu$}\vskip-.8mm
		\hbox{$\phantom{\alpha_1,\ldots,}{|\atop\mu} $}}}}
		\ol{\psi\ind{}{j}{C_p,\ol D_q}}
		\cdot h_{i\ol j}
		\cdot g^{A_p\ol C_p}
		\cdot g^{\ol B_qD_q}
		\\
		&=
		-\frac{1}{(p!)^2(q!)^2}
		\int_X
		g^{\ol\beta\alpha}
		\sum_{\mu=1}^p
		c(\omega)_{;\alpha_\mu}
		\chi\ind{}{i}{
		{\tiny\vtop{
		\hbox{$\alpha_1,\ldots,\alpha,\ldots,\alpha_p,\ol B_q;\ol\beta$}\vskip-.8mm
		\hbox{$\phantom{\alpha_1,\ldots,}{|\atop\mu} $}}}}
		\ol{\psi\ind{}{j}{C_p,\ol D_q}}
		\cdot h_{i\ol j}
		\cdot g^{A_p\ol C_p}
		\cdot g^{\ol B_qD_q}
		\\
		&
		\hspace{.5cm}
		-\frac{1}{(p!)^2(q!)^2}
		\int_X
		c(\omega)
		g^{\ol\beta\alpha}
		\sum_{\mu=1}^p
		\chi\ind{}{i}{
		{\tiny\vtop{
		\hbox{$\alpha_1,\ldots,\alpha,\ldots,\alpha_p,\ol B_q;\ol\beta$}\vskip-.8mm
		\hbox{$\phantom{\alpha_1,\ldots,}{|\atop\mu} $}}}}
		\ol{\psi\ind{}{j}{C_p,\ol D_q;\ol\alpha_\mu}}
		\cdot h_{i\ol j}
		\cdot g^{A_p\ol C_p}
		\cdot g^{\ol B_qD_q}
	\end{align*}
	Then the first term is computed as
	\begin{align*}
		-\frac{1}{(p!)^2(q!)^2}
		&
		\int_X
		g^{\ol\beta\alpha}
		\sum_{\mu=1}^p
		c(\omega)_{;\alpha_\mu}
		\chi\ind{}{i}{
		{\tiny\vtop{
		\hbox{$\alpha_1,\ldots,\alpha,\ldots,\alpha_p,\ol B_q;\ol\beta$}\vskip-.8mm
		\hbox{$\phantom{\alpha_1,\ldots,}{|\atop\mu} $}}}}
		\ol{\psi\ind{}{j}{C_p,\ol D_q}}
		\cdot h_{i\ol j}
		\cdot g^{A_p\ol C_p}
		\cdot g^{\ol B_qD_q}
		\\
		&=
		\frac{1}{(p!)^2(q!)^2}
		\int_X
		(\pt c(\omega)\wedge \pt^*\chi)\ind{}{i}{\alpha_1,\ldots,\alpha_p,\ol B_q}
		\ol{\psi\ind{}{j}{C_p,\ol D_q}}
		\cdot h_{i\ol j}
		\cdot g^{A_p\ol C_p}
		\cdot g^{\ol B_qD_q}
		\\
		&=
		\inner{\pt c(\omega)\wedge\pt^*\chi, \psi},
	\end{align*}
	and the second term is computed as
	\begin{align*}
			&
			-\frac{1}{(p!)^2(q!)^2}
			\int_X
			c(\omega)
			g^{\ol\beta\alpha}
			\sum_{\nu=1}^p
			\chi\ind{}{i}{
			{\tiny\vtop{
			\hbox{$\alpha_1,\ldots,\alpha,\ldots,\alpha_p,\ol B_q;\ol\beta$}\vskip-.8mm
			\hbox{$\phantom{\alpha_1,\ldots,}{|\atop\nu} $}}}}
			\ol{\psi\ind{}{j}{C_p,\ol D_q;\ol\alpha_\nu}}
			\cdot h_{i\ol j}
			\cdot g^{A_p\ol C_p}
			\cdot g^{\ol B_qD_q}
			\\
			&
			\hspace{.5cm}
			=
			-\frac{1}{p!q!}
			\int_X
			c(\omega)
			\sum_{\mu=1}^p
			(\pt^*\chi)\ind{}{i}{\alpha_1,\ldots,\widehat\alpha_\mu,\ldots,\alpha_p,\ol B_q}
			\ol{(\pt^*\psi)\ind{}{i}{\gamma_1,\ldots,\widehat\gamma_\mu,\ldots,\gamma_p,\ol D_q}}
			\cdot h_{i\ol j}
			\cdot g^{\alpha_1\ol\gamma_1}\cdots \widehat{g^{\alpha_\mu\ol\gamma_\mu}}\cdots g^{\alpha_p\ol\gamma_p}
			\cdot g^{\ol B_qD_q}
			\\
			&
			\hspace{.5cm}
			=
			-\inner{c(\omega)\pt^*\chi,\pt^*\psi}.
	\end{align*}
	Altogether, the conclusion follows from the Bochner-Kodaira-Nakano formula.
\end{proof}

\begin{remark}
	We have the following observations.
	\begin{enumerate}
		\item
		Since $\dim_\CC S=1$, $c(\omega)$ is real, so one has $\ol{c(\omega)}=c(\omega)$.
		If $\dim_\CC S\ge2$, then the same formula holds only except that $c(\omega)$ should be replaced by the hermitian symmetric terms $c(\omega)_{i\ol\jmath}$.
		Hence \eqref{E:Lie_derivative_commutator} becomes
		\begin{align*}
			\inner{L_{[v_i,\ol{v_j}]} \chi,\psi}
			=&
			\inner{c(\omega)_{i\ol\jmath}\,\Box_\pt\chi,\psi}
			-
			\inner{c(\omega)_{i\ol\jmath}\,\partial\chi,\partial\psi}
			-
			\inner{c(\omega)_{i\ol\jmath}\,\pt^*\chi,\pt^*\psi}
			\\
			&+
			\inner{\chi,\pt \ol{c(\omega)_{i\ol\jmath}}\wedge\pt^*\psi}
			+
			\inner{\pt c(\omega)_{i\ol\jmath}\wedge\pt^*\chi, \psi}.
		\end{align*}
		\item
		The twisted Laplacian operator (see, e.g. \cite{Ohsawa_Takegoshi1987,Demailly(LN)}), which is given as
		\begin{equation*}
			[\pt,\eta\pt^*]
			=
			\eta\Box_\pt+\pt\eta\we\pt^*-(\pt\ol\eta)^*\pt
		\end{equation*}
		for any $\CC$-valued smooth function $\eta$, implies that
		\begin{align*}
			\inner{c(\omega)\,\partial\chi,\partial\psi}
			+
			\inner{c(\omega)\,\pt^*\chi,\pt^*\psi}
			=
			\inner{c(\omega)\,\Box_\pt\chi,\psi}
			+
			\inner{\pt c(\omega)\wedge\pt^*\chi,\psi}
			-
			\inner{\pt\chi,\pt \ol{c(\omega)}\wedge\psi}.
			\end{align*}
		Hence, together with Proposition~\ref{P:Lie_derivative_commutator}, one has
		\begin{align*}
			\inner{L_{[v,\ol v]} \chi,\psi}
			=
			\inner{\pt\chi,\pt\ol{c(\omega)}\wedge\psi}
			+
			\inner{\chi,\pt\ol{c(\omega)}\wedge\pt^*\psi}
			=
			\inner{\left[\paren{\pt\ol{c(\omega)}}^*,\pt\right]\chi,\psi}.
		\end{align*}
	\end{enumerate}
	
\end{remark}
We have the following corollary immediately.
\begin{corollary}\label{C:curvature_formula}
	If $\chi$ and $\psi$ are $\pt^*$-closed, then we have the following.
	\begin{equation*}
		\inner{L_{[v,\ol v]} \chi,\psi}
		=
		\inner{c(\omega)\,\Box_\pt\chi,\psi}
		-
		\inner{c(\omega)\,\partial\chi,\partial\psi}.
	\end{equation*}
		If $\chi$ and $\psi$ are $\pt$-closed, then we have the following.
	\begin{equation*}
		\inner{L_{[v,\ol v]} \chi,\psi}
		=
		-\inner{c(\omega)\,\chi,\Box_\pt\psi}
		+
		\inner{c(\omega)\,\pt^*\chi,\pt^*\psi}.
	\end{equation*}
\end{corollary}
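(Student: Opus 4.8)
The plan is to derive both identities as direct algebraic consequences of Proposition~\ref{P:Lie_derivative_commutator}, using only two elementary facts: on the compact fiber $\cX_s$ the operators $\pt$ and $\pt^*$ are genuine formal adjoints for the $L^2$-pairing $\inner{\cdot,\cdot}$, and $c(\omega)$ is a real-valued function, so multiplication by $c(\omega)$ is self-adjoint and $\ol{c(\omega)}=c(\omega)$ (as already recorded in the remark preceding the corollary).

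For the first assertion I would simply substitute $\pt^*\chi=0$ and $\pt^*\psi=0$ into the right-hand side of \eqref{E:Lie_derivative_commutator}. This annihilates the three terms $\inner{c(\omega)\,\pt^*\chi,\pt^*\psi}$, $\inner{\chi,\pt\ol{c(\omega)}\wedge\pt^*\psi}$ and $\inner{\pt c(\omega)\wedge\pt^*\chi,\psi}$ simultaneously, and what remains is exactly $\inner{c(\omega)\,\Box_\pt\chi,\psi}-\inner{c(\omega)\,\partial\chi,\partial\psi}$. Nothing further is needed.

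For the second assertion I would start again from \eqref{E:Lie_derivative_commutator}, now imposing $\pt\chi=0$ and $\pt\psi=0$. The term $\inner{c(\omega)\,\partial\chi,\partial\psi}$ vanishes, and $\Box_\pt\chi=\pt\pt^*\chi$, $\Box_\pt\psi=\pt\pt^*\psi$. The one substantive step is to recombine the remaining four terms via the Leibniz rule for $\pt$ applied to products with the real function $c(\omega)$. Concretely,
\begin{equation*}
\inner{c(\omega)\,\Box_\pt\chi,\psi}+\inner{\pt c(\omega)\wedge\pt^*\chi,\psi}
=\inner{\pt\big(c(\omega)\,\pt^*\chi\big),\psi}
=\inner{c(\omega)\,\pt^*\chi,\pt^*\psi},
\end{equation*}
and, in the same way together with $\ol{c(\omega)}=c(\omega)$,
\begin{equation*}
\inner{\chi,\pt\ol{c(\omega)}\wedge\pt^*\psi}
=\inner{\chi,\pt\big(\ol{c(\omega)}\,\pt^*\psi\big)}-\inner{\chi,\ol{c(\omega)}\,\Box_\pt\psi}
=\inner{c(\omega)\,\pt^*\chi,\pt^*\psi}-\inner{c(\omega)\,\chi,\Box_\pt\psi}.
\end{equation*}
Substituting these back, one copy of $\inner{c(\omega)\,\pt^*\chi,\pt^*\psi}$ cancels the term $-\inner{c(\omega)\,\pt^*\chi,\pt^*\psi}$ appearing in \eqref{E:Lie_derivative_commutator}, and the surviving terms are precisely $-\inner{c(\omega)\,\chi,\Box_\pt\psi}+\inner{c(\omega)\,\pt^*\chi,\pt^*\psi}$. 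Equivalently, one may start from the compact form $\inner{L_{[v,\ol v]}\chi,\psi}=\inner{\pt\chi,\pt\ol{c(\omega)}\wedge\psi}+\inner{\chi,\pt\ol{c(\omega)}\wedge\pt^*\psi}$ given in the preceding remark, drop the first term because $\pt\chi=0$, and perform the same integration by parts on the second.

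The argument is entirely routine: all the analysis — the evaluation of $L_{[v,\ol v]}$ and the emergence of $\Box_\pt$ — has already been carried out in Proposition~\ref{P:Lie_derivative_commutator}. The only point that needs a little care is the bookkeeping: keeping the signs correct when peeling a $\pt$ off a product $c(\omega)\,\pt^*(\cdot)$, and using that $c(\omega)$ is real so that $\pt c(\omega)=\pt\ol{c(\omega)}$ and multiplication by $c(\omega)$ moves freely across the inner product. I do not anticipate any genuine obstacle.
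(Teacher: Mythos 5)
Your proposal is correct and follows essentially the same route as the paper: the first identity is read off from \eqref{E:Lie_derivative_commutator} by setting $\pt^*\chi=\pt^*\psi=0$, and the second is obtained by the same Leibniz-rule/integration-by-parts recombination of the terms $\inner{\pt c(\omega)\wedge\pt^*\chi,\psi}$ and $\inner{\chi,\pt\ol{c(\omega)}\wedge\pt^*\psi}$ using $\Box_\pt=\pt\pt^*$ on $\pt$-closed forms and the reality of $c(\omega)$. The bookkeeping in your displayed computations is accurate, so nothing further is needed.
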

\begin{proof}
	The first assertion follows immediately.
	If $\chi$ and $\psi$ are $\pt$-closed on fibers, then one has
	\begin{equation*}
		\inner{\pt c(\omega)\wedge\pt^*\chi,\psi}
		=
		\inner{\pt\paren{c(\omega)\pt^*\chi},\psi}
		-
		\inner{c(\omega)\pt\pt^*\chi,\psi}
		=
		\inner{c(\omega)\pt^*\chi,\pt^*\psi}
		-
		\inner{c(\omega)\Box_\pt\chi,\psi},
	\end{equation*}
	and similarly one also has
	\begin{equation*}
		\inner{\chi,\pt\ol{c(\omega)}\wedge\psi}
		=
		\inner{c(\omega)\pt^*\chi,\pt^*\psi}-\inner{c(\omega)\chi,\Box_\pt\psi}.
	\end{equation*}
	Then the second assertion follows by plugging these into \eqref{E:Lie_derivative_commutator}.
\end{proof}

Next, we prove the following proposition for computing $\inner{L_v'\chi,L_v'\psi}$ in \eqref{E:curvature_formula}.

\begin{proposition}\label{P:d*_Acupd}
	Suppose that $\chi$ is $\pt^*$-closed and $A_s\cup\Theta=0$.
	Then we have
	\begin{equation*}
		\pt^*(A_s\cup\pt\chi)
		=
		[\ii\Theta,\Lambda](A_s\cup\chi)
	\end{equation*}
	on fibers.
\end{proposition}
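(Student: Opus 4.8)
The plan is to prove the identity by a direct computation on a fixed fiber $X=\cX_s$ in local holomorphic coordinates, along the lines of the proofs of Propositions~\ref{P:dbar*_Lvbar} and \ref{P:auxiliary_formulas}. The inputs are the coordinate formula for $\pt^*$, the skew-symmetric expression for the cup product $A_s\cup\vartextvisiblespace$ used in the proof of Proposition~\ref{P:primitive}, the commutation rule for covariant derivatives (Proposition~\ref{P:commutator_covariant_derivatives}), the symmetries $A_{s\ol\beta\ol\delta}=A_{s\ol\delta\ol\beta}$ and $A_{s\ol\beta\ol\delta;\ol\tau}=A_{s\ol\beta\ol\tau;\ol\delta}$ of the distinguished \ks\ representative recorded in Section~\ref{se:setup}, the local form \eqref{E:local_expression_curvature_operator} of $[\ii\Theta,\Lambda]$, and the Bochner-Kodaira-Nakano identity \eqref{E:BKN}.

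First I would expand $A_s\cup\pt\chi$ in coordinates (its coefficients being sums of products $A\ind{s}{\gamma}{\ol\delta}\,(\pt\chi)\ind{}{i}{\gamma,\cdots}$, with $\pt\chi$ written out via \eqref{E:partial}) and then apply $\pt^*$; the Leibniz rule splits each term according to whether the anti-holomorphic covariant derivative produced by $\pt^*$ lands on $A_s$ or on a coefficient of $\pt\chi$. In the terms where it lands on $\pt\chi$ one recognizes, after using that $\chi$ is $\pt^*$-closed, the expression $A_s\cup(\pt^*\pt\chi)=A_s\cup(\Box_{\pt_h}\chi)$; since $\chi$ is moreover $\dbar$-harmonic on $X$, the Bochner-Kodaira-Nakano formula \eqref{E:BKN} turns this into $-A_s\cup\big([\ii\Theta,\Lambda]\chi\big)$. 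In the terms where the derivative lands on $A_s$ one gets factors $g^{\ol\beta\alpha}A\ind{s}{\gamma}{\ol\delta;\ol\beta}$; here I would invoke the symmetries of $\nabla A_s$ together with the skew-symmetry of the form indices to simplify, exactly as in the vanishing arguments of Propositions~\ref{P:auxiliary_formulas} and \ref{P:Lie_derivative_commutator}, and any Riemann-curvature terms of $\omega_X$ that arise on commuting covariant derivatives cancel in pairs by the same symmetry-versus-antisymmetry mechanism used in the proof of Proposition~\ref{P:Lie_derivative_commutator}.

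Collecting everything, the right-hand side should emerge as $[\ii\Theta,\Lambda](A_s\cup\chi)$ once one compares with the local formula \eqref{E:local_expression_curvature_operator} for $[\ii\Theta,\Lambda]$ applied to the $(p-1,q+1)$-form $A_s\cup\chi$: the operator $[\ii\Theta,\Lambda]$ and the cup product $A_s\cup\vartextvisiblespace$ fail to commute only in the way $\Theta$ is paired, via the metric, with the holomorphic slot that $A_s$ contracts and with the new anti-holomorphic slot $dz^{\ol\delta}$ that $A_s$ produces, i.e.\ through the combination $A\ind{s}{\gamma}{\ol\delta}\Theta_{\gamma\ol\beta}-A\ind{s}{\gamma}{\ol\beta}\Theta_{\gamma\ol\delta}=(A_s\cup\Theta)_{\ol\beta\ol\delta}$, which vanishes by hypothesis; this is precisely the identity that appears in the proof of Proposition~\ref{P:d-closed}. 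Hence $[\ii\Theta,\Lambda]$ passes through $A_s\cup\vartextvisiblespace$ and the two sides agree. The part I expect to be the genuine obstacle is the bookkeeping: keeping the signs $(-1)^p$ and $(-1)^\nu$ from the cup product and from \eqref{E:partial} straight, distinguishing covariant derivatives with respect to $\nabla^\cE$ alone from those with respect to $\nabla^\cE$ and $\nabla^{\cX_s}$ simultaneously, and checking that after invoking $A_s\cup\Theta=0$ no spurious terms survive.
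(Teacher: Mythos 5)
Your route is genuinely different from the paper's and, once repaired, it works; in fact it is cleaner. The paper never invokes the Bochner--Kodaira--Nakano identity: it expands $\pt^*(A_s\cup\pt\chi)$ in coordinates, commutes covariant derivatives via Proposition~\ref{P:commutator_covariant_derivatives}, uses the $\pt^*$-closedness of $\chi$ (and, tacitly, its fiberwise $\dbar$-closedness) to remove the second-order terms, lets the Ricci/Riemann terms cancel, and only at the very end uses $A_s\cup\Theta=0$ to match the surviving $\Theta$-terms with \eqref{E:local_expression_curvature_operator}. You instead front-load the curvature into \eqref{E:BKN} and isolate $A_s\cup\Theta=0$ in a commutation statement for $[\ii\Theta,\Lambda]$ and $A_s\cup\vartextvisiblespace$. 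Note that in your route no commutation of covariant derivatives is needed at all: in both $\pt^*(A_s\cup\pt\chi)$ and $A_s\cup(\pt^*\pt\chi)$ the second derivatives occur in the same order, and the terms where the $\pt^*$-derivative falls on $A_s$ vanish outright, since $g^{\ol\delta\alpha_1}A\ind{s}{\alpha}{\ol\beta;\ol\delta}$ is symmetric in $\alpha,\alpha_1$ (by $A_{s\ol\beta\ol\delta}=A_{s\ol\delta\ol\beta}$ and $A_{s\ol\beta\ol\delta;\ol\tau}=A_{s\ol\beta\ol\tau;\ol\delta}$) while $\pt\chi$ is skew; so the Riemann-curvature cancellations you anticipate never even arise. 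The price of your route is that the BKN step needs $\Box_\dbar\chi=0$, i.e.\ fiberwise harmonicity, which is stronger than the stated hypothesis ``$\pt^*$-closed''; this is how the proposition is actually applied (and the paper's own proof also uses $\dbar$-closedness), but it must be stated.

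Two concrete points must be fixed. First, the sign in your central identification: with the paper's conventions, the part of $\pt^*(A_s\cup\pt\chi)$ in which the derivative lands on $\pt\chi$ equals $-A_s\cup(\pt^*\pt\chi)$, not $+A_s\cup(\pt^*\pt\chi)$ --- in the former $A_s$ contracts the first holomorphic index of $\pt\chi$ and the metric contraction of $\pt^*$ pairs with the next one, in the latter the roles are interchanged, and the skew-symmetry of $\pt\chi$ produces the minus sign. This sign is load-bearing: $-A_s\cup\pt^*\pt\chi=-A_s\cup\Box_\pt\chi=-A_s\cup\bigl(\Box_\dbar-[\ii\Theta,\Lambda]\bigr)\chi=+A_s\cup[\ii\Theta,\Lambda]\chi$, which then becomes $[\ii\Theta,\Lambda](A_s\cup\chi)$; with your plus sign the same assembly yields $-[\ii\Theta,\Lambda](A_s\cup\chi)$, the negative of the proposition (it would, for instance, flip the coefficient $-(n-p)$ used in Section~\ref{S:linebundle}). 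Second, the commutation $[\ii\Theta,\Lambda](A_s\cup\chi)=A_s\cup\bigl([\ii\Theta,\Lambda]\chi\bigr)$ is correct, but the mismatch consists of two families of terms --- $\Theta$ paired with the new antiholomorphic slot created by $A_s$, and $\Theta$ paired with the holomorphic slot consumed by $A_s$ --- and these combine into contractions of $(A_s\cup\Theta)_{\ol\beta\ol\delta}$ only after using the symmetry $A_{s\ol\beta\ol\delta}=A_{s\ol\delta\ol\beta}$; so cite that symmetry alongside the hypothesis $A_s\cup\Theta=0$, as in the proof of Proposition~\ref{P:d-closed}. With these corrections your argument closes.
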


\begin{proof}
Note that
\begin{equation*}
	A_s\cup\partial\chi
	=
	\frac{1}{p!q!}
	\paren{
		A\ind{s}{\alpha}{\ol\beta}
		\chi\ind{}{i}{\alpha_1,\ldots,\alpha_p,\ol B_q;\alpha}
		-
		\sum^p_{\mu=1}
		A\ind{s}{\alpha}{\ol\beta}
		\chi\ind{}{i}{
		{\tiny\vtop{
		\hbox{$\alpha_1,\ldots,\alpha,\ldots,\alpha_p\ol B_q;\alpha_\mu$}
		\vskip-.8mm
		\hbox{$\phantom{\alpha_1,\ldots,}{|\atop\mu} $}}}}
	}
	dz^{\ol\beta}\wedge dz^{A_p}\we dz^{\ol B_q}.
\end{equation*}
It follows that
\begin{align*}
	\partial^*&(A_s\cup\partial\chi)
	=
	\frac{1}{(p-1)!q!}
	I dz^{\ov\beta}\wedge dz^{A_{p-1}}\we dz^{\ol B_q},
\end{align*}
where
\begin{eqnarray*}
	I
	&=&
	-g^{\ov\delta\alpha_1}\nabla_{\ov\delta}
	\paren{
	A\ind{s}{\alpha}{\ol\beta}\chi\ind{}{i}{\alpha_1,\ldots,\alpha_p,\ol B_q;\alpha}
	-
	\sum^p_{\mu=1}
	A\ind{s}{\alpha}{\ol\beta}\chi\ind{}{i}{
	{\tiny\vtop{
	\hbox{$\alpha_1,\ldots,\alpha,\ldots,\alpha_p\ol B_q;\alpha_\mu$}\vskip-.8mm
	\hbox{$\phantom{\alpha_1,\ldots,}{|\atop\mu} $}}}}
	}\\
	&=&
	-
	A\indd{s}{\alpha}{\ol\beta}{;\alpha_1}
	\paren{
		\chi\ind{}{i}{\alpha_1,\ldots,\alpha_p,\ol B_q;\alpha}
		-
		\sum^p_{\mu=1}
		\chi\ind{}{i}{
		{\tiny\vtop{
		\hbox{$\alpha_1,\alpha_2,\ldots,\alpha,\ldots,\alpha_p,\ol B_q;\alpha_\mu$}
		\vskip-.8mm
		\hbox{$\phantom{\alpha_1,\alpha_2,\ldots,}{|\atop\mu } $}}}}
	} \\
	&&
	-g^{\ov\delta\alpha_1}
	\paren{
		A\ind{s}{\alpha}{\ol\beta}
		\chi\ind{}{i}{\alpha_1,\alpha_2,\ldots,\alpha_p,\ol B_q;\alpha\ov\delta}
		-
		A\ind{s}{\alpha}{\ol\beta}
		\chi\ind{}{i}{\alpha,\alpha_2,\ldots,\alpha_p,\ol B_q;\alpha_1\ov\delta}
		-
		\sum^p_{\mu =2}
		A\ind{s}{\alpha}{\ol\beta}
		\chi\ind{}{i}{
		{\tiny\vtop{
		\hbox{$\alpha_1,\ldots,\alpha,\ldots,\alpha_p,\ol B_q;\alpha_\mu\ov\delta$}	
		\vskip-.8mm
		\hbox{$\phantom{\alpha_1,\ldots,}{|\atop\mu } $}}}}
	}.
\end{eqnarray*}
The first line vanishes since it is both symmetric and anti-symmetric in $\alpha_1$ and $\alpha$.
We denote the second line by $I_1+I_2+I_3$.
We first compute $I_1$.
\begin{eqnarray*}
	I_1
	&=&
	-g^{\ol\delta\alpha_1}A\ind{s}{\alpha}{\ol\beta}
	\chi\ind{}{i}{\alpha_1,\ldots,\alpha_p,\ol B_q;\alpha\ol\delta}
	\\
	&=&
	-A\ind{s}{\alpha}{\ol\beta}
	\Bigg(
		g^{\ol\delta\alpha_1}
		\chi\ind{}{i}{A_p,\ol B_q;\ol\delta\alpha}
		-
		\Theta\indd{j}{i}{\alpha}{\alpha_1}
		\chi\ind{}{j}{A_p,\ol B_q}
		+
		\sum_{\mu=1}^p
		\chi\ind{}{i}{
		{\tiny\vtop{
		\hbox{$\alpha_1,\ldots,\gamma,\ldots,\alpha_p,\ol B_q$}
		\vskip-.8mm
		\hbox{$\phantom{\alpha_1,\ldots,}{|\atop\mu } $}}}}
		R\indd{\alpha_\mu}{\gamma}{\alpha}{\alpha_1}
		\\
		&&
		\hspace{4cm}+
		\sum_{\nu=1}^q
		\chi\ind{}{i}{
		{\tiny\vtop{
		\hbox{$A_p,\ol\beta_1,\ldots,\ol\lambda,\ldots,\ol\beta_q$}
		\vskip-.8mm
		\hbox{$\phantom{A_p,\ol\beta_1,\ldots,}{|\atop\nu } $}}}}
		R\indd{\ol\beta_\nu}{\ol\lambda}{\alpha}{\alpha_1}
	\Bigg)
	\\
	&=&
	-A\ind{s}{\alpha}{\ol\beta}
	\paren{
		-
		\Theta\indd{j}{i}{\alpha}{\alpha_1}
		\chi\ind{}{j}{A_p,\ol B_q}
		+
		\chi\ind{}{i}{\gamma,\alpha_2,\ldots,\alpha_p,\ol B_q}
		R\ind{\alpha}{\gamma}{}
		+
		\sum_{\nu=1}^q
		\chi\ind{}{i}{
		{\tiny\vtop{
		\hbox{$A_p,\ol\beta_1,\ldots,\ol\lambda,\ldots,\ol\beta_q$}
		\vskip-.8mm
		\hbox{$\phantom{A_p,\ol\beta_1,\ldots,}{|\atop\nu } $}}}}
		R\indd{\ol\beta_\nu}{\ol\lambda}{\alpha}{\alpha_1}
	}.
\end{eqnarray*}
Next we compute $I_2$.
%
\begin{eqnarray*}
	I_2
	&=&
	g^{\ol\delta\alpha_1}
	A\ind{s}{\alpha}{\ol\beta}
	\chi\ind{}{i}{\alpha,\alpha_2,\ldots,\alpha_p,\ol B_q;\alpha_1\ol\delta}
	\\
	&=&
	g^{\ov\delta\alpha_1}A\ind{s}{\alpha}{\ol\beta}
	\Bigg(
		\chi\ind{}{i}{\alpha,\alpha_2,\ldots,\alpha_p,\ol B_q;\ol\delta\alpha_1}
		-
		\Theta\ind{j}{i}{\alpha_1\ov\delta}
		\chi\ind{}{j}{\alpha,\alpha_2,\ldots,\alpha_p,\ol B_q}
		+
		\chi\ind{}{i}{\sigma,\alpha_2,\ldots,\alpha_p,\ol B_q}
		R\ind{\alpha}{\sigma}{\alpha_1\ov\delta}
		\\
		&&
		\hspace{2cm}
		+
		\sum_{\mu=2}^p
		\chi\ind{}{i}{
		{\tiny\vtop{
		\hbox{$\alpha,\alpha_2,\ldots,\sigma,\ldots,\alpha_p,\ol B_q$}
		\vskip-.8mm
		\hbox{$\phantom{\alpha,\alpha_2,\ldots,}{|\atop\mu } $}}}}
		R\ind{\alpha_\mu}{\sigma}{\alpha_1\ov\delta}
		+
		\sum_{\nu=1}^q
		\chi\ind{}{i}{
		{\tiny\vtop{
		\hbox{$\alpha,\alpha_2,\ldots,\alpha_p,\ol\beta_1,\ldots,\ol\tau,\ldots,\ol\beta_q$}
		\vskip-.8mm
		\hbox{$\phantom{\alpha,\alpha_2,\ldots,\alpha_p,\ol\beta_1,\ldots,}{|\atop\nu } $}}}}
		R\ind{\ol\beta_\nu}{\ol\tau}{\alpha_1\ov\delta}
	\Bigg)
	\\
	&=&
	A\ind{s}{\alpha}{\ol\beta}
	\Bigg(
		g^{\ol\delta\alpha_1}
		\chi\ind{}{i}{\alpha,\alpha_2,\ldots,\alpha_p,\ol B_q;\ol\delta\alpha_1}
		-
		\Theta\ind{j}{i}{}
		\chi\ind{}{j}{\alpha,\alpha_2,\ldots,\alpha_p,\ol B_q}
		+
		\chi\ind{}{i}{\sigma,\alpha_2,\ldots,\alpha_p,\ol B_q}
		R\ind{\alpha}{\sigma}{}
		\\
		&&
		\hspace{2cm}
		+
		\sum_{\mu=2}^p
		\chi\ind{}{i}{
		{\tiny\vtop{
		\hbox{$\alpha,\alpha_2,\ldots,\sigma,\ldots,\alpha_p,\ol B_q$}
		\vskip-.8mm
		\hbox{$\phantom{\alpha,\alpha_2,\ldots,}{|\atop\mu } $}}}}
		R\ind{\alpha_\mu}{\sigma}{}
		+
		\sum_{\nu=1}^q
		\chi\ind{}{i}{
		{\tiny\vtop{
		\hbox{$\alpha,\alpha_2,\ldots,\alpha_p,\ol\beta_1,\ldots,\ol\tau,\ldots,\ol\beta_q$}
		\vskip-.8mm
		\hbox{$\phantom{\alpha,\alpha_2,\ldots,\alpha_p,\ol\beta_1,\ldots,}{|\atop\nu } $}}}}
		R\ind{\ol\beta_\nu}{\ol\tau}{}
	\Bigg).
\end{eqnarray*}
Here the first term in the bracket is
\begin{align*}
	g^{\ov\delta\alpha_1}
	&
	\chi\ind{}{i}{\alpha,\alpha_2,\ldots,\alpha_p,\ol B_q;\ol\delta\alpha_1}
	=
	g^{\ov\delta\alpha_1}
	\sum_{\nu=1}^q
	\chi\ind{}{i}{
	{\tiny\vtop{
	\hbox{$\alpha,\alpha_2,\ldots,\alpha_p,
	\ol\beta_1,\ldots,\ol\delta,\ldots,\ol\beta_q;\ol\beta_\nu\alpha_1$}
	\vskip-.8mm
	\hbox{$\phantom{\alpha,\alpha_2,\ldots,\alpha_p,\ol\beta_1,\ldots,}{|\atop\nu } $}}}}
	\\
	&=
	g^{\ov\delta\alpha_1}
	\sum_{\nu=1}^q
	\Bigg(
		\chi\ind{}{i}{
		{\tiny\vtop{
		\hbox{$\alpha,\alpha_2,\ldots,\alpha_p,
		\ol\beta_1,\ldots,\ol\delta,\ldots,\ol\beta_q;\alpha_1\ol\beta_\nu$}
		\vskip-.8mm
		\hbox{$\phantom{\alpha,\alpha_2,\ldots,\alpha_p,\ol\beta_1,\ldots,}{|\atop\nu } $}}}}
		-
		\Theta\ind{j}{i}{\ol\beta_\nu\alpha_1}
		\chi\ind{}{j}{
		{\tiny\vtop{
		\hbox{$\alpha,\alpha_2,\ldots,\alpha_p,
		\ol\beta_1,\ldots,\ol\delta,\ldots,\ol\beta_q$}
		\vskip-.8mm
		\hbox{$\phantom{\alpha,\alpha_2,\ldots,\alpha_p,\ol\beta_1,\ldots,}{|\atop\nu } $}}}}
		\\
		&\hspace{2cm}
		+
		\chi\ind{}{i}{
		{\tiny\vtop{
		\hbox{$\sigma,\alpha_2,\ldots,\alpha_p,
		\ol\beta_1,\ldots,\ol\delta,\ldots,\ol\beta_q$}
		\vskip-.8mm
		\hbox{$\phantom{\alpha,\alpha_2,\ldots,\alpha_p,\ol\beta_1,\ldots,}{|\atop\nu } $}}}}
		R\ind{\alpha}{\sigma}{\ol\beta_\nu\alpha_1}
		+
		\sum_{\mu=2}^p
		\chi\ind{}{i}{
		{\tiny\vtop{
		\hbox{$\alpha,\alpha_2,\ldots,\sigma,\ldots,\alpha_p,
		\ol\beta_1,\ldots,\ol\delta,\ldots,\ol\beta_q$}
		\vskip-.8mm
		\hbox{$\phantom{\alpha,\alpha_2,\ldots,}{|\atop\mu }
		\phantom{,\ldots,\alpha_p,\ol\beta_1,\ldots,}{|\atop\nu }$}}}}
		R\ind{\alpha_\mu}{\sigma}{\ol\beta_\nu\alpha_1}
		\\
		&\hspace{2cm}
		+
		\chi\ind{}{i}{
		{\tiny\vtop{
		\hbox{$\alpha,\alpha_2,\ldots,\alpha_p,
		\ol\beta_1,\ldots,\ol\tau,\ldots,\ol\beta_q$}
		\vskip-.8mm
		\hbox{$\phantom{\alpha,\alpha_2,\ldots,\alpha_p,\ol\beta_1,\ldots,}{|\atop\nu } $}}}}
		R\ind{\ol\delta}{\ol\tau}{\ol\beta_\nu\alpha_1}
		+
		\sum_{\kappa\neq\nu}
		\chi\ind{}{i}{
		{\tiny\vtop{
		\hbox{$\sigma,\alpha_2,\ldots,\alpha_p,
		\ol\beta_1,\ldots,\ol\tau,\ldots,\ol\delta,\ldots,\ol\beta_q$}
		\vskip-.8mm
		\hbox{$\phantom{\alpha,\alpha_2,\ldots,\alpha_p,\ol\beta_1,\ldots,}{|\atop\kappa}
		\phantom{,\ldots,}{|\atop\nu}$}}}}
		R\ind{\ol\beta_\kappa}{\ol\tau}{\ol\beta_\nu\alpha_1}
	\Bigg)
	\\
	&=
	\sum_{\nu=1}^q
	\Bigg(
		-
		\Theta\indd{j}{i}{\ol\beta_\nu}{\ol\delta}
		\chi\ind{}{j}{
		{\tiny\vtop{
		\hbox{$\alpha,\alpha_2,\ldots,\alpha_p,
		\ol\beta_1,\ldots,\ol\delta,\ldots,\ol\beta_q$}
		\vskip-.8mm
		\hbox{$\phantom{\alpha,\alpha_2,\ldots,\alpha_p,\ol\beta_1,\ldots,}{|\atop\nu } $}}}}
		+
		\chi\ind{}{i}{
		{\tiny\vtop{
		\hbox{$\sigma,\alpha_2,\ldots,\alpha_p,
		\ol\beta_1,\ldots,\ol\delta,\ldots,\ol\beta_q$}
		\vskip-.8mm
		\hbox{$\phantom{\alpha,\alpha_2,\ldots,\alpha_p,\ol\beta_1,\ldots,}{|\atop\nu } $}}}}
		R\indd{\alpha}{\sigma}{\ol\beta_\nu}{\ol\delta}
		\\
		&\hspace{2cm}
		+
		\sum_{\mu=2}^p
		\chi\ind{}{i}{
		{\tiny\vtop{
		\hbox{$\alpha,\alpha_2,\ldots,\sigma,\ldots,\alpha_p,
		\ol\beta_1,\ldots,\ol\delta,\ldots,\ol\beta_q$}
		\vskip-.8mm
		\hbox{$\phantom{\alpha,\alpha_2,\ldots,}{|\atop\mu }
		\phantom{,\ldots,\alpha_p,\ol\beta_1,\ldots,}{|\atop\nu }$}}}}
		R\indd{\alpha_\mu}{\sigma}{\ol\beta_\nu}{\ol\delta}
		+
		\chi\ind{}{i}{
		{\tiny\vtop{
		\hbox{$\alpha,\alpha_2,\ldots,\alpha_p,
		\ol\beta_1,\ldots,\ol\tau,\ldots,\ol\beta_q$}
		\vskip-.8mm
		\hbox{$\phantom{\alpha,\alpha_2,\ldots,\alpha_p,\ol\beta_1,\ldots,}{|\atop\nu } $}}}}
		R\ind{}{\ol\tau}{\ol\beta_\nu}
	\Bigg).
\end{align*}
Hence $I_2$ is computed as follows.
\begin{eqnarray*}
I_2
&=&
A\ind{s}{\alpha}{\ol\beta}
\Bigg(
	-
	\Theta\ind{j}{i}{}
	\chi\ind{}{j}{\alpha,\alpha_2,\ldots,\alpha_p,\ol B_q}
	-
	\sum_{\nu=1}^q
	\Theta\indd{j}{i}{\ol\beta_\nu}{\ol\delta}
	\chi\ind{}{j}{
	{\tiny\vtop{
	\hbox{$\alpha,\alpha_2,\ldots,\alpha_p,
	\ol\beta_1,\ldots,\ol\delta,\ldots,\ol\beta_q$}
	\vskip-.8mm
	\hbox{$\phantom{\alpha,\alpha_2,\ldots,\alpha_p,\ol\beta_1,\ldots,}{|\atop\nu } $}}}}
	+
		\chi\ind{}{i}{\sigma,\alpha_2,\ldots,\alpha_p,\ol B_q}
		R\ind{\alpha}{\sigma}{}
	\\
	&&
	\hspace{2cm}
	+
	\sum_{\mu=2}^p
		\chi\ind{}{i}{
		{\tiny\vtop{
		\hbox{$\alpha,\alpha_2,\ldots,\sigma,\ldots,\alpha_p,\ol B_q$}
		\vskip-.8mm
		\hbox{$\phantom{\alpha,\alpha_2,\ldots,}{|\atop\mu } $}}}}
		R\ind{\alpha_\mu}{\sigma}{}
	+\sum_{\nu=1}^q
		\chi\ind{}{i}{
		{\tiny\vtop{
		\hbox{$\sigma,\alpha_2,\ldots,\alpha_p,
		\ol\beta_1,\ldots,\ol\delta,\ldots,\ol\beta_q$}
		\vskip-.8mm
		\hbox{$\phantom{\alpha,\alpha_2,\ldots,\alpha_p,\ol\beta_1,\ldots,}{|\atop\nu } $}}}}
		R\indd{\alpha}{\sigma}{\ol\beta_\nu}{\ol\delta}
	\\
	&&
	\hspace{2cm}
	+
	\sum_{\mu=2}^p\sum_{\nu=1}^q
		\chi\ind{}{i}{
		{\tiny\vtop{
		\hbox{$\alpha,\alpha_2,\ldots,\sigma,\ldots,\alpha_p,
		\ol\beta_1,\ldots,\ol\delta,\ldots,\ol\beta_q$}
		\vskip-.8mm
		\hbox{$\phantom{\alpha,\alpha_2,\ldots,}{|\atop\mu }
		\phantom{,\ldots,\alpha_p,\ol\beta_1,\ldots,}{|\atop\nu }$}}}}
		R\indd{\alpha_\mu}{\sigma}{\ol\beta_\nu}{\ol\delta}
\Bigg).
\end{eqnarray*}
Finally,
\begin{eqnarray*}
	I_3
	&=&
	g^{\ov\delta\alpha_1}
	\sum^p_{\mu =2}
	A\ind{s}{\alpha}{\ol\beta}
	\chi\ind{}{i}{
	{\tiny\vtop{
	\hbox{$\alpha_1,\ldots,\alpha,\ldots,\alpha_p,\ol B_q;
	\alpha_\mu\ov\delta$}
	\vskip-.8mm
	\hbox{$\phantom{\alpha_1,\ldots,}{|\atop\mu } $}}}}
	\\
	&=&
	g^{\ov\delta\alpha_1}
	A\ind{s}{\alpha}{\ol\beta}
	\sum^p_{\mu =2}
	\Bigg(
		\chi\ind{}{j}{
		{\tiny\vtop{
		\hbox{$\alpha_1,\ldots,\alpha,\ldots,\alpha_p,\ol B_q;\ol\delta\alpha_\mu$}
		\vskip-.8mm
		\hbox{$\phantom{\alpha_1,\ldots,}{|\atop\mu } $}}}}
		-
		\Theta\ind{j}{i}{\alpha_\mu\ov\delta}
		\chi\ind{}{j}{
		{\tiny\vtop{
		\hbox{$\alpha_1,\ldots,\alpha,\ldots,\alpha_p,\ol B_q$}
		\vskip-.8mm
		\hbox{$\phantom{\alpha_1,\ldots,}{|\atop\mu } $}}}}
		+
		\chi\ind{}{i}{
		{\tiny\vtop{
		\hbox{$\sigma,\alpha_2,\ldots,\alpha,\ldots,\alpha_p,\ol B_q$}
		\vskip-.8mm
		\hbox{$\phantom{\sigma,\alpha_2,\ldots,}{|\atop\mu } $}}}}
		R\ind{\alpha_1}{\sigma}{\alpha_\mu\ov\delta}
		\\
		&&
		\hspace{3.2cm}
		+
		\chi\ind{}{i}{
		{\tiny\vtop{
		\hbox{$\alpha_1,\ldots,\sigma,\ldots,\alpha_p,\ol B_q$}
		\vskip-.8mm
		\hbox{$\phantom{\alpha_1,\ldots,}{|\atop\mu } $}}}}
		R\ind{\alpha}{\sigma}{\alpha_\mu\ov\delta}
		+
		\sum_{\substack{\kappa=2 \\ \kappa\neq\mu}}^p
		\chi\ind{}{i}{
		{\tiny\vtop{
		\hbox{$\alpha_1,\ldots,\sigma,\ldots,\alpha,\ldots\alpha_p,\ol B_q$}
		\vskip-.8mm
		\hbox{$\phantom{\alpha_1,\ldots,}
		{|\atop\kappa}\phantom{,\ldots,}{|\atop\mu} $}}}}
		R\ind{\alpha_\kappa}{\sigma}{\alpha_\mu\ov\delta}
		\\
		&&
		\hspace{3.2cm}
		+
		\sum_{\nu=1}^q
		\chi\ind{}{i}{
		{\tiny\vtop{
		\hbox{$\alpha_1,\ldots,\alpha,\ldots,\alpha_p,
		\ol\beta_1,\ldots,\ol\tau,\ldots,\ol\beta_q$}
		\vskip-.8mm
		\hbox{$\phantom{\alpha_1,\ldots,}{|\atop\mu}
		\phantom{,\ldots,\alpha_p,\ol\beta_1,\ldots,}{|\atop\nu}
		 $}}}}
		R\ind{\ol\beta_\nu}{\ol\tau}{\alpha_\mu\ol\delta}
	\Bigg)
	\\
	&=&
	A\ind{s}{\alpha}{\ol\beta}
	\sum^p_{\mu =2}
	\Bigg(
		-
		\Theta\indd{j}{i}{\alpha_\mu}{\alpha_1}
		\chi\ind{}{j}{
		{\tiny\vtop{
		\hbox{$\alpha_1,\ldots,\alpha,\ldots,\alpha_p,\ol B_q$}
		\vskip-.8mm
		\hbox{$\phantom{\alpha_1,\ldots,}{|\atop\mu } $}}}}
		+
		\chi\ind{}{i}{
		{\tiny\vtop{
		\hbox{$\sigma,\alpha_2,\ldots,\alpha,\ldots,\alpha_p,\ol B_q$}
		\vskip-.8mm
		\hbox{$\phantom{\sigma,\alpha_2,\ldots,}{|\atop\mu } $}}}}
		R\ind{\alpha_\mu}{\sigma}{}{}
		\\
		&&
		\hspace{3.2cm}
		+
		\sum_{\nu=1}^q
		\chi\ind{}{i}{
		{\tiny\vtop{
		\hbox{$\alpha_1,\ldots,\alpha,\ldots,\alpha_p,
		\ol\beta_1,\ldots,\ol\tau,\ldots,\ol\beta_q$}
		\vskip-.8mm
		\hbox{$\phantom{\alpha_1,\ldots,}{|\atop\mu}
		\phantom{,\ldots,\alpha_p,\ol\beta_1,\ldots,}{|\atop\nu}
		 $}}}}
		R\indd{\ol\beta_\nu}{\ol\tau}{\alpha_\mu}{\alpha_1}
	\Bigg).
\end{eqnarray*}
Altogether, we have
\begin{align*}
	I
	&=
	-
	A\ind{s}{\alpha}{\ol\beta}
	\Bigg(
		\Theta\ind{j}{i}{}
		\chi\ind{}{j}{\alpha,\alpha_2,\ldots,\alpha_p,\ol B_q}
		-
		\Theta\indd{j}{i}{\alpha}{\alpha_1}
		\chi\ind{}{j}{A_p,\ol B_q}
		\\
		&\hspace{3cm}
		+
		\sum_{\mu=2}^p
		\Theta\indd{j}{i}{\alpha_\mu}{\alpha_1}
		\chi\ind{}{j}{
		{\tiny\vtop{
		\hbox{$\alpha_1,\ldots,\alpha,\ldots,\alpha_p,\ol B_q$}
		\vskip-.8mm
		\hbox{$\phantom{\alpha_1,\ldots,}{|\atop\mu } $}}}}
		+
		\sum_{\nu=1}^q
		\Theta\indd{j}{i}{\ol\beta_\nu}{\ol\delta}
		\chi\ind{}{i}{
		{\tiny\vtop{
		\hbox{$\alpha,\alpha_2,\ldots,\alpha_p,
		\ol\beta_1,\ldots,\ol\delta,\ldots,\ol\beta_q$}
		\vskip-.8mm
		\hbox{$\phantom{\alpha,\alpha_2,\ldots,\alpha_p,\ol\beta_1,\ldots,}{|\atop\nu } $}}}}
	\Bigg).
\end{align*}
Taking into account the assumption $A_s\cup\Theta=0$, it follows that
\begin{align*}
I
	&=
	-
	\Theta\ind{j}{i}{}
	A\ind{s}{\alpha}{\ol\beta}
		\chi\ind{}{j}{\alpha,\alpha_2,\ldots,\alpha_p,\ol B_q}
	+
	\Theta\ind{j}{i\ol\delta}{\ol\beta}
	A\ind{s}{\alpha}{\ol\delta}
	\chi\ind{}{j}{\alpha,\alpha_2,\ldots,\alpha_p,\ol B_q}
	\\
	&\hspace{1cm}
	+
	\sum_{\mu=2}^p
	\Theta\indd{j}{i}{\alpha_\mu}{\gamma}
	A\ind{s}{\alpha}{\ol\beta}
	\chi\ind{}{j}{
	{\tiny\vtop{
	\hbox{$\alpha,\alpha_2\ldots,\gamma,\ldots,\alpha_p,\ol B_q$}
	\vskip-.8mm
	\hbox{$\phantom{\alpha,\alpha_2\ldots,}{|\atop\mu } $}}}}
	+
	\sum_{\nu=1}^q
	\Theta\indd{j}{i\ol\delta}{\ol\beta_\nu}{}
	\chi\ind{}{i}{
	{\tiny\vtop{
	\hbox{$\alpha,\alpha_2,\ldots,\alpha_p,
	\ol\beta_1,\ldots,\ol\delta,\ldots,\ol\beta_q$}
	\vskip-.8mm
	\hbox{$\phantom{\alpha,\alpha_2,\ldots,\alpha_p,\ol\beta_1,\ldots,}{|\atop\nu } $}}}}.
 \end{align*}
This completes the proof.
\end{proof}

\section{Untwisted case}
In this section, we show that the results from the previous sections, applied to the untwisted case specialize to Griffiths' classical curvature formula for Hodge bundles. We consider the Hodge sheaves $R^qf_*\Omega_{\cX/S}^p(\CC)$.
In this case, the curvature $\Theta_h(\CC)$ vanishes and we have the Kähler identity.
\begin{equation*}
\Box=\Box_\pt=\Box_\dbar.
\end{equation*}
This implies that a local holomorphic section of $\psi$ of $R^qf_*\Omega_{\cX/S}^p(\CC)$ given by a form $\psi$ as in Lemma~\ref{L:representative} satisfies the following.
\begin{itemize}
\item $\psi$ is $\dbar_\cX$-closed,
\item $\psi$ is $\pt$-harmonic and $\dbar$-harmonic on fibers.
\end{itemize}
Then \eqref{E:curvature_formula} and Corollary~\ref{C:curvature_formula} yield that
\begin{equation}\label{E:untwisted_case}
R\paren{\pt_s,\pt_{\ol s},\psi,\ol\psi}
=
-\norm{L_v'\psi}^2
+
\norm{A_s\cup\psi}^2
+
\norm{L_{\ol v}'\psi}^2
-
\norm{A_{\ol s}\cup\psi}^2.
\end{equation}

\begin{theorem}{\cite{Griffiths1969,Griffiths1970,Griffiths_Tu1984}}
The curvature of $R^qf_*\Omega_{\cX/S}^p(\CC)$ is given as follows.
\begin{equation*}
R\paren{\pt_s,\pt_{\ol s},\psi,\ol\psi}
=
\norm{H(A_s\cup\psi)}^2
-
\norm{H(A_{\ol s}\cup\psi)}^2,
\end{equation*}
where $H(A_s\cup\psi)$ and $H(A_{\ol s}\cup\psi)$ are the harmonic parts of $A_s\cup\psi$ and $A_{\ol s}\cup\psi$, respectively.
\end{theorem}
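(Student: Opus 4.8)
The starting point is the reduced curvature formula \eqref{E:untwisted_case} already in hand,
\[
R(\pt_s,\pt_{\ol s},\psi,\ol\psi)
=
-\norm{L_v'\psi}^2
+\norm{A_s\cup\psi}^2
+\norm{L_{\ol v}'\psi}^2
-\norm{A_{\ol s}\cup\psi}^2 ,
\]
so the theorem is equivalent to the two Pythagorean-type identities
$\norm{A_s\cup\psi}^2-\norm{L_v'\psi}^2=\norm{H(A_s\cup\psi)}^2$
and
$\norm{A_{\ol s}\cup\psi}^2-\norm{L_{\ol v}'\psi}^2=\norm{H(A_{\ol s}\cup\psi)}^2$. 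The plan is to prove these by recognizing $L_v'\psi$ and $L_{\ol v}'\psi$ as the minimal solutions of explicit first order equations and then evaluating their norms with the Green's operator, exploiting the Kähler identity $\Box_\pt=\Box_\dbar$.

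First I would record what the untwisted hypothesis $\Theta_h(\CC)=0$ buys: it forces $\eta_s=\eta_{\ol s}=0$, and a representative $\psi$ from Lemma~\ref{L:representative} is simultaneously $\dbar$- and $\pt$-harmonic on fibers, so $\pt\psi=0$ and $\pt^*\psi=0$. With these, Proposition~\ref{P:dbar_Lv} collapses to $\dbar(L_v'\psi)=\pt(A_s\cup\psi)$ and Proposition~\ref{P:dbar*_Lvbar} to $\dbar^*(L_{\ol v}'\psi)=(-1)^p\pt^*(A_{\ol s}\cup\psi)$; Proposition~\ref{P:auxiliary_formulas} gives $\dbar^*(L_v'\psi)=0$, $\dbar(L_{\ol v}'\psi)=0$, $\pt^*(A_s\cup\psi)=0$ and $\pt(A_{\ol s}\cup\psi)=-A_{\ol s}\cup\pt\psi=0$; Proposition~\ref{P:Lie_derivative} shows $L_{\ol v}'\psi=(-1)^p\dbar(\ol v\cup\psi)$ is $\dbar$-exact; and the normalization \eqref{E:orthogonal_to_harmonic_space} gives $H(L_v'\psi)=0$. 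Consequently $L_v'\psi$ is co-closed and harmonic-free, hence $L_v'\psi=\dbar^*G_\dbar\dbar(L_v'\psi)=\dbar^*G_\dbar\pt(A_s\cup\psi)$, while $L_{\ol v}'\psi$ is closed and harmonic-free, hence $L_{\ol v}'\psi=(-1)^p\dbar G_\dbar\pt^*(A_{\ol s}\cup\psi)$.

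Now I would compute. Since $\pt(A_s\cup\psi)=\dbar(L_v'\psi)$ is $\dbar$-exact, $\dbar\dbar^*G_\dbar$ acts as the identity on it, so
\[
\norm{L_v'\psi}^2
=\inner{G_\dbar\pt(A_s\cup\psi),\pt(A_s\cup\psi)}
=\inner{\pt^*G_\dbar\pt(A_s\cup\psi),A_s\cup\psi} .
\]
The identity $\Box_\pt=\Box_\dbar$ makes $G_\dbar$ commute with $\pt$, so $\pt^*G_\dbar\pt=\pt^*\pt G_\dbar=\Box_\pt G_\dbar-\pt\pt^*G_\dbar=\id-H-\pt\pt^*G_\dbar$; pairing with $A_s\cup\psi$ and using $\pt^*(A_s\cup\psi)=0$ kills the last term and yields $\norm{L_v'\psi}^2=\norm{A_s\cup\psi}^2-\norm{H(A_s\cup\psi)}^2$. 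The conjugate case is the mirror image: $\pt^*(A_{\ol s}\cup\psi)$ is $\dbar^*$-exact, $\dbar^*\dbar G_\dbar$ is the identity on it, and one gets $\norm{L_{\ol v}'\psi}^2=\inner{\pt\pt^*G_\dbar(A_{\ol s}\cup\psi),A_{\ol s}\cup\psi}=\norm{A_{\ol s}\cup\psi}^2-\norm{H(A_{\ol s}\cup\psi)}^2$, the correction term $\inner{\pt G_\dbar(A_{\ol s}\cup\psi),\pt(A_{\ol s}\cup\psi)}$ vanishing because $\pt(A_{\ol s}\cup\psi)=0$. Substituting both identities into \eqref{E:untwisted_case} gives the stated formula. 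The only point demanding care is the bookkeeping of the Kähler identities together with the $(-1)^p$ signs in Propositions~\ref{P:dbar*_Lvbar} and \ref{P:Lie_derivative}; I expect no genuine obstacle, since every structural input — the reduced curvature formula, the $\dbar$/$\dbar^*$-images of the two Lie derivatives, and the vanishings $\pt^*(A_s\cup\psi)=0$, $\pt(A_{\ol s}\cup\psi)=0$ — is already furnished by the earlier sections.
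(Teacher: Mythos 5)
Your proposal is correct and follows essentially the same route as the paper: starting from \eqref{E:untwisted_case}, you use the normalization \eqref{E:orthogonal_to_harmonic_space}, the $\dbar$-exactness of $L_{\ol v}'\psi$ from Proposition~\ref{P:Lie_derivative}, the identities $\dbar(L_v'\psi)=\pt(A_s\cup\psi)$ and $\dbar^*(L_{\ol v}'\psi)=(-1)^p\pt^*(A_{\ol s}\cup\psi)$ together with Proposition~\ref{P:auxiliary_formulas}, and the K\"ahler identity $\Box_\pt=\Box_\dbar$ to show $\norm{L_v'\psi}^2=\norm{(A_s\cup\psi)^\perp}^2$ and $\norm{L_{\ol v}'\psi}^2=\norm{(A_{\ol s}\cup\psi)^\perp}^2$, exactly as in the paper's Green's-operator computation. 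Your phrasing via minimal solutions and the projection $\dbar\dbar^*G_\dbar$ (resp.\ $\dbar^*\dbar G_\dbar$) on exact (resp.\ co-exact) forms is only a cosmetic repackaging of the same chain of identities.
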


\begin{proof}
It is enough to compute $\norm{L_v'\psi}^2$ and $\norm{L_\ol v'\psi}^2$.
Since in view of \eqref{E:orthogonal_to_harmonic_space}, $L_v\psi$ can be assumed to be orthogonal to the harmonic space, we have
\begin{align*}
	\norm{L_v'\psi}^2
	&=
	\inner{L_v'\psi,L_v'\psi}
	=
	\inner{G_\dbar\Box L_v'\psi,L_v'\psi}
	=
	\inner{G_\dbar\dbar^*\dbar L_v'\psi,L_v'\psi}
	\\
	&=
	\inner{G_\dbar\dbar L_v'\psi,\dbar L_v'\psi}
	=
	\inner{G_\dbar\dbar^*(A_s\cup\psi),\dbar^*(A_s\cup\psi)}
	\\
	&=
	\inner{G_\dbar\Box(A_s\cup\psi),(A_s\cup\psi)}
	=
	\inner{(A_s\cup\psi)^\perp,A_s\cup\psi}
	=
	\norm{(A_s\cup\psi)^\perp}^2.
\end{align*}
Similarly, we have $\norm{L_\ol v'\psi}^2=\norm{(A_\ol s\cup\psi)^\perp}^2$.
Plugging these into \eqref{E:untwisted_case}, the conclusion follows.
\end{proof}

\section{Line bundle case}\label{S:linebundle}
In this section, we discuss the curvature formula of the higher direct image sheaves $R^qf_*\Omega^p_{\cX/S}(\cL)$ where $(\cL,h)$ is a hermitian line bundle equipped with a hermitian metric $h$ which is positively (negatively) curved on fibers.
For the sake of simplicity, we assume that $\dim S=1$ and we also only consider the case when $\psi^{(k)}=\psi^{(l)}=\psi$.
The general formula is easily obtained by polarization.

In this case, the fiberwise K\"ahler form $\omega$ can be taken as $\omega:=\ii\Theta_h(\cE)$
(or $\omega:=-\ii\Theta_h(\cE)$ if $h$ is negatively curved on fibers).
Then it is easy to see the following.
\begin{itemize}
\item[(\romannumeral1)] $\eta_s$ and $\eta_{\ol s}$ vanish.
\item[(\romannumeral2)] $A_s\cup\Theta_h(\cL)$ vanishes.
\end{itemize}
Indeed, (\romannumeral1) comes from the fact that $v$ is a horizontal lift with respect to $\omega$ and
(\romannumeral2) follows by
\begin{equation*}
A_s\cup\Theta
=
A_s\cup\ii\omega
=
\ii A\ind{s}{\alpha}{\ol\beta}g_{\alpha\ol\delta}dz^{\ol\beta}\we dz^{\ol\delta}
=
\ii A\ind{s}{}{\ol\delta\ol\beta}dz^{\ol\beta}\we dz^{\ol\delta}
=0,
\end{equation*}
since $A\ind{s}{}{\ol\delta\ol\beta}=A\ind{s}{}{\ol\beta\ol\delta}$.
Invoking \eqref{E:curvature_formula}, Proposition~\ref{P:dbar_Lv} and Proposition~\ref{P:dbar*_Lvbar}, the curvature formula for $R^qf_*\Omega_{\cX/S}(\cL)$ is given as follows.
\begin{equation*}
		R\paren{\pt_s,\pt_{\ol s},\psi,\ol\psi}
		=
		\inner{L_{[v,\ol v]}\psi,\psi}
		+
		\inner{c(\omega)\psi,\psi}
		-
		\norm{L_v'\psi}^2
		+
		\norm{A_s\cup\psi}^2
		+
		\norm{L_{\ol v}'\psi}^2
		-
		\norm{A_{\ol s}\cup\psi}^2,
\end{equation*}
where
\begin{equation*}
	\dbar L_v'\psi=\pt(A_s\cup\psi)+A_s\cup\pt\psi
	\quad\text{and}\quad
	\dbar^*L_{\ol v}'\psi
	=
	(-1)^p\pt^*(A_{\ol s}\cup\psi)+(-1)^pA_{\ol s}\cup\pt^*\psi.
\end{equation*}
Now we consider three special cases : (1) $p+q=n$, (2) $q=0$, and (3) $p=n$.

\subsection{Case : $p+q=n$}
First we consider $R^qf_*\Omega^q_{\cX/S}(\cL)$ with $p+q=n$ where $\cL$ is a hermitian line bundle equipped with a hermitian metric $h$ that is positively (resp. negatively) curved on fibers, i.e., $\Theta_h(\cL)\vert_{\cX_s}>0$ (resp. $<0)$ on each fiber $\cX_s$.
This case is already studied by several authors (for the details, see~\cite{Schumacher2012,Naumann2021,Berndtsson_Paun_Wang2022}).
Here we discuss how to derive the known result from our computations.
Let a local holomorphic section of $R^qf_*\Omega^p_{\cX/S}(\cE)$ be given which is represented by a form $\psi$ in the sense of Lemma~\ref{L:representative}.
Then the Bochner-Kodaira-Nakano formula,
\begin{equation*}
\Box_{\dbar}-\Box_\pt
=
[\ii\Theta_h(\cL),\Lambda]
=
[\omega,\Lambda]
=
-(n-p-q)\id
\end{equation*}
for all $\cL$-valued $(p,q)$-forms, implies that $\psi$ is also $\pt$-harmonic on fibers, i.e., $\Box_\pt\psi=0$ on fibers.
Now Corollary~\ref{C:curvature_formula} yields that $\inner{L_{[v,\ol v]}\psi,\psi}=0$, so the curvature formula becomes
\begin{align*}
R\paren{\pt_s,\pt_{\ol s},\psi,\ol\psi}
&=
\inner{c(\omega)\psi,\psi}
-\norm{L_v'\psi}^2
+
\norm{A_s\cup\psi}^2
+
\norm{L_{\ol v}'\psi}^2
-
\norm{A_{\ol s}\cup\psi}^2.
\end{align*}
The second term and the forth term can be computed by Proposition~\ref{P:dbar_Lv}, Proposition~\ref{P:dbar*_Lvbar} and the Bochner-Kodaira-Nakano formula.
Naumann, Berndtsson-P\u aun-Wang proved the following theorems independently (\cite{Naumann2021,Berndtsson_Paun_Wang2022}).
\begin{itemize}
\item[(\romannumeral1)] $\Theta_h(\cL)$ is positive on fibers:
\begin{align*}
R(\pt_s,\pt_{\ol s},\psi,\psi)
&=
\inner{c(\omega)\psi,\psi}
+
\inner{
	(\Box_{\dbar}+1)^{-1}
	(A_s\cup\psi)
	,
	(A_s\cup\psi)
}
+
\inner{(\Box_{\dbar}-1)^{-1}(A_{\ol s}\cup\psi),A_{\ol s}\cup\psi}.
\end{align*}
\item[(\romannumeral2)] $\Theta_h(\cL)$ is negative on fibers:
\begin{align*}
R(\pt_s,\pt_{\ol s},\psi,\ol\psi)
&=
-
\inner{c(\omega)\psi,\psi}
-
\inner{
	(\Box_{\dbar}-1)^{-1}
	(A_s\cup\psi)
	,
	(A_s\cup\psi)
}
-
\inner{(\Box_{\dbar}+1)^{-1}(A_{\ol s}\cup\psi),A_{\ol s}\cup\psi}
\end{align*}
\end{itemize}

\subsection{Case : $q=0$}
Let $\psi$ be a local holomorphic section of $f_*\Omega_{\cX/S}^p(\cL)$ as represented in Lemma~\ref{L:representative}.
Considering the bidegree it is obvious that $\psi$ is primitive on fibers and $A_{\ol s}\cup\psi$ vanishes.
In particular, this implies that $\psi$ is $\pt^*$-closed due to the fundamental K\"ahler identity (see, for example \cite{Demailly(Book)}):
\begin{equation*}
	-\ii\pt^*=[\Lambda,\dbar].
\end{equation*}
Suppose that $\ii\Theta_h(\cL)$ is positive on fibers.
As before, we take the fiberwise Kähler-form $\omega=\ii\Theta_h(\cL)$.
Then a local section $\psi$ of $f_*\Omega_{\cX/S}^p(\cL)$ satisfies
\begin{equation*}
-\Box_\pt\psi=[\ii\Theta_h(\cL),\Lambda]\psi
=
[\omega_{\cX_s},\Lambda]=-(n-p)\psi
\end{equation*}
on fibers.

In case of hermitian line bundles, we can compute $\norm{L_v'\psi}^2$ and $\norm{L_{\ol v}'\psi}^2$ more precisely.
First we compute $\norm{L_v'\psi}^2$.
\begin{eqnarray*}
\norm{L_v'\psi}^2
&=&
\inner{L_v'\psi,L_v'\psi}
=
\inner{G_{\dbar}\Box_{\dbar}L_v'\psi,L_v'\psi}
=
\inner{G_{\dbar}\dbar^*\dbar L_v'\psi,L_v'\psi}
\\
&=&
\inner{G_{\dbar}\dbar L_v'\psi,\dbar L_v'\psi}
=
\inner{\Box_{\dbar}^{-1}\dbar L_v'\psi,\dbar L_v'\psi}
\\
&=&
\inner{
	(\Box_\pt-(n-p-1))^{-1}
	\paren{
		\pt(A_s\cup\psi)
		+
		A_s\cup\pt\psi
	},
	\paren{
		\pt(A_s\cup\psi)
		+
		A_s\cup\pt\psi
	}
}
\\
&=&
\inner{
	(\Box_\pt-(n-p-1))^{-1}
	\Box_\pt(A_s\cup\psi)
	,
	A_s\cup\psi
}
\\
&&
+
\inner{
	(\Box_\pt-(n-p-1))^{-1}
	(A_s\cup\psi)
	,
	\pt^*(A_s\cup\pt\psi)
}
\\
&&+
\inner{
	(\Box_\pt-(n-p-1))^{-1})
	\pt^*(A_s\cup\pt\psi)
	,
	(A_s\cup\psi)
}
\\
&&
+
\inner{
	(\Box_\pt-(n-p-1))^{-1}
	A_s\cup\pt\psi
	,A_s\cup\pt\psi
}
\\
&=:&
I_1+I_2+I_3+I_4.
\end{eqnarray*}
Proposition~\ref{P:d*_Acupd} implies that
\begin{align*}
	I_1&
	=
	\inner{
		(\Box_\pt-(n-p-1))^{-1}
		\Box_\pt(A_s\cup\psi)
		,
		A_s\cup\psi
	}
\\
&=
(n-p-1)\inner{(\Box_\pt-(n-p-1))^{-1}A_s\cup\psi,A_s\cup\psi}
+
\norm{A_s\cup\psi}^2
\\
&
=
(n-p-1)\inner{(\Box_{\dbar}+1)^{-1}A_s\cup\psi,A_s\cup\psi}
+
\norm{A_s\cup\psi}^2.
\end{align*}
We also have
\begin{equation*}
		I_4
		=
		\inner{
				(\Box_\pt-(n-p-1))^{-1}
				A_s\cup\pt\psi
				,A_s\cup\pt\psi
		}
		=
		\inner{
				\Box_\dbar^{-1}
				A_s\cup\pt\psi
				,A_s\cup\pt\psi
		}.
\end{equation*}
Proposition~\ref{P:d*_Acupd} states that $\pt^*(A_s\cup\pt\psi)=-(n-p)\paren{A_s\cup\psi}$, so it follows that
\begin{align*}
		I_2
		&=
		\inner{
					(\Box_\pt-(n-p-1))^{-1}
					(A_s\cup\psi)
					,
					\pt^*(A_s\cup\pt\psi)
		}
		=
		-(n-p)
		\inner{
					(\Box_\dbar+1)^{-1}
					(A_s\cup\psi)
					,
					A_s\cup\psi
		}
\end{align*}
and
\begin{align*}
I_3
=
\inner{
	(\Box_\pt-\ell+1)^{-1}
	\pt^*(A_s\cup\psi)
	,
	(A_s\cup\pt\psi)
}
&=
-(n-p)
\inner{
	(\Box_{\dbar}+1)^{-1}
	(A_s\cup\psi)
	,
	(A_s\cup\psi)
}.
\end{align*}
Alltogether, we have
\begin{equation*}
	\norm{L_v'\psi}^2
	=
	\norm{A_s\cup\psi}^2
	-(n-p+1)\inner{(\Box_{\dbar}+1)^{-1}A_s\cup\psi,A_s\cup\psi}
	+
			\inner{
				\Box_\dbar^{-1}
				A_s\cup\pt\psi
				,A_s\cup\pt\psi
		}.	
\end{equation*}
Here we used the following lemma.
\begin{lemma}\label{L:well_definedness1}
$\pt(A_s\cup\psi)$ and $A_s\cup\pt\psi$ are orthogonal to the harmonic space.
\end{lemma}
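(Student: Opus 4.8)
\textbf{Proof proposal for Lemma~\ref{L:well_definedness1}.}
The plan is to show that each of the two forms $\pt(A_s\cup\psi)$ and $A_s\cup\pt\psi$ is orthogonal to $\ker\Box_\dbar$ by exhibiting it as lying in the image of $\dbar$ (equivalently, showing its $L^2$-pairing against every $\dbar$-harmonic form vanishes). Since we are in the line bundle case with $q=0$, $\psi$ is a holomorphic section of $f_*\Omega^p_{\cX/S}(\cL)$, so $\psi|_{\cX_s}$ is a $\dbar$-closed $\cL$-valued $(p,0)$-form that is harmonic on fibers; moreover by the discussion preceding the lemma $\psi$ is primitive on fibers, $\pt^*\psi=0$, and $A_{\ol s}\cup\psi=0$. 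I would first record that $A_s\cup\psi$ and $A_s\cup\pt\psi$ are $\dbar$-closed on fibers: indeed $\dbar(A_s\cup\psi)=-A_s\cup\dbar\psi + (\dbar A_s)\cup\psi$ up to sign, and $\dbar\psi=0$ on fibers while $\dbar A_s=0$ (the distinguished representative is $\dbar$-closed); similarly for $A_s\cup\pt\psi$ one uses $\dbar(\pt\psi)=-\pt\dbar\psi=0$ on fibers together with $\dbar A_s=0$. Hence both forms define Dolbeault classes on $\cX_s$, and orthogonality to the harmonic space is the assertion that these classes are zero.

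The key step is to realize each as $\dbar$ of an explicit form. From Proposition~\ref{P:dbar_Lv} specialized to the line bundle case (where $\eta_s$ vanishes, as noted in Section~\ref{S:linebundle}), we have $\dbar(L_v'\psi)=\pt(A_s\cup\psi)+A_s\cup\pt\psi$ on fibers. Thus the \emph{sum} $\pt(A_s\cup\psi)+A_s\cup\pt\psi$ is automatically $\dbar$-exact, so it is orthogonal to the harmonic space. It therefore suffices to show that one of the two summands, say $A_s\cup\pt\psi$, is $\dbar$-exact — then the other is too. For this I would use that $\pt\psi$ is itself (after restriction to a fiber) cohomologically trivial in a suitable sense: since $\psi$ is harmonic and primitive, the Bochner--Kodaira--Nakano identity $\Box_\dbar-\Box_\pt=[\ii\Theta_h(\cL),\Lambda]=-(n-p)\,\id$ on $\cL$-valued $(p,0)$-forms gives $\Box_\pt\psi=(n-p)\psi$, so $\pt^*\pt\psi=(n-p)\psi$ and hence $\pt\psi=(n-p)^{-1}\pt\pt^*(\pt\psi)$... more directly, $\pt\psi$ lies in the image of $\pt$, and I would pass this through the cup product: writing $A_s\cup\pt\psi = A_s\cup \pt\psi$ and commuting $A_s\cup(-)$ with $\pt$ via Proposition~\ref{P:auxiliary_formulas} (which gives $\pt^*(A_s\cup\chi)=0$ and controls $\pt(A_{\ol s}\cup\chi)$), one reduces the claim to $\dbar$-exactness statements already available. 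Concretely, Proposition~\ref{P:d*_Acupd} (valid here since $\psi$ is $\pt^*$-closed and $A_s\cup\Theta=0$) yields $\pt^*(A_s\cup\pt\psi)=[\ii\Theta,\Lambda](A_s\cup\psi)=-(n-p)(A_s\cup\psi)$, and combined with $\dbar$-closedness of $A_s\cup\pt\psi$ and the identity $A_s\cup\pt\psi=\dbar(L_v'\psi)-\pt(A_s\cup\psi)$ this pins down both forms as $\dbar$-exact.

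The main obstacle I anticipate is the bookkeeping needed to separate the two summands: knowing that their \emph{sum} is $\dbar$-exact is immediate from Proposition~\ref{P:dbar_Lv}, but to conclude each is \emph{individually} orthogonal to $\ker\Box_\dbar$ one must genuinely use the structure of $\pt(A_s\cup\psi)$ — which is manifestly $\pt$-exact in the $dz^{\alpha}$-slots — and argue that a $\pt$-exact, $\dbar$-closed, primitive form on a compact Kähler manifold is $\dbar$-exact. This last point follows from the Kähler identities and the $\pt\dbar$-lemma (or directly: $\pt$-exact $\Rightarrow$ orthogonal to $\ker\pt^*\supseteq\ker\Box$, since on a compact Kähler manifold $\ker\Box_\dbar=\ker\Box_\pt$ in each bidegree where $[\ii\Theta,\Lambda]$ is scalar, as it is here). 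So the proof reduces to: (i) the sum is $\dbar$-exact by Proposition~\ref{P:dbar_Lv}; (ii) $\pt(A_s\cup\psi)$ is $\pt$-exact hence orthogonal to $\ker\Box_\pt=\ker\Box_\dbar$; (iii) therefore $A_s\cup\pt\psi$, being the difference of a $\dbar$-exact form and a form orthogonal to the harmonic space, is also orthogonal to the harmonic space. I would present it in that order, with step (ii) being where the Kähler identity and the scalar nature of $[\ii\Theta_h(\cL),\Lambda]$ on the relevant bidegree are invoked.
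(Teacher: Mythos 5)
Your overall architecture is the paper's: step (i), namely that $\pt(A_s\cup\psi)+A_s\cup\pt\psi=\dbar(L_v'\psi)$ is $\dbar$-exact by Proposition~\ref{P:dbar_Lv} (with $\eta_s=0$ in the line bundle case), and step (iii), deducing the statement for $A_s\cup\pt\psi$ from the statement for $\pt(A_s\cup\psi)$, are exactly how the paper disposes of the second form. The genuine gap is in your step (ii). You justify ``$\pt$-exact $\Rightarrow$ orthogonal to the harmonic space'' by the claim that $\ker\Box_\pt=\ker\Box_\dbar$ ``in each bidegree where $[\ii\Theta_h(\cL),\Lambda]$ is scalar''. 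That is false unless the scalar vanishes: on $\cL$-valued $(p,1)$-forms the Bochner--Kodaira--Nakano identity gives $\Box_\dbar=\Box_\pt+(p+1-n)\,\id$, so for $p\neq n-1$ the $\Box_\dbar$-harmonic forms are precisely the $\Box_\pt$-eigenforms with eigenvalue $n-p-1\neq0$, and the two kernels meet only in $0$. A $\pt$-exact form is orthogonal to $\ker\Box_\pt$, but the lemma asserts orthogonality to $\ker\Box_\dbar$ (the harmonic space relevant for $G_\dbar$, $\Box_\dbar^{-1}$ and $(\Box_\dbar+1)^{-1}$), and the nonvanishing scalar is exactly what prevents you from quoting the untwisted K\"ahler identity here.

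What closes the gap is the paper's eigenvalue-shift argument, which uses the scalar rather than ignoring it: decompose $A_s\cup\psi=\sum\rho_\nu$ into $\Box_\dbar$-eigenforms of type $(p-1,1)$ with eigenvalues $\lambda_\nu\geq0$; since $[\Box_\pt,\pt]=0$ and $\Box_\dbar=\Box_\pt-(n-p)$ on $(p-1,1)$-forms while $\Box_\dbar=\Box_\pt-(n-p-1)$ on $(p,1)$-forms, each $\pt\rho_\nu$ is a $\Box_\dbar$-eigenform with eigenvalue $\lambda_\nu+1\geq1$, so $\pt(A_s\cup\psi)$ has no harmonic component. (Equivalently, the statement you actually need in (ii) is that $\pt^*u=0$ for every $\Box_\dbar$-harmonic $(p,1)$-form $u$; this holds because $\pt^*u$ would otherwise be a $\Box_\dbar$-eigenform of eigenvalue $-1$, contradicting $\Box_\dbar\geq0$ --- it does not follow from a kernel identity.) Two smaller slips in your middle paragraph: for a twisted bundle $\dbar(\pt\psi)=\Theta_h(\cL)\we\psi\neq0$ in general, so $A_s\cup\pt\psi$ is not $\dbar$-closed as you assert; and the $\pt\dbar$-lemma you invoke is not available for forms with values in a nontrivial hermitian line bundle. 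Neither is needed once step (ii) is argued via the eigenvalue shift.
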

\begin{proof}
Let $\sum\rho_\nu$ be the decomposition of $A_s\cup\psi$ where the forms $\rho_\nu$ are eigenfunctions of $\Box_\dbar$ with  eigenvalues $\lambda_\nu$.
Then it follows from $\Box_\dbar\rho_\nu=\lambda_\nu\rho_\nu$ that
\begin{align*}
\lambda_\nu\pt\rho_\nu
=
\pt\Box_\dbar\rho_\nu
=
\pt\paren{\Box_\pt-\ell}\rho_\nu
=
\paren{\Box_\pt-\ell}\pt\rho_\nu
=
\paren{\Box_\dbar-1}\pt\rho_\nu,
\end{align*}
or
\begin{equation*}
\Box_\dbar\pt\rho_\nu
=
\paren{\lambda_\nu+1}\pt\rho_\nu
\end{equation*}
This means that all $\pt\rho_\nu$ are eigenfunctions with eigenvalues $\lambda_\nu+1$.
Since $\pt(A_s\cup\psi)=\sum\pt\rho_\nu$, the forms $\pt(A_s\cup\psi)$ do not have an harmonic part.
It also follows from Proposition~\ref{P:dbar_Lv} that $A_s\cup\pt\psi$ is orthogonal to the harmonic space.
\end{proof}

On the other hand,  by Proposition~\ref{P:Lie_derivative} and reasons of bidegree the Lie derivative $L_{\ol v}'\psi$ vanishes, in particular, we have $\norm{L_{\ol v}'\psi}^2=0$.
\begin{theorem}
\label{T:positive_p0}
Let $f:\cX\rightarrow S$ be a family of compact Kähler manifolds with a hermitian line bundle $(\cL,h)$ such that $\ii\Theta_h(\cL)$ is positive on fibers.
Then the curvature of $f_*\Omega^p_{\cX/S}(\cL)$ is given by
\begin{align*}
R(\pt_s,\pt_{\ol s},\psi,\ol\psi)
=
&
(n-p+1)
\inner{c(\omega)\psi,\psi}
-
\inner{c(\omega)\pt\psi,\pt\psi}
\\
&
+
(n-p+1)
\inner{
	(\Box_{\dbar}+1)^{-1}
	(A_s\cup\psi)
	,
	A_s\cup\psi
}
-
\inner{
	\Box_{\dbar}^{-1}
	(A_s\cup\pt\psi)
	,A_s\cup\pt\psi
}.
\end{align*}
\end{theorem}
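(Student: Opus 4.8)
The plan is to assemble the formula from the general curvature identity \eqref{E:curvature_formula} together with the special features of the case $q=0$, $\cE=\cL$ a fiberwise positively curved line bundle with $\omega=\ii\Theta_h(\cL)$. First I would record the simplifications that hold here: $\eta_s=\eta_{\ol s}=0$ (horizontal lift with respect to $\omega$), $A_s\cup\Theta=0$, and $A_{\ol s}\cup\psi=0$ and $L_{\ol v}'\psi=0$ for bidegree reasons, so that $\norm{A_{\ol s}\cup\psi}^2=\norm{L_{\ol v}'\psi}^2=0$. Also, since $\psi$ is a section of $f_*\Omega^p_{\cX/S}(\cL)$ it is a $(p,0)$-form on fibers, hence primitive, hence $\pt^*$-closed by the K\"ahler identity $-\ii\pt^*=[\Lambda,\dbar]$, and the Bochner--Kodaira--Nakano formula gives $\Box_\pt\psi=(n-p)\psi$ on fibers. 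Thus the curvature formula reduces to
\begin{equation*}
R(\pt_s,\pt_{\ol s},\psi,\ol\psi)
=
\inner{L_{[v,\ol v]}\psi,\psi}
+
\inner{c(\omega)\psi,\psi}
-
\norm{L_v'\psi}^2
+
\norm{A_s\cup\psi}^2.
\end{equation*}

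Next I would treat the first term using Corollary~\ref{C:curvature_formula}: since $\psi$ is $\pt^*$-closed and $\Box_\pt\psi=(n-p)\psi$,
\begin{equation*}
\inner{L_{[v,\ol v]}\psi,\psi}
=
(n-p)\inner{c(\omega)\psi,\psi}
-
\inner{c(\omega)\pt\psi,\pt\psi},
\end{equation*}
so the first two terms combine to $(n-p+1)\inner{c(\omega)\psi,\psi}-\inner{c(\omega)\pt\psi,\pt\psi}$. The main computational core is the term $\norm{L_v'\psi}^2$. Here I would argue exactly as in the displayed computation preceding the statement: by Proposition~\ref{P:auxiliary_formulas} $L_v'\psi$ is $\dbar^*$-closed, and by \eqref{E:orthogonal_to_harmonic_space} it is orthogonal to the harmonic space, so $\norm{L_v'\psi}^2=\inner{G_\dbar\dbar L_v'\psi,\dbar L_v'\psi}=\inner{\Box_\dbar^{-1}w,w}$ with $w=\dbar L_v'\psi=\pt(A_s\cup\psi)+A_s\cup\pt\psi$ by Proposition~\ref{P:dbar_Lv} (using $\eta_s=0$). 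Expanding $\inner{\Box_\dbar^{-1}w,w}$ into the four pieces $I_1,I_2,I_3,I_4$, I would use $\Box_\dbar=\Box_\pt-(n-p)$ on $(p+1,0)$-forms via BKN (note the shift by one from $p$ to $p+1$, so $\Box_\dbar+1=\Box_\pt-(n-p-1)$ on that space), the identity $\pt^*(A_s\cup\psi)=0$ from Proposition~\ref{P:auxiliary_formulas}, and crucially Proposition~\ref{P:d*_Acupd}, which gives $\pt^*(A_s\cup\pt\psi)=[\ii\Theta,\Lambda](A_s\cup\psi)=-(n-p)(A_s\cup\psi)$ here (since $\ii\Theta=\omega$ and $A_s\cup\psi$ is a primitive $(p+1,0)$-form). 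Lemma~\ref{L:well_definedness1} guarantees that $\pt(A_s\cup\psi)$ and $A_s\cup\pt\psi$ have no harmonic part, so all the resolvents $\Box_\dbar^{-1}$ and $(\Box_\dbar+1)^{-1}$ are well-defined on the relevant subspaces.

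Carrying the bookkeeping through yields
\begin{equation*}
\norm{L_v'\psi}^2
=
\norm{A_s\cup\psi}^2
-(n-p+1)\inner{(\Box_\dbar+1)^{-1}(A_s\cup\psi),A_s\cup\psi}
+
\inner{\Box_\dbar^{-1}(A_s\cup\pt\psi),A_s\cup\pt\psi},
\end{equation*}
and substituting this, together with the first-term computation, into the reduced curvature formula makes the $\norm{A_s\cup\psi}^2$ contributions cancel, leaving precisely the claimed expression. I expect the main obstacle to be the sign and index bookkeeping in the expansion of $\inner{\Box_\dbar^{-1}w,w}$: one must keep straight that $w$ lives in bidegree $(p+1,0)$ (so the eigenvalue shift from Lemma~\ref{L:well_definedness1}'s proof applies with $\ell=n-p$, giving $\Box_\pt-(n-p-1)=\Box_\dbar+1$ on that space), that the cross terms $I_2$ and $I_3$ each contribute $-(n-p)\inner{(\Box_\dbar+1)^{-1}(A_s\cup\psi),A_s\cup\psi}$, and that $I_4=\inner{\Box_\dbar^{-1}(A_s\cup\pt\psi),A_s\cup\pt\psi}$ because $A_s\cup\pt\psi$ sits in bidegree $(p+1,1)$ where BKN gives $\Box_\pt-(n-p-1)=\Box_\dbar$. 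Everything else is a direct invocation of results already established in the excerpt.
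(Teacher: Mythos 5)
Your proposal is correct and follows essentially the same route as the paper's own derivation in Section~\ref{S:linebundle}: the reduction of \eqref{E:curvature_formula} using $\eta_s=\eta_{\ol s}=0$, $A_{\ol s}\cup\psi=0$, $L_{\ol v}'\psi=0$, the fiberwise identity $\Box_\pt\psi=(n-p)\psi$ fed into Corollary~\ref{C:curvature_formula}, and the evaluation of $\norm{L_v'\psi}^2$ through $G_\dbar$ together with Propositions~\ref{P:dbar_Lv}, \ref{P:auxiliary_formulas}, \ref{P:d*_Acupd} and Lemma~\ref{L:well_definedness1}, ending with the cancellation of $\norm{A_s\cup\psi}^2$. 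One correction to your closing bookkeeping remarks: the cup product with $A_s$ lowers the holomorphic degree by one and raises the antiholomorphic degree by one, so $A_s\cup\psi$ has type $(p-1,1)$ while $\pt(A_s\cup\psi)$, $A_s\cup\pt\psi$, and hence $w=\dbar L_v'\psi$, all have type $(p,1)$ — not $(p+1,0)$ resp.\ $(p+1,1)$ as you wrote; since the Bochner--Kodaira--Nakano shift $[\omega,\Lambda]=(p+q-n)\,\mathrm{id}$ depends only on the total degree, the identifications you actually need are $\Box_\pt-(n-p-1)=\Box_\dbar+1$ on total degree $p$ (where $A_s\cup\psi$ lives, producing the $(\Box_\dbar+1)^{-1}$ in $I_1,I_2,I_3$) and $\Box_\pt-(n-p-1)=\Box_\dbar$ on total degree $p+1$ (where $w$ and $A_s\cup\pt\psi$ live, producing $I_4=\inner{\Box_\dbar^{-1}(A_s\cup\pt\psi),A_s\cup\pt\psi}$), and with these your values of $I_1,\dots,I_4$ and the resulting formula for $\norm{L_v'\psi}^2$ coincide with the paper's.
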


Before going further, we prove the following proposition which says that the formula in Theorem \ref{T:positive_p0} is compatible with a previous result of Berndtsson for the  special case  of $p=n$ and $q=0$; we have the following theorem.
\begin{theorem}[\cite{Berndtsson2011,Naumann2021}]
The curvature of $f_*(K_{\cX/S})(\cL)$ is given as follows.
\begin{equation*}
R^H(\pt_s,\pt_\ol s,\psi,\ol\psi)
=
\inner{c(\omega)\cdot\psi , \psi}
+
\inner{
	\paren{
		\Box_{\dbar}+1}^{-1}\paren{A_s\cup\psi},
		A_s\cup\psi
	}
.
\end{equation*}
\end{theorem}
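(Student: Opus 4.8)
The plan is to derive this formula as the special case $p = n$, $q = 0$ of Theorem~\ref{T:positive_p0}, which we have just proven. First I would observe that when $p = n$ the sheaf $\Omega^n_{\cX/S}(\cL) = \cK_{\cX/S}(\cL)$, and a local holomorphic section $\psi$ of $f_*\cK_{\cX/S}(\cL)$ given by Lemma~\ref{L:representative} is an $\cL$-valued $(n,0)$-form on fibers. For bidegree reasons $\pt\psi = 0$ (it would be an $(n+1,0)$-form on an $n$-dimensional fiber), so the term $\inner{c(\omega)\pt\psi,\pt\psi}$ vanishes and so does the term $\inner{\Box_{\dbar}^{-1}(A_s\cup\pt\psi),A_s\cup\pt\psi}$. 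Moreover with $p = n$ the coefficient $n - p + 1$ equals $1$, so the surviving terms of Theorem~\ref{T:positive_p0} reduce exactly to
\begin{equation*}
R^H(\pt_s,\pt_{\ol s},\psi,\ol\psi)
=
\inner{c(\omega)\,\psi,\psi}
+
\inner{(\Box_{\dbar}+1)^{-1}(A_s\cup\psi),A_s\cup\psi},
\end{equation*}
which is the claimed formula.

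The one point that needs a small check is the degenerate behavior of the factor $n-p$ appearing in the intermediate computations: in the proof of Theorem~\ref{T:positive_p0} we used the Bochner-Kodaira-Nakano identity $-\Box_\pt\psi = (p - n)\psi$ on fibers for $\psi \in A^{p,0}(\cL)$ with $\ii\Theta_h(\cL) = \omega$, and invoked Proposition~\ref{P:d*_Acupd} to identify $\pt^*(A_s\cup\pt\psi)$. When $p = n$ we have $\Box_\pt\psi = 0$, i.e.\ $\psi$ is $\pt$-harmonic, hence also $\pt$-closed; combined with the primitivity and $\pt^*$-closedness noted in the $q=0$ discussion, $\psi$ is in fact harmonic for all three Laplacians, which is consistent. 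So no separate argument is required; one simply substitutes $p = n$ into the already-established identity $\norm{L_v'\psi}^2 = \norm{A_s\cup\psi}^2 - (n-p+1)\inner{(\Box_\dbar+1)^{-1}A_s\cup\psi,A_s\cup\psi} + \inner{\Box_\dbar^{-1}A_s\cup\pt\psi,A_s\cup\pt\psi}$, whose last term drops and whose middle coefficient becomes $1$, and likewise $\norm{L_{\ol v}'\psi}^2 = 0$ and $\inner{L_{[v,\ol v]}\psi,\psi} = 0$ from Corollary~\ref{C:curvature_formula} since $\Box_\pt\psi = 0$.

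I do not expect any genuine obstacle here: the statement is strictly a corollary, cited in the excerpt as being due to Berndtsson and Naumann, and all the needed inputs (the line-bundle curvature formula after taking $\omega = \ii\Theta_h(\cL)$, the vanishing of $\eta_s$, $\eta_{\ol s}$ and $A_s\cup\Theta$, and the general formula \eqref{E:curvature_formula}) have already been assembled in Section~\ref{S:linebundle}. The only mildly delicate bookkeeping is making sure the $\pt$-harmonicity of $\psi$ (automatic from $[\omega,\Lambda] = (p+q-n)\id$ with $p = n$, $q = 0$) is used to kill the Lie-bracket term via Corollary~\ref{C:curvature_formula}; I would state that explicitly rather than leave it implicit. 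After that the result follows by direct substitution, and I would close by remarking that the positivity of $c(\omega)$ and of $(\Box_\dbar + 1)^{-1}$ (a positive operator on the orthogonal complement of the harmonic forms, where $A_s\cup\psi$ lives by Lemma~\ref{L:well_definedness1}) recovers Berndtsson's Nakano positivity of $f_*(\cK_{\cX/S}\otimes\cL)$.
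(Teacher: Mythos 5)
Your proposal is correct and follows essentially the same route as the paper: the paper also deduces the statement directly from Theorem~\ref{T:positive_p0} with $p=n$, noting only that the harmonic $(n,0)$-form $\psi$ is automatically $\pt$-harmonic (equivalently $\pt\psi=0$), so the terms involving $\pt\psi$ drop and the coefficient $n-p+1$ becomes $1$. Your additional bookkeeping about the Lie-bracket term and the minimality of $L_v'\psi$ is consistent with, and already absorbed into, the proof of Theorem~\ref{T:positive_p0}.
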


\begin{proof}
Note that  $f_*K_{\cX/S}(\cL)=f_*\Omega^n_{\cX/S}(\cL)$.
Since $\psi$ is $\dbar$-harmonic $(n,0)$-form on fibers, it is also $\pt$-harmonic on fibers by the Bochner-Kodaira-Nakano formula.
Hence, the conclusion readily follows from Theorem~\ref{T:positive_p0}.
\end{proof}

\subsection{Case : $p=n$}\label{SS:p=n}
Our last case concerns $R^qf_*\Omega^n_{\cX/S}(\cL)$.
Let a local holomorphic section of $R^qf_*\Omega^n_{\cX/S}(\cL)$ be represented by $\psi$ according to Lemma~\ref{L:representative}.
Then $\pt\psi$ and $A_{\ol s}\cup\psi$ vanish on fibers by reasons of bidegree.
Suppose that $\ii\Theta_h(\cL)$ is negative on fibers.
As before, we take the fiberwise Kähler-form $\omega$ by $-\ii\Theta_h(\cL)$.
Then for a local  section of $R^qf_*\Omega^n_{\cX/S}(\cL)$ given by $\psi$ as above we have
\begin{equation*}
-\Box_\pt\psi
=
[\ii\Theta_h(\cL),\Lambda]\psi
=
[-\omega_{\cX_s},\Lambda]=-q\psi.
\end{equation*}
on fibers.
Hence Corollary~\ref{C:curvature_formula} implies that
\begin{equation*}
	\inner{L_{[v,\ol v]} \chi,\psi}
	=
	q\inner{c(\omega)\,\chi,\psi}
	+
	\inner{c(\omega)\,\pt^*\chi,\pt^*\psi}.
\end{equation*}
Next we compute $\norm{L_v'\psi}^2$.
\begin{eqnarray*}
\norm{L_v'\psi}^2
&=&
\inner{L_v'\psi,L_v'\psi}
=
\inner{G_{\dbar}\Box_{\dbar}L_v'\psi,L_v'\psi}
=
\inner{G_{\dbar}\dbar^*\dbar L_v'\psi,L_v'\psi}
\\
&=&
\inner{G_{\dbar}\dbar L_v'\psi,\dbar L_v'\psi}
=
\inner{\Box_{\dbar}^{-1}\dbar L_v'\psi,\dbar L_v'\psi}
\\
&=&
\inner{
	\paren{\Box_\pt-(q+1)}^{-1}
	\paren{\pt(A_s\cup\psi)}
	,
	\pt(A_s\cup\psi)
}
\\
&=&
\inner{
	\paren{\Box_\pt-(q+1)}^{-1}
	\Box_\pt(A_s\cup\psi)
	,
	A_s\cup\psi
}
\\
&=&
(q+1)\inner{\paren{\Box_\pt-(q+1)}^{-1}A_s\cup\psi,A_s\cup\psi}
+
\norm{A_s\cup\psi}^2
\\
&=&
(q+1)\inner{\paren{\Box_\dbar-1}^{-1}A_s\cup\psi,A_s\cup\psi}
+
\norm{A_s\cup\psi}^2
\end{eqnarray*}
On the other hand, a similar argument using Proposition~\ref{P:dbar*_Lvbar} and Proposition~\ref{P:auxiliary_formulas} implies that
\begin{eqnarray*}
	\inner{L_{\ol v}'\psi,L_{\ol v}'\psi}
	&=&
	\inner{G_{\dbar}\dbar^*L_{\ol v}'\psi,\dbar^*L_{\ol v}'\psi}
	=
	\inner{
		\Box_\dbar^{-1}
		\paren{A_{\ol s}\cup\pt^*\psi}
		,
		\paren{A_{\ol s}\cup\pt^*\psi}
	}
\end{eqnarray*}
Hence we have
\begin{theorem}
\label{T:curvature_formula_(n,q)}
The curvature of $R^qf_*\Omega^n_{\cX/S}(\cL)$ is given as follows.
\begin{eqnarray*}
	R(\pt_s,\pt_{\ol s},\psi,\psi)
	&=&
	(q-1)
	\inner{c(\omega)\psi,\psi}
	+
	\inner{c(\omega)\pt^*\psi,\pt^*\psi}
	\\
	&&-
	(q+1)
	\inner{
		(\Box_{\dbar}-1)^{-1}
		(A_s\cup\psi)
		,
		A_s\cup\psi
	}
	+
	\inner{
		\Box_\dbar^{-1}
		\paren{A_{\ol s}\cup\pt^*\psi}
		,
		\paren{A_{\ol s}\cup\pt^*\psi}
	}.
\end{eqnarray*}
\end{theorem}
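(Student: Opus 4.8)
The plan is to specialize the line bundle curvature formula of Section~\ref{S:linebundle} to the case $p=n$ and then evaluate the surviving Lie-derivative norms by the Green's-operator identities of Section~\ref{se:compcurv} together with the Bochner-Kodaira-Nakano formula. Throughout I would keep the standing assumption $\dim S=1$ (the general statement following by polarization) and take the fiberwise K\"ahler form to be $\omega:=-\ii\Theta_h(\cL)$, which is positive on fibers by hypothesis; $c(\omega)$ is then the associated geodesic curvature.

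First I would record the consequences of the bidegree. Represent a local holomorphic section of $R^qf_*\Omega^n_{\cX/S}(\cL)$ by a form $\psi$ as in Lemma~\ref{L:representative}, so $\psi|_{\cX_s}$ is a harmonic $\cL$-valued $(n,q)$-form. Since $\cX_s$ carries no $(n+1,\ast)$-forms, one has $\pt\psi=0$ and $A_{\ol s}\cup\psi=0$ on each fiber. With $\omega=-\ii\Theta_h(\cL)$ the forms $\eta_s,\eta_{\ol s}$ vanish (because $v,\ol v$ are horizontal lifts) and $A_s\cup\Theta_h(\cL)=0$ (from $A\ind{s}{}{\ol\delta\ol\beta}=A\ind{s}{}{\ol\beta\ol\delta}$), exactly as in Section~\ref{S:linebundle}. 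Consequently Proposition~\ref{P:dbar_Lv} reduces to $\dbar L_v'\psi=\pt(A_s\cup\psi)$, Proposition~\ref{P:dbar*_Lvbar} reduces to $\dbar^*L_{\ol v}'\psi=(-1)^nA_{\ol s}\cup\pt^*\psi$, and by Proposition~\ref{P:auxiliary_formulas} one has $\dbar L_{\ol v}'\psi=0$ and $\pt^*(A_s\cup\psi)=0$. Substituting into \eqref{E:curvature_formula}, the curvature collapses to
\begin{equation*}
R(\pt_s,\pt_{\ol s},\psi,\ol\psi)
=\inner{L_{[v,\ol v]}\psi,\psi}-\inner{c(\omega)\psi,\psi}-\norm{L_v'\psi}^2+\norm{A_s\cup\psi}^2+\norm{L_{\ol v}'\psi}^2 ,
\end{equation*}
the curvature-form term carrying the sign dictated by $\omega=-\ii\Theta_h(\cL)$.

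Next I would handle the three remaining terms. Since $p=n$ makes $\psi$ $\pt$-closed on fibers, the second identity of Corollary~\ref{C:curvature_formula} rewrites $\inner{L_{[v,\ol v]}\psi,\psi}$ in terms of $\inner{c(\omega)\psi,\Box_\pt\psi}$ and $\inner{c(\omega)\pt^*\psi,\pt^*\psi}$; evaluating $\Box_\pt\psi$ on the harmonic $(n,q)$-form $\psi$ via Bochner-Kodaira-Nakano (with $[\ii\Theta_h(\cL),\Lambda]=[-\omega,\Lambda]$) produces a scalar multiple of $\psi$, and combined with the $-\inner{c(\omega)\psi,\psi}$ above this yields the coefficient $q-1$ and leaves $\inner{c(\omega)\pt^*\psi,\pt^*\psi}$. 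For $\norm{L_v'\psi}^2$ I would use that $L_v'\psi$ is orthogonal to the harmonic space (Equation~\eqref{E:orthogonal_to_harmonic_space}) to write $\norm{L_v'\psi}^2=\inner{G_\dbar\,\pt(A_s\cup\psi),\pt(A_s\cup\psi)}$, move one $\pt$ across $G_\dbar$ (legitimate since $\pt$ commutes with $\Box_\pt$ and hence with the relevant resolvent, and $\pt(A_s\cup\psi)$ is $\dbar$-exact so carries no harmonic part), replace $\pt^*\pt$ by $\Box_\pt$ using $\pt^*(A_s\cup\psi)=0$, and identify $G_\dbar$ on $(n,q+1)$-forms with $(\Box_\pt-(q+1))^{-1}$; converting back to $(\Box_\dbar-1)^{-1}$ on the $(n-1,q+1)$-form $A_s\cup\psi$ via Bochner-Kodaira-Nakano gives $\norm{L_v'\psi}^2=(q+1)\inner{(\Box_\dbar-1)^{-1}(A_s\cup\psi),A_s\cup\psi}+\norm{A_s\cup\psi}^2$. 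An entirely parallel computation using $\dbar L_{\ol v}'\psi=0$ and $\dbar^*L_{\ol v}'\psi=(-1)^nA_{\ol s}\cup\pt^*\psi$ gives $\norm{L_{\ol v}'\psi}^2=\inner{\Box_\dbar^{-1}(A_{\ol s}\cup\pt^*\psi),A_{\ol s}\cup\pt^*\psi}$. Collecting the four contributions (the $\norm{A_s\cup\psi}^2$ from $\norm{L_v'\psi}^2$ cancelling the explicit $\norm{A_s\cup\psi}^2$) produces the asserted formula.

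The step I expect to cost the most care is the spectral bookkeeping: tracking the form-degree at each application of Bochner-Kodaira-Nakano so that $(\Box_\pt-(q+1))^{-1}$ is correctly turned into $(\Box_\dbar-1)^{-1}$, checking that $\pt$ and $\pt^*$ genuinely commute with these operators on the subspaces in question, and --- most delicately --- making sense of $(\Box_\dbar-1)^{-1}$ on $A_s\cup\psi$. Here, in contrast to the case $q=0$ of Theorem~\ref{T:positive_p0}, the Bochner-Kodaira-Nakano shift \emph{lowers} the $\Box_\dbar$-eigenvalue under $\pt$, so one must verify --- exactly as in the $p+q=n$ treatment of \cite{Naumann2021,Berndtsson_Paun_Wang2022} --- that $A_s\cup\psi$ has no component in the $1$-eigenspace of $\Box_\dbar$, equivalently that $\pt(A_s\cup\psi)$ carries no harmonic part; granting this, the remaining computations are routine index manipulations of the type already carried out in Sections~\ref{se:compcurv} and \ref{S:linebundle}.
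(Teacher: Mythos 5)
Your proposal is correct and follows essentially the same route as the paper's own proof: specialize the Section~\ref{S:linebundle} line-bundle formula with $\omega=-\ii\Theta_h(\cL)$, invoke the bidegree vanishings $\pt\psi=0$, $A_{\ol s}\cup\psi=0$, $\eta_s=\eta_{\ol s}=0$, use Corollary~\ref{C:curvature_formula} together with Bochner--Kodaira--Nakano to get the $(q-1)\inner{c(\omega)\psi,\psi}+\inner{c(\omega)\pt^*\psi,\pt^*\psi}$ contribution, and evaluate $\norm{L_v'\psi}^2$ and $\norm{L_{\ol v}'\psi}^2$ by the identical Green's-operator and eigenvalue-shift computation, with the same cancellation of $\norm{A_s\cup\psi}^2$. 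Your explicit caution about the $1$-eigenspace of $\Box_\dbar$ when applying $(\Box_\dbar-1)^{-1}$ to $A_s\cup\psi$ is a point the paper addresses only implicitly, in the remark following the theorem, and is a welcome refinement rather than a deviation.
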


\begin{remark}
	By the previous argument using eigenfunctions of Laplacians, one has
	\begin{equation*}
		\inner{
			(\Box_{\dbar}-1)^{-1}
			(A_s\cup\psi)
			,
			A_s\cup\psi
		}
		=
		-
		\norm{H(A_s\cup\psi)}^2
		+
		\inner{
			(\Box_{\dbar}-1)^{-1}
			(A_s\cup\psi)^\perp
			,
			(A_s\cup\psi)^\perp
		},
	\end{equation*}
	where the second term is nonnegative.
\end{remark}

\section{Fiberwise hermitian flat case}
\label{S:flat}
In this section we will consider the following situation.
\begin{itemize}
\item $\ii\Theta_h(\cE)$ is hermitian flat on fibers.
\item There exists a fiberwise Kähler form $\omega$ on $\cX$.
\end{itemize}
Then the Bochner-Kodaira-Nakano formula says that we have
\begin{equation}\label{E:flat_BNK_formula}
\Box_\dbar-\Box_\pt=[\ii\Theta,\Lambda]=0
\end{equation}
on fibers.
Let $\psi$ be a local holomorphic section of $R^qf_*\Omega^p_{\cX/S}(\cE)$.
Then $\psi$ can be represented by a $\ol\pt$-closed $(p,q)$-form, which is $\dbar$-harmonic and also $\pt$-harmonic when restricted to fibers. This form will also be denoted by the same letter.
It follows from \eqref{E:curvature_formula} and Corollary~\ref{C:curvature_formula} that
\begin{align*}
R(\pt_s,\pt_{\ol s},\psi,\ol\psi)
&=
\inner{\Theta_E(v,\ol v)\psi,\psi}
-
\norm{L_v'\psi}^2
+
\norm{A_s\cup\psi}^2
+
\norm{L_{\ol v}'\psi}^2
-
\norm{A_{\ol s}\cup\psi}^2.
\end{align*}
In this case, we have
\begin{equation*}
		\dbar L_v'\psi
		=
		\pt(A_s\cup\psi)
		+
		\eta_s
			\we\psi
\end{equation*}
and
\begin{equation*}
		\dbar^* L_{\ol v}''\psi
		=
	(-1)^p\pt^*(A_{\ol s}\cup\psi)
	+
	[\Lambda,\ii\eta_{\ol s}]\psi.
\end{equation*}
where
\begin{equation*}
	\eta_s=-v\cup\Theta
	=
	-\Theta_{s\ol\beta}dz^{\ol\beta}
	\quad\text{and}\quad
	\eta_{\ol s}=-\ol v\cup\Theta
	=
	-\Theta_{\ol s\alpha}dz^\alpha
\end{equation*}
because of the fiberwise flatness of the hermitian structure.
Note that $\eta_s$ is harmonic by Proposition 4.8 and Proposition 4.9.
So we have
\begin{align*}
	\Box_\dbar \Theta(v,\ol v)
	=
	-g^{\ol\delta\gamma}
	\paren{
		\Theta_{s\ol s}
		+a\ind{s}{\alpha}{}\Theta_{\alpha\ol s}
		+a\ind{\ol s}{\ol\beta}{}\Theta_{s\ol\beta}
	}_{;\ol\delta\gamma}
	=:I_1+I_2+I_3.
\end{align*}
Indeed, the first term is already computed as $I_1=\ii\Lambda[\eta_s,\eta_{\ol s}]$ in~\cite{Geiger_Schumacher2017}.
The second term is computed as follows.
\begin{align*}
	I_2
	=
	-g^{\ol\delta\gamma}
	\paren{a\ind{s}{\alpha}{}\Theta_{\alpha\ol s}}_{;\ol\delta\gamma}
	=
	-g^{\ol\delta\gamma}
	\paren{a\ind{s}{\alpha}{;\ol\delta}\Theta_{\alpha\ol s}}_{;\gamma}
	=
	-g^{\ol\delta\gamma}
	\paren{A\ind{s}{\alpha}{\ol\delta}\Theta_{\alpha\ol s}}
	=
	\dbar^*\paren{A_s\cup\eta_{\ol s}}.
\end{align*}
Finally $I_3$ is computed as
\begin{align*}
	I_3
	&=
	-g^{\ol\delta\gamma}
	\paren{
		a\ind{\ol s}{\ol\beta}{}\Theta_{s\ol\beta}
	}_{;\ol\delta\gamma}
	=
	-g^{\ol\delta\gamma}
	\paren{
		a\ind{\ol s}{\ol\beta}{;\ol\delta}\Theta_{s\ol\beta}
		+
		a\ind{\ol s}{\ol\beta}{}\Theta_{s\ol\beta;\ol\delta}
	}_{;\gamma}
	\\
	&=
	-g^{\ol\delta\gamma}
	\paren{
		a\ind{\ol s}{\ol\beta}{;\ol\delta\gamma}\Theta_{s\ol\beta}
		+
		a\ind{\ol s}{\ol\beta}{;\gamma}\Theta_{s\ol\beta;\ol\delta}
		+
		a\ind{\ol s}{\ol\beta}{}\Theta_{s\ol\beta;\ol\delta\gamma}
	}
	\\
	&=
	-g^{\ol\delta\gamma}
	\paren{
		\Theta_{s\ol\beta}
		\paren{
			A\ind{\ol s}{\ol\beta}{\gamma;\ol\delta}
			-
			a\ind{\ol s}{\ol\tau}{}R\ind{\ol\tau}{\ol\beta}{\ol\delta\gamma}
		}
		+
		A\ind{\ol s}{\ol\beta}{\gamma}\Theta_{s\ol\beta;\ol\delta}
		+
		a\ind{\ol s}{\ol\beta}{}
		\paren{
			\Theta_{s\ol\beta;\gamma\ol\delta}
			+
			[\Theta_{s\ol\beta},\Theta_{\gamma\ol\delta}]
			+
			\Theta_{s\ol\tau}R\ind{\ol\beta}{\ol\tau}{\ol\delta\gamma}
		}
	}	
	\\
	&=
	-g^{\ol\delta\gamma}
	\paren{
		A\ind{\ol s}{\ol\beta}{\gamma}\Theta_{s\ol\beta}	
	}_{;\ol\delta}
	=
	-\pt^*\paren{A_{\ol s}\cup\eta_s}.
\end{align*}
Hence we have
\begin{equation*}
	\Theta(v,\ol v)
	=
	H(\Theta(v,\ol v))
	+
	\ii\Lambda[\eta_s,\eta_{\ol s}]
	+
	\dbar^*\paren{A_s\cup\eta_{\ol s}}
	-
	\pt^*\paren{A_{\ol s}\cup\eta_s}.	
\end{equation*}

For computing the terms $\norm{L_v'\psi}^2$ and $\norm{L_{\ol v}'\psi}^2$, we need the following proposition.
\begin{proposition}
	Suppose that $\eta_s$ is fiberwise parallel, in the sense that
	$$
	\nabla_{\cX/S}\eta_s
	:=
	\eta_{s\ol\beta;\alpha}
	dz^{\ol\beta}\otimes dz^{\alpha}
	+
	\eta_{s\ol\beta;\ol\delta}
	dz^{\ol\beta}\otimes dz^{\ol\delta}=0.
	$$
	Then $\eta_s\we\psi$ and $\ii[\Lambda,\eta_{\ol s}]\psi$ are harmonic.
\end{proposition}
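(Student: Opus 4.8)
\emph{Proof proposal.} The plan is to reduce the statement to two operator–commutation identities on the fixed compact fiber $X=\cX_s$ and then read off the conclusion from the harmonicity of $\psi$. Since both $\eta_s\we\psi$ and $\ii[\Lambda,\eta_{\ol s}]\psi$ are $\cE$-valued forms on $X$, asserting that they are harmonic means that they lie in $\ker\Box_\dbar$ on $X$; for this it suffices to establish, as identities of operators on $\cE$-valued forms on $X$, that $[\Box_\dbar,e(\eta_s)]=0$ and $[\Box_\dbar,[\Lambda,\ii\,e(\eta_{\ol s})]]=0$, where $e(\beta):=\beta\we\,\cdot$; for then, since $\psi|_X$ is harmonic, $\Box_\dbar(\eta_s\we\psi)=e(\eta_s)\,\Box_\dbar\psi=0$ and $\Box_\dbar\bigl(\ii[\Lambda,\eta_{\ol s}]\psi\bigr)=[\Lambda,\ii\,e(\eta_{\ol s})]\,\Box_\dbar\psi=0$. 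I use throughout the running hypotheses of this section: $\Theta_h(\cE)|_X=0$, whence by the Bochner--Kodaira--Nakano identity~\eqref{E:flat_BNK_formula} one has $\Box_\dbar=\Box_\pt$ on $X$, the Laplacian commutes with $\Lambda$ there (both being local statements reducing to the untwisted Kähler case), and the chosen representative $\psi|_X$ is simultaneously $\dbar$-harmonic and $\pt_h$-harmonic.

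The central point is the following commutation lemma, to be proved by a direct computation in local coordinates with the formulas for $\dbar_h^*$ and $\pt_h^*$ recalled in Section~\ref{se:hermbdl}. Let $\beta$ be an $End(\cE)$-valued $1$-form on $X$ of pure type which is parallel, $\nabla_{\cX/S}\beta=0$ (covariant derivatives with respect to the Chern connections on $\cE$ and $End(\cE)$ and the Levi-Civita connection of $X$). If $\beta$ has type $(0,1)$ then $e(\beta)$ graded-commutes with $\pt_h$ and with $\pt_h^*$; if $\beta$ has type $(1,0)$ then $e(\beta)$ graded-commutes with $\dbar$ and with $\dbar^*$. Granting this, the graded Jacobi identity applied to $\Box_\pt=\{\pt_h,\pt_h^*\}$ (respectively $\Box_\dbar=\{\dbar,\dbar^*\}$) yields $[\Box_\pt,e(\beta)]=0$ (respectively $[\Box_\dbar,e(\beta)]=0$), and in the first case $[\Box_\dbar,e(\beta)]=[\Box_\pt,e(\beta)]=0$ follows on $X$ because the fibers are $\cE$-flat. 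For the first pair one uses $\{\pt_h,e(\beta)\}=e(\pt_h\beta)$ together with $\pt_h\beta=0$ (immediate from $\nabla\beta=0$), while for $\{\pt_h^*,e(\beta)\}$ one observes that the holomorphic index removed by $\pt_h^*$ must, $\beta$ being of type $(0,1)$, originate from the form acted upon, so that the only term in which the covariant derivative in $\pt_h^*$ falls on $\beta$ is a multiple of $e(\nabla\beta)=0$ and the remaining term is precisely $-e(\beta)\,\pt_h^*$. The $(1,0)$ case is the conjugate statement with $\pt_h$ replaced by $\dbar$.

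With the lemma available the Proposition follows quickly. The form $\eta_s=-(v\cup\Theta)|_X=-\Theta_{s\ol\beta}\,dz^{\ol\beta}$ is an $End(\cE)$-valued $(0,1)$-form, parallel by hypothesis, so $[\Box_\dbar,e(\eta_s)]=0$ on $X$ and $\eta_s\we\psi$ is harmonic. The form $\eta_{\ol s}=-(\ol v\cup\Theta)|_X=-\Theta_{\ol s\alpha}\,dz^{\alpha}$ has type $(1,0)$; since $\ii\Theta_h(\cE)$ is $h$-hermitian, its component $\Theta_{\ol s\alpha}$ arises from $\Theta_{s\ol\alpha}$ via the bundle metric, and as the Chern connection is metric-compatible the hypothesis $\nabla_{\cX/S}\eta_s=0$ forces $\nabla_{\cX/S}\eta_{\ol s}=0$. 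Hence $[\Box_\dbar,e(\eta_{\ol s})]=0$ by the lemma, and together with $[\Box_\dbar,\Lambda]=0$ this gives $[\Box_\dbar,[\Lambda,\ii\,e(\eta_{\ol s})]]=0$, so $\ii[\Lambda,\eta_{\ol s}]\psi$ is harmonic.

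The step I expect to be the main obstacle is the verification of the two adjoint relations $\{\pt_h^*,e(\eta_s)\}=0$ and $\{\dbar^*,e(\eta_{\ol s})\}=0$ in coordinates: one must track carefully which index is being contracted out (to see that it always comes from $\psi$ and never from the parallel factor) as well as the endomorphism action and the bundle metric entering the formal adjoint, in order to confirm that the one surviving term is exactly $-e(\eta_s)\,\pt_h^*\psi$ (respectively $-e(\eta_{\ol s})\,\dbar^*\psi$) and hence cancels. It is worth stressing that one cannot shortcut this through $\{\dbar^*,e(\eta_s)\}$: that anticommutator does \emph{not} vanish in general --- it equals minus the covariant derivative along the parallel vector field $\eta_s^{\sharp}$ metrically dual to $\eta_s$ --- so the argument genuinely exploits that on the $\cE$-flat fibers the $\dbar$- and $\pt_h$-Laplacians coincide, which lets one work with whichever of the two operators is adapted to the type of the parallel form at hand. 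Once these two identities are in place, the remainder is purely formal graded-commutator algebra together with the fiberwise-flat identities $\Box_\dbar=\Box_\pt$ and $[\Box_\dbar,\Lambda]=0$.
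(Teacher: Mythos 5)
The paper states this proposition without proof (it is invoked immediately afterwards in the evaluation of $\norm{L_v'\psi}^2$), so there is no argument of the authors to measure yours against; judged on its own, your proof is correct and supplies the missing verification. The two anticommutation identities you identify as the crux do hold: writing $e(\beta)=\beta\we\cdot$, for a parallel $End(\cE)$-valued $(0,1)$-form one has $\{\pt_h,e(\beta)\}=e(\pt_h\beta)=0$, and since $\pt_h^*=-g^{\ol\beta\alpha}\,\iota_{\pt_\alpha}\nabla_{\ol\beta}$ and $\iota_{\pt_\alpha}\beta=0$, the only term in $\{\pt_h^*,e(\beta)\}$ not cancelling is $e(\nabla^{0,1}\beta)=0$; the $(1,0)$ case with $\dbar,\dbar^*$ is the mirror image, and the graded algebra then gives $[\Box_{\pt},e(\eta_s)]=0=[\Box_{\dbar},e(\eta_{\ol s})]$, with $\Box_{\dbar}=\Box_{\pt}$ on the $\cE$-flat fibers. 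Two points deserve to be spelled out more than you do. First, the hypothesis is stated only for $\eta_s$, so the parallelism of $\eta_{\ol s}$ must be derived: $\eta_{\ol s}$ is, up to sign, obtained from $\eta_s$ by taking the $h$-adjoint of the endomorphism coefficients and conjugating the form index (hermitian symmetry of the Chern curvature, valid also for the mixed base--fiber components $\Theta_{s\ol\beta}$), and metric compatibility of the Chern and Levi-Civita connections makes the adjoint operation interchange the $(1,0)$- and $(0,1)$-covariant derivatives; this is exactly where one uses that \emph{both} components of $\nabla_{\cX/S}\eta_s$ vanish. Second, $[\Box_{\dbar},\Lambda]=0$ on the fibers is not the generic situation and needs the flatness once more: it reduces via the Nakano identities to $\pm\ii\{\dbar^*,\pt_h^*\}$, which vanishes because $\{\pt_h,\dbar\}=\Theta_h(\cE)\we\cdot=0$ on fibers. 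Finally, you could shorten the second half: as the paper observes in Section~\ref{S:HE}, $[\Lambda,\ii\eta_{\ol s}]$ is the formal adjoint of $\eta_s\we\cdot$, so once $e(\eta_s)$ commutes with the self-adjoint operator $\Box_{\dbar}$ its adjoint does too, and the harmonicity of $\ii[\Lambda,\eta_{\ol s}]\psi$ follows without separately establishing the parallelism of $\eta_{\ol s}$ or $[\Box_{\dbar},\Lambda]=0$.
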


Under the assumption that $\eta_s$ is parallel, the term $\norm{L_v'\psi}^2$ is computed as
\begin{align*}
\inner{L_v'\psi,L_v'\psi}
&=
\inner{G_{\dbar}\Box L_v'\psi,L_v'\psi}
=
\inner{\dbar^*G_{\dbar}\dbar L_v'\psi,L_v'\psi}
=
\inner{G_{\dbar}\dbar L_v'\psi,\dbar L_v'\psi}
\\
&=
\inner{G_{\dbar}\paren{\pt(A_s\cup\psi)+\eta_s\we\psi},\pt(A_s\cup\psi)+\eta_s\we\psi}
\\
&=
\inner{G_{\dbar}\paren{\pt(A_s\cup\psi),\pt(A_s\cup\psi)}}
=
\inner{G_{\pt}\pt^*\pt(A_s\cup\psi),A_s\cup\psi}
\\
&=
\inner{G_{\pt}\Box(A_s\cup\psi),A_s\cup\psi}
=
\norm{(A_s\cup\psi)^\perp}^2.
\end{align*}
Note that \eqref{E:flat_BNK_formula} implies that $G_\dbar$ and $G_\pt$ coincide.
Likewise, we have
\begin{align*}
\inner{L_{\ol v}'\psi,L_{\ol v}'\psi}
&=
\norm{(A_{\ol s}\cup\psi)^\perp}^2.
\end{align*}

\begin{theorem}\label{T:flat}
Let $\cX\rightarrow S$ be a family of compact Kähler manifolds and let $\cE\rightarrow\cX$ be a hermitian vector bundle on $\cX$ which is hermitian flat on fibers.
If $\eta_s$ is fiberwise parallel, then we have the following curvature formula for $R^qf_*\Omega^p_{\cX/S}(\cE)$.
\begin{align*}
	R(\pt_s,\pt_{\ol s},\psi,\ol\psi)
	&=
	\inner{H(\Theta(v,\ol v))\psi,\psi}
	+
	\inner{
	G_\dbar\paren{\ii\Lambda[\eta_s,\eta_{\ol s}]
	+
	\dbar^*\paren{A_s\cup\eta_{\ol s}}
	+
	\pt^*\paren{A_{\ol s}\cup\eta_s}}\psi,\psi}
		\\
		&+
		\norm{H(A_s\cup\psi)}^2
		-
		\norm{H(A_{\ol s}\cup\psi)}^2.
\end{align*}
\end{theorem}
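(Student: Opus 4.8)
The plan is to assemble the formula from the already-established pieces by plugging everything into the general curvature expression~\eqref{E:curvature_formula} and using the flatness hypothesis to simplify. First I would recall that, under $[\ii\Theta,\Lambda]=0$ on fibers, a local section $\psi$ given by Lemma~\ref{L:representative} is simultaneously $\dbar$- and $\pt$-harmonic on fibers, so Corollary~\ref{C:curvature_formula} applies and the term $\inner{L_{[v,\ol v]}\psi,\psi}$ drops out (both $\Box_\pt\psi=0$ and $\pt\psi$ harmonic imply the right-hand side vanishes). Hence \eqref{E:curvature_formula} collapses to
\begin{equation*}
R(\pt_s,\pt_{\ol s},\psi,\ol\psi)
=
\inner{\Theta_\cE(v,\ol v)\psi,\psi}
-\norm{L_v'\psi}^2
+\norm{A_s\cup\psi}^2
+\norm{L_{\ol v}'\psi}^2
-\norm{A_{\ol s}\cup\psi}^2.
\end{equation*}

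Next I would handle the two Lie-derivative norms. Using Proposition~\ref{P:dbar_Lv} together with the fiberwise-flatness forms $\dbar L_v'\psi=\pt(A_s\cup\psi)+\eta_s\we\psi$, the assumption that $\eta_s$ is fiberwise parallel guarantees (via the stated Proposition on parallel $\eta_s$) that $\eta_s\we\psi$ is harmonic; on the other hand $\pt(A_s\cup\psi)$ is orthogonal to the harmonic space by the eigenfunction argument (as in Lemma~\ref{L:well_definedness1}, which goes through verbatim since $\Box_\dbar=\Box_\pt$ here). So the two summands in $\dbar L_v'\psi$ are $L^2$-orthogonal, the harmonic one contributes nothing after applying $G_\dbar$, and a Green's-operator computation identical to the untwisted case yields $\norm{L_v'\psi}^2=\norm{(A_s\cup\psi)^\perp}^2$, and likewise $\norm{L_{\ol v}'\psi}^2=\norm{(A_{\ol s}\cup\psi)^\perp}^2$ from Proposition~\ref{P:dbar*_Lvbar}. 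Combining with $\norm{A_s\cup\psi}^2=\norm{H(A_s\cup\psi)}^2+\norm{(A_s\cup\psi)^\perp}^2$ (and similarly for $A_{\ol s}$), the perpendicular parts cancel and leave $\norm{H(A_s\cup\psi)}^2-\norm{H(A_{\ol s}\cup\psi)}^2$.

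Finally I would treat the curvature term $\inner{\Theta_\cE(v,\ol v)\psi,\psi}$ by substituting the Hodge decomposition of $\Theta(v,\ol v)$ derived just above the theorem, namely
\begin{equation*}
\Theta(v,\ol v)
=
H(\Theta(v,\ol v))
+\ii\Lambda[\eta_s,\eta_{\ol s}]
+\dbar^*(A_s\cup\eta_{\ol s})
-\pt^*(A_{\ol s}\cup\eta_s),
\end{equation*}
which comes from computing $\Box_\dbar\Theta(v,\ol v)=I_1+I_2+I_3$ with $I_1=\ii\Lambda[\eta_s,\eta_{\ol s}]$ (from \cite{Geiger_Schumacher2017}), $I_2=\dbar^*(A_s\cup\eta_{\ol s})$, $I_3=-\pt^*(A_{\ol s}\cup\eta_s)$, and then inverting with $G_\dbar$. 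Pairing against $\psi$ and writing the non-harmonic part through $G_\dbar$ gives exactly the middle term of the claimed formula (the sign on the $\pt^*$ piece flips to $+$ inside the Green's operator pairing, matching the statement). The main obstacle is the bookkeeping in the $I_3$ computation: one must commute covariant derivatives using Proposition~\ref{P:commutator_covariant_derivatives}, so curvature terms $R\ind{\ol\tau}{\ol\beta}{\ol\delta\gamma}$ and the commutator bracket $[\Theta_{s\ol\beta},\Theta_{\gamma\ol\delta}]$ appear, and one has to verify they cancel against the analogous terms coming from $a\ind{\ol s}{\ol\tau}{}$-contractions so that the whole thing telescopes into $-\pt^*(A_{\ol s}\cup\eta_s)$; this cancellation, plus checking that the parallel-$\eta_s$ hypothesis is genuinely what is needed for $\eta_s\we\psi$ and $\ii[\Lambda,\eta_{\ol s}]\psi$ to be harmonic (so the orthogonality splittings are valid), is where the real work lies, everything else being assembly of cited results.
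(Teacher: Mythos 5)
Your proposal follows essentially the same route as the paper's own proof: reduce \eqref{E:curvature_formula} via Corollary~\ref{C:curvature_formula} using that $\Box_\dbar=\Box_\pt$ makes $\psi$ $\pt$-harmonic on fibers, compute $\norm{L_v'\psi}^2=\norm{(A_s\cup\psi)^\perp}^2$ and $\norm{L_{\ol v}'\psi}^2=\norm{(A_{\ol s}\cup\psi)^\perp}^2$ by discarding the harmonic pieces $\eta_s\we\psi$ and $[\Lambda,\ii\eta_{\ol s}]\psi$ under the parallelism hypothesis, and substitute the decomposition of $\Theta(v,\ol v)$ obtained from $\Box_\dbar\Theta(v,\ol v)=I_1+I_2+I_3$; your insertion of $G_\dbar$ into that decomposition is in fact needed and is implicit in the paper.

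One point is not a real argument: your claim that ``the sign on the $\pt^*$ piece flips to $+$ inside the Green's operator pairing.'' Inserting $G_\dbar\bigl(\ii\Lambda[\eta_s,\eta_{\ol s}]+\dbar^*(A_s\cup\eta_{\ol s})-\pt^*(A_{\ol s}\cup\eta_s)\bigr)$ into $\inner{\cdot\,\psi,\psi}$ preserves all signs, so no flip occurs. The tension you are papering over is already present in the text: the displayed decomposition carries $-\pt^*(A_{\ol s}\cup\eta_s)$ while the theorem states $+\pt^*(A_{\ol s}\cup\eta_s)$, so one of the two signs is a typo; the honest resolution is to recheck the sign in the $I_3$ computation (or simply carry the minus through), not to invoke a pairing identity that does not exist. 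A minor further remark: the orthogonality of $\pt(A_s\cup\psi)$ to the harmonic space is most directly seen from $\inner{\pt u,h}=\inner{u,\pt^*h}=0$ for $\Box_\pt$-harmonic $h$ (harmonicity for $\Box_\dbar$ and $\Box_\pt$ coincide here), rather than by transporting the eigenvalue-shift argument of Lemma~\ref{L:well_definedness1} ``verbatim,'' since in the flat case the shift is zero and the argument needs this small adjustment.
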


\section{Families of Hermite-Einstein vector bundles}\label{S:HE}
In this section we will compute the curvature for holomorphic families of simple Hermite-Einstein vector bundles over a compact Kähler manifold; the case $p=0$ was treated in \cite{Geiger_Schumacher2017}.
More precisely, we consider the following setup:
\begin{itemize}
\item $(X,\omega)$ is a compact Kähler manifold
\item $(\cE,h)\rightarrow X\times S$ is a hermitian vector bundle.
\item The restriction $h_s$ of $h$ to $E_s:=\cE\vert_{X\times\set{s}}$ is a Hermite-Einstein metric for every $s\in S$.
\end{itemize}
Then the horizontal lift $v\in X\times S$ of $\pt_s\in S$ is a trivial lift, and the representative of Kodaira-Spencer class $A_s$ obviously vanishes.
Furthermore
$$
\eta_s=-\Theta_{s\ol\beta}dz^{\ol\beta} \; \text{ and } \; \eta_{\ol s}=-\Theta_{\alpha\ol s} dz^\alpha
$$
coincide with the harmonic \ks\ forms $\rho_s$ and their conjugates $\rho_{\ol s}$ resp.\ for the deformation of $E$ in the direction of $\pt_s$ (cf.\ also the section below).

In this sense the wedge product $\eta_s\we\text{\vartextvisiblespace}$ is equal to the cup product $\rho_s\cup\text{\vartextvisiblespace}$ from \cite{Geiger_Schumacher2017}, and
$$
[\Lambda,\ii\eta_{\ol s}]\psi_{A_p\ol B_q}=- g^{\ol\beta\alpha} \Theta_{\alpha\ol s}\psi_{A_p \ol\beta\ol\beta_2\ldots\ol\beta_q}
$$
is equal to the cap product $\rho^*_{\ol s} \cap\text{\textvisiblespace}$.
It is remarkable that $\rho^*_{\ol s} \cap \text{\textvisiblespace}$ is adjoint to $\rho_s\cup\text{\textvisiblespace}$.

So Equation~\eqref{E:curvature_formula} implies that
\begin{align*}
R(\pt_s,\pt_{\ol s},\psi,\ol\psi)
&=
\inner{\Theta(v,\ol v)\psi,\psi}
-
\norm{L_v'\psi}^2
+
\norm{L_{\ol v}'\psi}^2.
\end{align*}
By Proposition~\ref{P:dbar_Lv} and Proposition~\ref{P:dbar*_Lvbar} we have
\begin{align*}
R(\pt_s,\pt_{\ol s},\psi,\ol\psi)
&=
\inner{\Theta(\pt_s,\pt_{\ol s})\psi,\psi}
-
\inner{G_\dbar(\eta_s\we\psi),\eta_s\we\psi}
+
\inner{G_\dbar\paren{[\Lambda,\ii\eta_{\ol s}]\psi},[\Lambda,\ii\eta_{\ol s}]\psi}.
\end{align*}
It follows from Lemma~3.4 in \cite{Geiger_Schumacher2017} and \cite[(11)]{Schumacher_Toma1992} that $\Theta(\pt_s,\pt_{\ol s})=\Theta_{s\ol s}$ satisfies that
\begin{equation*}
\Box_\dbar\Theta_{s\ol s}
=
\ii\Lambda[\eta_s,\eta_{\ol s}].
\end{equation*}
This implies that the first term is computed as follows.
\begin{equation*}
\inner{\Theta_E(\pt_s,\pt_{\ol s})\psi,\psi}
=
\inner{H(\Theta_{s\ol s})\psi,\psi}
+
\inner{G_\dbar\paren{\ii\Lambda[\eta_s,\eta_{\ol s}]}\psi,\psi},
\end{equation*}
where $H(\Theta_{s\ol s})$ is the harmonic part of $\Theta_{s\ol s}$.


Altogether the following curvature formula holds:
\begin{theorem}\label{curvHE}
Let $\cE\rightarrow X\times S$ be a family of Hermite-Einstein vector bundles.
Then the curvature formula for $R^qf_*\Omega^p_{X\times S/S}(\cE)$ is
\begin{equation}\label{f:curvHE}
\begin{aligned}
R(\pt_s,\pt_{\ol s},\psi,\ol\psi)
=&
\inner{H(\Theta_{s\ol s})\psi,\psi}
+
\inner{G_\dbar(\ii\Lambda[\eta_s,\eta_{\ol s}])\psi,\psi}
\\
&-
\inner{G_\dbar(\eta_s\we\psi),\eta_s\we\psi}
+
\inner{G_\dbar\paren{[\Lambda,\ii\eta_{\ol s}]\psi},[\Lambda,\ii\eta_{\ol s}]\psi}.
\end{aligned}
\end{equation}
\end{theorem}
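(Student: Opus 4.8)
The plan is to specialize the general curvature formula \eqref{E:curvature_formula} to the product situation $\cX = X\times S$ with $\cE$ a family of Hermite-Einstein bundles, and then to rewrite each of the six terms using the auxiliary results already established. First I would record the simplifications coming from the product structure: since $v = \pt_s$ is the trivial horizontal lift (the family of base manifolds is constant), one has $A_s = \dbar_\cX(v)|_X = 0$, so that $A_s\cup\psi = A_{\ol s}\cup\psi = 0$ and the terms $\inner{A_s\cup\psi,A_s\cup\psi}$ and $\inner{A_{\ol s}\cup\psi,A_{\ol s}\cup\psi}$ vanish. Moreover $c(\omega) = \omega(v,\ol v) = 0$ for the trivial lift, and $[v,\ol v] = 0$, so by Proposition~\ref{P:Lie_derivative_commutator} the Lie-derivative term $\inner{L_{[v,\ol v]}\psi,\psi}$ vanishes as well. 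Thus \eqref{E:curvature_formula} collapses to
\begin{equation*}
R(\pt_s,\pt_{\ol s},\psi,\ol\psi)
=
\inner{\Theta(v,\ol v)\psi,\psi}
-
\norm{L_v'\psi}^2
+
\norm{L_{\ol v}'\psi}^2.
\end{equation*}

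Next I would handle the three surviving terms. For the middle term, Proposition~\ref{P:dbar_Lv} gives $\dbar L_v'\psi = \pt(A_s\cup\psi) + A_s\cup\pt\psi + \eta_s\we\psi = \eta_s\we\psi$ since $A_s = 0$; combined with Proposition~\ref{P:auxiliary_formulas} (which gives $\dbar^* L_v'\psi = 0$, so $L_v'\psi$ is the minimal solution and $\norm{L_v'\psi}^2 = \inner{G_\dbar \dbar L_v'\psi, \dbar L_v'\psi}$), this yields $\norm{L_v'\psi}^2 = \inner{G_\dbar(\eta_s\we\psi),\eta_s\we\psi}$. Symmetrically, Proposition~\ref{P:dbar*_Lvbar} gives $\dbar^* L_{\ol v}'\psi = (-1)^p\pt^*(A_{\ol s}\cup\psi) + (-1)^p A_{\ol s}\cup\pt^*\psi + [\Lambda,\ii\eta_{\ol s}]\psi = [\Lambda,\ii\eta_{\ol s}]\psi$, and since $\dbar L_{\ol v}'\psi = 0$ (Proposition~\ref{P:auxiliary_formulas}) together with the orthogonality of $L_{\ol v}'\psi$ to the harmonic space (Proposition~\ref{P:Lie_derivative}), we get $\norm{L_{\ol v}'\psi}^2 = \inner{G_\dbar([\Lambda,\ii\eta_{\ol s}]\psi),[\Lambda,\ii\eta_{\ol s}]\psi}$.

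Finally I would treat the first term $\inner{\Theta(v,\ol v)\psi,\psi}$. Because $v = \pt_s$ is trivial, $\Theta(v,\ol v) = \Theta_{s\ol s}$, which is a global $\mathrm{End}(\cE)$-valued function on each fiber $X$; it is not harmonic in general, so I would decompose it as $\Theta_{s\ol s} = H(\Theta_{s\ol s}) + G_\dbar\Box_\dbar\Theta_{s\ol s}$. The key input is the identity $\Box_\dbar \Theta_{s\ol s} = \ii\Lambda[\eta_s,\eta_{\ol s}]$, which follows from Lemma~3.4 of \cite{Geiger_Schumacher2017} and (11) of \cite{Schumacher_Toma1992}, using that under the Hermite-Einstein condition $\eta_s = -\Theta_{s\ol\beta}dz^{\ol\beta}$ is the harmonic representative of the Kodaira-Spencer class of the deformation of $E$ (so $\dbar\eta_s = 0$ and $\dbar^*\eta_s = 0$). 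Substituting $\Theta_{s\ol s} = H(\Theta_{s\ol s}) + G_\dbar(\ii\Lambda[\eta_s,\eta_{\ol s}])$ into $\inner{\Theta_{s\ol s}\psi,\psi}$ and collecting all four contributions gives exactly \eqref{f:curvHE}. The main obstacle is verifying the Bochner-type identity $\Box_\dbar\Theta_{s\ol s} = \ii\Lambda[\eta_s,\eta_{\ol s}]$ in the present generality; I would reduce it to the cited computation by checking that, in the product case with the trivial lift, the $I_2$ and $I_3$ terms of the analogous computation in Section~\ref{S:flat} vanish identically (since $A_s = A_{\ol s} = 0$ there), leaving only $I_1 = \ii\Lambda[\eta_s,\eta_{\ol s}]$, which matches the flat-case computation and the Hermite-Einstein computation in \cite{Geiger_Schumacher2017}.
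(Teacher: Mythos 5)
Your proposal is correct and follows essentially the same route as the paper: specialize the general formula \eqref{E:curvature_formula} to the product/trivial-lift situation (so $A_s=0$ and the $L_{[v,\ol v]}$ term drops), identify $\dbar L_v'\psi=\eta_s\we\psi$ and $\dbar^*L_{\ol v}'\psi=[\Lambda,\ii\eta_{\ol s}]\psi$ via Propositions~\ref{P:dbar_Lv} and \ref{P:dbar*_Lvbar} to get the two Green's-operator terms, and decompose $\Theta_{s\ol s}$ using $\Box_\dbar\Theta_{s\ol s}=\ii\Lambda[\eta_s,\eta_{\ol s}]$ with exactly the citations the paper uses. Your extra remarks (explicit vanishing of $c(\omega)$ and $[v,\ol v]$, and the cross-check that the $I_2,I_3$ terms of the Section~\ref{S:flat} computation vanish when $A_s=0$) are consistent with, and only elaborate on, the paper's argument.
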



We will see below that in a moduli theoretic situation the first term above is not present.

It seems to be open, whether or not the term $G_\dbar(\ii\Lambda[\eta_s,\eta_{\ol s}])$ gives rise to a positive operator.

\section{Application to moduli spaces and \wp\ metrics}
For families of holomorphic vector bundles, as usual  tangent vectors of the parameter space are mapped to elements of $H^1(X, End(\cE_s))$ under the \ks\ map so that the curvature of $R^1f_*(End(\cE))$ is of interest.

Moduli spaces of holomorphic vector bundles were introduced under the assumption of simplicity which guaranteed that universal families exist, whose parameter spaces could be glued together. The \he\ condition was used to guarantee the Hausdorff property of the resulting moduli space.

Given a holomorphic family $\cE$ of simple \he\ bundles on $X$ parameterized by $S$ the \wp\ or $L^2$-(semi)norm of a tangent vector $\pt_s$ at a point $s\in S$ is defined as follows:
The \ks\ class of $\pt_s$ is represented by a harmonic form $\eta_s \in \cA^{0,1}(\cX_s,End(\cE_s))$ as above, and
$$
\|\pt_s\|^2_{WP}=\|\eta_s\|^2_{WP} = \int_{\cX_s} \ii \Lambda(\eta_s \we_h \eta_{\ol s})\, g \, dV=  \int_{\cX_s} \rm{tr}(\ii \Lambda(\eta_s \we \eta^*_{s}))\, g \, dV\, .
$$

When studying the geometry of moduli spaces of stable, holomorphic vector bundles on compact \ka\ manifolds, it is assumed that the respective determinant line bundles in a holomorphic family are constant. The line bundle contribution can be discussed separately. Note that
$$
{\rm tr}(\ii\Theta_\cE) = - \ddb \log \det(h_{i\ol\jmath}) = 2\pi c_1(\det \cE,\det(h_{i\ol\jmath}))
$$
so that infinitesimal deformations with constant determinant bundle are given by elements of $H^1(X,End^0(E_s))$.
For any form $\eta_s$ the form ${\rm tr} (\eta_s)$ is again harmonic, hence equal to zero. In this way in the curvature formula $End(\cE)$ can be replaced  by the space  $End^0(\cE)$ of endomorphisms with vanishing traces.

Given a family $\cE$ of holomorphic vector bundles over a compact \ka\ manifold $X$, parameterized by $S$, we will need the curvature of $R^pf_* End(\cE)$, in particular for $p=1$. Then the previous arguments can be adopted in the following way. The connection $\theta_\cE$ and curvature $\Theta_\cE$ induce connections on $End(\cE)= \cE \otimes \cE^*$ so that
$$
\Theta_{\cE\otimes \cE^*}= [\Theta_\cE,\vartextvisiblespace],
$$
in particular the harmonic forms $\eta_s$ act on $End(\cE)$ by $[\eta_s,\vartextvisiblespace]$.

\begin{proposition}
  The curvature of the \wp\ metric for families of simple \he\ bundles is given by the curvature for
  $$
  R^1f_*End(\cE)\,.
  $$
  When applying \eqref{f:curvHE} to sheaves $ R^qf_*End(\cE)$ the action of curvature terms of $\cE$ is the Lie commutator of bundle valued differential forms, in particular $\inner{H(\Theta_{s\ol s})\psi,\psi} =\inner{[H(\Theta_{s\ol s}),\psi],\psi}=0$ for all $\psi$.
  If the determinants $\det(\cE_s)$ are constant, then $End(\cE)$ is replaced by $End^0(\cE)$.
\end{proposition}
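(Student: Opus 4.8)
The statement bundles together three essentially independent assertions, and I would prove them in sequence.

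First, the identification of the Weil--Petersson metric with the curvature-relevant metric on $R^1f_*End(\cE)$. Here the point is purely definitional: by the discussion of the Atiyah sequence in Section~\ref{S:pairs} and the construction of $\eta_s$, a tangent vector $\pt_s$ of the moduli space of simple \he\ bundles (with $X$ fixed, so the horizontal lift is trivial and $A_s=0$) is sent under the \ks\ map to the class of $\eta_s = -\Theta_{s\ol\beta}dz^{\ol\beta}\in\cA^{0,1}(\cX_s,End(\cE_s))$, which is \emph{harmonic} by Propositions~\ref{P:d-closed} and~\ref{P:dbar*-closed} (applied with $A_s=0$; here one uses that $h_s$ is Hermite--Einstein). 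The $L^2$-norm $\|\pt_s\|^2_{WP}=\|\eta_s\|^2$ is then literally the norm induced on a one-dimensional space $H^1(X,End(\cE_s))$, and since harmonic representatives compute both the \wp\ pairing and the natural hermitian metric on $R^1f_*End(\cE)$, the metrics agree; hence their curvatures agree. I would write this as one short paragraph citing \cite{Schumacher_Toma1992} for the \wp\ definition and Theorem~\ref{curvHE} for the curvature side.

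Second, the statement about the Lie-commutator action and the vanishing of $\inner{H(\Theta_{s\ol s}),\psi}$. The idea is that $\cE$ must be replaced throughout by $End(\cE)=\cE\otimes\cE^*$; the Chern connection and curvature on a tensor product act by the Leibniz rule, so $\Theta_{End(\cE)} = [\Theta_\cE,\;\cdot\;]$ and similarly the harmonic \ks\ form acts as $[\eta_s,\;\cdot\;]$ and $[\eta_{\ol s},\;\cdot\;]$. Consequently in formula~\eqref{f:curvHE} every occurrence of $\Theta_{s\ol s}$, $\eta_s\we$, $[\Lambda,\ii\eta_{\ol s}]$ is the corresponding bracket operator on $End(\cE)$-valued forms. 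For the first term: $\Theta_{s\ol s}$ is the mean curvature in the $s\ol s$-direction, and by Lemma~3.4 of \cite{Geiger_Schumacher2017} together with \cite[(11)]{Schumacher_Toma1992} its harmonic part $H(\Theta_{s\ol s})$ is a (constant, by harmonicity) section of $End(\cE_s)$; acting on an $End(\cE)$-valued $(p,q)$-form $\psi$ it becomes $[H(\Theta_{s\ol s}),\psi]$. But $H(\Theta_{s\ol s})$, being harmonic and holomorphic in an appropriate sense (equivalently, a constant multiple of $\id$ up to the trace-free part, or more precisely: for \he\ bundles the mean curvature is $\lambda\cdot\id$ plus a coexact piece whose harmonic projection vanishes), commutes with $\psi$, so $\inner{[H(\Theta_{s\ol s}),\psi],\psi}=0$. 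I would be a little careful here and give the argument via traces: $\mathrm{tr}(\eta_s)$ is harmonic hence zero, so all the $\eta$'s live in $End^0$, and the harmonic part of the mean curvature lies in the center, whence the bracket kills it.

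Third, the reduction to $End^0(\cE)$ when the determinants are constant. This I would handle exactly as in the paragraph preceding the proposition: the decomposition $End(\cE)=End^0(\cE)\oplus\mathcal{O}_X\cdot\id$ is parallel for the Chern connection, $\mathrm{tr}(\eta_s)$ is a harmonic $(0,1)$-form on the compact \ka\ manifold $X$ hence vanishes, so the \ks\ classes and all the operators appearing in~\eqref{f:curvHE} preserve the $End^0$-summand; on the central summand the deformation is that of $\det\cE$, which is assumed constant, so it contributes nothing. Thus the whole curvature computation restricts to $End^0(\cE)$.

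\textbf{Main obstacle.} The only genuinely delicate point is the vanishing $\inner{H(\Theta_{s\ol s}),\psi} = 0$: one must argue precisely why the \emph{harmonic projection} of the mean curvature $\Theta_{s\ol s}$ acts trivially by the commutator on \emph{every} harmonic $End(\cE)$-valued form, rather than merely asserting it. The clean route is to note that for a simple \he\ bundle the endomorphism-valued harmonic $(0,0)$-forms are exactly $\C\cdot\id$ (simplicity!), and $H(\Theta_{s\ol s})$ is such a harmonic section by the cited lemma of \cite{Geiger_Schumacher2017}; being a scalar multiple of the identity it is central, so $[H(\Theta_{s\ol s}),\;\cdot\;]\equiv 0$. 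Everything else is bookkeeping with the Leibniz rule on tensor products.
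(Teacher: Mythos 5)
Your argument is correct and follows essentially the same route as the paper's (implicit) proof: the identification of the \wp\ metric with the $L^2$ metric on $R^1f_*End(\cE)$ via the harmonic forms $\eta_s$, the Leibniz rule giving $\Theta_{End(\cE)}=[\Theta_\cE,\,\cdot\,]$ and hence the commutator action in \eqref{f:curvHE}, and the trace/constant-determinant reduction to $End^0(\cE)$ are exactly the points made in the discussion surrounding the proposition, while your simplicity argument (harmonic $(0,0)$-endomorphisms are holomorphic, hence $\C\cdot\mathrm{id}$, hence central) supplies precisely the justification for $\inner{[H(\Theta_{s\ol s}),\psi],\psi}=0$ that the paper only asserts. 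Two wording slips are harmless but should be fixed: $H^1(X,End(\cE_s))$ is not one-dimensional (you mean the $L^2$ norm of the harmonic representative of the \ks\ class), and a harmonic $(0,1)$-form does not vanish merely by harmonicity---$\mathrm{tr}(\eta_s)=0$ because constancy of $\det\cE_s$ makes its class in $H^1(X,\cO_X)$ trivial, and the harmonic representative of the trivial class is zero.
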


\subsection{Curvature of the Weil-Petersson metric for simple \he\ bundles}
We consider a universal family of simple, \he\ vector bundles on a compact \ka\ manifold $X$ with constant determinant line bundle over a parameter space $S$ such that (at any base point) for any tangent vector $\pt_s$ the term $H(\Theta_{s\ol s})$ vanishes.

The \wp\ form is known to be \ka. When computing its curvature the sheaf $R^1f_*End^0(\cE)$ on for $f: X\times S\to S$ for the original family $\cE$ is of interest. A tangent vector $\pt_s$ of the parameter space is again represented by $\eta_s$, but in this case its action is on endomorphisms rather than sections is of relevance. Hence the following theorem holds (\cite{Schumacher_Toma1992}).
\begin{theorem}\label{T:curvWP}
The holomorphic bisectional curvature of the \wp\ metric for tangent vectors given by $\eta_s$ and $\kappa$ is determined by
\begin{eqnarray*}
  R(\pt_s,\pt_{\ol s},\kappa,\ol{\kappa})&=&
\inner{G_\dbar(\ii\Lambda[\eta_s,\eta_{\ol s}]), \ii\Lambda[\kappa,\ol{\kappa}]}\\
& & \qquad - \inner{G_\dbar([\eta_s,\kappa]),[\eta_s,\kappa]}\\
& & \qquad +
\inner{G_\dbar(\ii \Lambda[\eta_{\ol s}, \kappa]),\ii \Lambda[\eta_{\ol s}, \kappa]}
\end{eqnarray*}
For $\kappa = \eta_s$ the formula determines the holomorphic sectional curvature. In this case
$$
  R(\pt_s,\pt_{\ol s},\eta_s,{\eta_{\ol s}})=
2 \inner{G_\dbar(\ii\Lambda[\eta_s,\eta_{\ol s}]), \ii\Lambda[\eta_s,{\eta_{\ol s}}]}
  - \inner{G_\dbar([\eta_s,\eta_s]),[\eta_s,\eta_s]},
$$
where the first term is non-negative, and the second is non-positive.

\end{theorem}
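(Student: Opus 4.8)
The plan is to obtain Theorem~\ref{T:curvWP} as the moduli-theoretic specialization of Theorem~\ref{curvHE}, reading it off from \eqref{f:curvHE} in the case $p=0$, $q=1$ applied to the auxiliary bundle $End^0(\cE)$ over $f\colon X\times S\to S$. By the Proposition preceding the statement, the \wp\ metric on the moduli space is the $L^2$-metric on $R^1f_*End^0(\cE)$; a tangent vector $\pt_s$ is represented by the harmonic \ks\ form $\eta_s\in\cA^{0,1}(\cX_s,End^0(\cE_s))$ (harmonicity being automatic here by Propositions~\ref{P:d-closed} and~\ref{P:dbar*-closed}), while a second tangent direction is represented by a harmonic $\kappa$ in the same space, chosen as in Lemma~\ref{L:representative}, which plays the role of the section $\psi$ whose $L^2$-norm we differentiate. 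Since the base is a \emph{trivial} family of $X$ one has $A_s=0$, so in \eqref{f:curvHE} the cup-product terms drop and only the first line plus the last two terms survive. It remains to convert these into the bracket notation appropriate to $End^0(\cE)$: the Chern connection and curvature of $\cE\otimes\cE^*$ are the induced ones, whence $\Theta_{\cE\otimes\cE^*}=[\Theta_\cE,-]$ and $\eta_s$, $\eta_{\ol s}$ act on $End(\cE)$-valued forms by the Lie commutator (this is exactly the identification recorded in the Proposition above, and in \cite{Geiger_Schumacher2017,Schumacher_Toma1992}). Accordingly $\eta_s\we\psi$ becomes $[\eta_s,\kappa]$ and $[\Lambda,\ii\eta_{\ol s}]\psi$ becomes $\ii\Lambda[\eta_{\ol s},\kappa]$, reproducing the second and third terms of the asserted formula verbatim.

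Two points then remain. First, $\inner{H(\Theta_{s\ol s})\psi,\psi}=\inner{[H(\Theta_{s\ol s}),\kappa],\kappa}$ vanishes: $H(\Theta_{s\ol s})$ is a harmonic section of $End(\cE)$, hence a scalar endomorphism by simplicity of the bundles in the family — which commutes with $\kappa$ — and in the present constant-determinant situation one moreover has $H(\Theta_{s\ol s})=0$ by hypothesis. Second, the middle term $\inner{G_\dbar(\ii\Lambda[\eta_s,\eta_{\ol s}])\kappa,\kappa}$ must be rewritten as $\inner{G_\dbar(\ii\Lambda[\eta_s,\eta_{\ol s}]),\ii\Lambda[\kappa,\ol\kappa]}$. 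Here I would set $B:=G_\dbar(\ii\Lambda[\eta_s,\eta_{\ol s}])$, which by $\Box_\dbar\Theta_{s\ol s}=\ii\Lambda[\eta_s,\eta_{\ol s}]$ (Lemma~3.4 of \cite{Geiger_Schumacher2017}, \cite[(11)]{Schumacher_Toma1992}) equals $\Theta_{s\ol s}-H(\Theta_{s\ol s})$, a section of $End(\cE)$ acting on $\kappa$ by $[B,-]$, and invoke the pointwise adjunction
\begin{equation*}
\inner{[B,\kappa],\kappa}=\inner{B,\ii\Lambda[\kappa,\ol\kappa]},
\end{equation*}
a consequence of cyclicity of the trace (ad-invariance of the trace form on $End(\cE)$) together with the adjointness $\inner{L\,\cdot,\cdot}=\inner{\cdot,\Lambda\,\cdot}$. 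Since the harmonic/scalar correction $H(\Theta_{s\ol s})$ pairs to zero against $\ii\Lambda[\kappa,\ol\kappa]$ (the trace of a commutator vanishes), this yields the first term of the bisectional formula, and assembling the three surviving terms gives the stated expression for $R(\pt_s,\pt_{\ol s},\kappa,\ol\kappa)$.

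For the holomorphic sectional curvature I would put $\kappa=\eta_s$. The first term becomes $\inner{G_\dbar(\ii\Lambda[\eta_s,\eta_{\ol s}]),\ii\Lambda[\eta_s,\eta_{\ol s}]}$; the third term becomes $\inner{G_\dbar(\ii\Lambda[\eta_{\ol s},\eta_s]),\ii\Lambda[\eta_{\ol s},\eta_s]}$, which coincides with the first by the graded symmetry $[\eta_{\ol s},\eta_s]=[\eta_s,\eta_{\ol s}]$ of the bracket of two odd-degree bundle-valued forms, so together they contribute the factor $2$; the middle term becomes $-\inner{G_\dbar([\eta_s,\eta_s]),[\eta_s,\eta_s]}$. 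Non-negativity of $\inner{G_\dbar(\ii\Lambda[\eta_s,\eta_{\ol s}]),\ii\Lambda[\eta_s,\eta_{\ol s}]}$ and of $\inner{G_\dbar([\eta_s,\eta_s]),[\eta_s,\eta_s]}$ is immediate, since $G_\dbar$ is self-adjoint and non-negative: it annihilates the harmonic space and inverts $\Box_\dbar$ on its orthogonal complement, so $\inner{G_\dbar T,T}\ge0$ for every $T$. The step I expect to be the main obstacle is the bookkeeping of the second paragraph: making the adjunction identity precise, keeping the sign and conjugation conventions consistent for brackets of $End(\cE)$-valued $(1,0)$- and $(0,1)$-forms (and for the operator $G_\dbar$ on $End(\cE)$-valued sections acting through the commutator), and checking that every harmonic-part correction and every trace of a commutator that appears genuinely vanishes. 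The underlying mechanism, by contrast, is a direct translation of Theorem~\ref{curvHE}.
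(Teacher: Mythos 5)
Your proposal is correct and follows essentially the same route as the paper: it specializes \eqref{f:curvHE} to the endomorphism bundle, kills the $H(\Theta_{s\ol s})$-term via the commutator action, translates $\eta_s\we\psi$ and $[\Lambda,\ii\eta_{\ol s}]\psi$ into brackets, and rewrites the Green's-operator term through exactly the trace/adjunction identity $\inner{[\chi,\kappa],\kappa}=\inner{\chi,\ii\Lambda[\kappa,\ol\kappa]}$ used in the paper, before setting $\kappa=\eta_s$ for the sectional case. The only differences are cosmetic (your extra remarks on $G_\dbar\Box_\dbar\Theta_{s\ol s}$ and on the vanishing pairing with $H(\Theta_{s\ol s})$ are harmless), so no gap remains.
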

\begin{proof}
The result follows from \eqref{f:curvHE} applied to the endomorphism bundle: The first term is already known to vanish. Denote the endomorphism $G_\dbar(\ii\Lambda[\eta_s,\eta_{\ol s}])$ by $\chi$. Then the pointwise inner product for the second term equals
\begin{eqnarray*}
([\chi,\kappa],\kappa) &=& Tr(\ii\Lambda([\chi,\kappa] \we \kappa^* )) = Tr(\ii\Lambda(\chi\kappa - \kappa\chi)\we \kappa^*) \\ &=& Tr(\chi \ii\Lambda( (\kappa \we \kappa^* + \kappa^* \we \kappa))) = ( \chi , \ii\Lambda [\kappa,\ol\kappa]   ).
\end{eqnarray*}
\end{proof}

\subsection{\wp\ metric for fiberwise flat hermitian bundles}
In the situation of the previous section we assume that the fibers $\cE_s$ are flat, i.e.\
$$
\Theta|_{\cX_s} = \Theta_{\alpha\ol\beta}\, dz^\alpha\we dz^{\ol\beta}=0.
$$
Now the forms $\eta_s$ satisfy $\eta_s=-\Theta_{s\ol\beta} dz^{\ol\beta}$.
\begin{lemma}
	For  all directions $\pt_{s^i}$, $\pt_{s^k} $ the $End(\cE_s)$-valued $(0,2)$-forms $\frac{1}{2}[\eta_i, \eta_k]$ in particular the forms $\eta^2_s:=\frac{1}{2}[\eta_s,\eta_s]$ are harmonic.
\end{lemma}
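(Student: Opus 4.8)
The plan is to show $\dbar$-closedness and $\dbar^*$-closedness of the $End(\cE_s)$-valued $(0,2)$-form $\tfrac12[\eta_i,\eta_k]$ on the fixed fiber $X=\cX_s$ and then invoke Hodge theory. Since $X$ is compact K\"ahler and the coefficients are smooth, a form is harmonic precisely when it is both $\dbar$- and $\dbar^*$-closed, so these two verifications suffice. Throughout I work on a fiber where $\Theta_{\alpha\ol\beta}=0$ (fiberwise flatness), so the only surviving components of the curvature that enter $\eta_i = -\Theta_{i\ol\beta}\,dz^{\ol\beta}$ are the mixed ones. By Proposition~\ref{P:d-closed} (applied with $A_i\cup\Theta=0$, which holds here since $\Theta_{\alpha\ol\beta}=0$) each $\eta_i$ is $\dbar$-closed, and by Proposition~\ref{P:dbar*-closed} each $\eta_i$ is $\dbar^*$-closed; hence every $\eta_i$ is harmonic to begin with. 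The task is to propagate harmonicity through the Lie bracket.

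For the $\dbar$-closedness of $[\eta_i,\eta_k]$: the Chern connection satisfies a Leibniz rule with respect to the bracket of $End(\cE)$-valued forms, namely $\dbar[\eta_i,\eta_k] = [\dbar\eta_i,\eta_k] - [\eta_i,\dbar\eta_k]$ (signs from bidegree $(0,1)$). Since both $\dbar\eta_i=0$ and $\dbar\eta_k=0$ on the fiber, this gives $\dbar[\eta_i,\eta_k]=0$ immediately. The specialization to $\eta^2_s=\tfrac12[\eta_s,\eta_s]$ is the diagonal case $i=k$.

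For the $\dbar^*$-closedness, the cleanest route is to use the K\"ahler identity $-\ii\dbar^* = [\Lambda,\pt_h]$ together with the Bochner--Kodaira--Nakano formula; but since the bundle is flat on the fiber, $\Box_\dbar=\Box_{\pt_h}$ on $X$, and more simply I can argue pointwise in local coordinates. Writing $\eta_i = -\Theta_{i\ol\beta}\,dz^{\ol\beta}$ with $\Theta_{i\ol\beta}=\theta_{i\vert\ol\beta}$, the bracket $[\eta_i,\eta_k]$ has coefficients $[\Theta_{i\ol\beta},\Theta_{k\ol\delta}]$ skew-symmetrized in $\ol\beta,\ol\delta$, and $\dbar^*$ of a $(0,2)$-form is $(\dbar^*\varphi)_{\ol\delta} = -g^{\ol\beta\alpha}\varphi_{\ol\beta\ol\delta;\alpha}$. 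Differentiating the bracket and using the Leibniz rule for $\nabla_\alpha$ on $End(\cE)$, one gets $g^{\ol\beta\alpha}[\Theta_{i\ol\beta;\alpha},\Theta_{k\ol\delta}] + g^{\ol\beta\alpha}[\Theta_{i\ol\beta},\Theta_{k\ol\delta;\alpha}]$; the first term vanishes because $g^{\ol\beta\alpha}\Theta_{i\ol\beta;\alpha}$ is (up to sign) $\dbar^*\eta_i=0$ after the commutation-of-derivatives step already carried out in the proof of Proposition~\ref{P:dbar*-closed}, and likewise the second term vanishes. The only subtlety is the reordering of the covariant derivative $\Theta_{i\ol\beta;\alpha}$ versus $\Theta_{i\ol\alpha;\beta}$-type commutators and making sure the flatness $\Theta_{\alpha\ol\beta}=0$ kills the curvature-of-$End(\cE)$ correction terms that would otherwise appear when commuting $\nabla_\alpha$ past $\nabla_{\ol\beta}$; this is exactly the place where fiberwise flatness is essential and is the main thing to check carefully.

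The main obstacle I anticipate is not conceptual but bookkeeping: keeping the signs and the skew-symmetrization in the antiholomorphic indices straight when applying the Leibniz rule to $\dbar$ and $\dbar^*$ of a bracket, and confirming that no term involving $\Theta_{\alpha\ol\beta}$ (which vanishes) or $R$-curvature-of-$X$ correction survives. Once the two closedness statements are in hand, harmonicity of $\tfrac12[\eta_i,\eta_k]$, and in particular of $\eta^2_s$, follows from the standard Hodge-theoretic fact that on a compact K\"ahler manifold a bundle-valued form is $\Box_\dbar$-harmonic iff it is annihilated by both $\dbar$ and $\dbar^*$. I would state the Leibniz rule $\dbar[\varphi,\psi]=[\dbar\varphi,\psi]+(-1)^{\deg\varphi}[\varphi,\dbar\psi]$ and its $\dbar^*$-analogue as preliminary identities, then give the two one-line computations and conclude.
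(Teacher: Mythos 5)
Your overall frame (prove $\dbar$- and $\dbar^*$-closedness on the fiber, then invoke Hodge theory) is the same as the paper's, and the $\dbar$-part is fine: each $\eta_i$ is $\dbar$-closed here (Proposition~\ref{P:d-closed} applies, since $\Theta_{\alpha\ol\beta}=0$ on fibers forces $A_i\cup\Theta=0$; in fact $A_i=0$ in this product situation), and the Leibniz rule for $\dbar$ of a bracket does the rest. The gap is in the $\dbar^*$-computation. Writing the skew-symmetrized coefficient $[\eta_i,\eta_k]_{\ol\beta\ol\delta}$ and applying $(\dbar^*\varphi)_{\ol\delta}=-g^{\ol\beta\alpha}\varphi_{\ol\beta\ol\delta;\alpha}$ (up to sign), the Leibniz expansion produces not only terms in which $g^{\ol\beta\alpha}$ contracts the form index and the derivative index of the \emph{same} factor --- those are indeed multiples of $\dbar^*\eta_i$ resp.\ $\dbar^*\eta_k$ and vanish by Proposition~\ref{P:dbar*-closed} --- but also the mixed terms $g^{\ol\beta\alpha}[\eta_{i\ol\beta},\eta_{k\ol\delta;\alpha}]$ and $g^{\ol\beta\alpha}[\eta_{i\ol\delta;\alpha},\eta_{k\ol\beta}]$, where the metric couples the antiholomorphic index of one factor to the derivative index of the other. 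Your ``likewise the second term vanishes'' is precisely such a mixed term, and $\dbar^*\eta_k=0$, i.e.\ $g^{\ol\delta\alpha}\Theta_{k\ol\delta;\alpha}=0$, says nothing about it; nor will commuting $\nabla_\alpha$ past $\nabla_{\ol\beta}$ turn it into $\dbar^*$ of a factor. This is not sign bookkeeping: for merely Hermite--Einstein (non-flat) fibers these mixed terms are the reason $[\eta_i,\eta_k]$ is in general \emph{not} harmonic, so some genuinely new input is required exactly at this point.

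The missing input is the one the paper's one-line proof uses: the Bianchi identity $\pt_h\Theta=0$ gives $\Theta_{i\ol\beta;\gamma}=\Theta_{\gamma\ol\beta;i}$, and since the fiberwise components $\Theta_{\gamma\ol\beta}$ vanish identically in $(z,s)$ by fiberwise flatness, so does their covariant derivative in the $s^i$-direction. Hence $\eta_{i\ol\beta;\gamma}=0$, i.e.\ each $\eta_i$ is parallel in the $(1,0)$ fiber directions, and then \emph{every} term of the Leibniz expansion of $\dbar^*[\eta_i,\eta_k]$ vanishes pointwise (this also re-proves the vanishing of the unmixed terms without appealing to Proposition~\ref{P:dbar*-closed}). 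You correctly sensed that flatness must be ``the main thing to check,'' but the check you propose --- that flatness kills $End(\cE)$-curvature corrections arising from commuting derivatives --- is not where the difficulty sits and does not close the gap; you need the Bianchi-plus-flatness argument (equivalently, fiberwise parallelism of the $\eta_i$) to dispose of the mixed terms.
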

\begin{proof}
	$$
	\ol\pt^*(\eta_i\we\eta_k)=-g^{\ol\delta\gamma}(\eta_{i\ol\beta}\we\eta_{k\ol\delta})_{; \gamma}dz^{\ol\beta} =g^{\ol\delta\gamma}(\Theta_{\gamma\ol\beta;i}\we\eta_{k\ol\delta})dz^{\ol\beta}=    0.
	$$
\end{proof}
The \ks\ map of order $p$ can be defined on the $q$-th symmetric power of the tangent space of $S$.
$$
\rho^q_{S}: S^q\cT_{S} \to R^qf_*End(\cE)
$$
by
$$
\rho^q_{S}({\pt_{i_1}\otimes\ldots \otimes\pt_{i_q}}) = \frac{1}{q!}\sum \eta_{s_{\nu_1}}\we\ldots\we\eta_{s_{\nu_q}},
$$
where the wedge product also involves the composition of endomorphisms (which implies commutators by skew-symmetricity) and the sum is taken over all permutations. In this sense $\eta^q_s= \eta_s\wedge\ldots\wedge \eta_s$ is defined and parallel.

Now we apply the curvature formula of Theorem~\ref{curvHE} to $End(\cE)$ with $\psi=\eta^q_s$.  Because of the harmonicity of $\eta^q_s$ the term that carries a negative sign vanishes.
\begin{theorem}\label{curvflat}
	Given a family of fiberwise flat hermitian bundles for $q=1,\ldots,n$ the curvature of $R^qf_*End(\cE)$ satisfies
\begin{eqnarray*}
   R(\pt_s,\pt_{\ol s},\eta^q_s,\eta^q_{\ol s})&=&
\inner{G_\dbar(\ii\Lambda[\eta_s,\eta_{\ol s}]) \eta^q_s  , \eta^q_s}\\
& & \qquad +
\inner{G_\dbar(\ii \Lambda[\eta_{\ol s}, \eta^q_s]),\ii \Lambda[\eta_{\ol s}, \eta^q_s]}
\end{eqnarray*}
\end{theorem}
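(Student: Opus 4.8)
\textbf{Proof proposal for Theorem~\ref{curvflat}.}

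The plan is to specialize the general curvature formula \eqref{f:curvHE} of Theorem~\ref{curvHE} to the endomorphism bundle $End(\cE)$ and to the distinguished section $\psi=\eta^q_s$. First I would recall that in the setup of the present section $X$ is fixed, the family is the trivial product $X\times S\to S$, the Kodaira--Spencer representative $A_s$ of the deformation of $X$ vanishes, and the fibers $\cE_s$ are flat, so $\Theta_{\alpha\ol\beta}|_{\cX_s}=0$. Consequently $\eta_s=-\Theta_{s\ol\beta}\,dz^{\ol\beta}$ and $\eta_{\ol s}=-\Theta_{\alpha\ol s}\,dz^{\alpha}$ are harmonic $End(\cE_s)$-valued $(0,1)$- and $(1,0)$-forms (Proposition~\ref{P:d-closed}, Proposition~\ref{P:dbar*-closed}), and by the preceding Lemma the iterated wedge $\eta^q_s=\eta_s\wedge\dots\wedge\eta_s$ (with composition of endomorphisms, hence Lie brackets built in) is a harmonic, fiberwise parallel $(0,q)$-form. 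Thus $\eta^q_s$ is a legitimate harmonic representative of a class in $H^q(X,End(\cE))$ to which Theorem~\ref{curvHE} applies verbatim.

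Next I would plug $\psi=\eta^q_s$ into \eqref{f:curvHE}, which reads
\begin{align*}
R(\pt_s,\pt_{\ol s},\eta^q_s,\eta^q_{\ol s})
=&
\inner{H(\Theta_{s\ol s})\eta^q_s,\eta^q_s}
+
\inner{G_\dbar(\ii\Lambda[\eta_s,\eta_{\ol s}])\eta^q_s,\eta^q_s}
\\
&-
\inner{G_\dbar(\eta_s\we\eta^q_s),\eta_s\we\eta^q_s}
+
\inner{G_\dbar\paren{[\Lambda,\ii\eta_{\ol s}]\eta^q_s},[\Lambda,\ii\eta_{\ol s}]\eta^q_s},
\end{align*}
with the caveat explained in the Proposition preceding the theorem that, on $End(\cE)=\cE\otimes\cE^*$, the action of a curvature term is the adjoint action, so $\inner{H(\Theta_{s\ol s})\eta^q_s,\eta^q_s}=\inner{[H(\Theta_{s\ol s}),\eta^q_s],\eta^q_s}$. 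The first term vanishes: since $\cE_s$ is simple and \he, $H(\Theta_{s\ol s})$ is a constant multiple of the identity endomorphism (trace of the harmonic part), and the Lie bracket of a scalar endomorphism with anything is zero; equivalently one invokes directly the hypothesis that $H(\Theta_{s\ol s})=0$ in the moduli-theoretic normalization already used for Theorem~\ref{T:curvWP}. The third term, carrying the negative sign, vanishes because $\eta_s\we\eta^q_s=\eta^{q+1}_s$ is again harmonic by the Lemma (so $G_\dbar$ annihilates it) — this is exactly the parallelism of $\eta_s$ being used as in the Theorem~\ref{T:flat} computation. What survives is precisely the stated formula, with $[\Lambda,\ii\eta_{\ol s}]\eta^q_s=\ii\Lambda[\eta_{\ol s},\eta^q_s]$ after identifying the cap product with $\rho^*_{\ol s}\cap\text{\vartextvisiblespace}$ as in Section~\ref{S:HE}.

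The steps that require genuine care, rather than bookkeeping, are two. First, one must justify that $\eta^{q+1}_s$ — and not merely $\eta^2_s$ — is harmonic; the Lemma above is stated for $\tfrac12[\eta_i,\eta_k]$, and I would extend it inductively using that $\eta_s$ is parallel along fibers ($\eta_{s\ol\beta;\alpha}$ and $\eta_{s\ol\beta;\ol\delta}$ involve $\Theta_{\gamma\ol\beta;s}$ and the flatness $\Theta_{\gamma\ol\beta}=0$ on fibers), so wedging with $\eta_s$ commutes with $\dbar$ and $\dbar^*$ up to terms that vanish by flatness; this is the main obstacle and the place where the hypotheses are genuinely consumed. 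Second, one must be careful that when $End(\cE)$ replaces $\cE$ all wedge products silently become graded commutators, so that $\eta^q_s$ is nonzero in general (it is an alternating sum over permutations of iterated brackets) and the identifications $\eta_s\we\text{\vartextvisiblespace}=[\eta_s,\text{\vartextvisiblespace}]$, $[\Lambda,\ii\eta_{\ol s}]\text{\vartextvisiblespace}=\ii\Lambda[\eta_{\ol s},\text{\vartextvisiblespace}]$ hold at the level of the pointwise and $L^2$ inner products exactly as in the proof of Theorem~\ref{T:curvWP}. Once these two points are in place, the theorem follows immediately by reading off \eqref{f:curvHE}.
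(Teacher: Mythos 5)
Your proposal is correct and follows essentially the same route as the paper: specialize \eqref{f:curvHE} of Theorem~\ref{curvHE} to $End(\cE)$ with $\psi=\eta^q_s$, kill the $H(\Theta_{s\ol s})$ term via the adjoint action (simplicity/moduli normalization), and kill the negatively signed term because $\eta_s\we\eta^q_s\propto\eta^{q+1}_s$ is harmonic, so $G_\dbar$ annihilates it. You are in fact slightly more explicit than the paper at the one delicate point — extending the harmonicity Lemma from $\eta^2_s$ to all powers via fiberwise parallelism of $\eta_s$ — which is exactly what the paper's assertion that $\eta^q_s$ is ``defined and parallel'' is meant to cover.
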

Let $q=1$. Then the above results imply the following.
\begin{corollary}
	For families of fiberwise flat hermitian bundles, in particular on the moduli space of simple, flat bundles (with trivial determinant bundle) the holomorphic sectional curvature
	$$
   R(\pt_s,\pt_{\ol s},\eta_s,\eta_{\ol s})=
2\inner{G_\dbar(\ii \Lambda[\eta_{\ol s}, \eta_s]),\ii \Lambda[\eta_{\ol s}, \eta_s]}
	$$
	of the \wp\ metric is semi-positive.
\end{corollary}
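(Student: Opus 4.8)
The plan is to obtain the formula of Theorem~\ref{curvflat} in the case $q=1$ and then to recognize that, in this case, its two remaining summands coincide, by the same adjointness computation that appears in the proof of Theorem~\ref{T:curvWP}. Setting $q=1$ in Theorem~\ref{curvflat} (so that $\eta^1_s=\eta_s$; note that the summand $\inner{H(\Theta_{s\ol s})\eta_s,\eta_s}$ is already absent there, since for a family of simple bundles with fixed determinant line bundle $H(\Theta_{s\ol s})$ is a traceless parallel endomorphism and hence vanishes, so that $[H(\Theta_{s\ol s}),\eta_s]=0$) one gets
\[
R(\pt_s,\pt_{\ol s},\eta_s,\eta_{\ol s})
=
\inner{G_\dbar(\ii\Lambda[\eta_s,\eta_{\ol s}])\,\eta_s,\eta_s}
+
\inner{G_\dbar(\ii\Lambda[\eta_{\ol s},\eta_s]),\ii\Lambda[\eta_{\ol s},\eta_s]}.
\]
By the Proposition preceding Theorem~\ref{T:curvWP}, on $R^1f_*End(\cE)$ the curvature term of $\cE$ acts by the Lie commutator of bundle-valued forms, so $\chi:=G_\dbar(\ii\Lambda[\eta_s,\eta_{\ol s}])$, a section of $End(\cE_s)$, acts on the $End(\cE_s)$-valued $(0,1)$-form $\eta_s$ as $[\chi,\eta_s]$.

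The key step is the identity
\[
\inner{[\chi,\eta_s],\eta_s}=\inner{\chi,\ii\Lambda[\eta_s,\eta_{\ol s}]}.
\]
This is exactly the pointwise computation of the proof of Theorem~\ref{T:curvWP}: using the Hermitian symmetry $\Theta^*_{s\ol\beta}=\Theta_{\beta\ol s}$ (equivalently $\eta_s^*=\eta_{\ol s}$), the cyclic invariance of the trace, and $\ii\Lambda(\eta_s\we\eta_{\ol s}+\eta_{\ol s}\we\eta_s)=\ii\Lambda[\eta_s,\eta_{\ol s}]$, one checks pointwise that $([\chi,\eta_s],\eta_s)=\mathrm{tr}\bigl(\ii\Lambda([\chi,\eta_s]\we\eta_s^*)\bigr)=\bigl(\chi,\ii\Lambda[\eta_s,\eta_{\ol s}]\bigr)$; integrating over $\cX_s$ and substituting $\chi=G_\dbar(\ii\Lambda[\eta_s,\eta_{\ol s}])$ gives $\inner{G_\dbar(\ii\Lambda[\eta_s,\eta_{\ol s}])\,\eta_s,\eta_s}=\inner{G_\dbar(\ii\Lambda[\eta_s,\eta_{\ol s}]),\ii\Lambda[\eta_s,\eta_{\ol s}]}$. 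Since $\ii\Lambda[\eta_{\ol s},\eta_s]$ and $\ii\Lambda[\eta_s,\eta_{\ol s}]$ agree up to an overall sign, which is immaterial in a quadratic expression, the right-hand side equals the second summand above, so the two summands are equal and
\[
R(\pt_s,\pt_{\ol s},\eta_s,\eta_{\ol s})=2\inner{G_\dbar(\ii\Lambda[\eta_{\ol s},\eta_s]),\ii\Lambda[\eta_{\ol s},\eta_s]}.
\]
As $G_\dbar$ is the Green's operator of the non-negative Laplacian $\Box_\dbar$ and hence a non-negative self-adjoint operator on $L^2$, the last quantity is $\ge 0$; since the \wp\ norm of $\pt_s$ is $\norm{\eta_s}^2$ and the \wp\ curvature is computed by that of $R^1f_*End(\cE)$, the holomorphic sectional curvature in the direction $\pt_s$ is a positive multiple of $R(\pt_s,\pt_{\ol s},\eta_s,\eta_{\ol s})$ and therefore semi-positive.

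The main obstacle is the pointwise adjointness identity $([\chi,\eta_s],\eta_s)=(\chi,\ii\Lambda[\eta_s,\eta_{\ol s}])$: one has to keep track of the Koszul signs in the graded commutator of $End(\cE)$-valued forms of bidegrees $(0,1)$ and $(1,0)$, of the identification $\eta_s^*=\eta_{\ol s}$, and of the fact that $\Lambda$ here acts on a $(1,1)$-form. All of this, however, is already carried out in the proof of Theorem~\ref{T:curvWP}, so no essentially new work is required; one only needs, in addition, to confirm that $\inner{H(\Theta_{s\ol s})\eta_s,\eta_s}$ really drops out, which is where simplicity of the bundles and constancy of the determinant line bundle enter.
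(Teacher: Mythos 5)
Your proposal is correct and follows essentially the same route as the paper, which deduces the corollary from Theorem~\ref{curvflat} at $q=1$ together with the adjointness computation $([\chi,\kappa],\kappa)=(\chi,\ii\Lambda[\kappa,\ol\kappa])$ carried out in the proof of Theorem~\ref{T:curvWP}, plus the non-negativity of $G_\dbar$. You merely spell out the details the paper leaves implicit (including why the $H(\Theta_{s\ol s})$-term is absent), so no new argument is introduced.
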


\bibliographystyle{amsalpha}

\providecommand{\bysame}{\leavevmode\hbox to3em{\hrulefill}\thinspace}
\providecommand{\MR}{\relax\ifhmode\unskip\space\fi MR }
\providecommand{\MRhref}[2]{%
  \href{http://www.ams.org/mathscinet-getitem?mr=#1}{#2}
}
\providecommand{\href}[2]{#2}

\end{document}